\newcommand{\bcd}{\begin{center}\begin{tikzcd}}
\newcommand{\ecd}{\end{tikzcd}\end{center}}
\newcommand{\vir}[1]{[#1]^\mathrm{vir}}
\newcommand{\ev}{\mathrm{ev}}
\newcommand{\pt}{\mathrm{pt}}
\newcommand{\Pic}{\operatorname{Pic}}
\newcommand{\Alb}{\operatorname{Alb}}
\newcommand{\Hom}{\operatorname{Hom}}
\newcommand{\Aut}{\operatorname{Aut}}
\newcommand{\DR}{\operatorname{DR}}
\newcommand{\LogPic}{\operatorname{LogPic}}
\newcommand{\TroPic}{\operatorname{TroPic}}
\newcommand{\TroJac}{\operatorname{TroJac}}
\newcommand{\Spec}{\operatorname{Spec}}
\newcommand{\Gm}{\mathbb G_m}
\newcommand{\Gmlog}{\mathbb G_{m,\rm{log}}}
\newcommand{\Gmtrop}{\mathbb G_{m,\rm{trop}}}
\newcommand{\gp}{\mathrm{gp}}
\newcommand{\Mbargp}{\overline{M}^\mathrm{gp}}
\newcommand{\Trop}{\mathrm{Trop}}
\newcommand{\rmH}{\mathrm{H}}
\newcommand{\PP}{\mathbb{P}}
\newcommand{\NN}{\mathbb{N}}
\newcommand{\ZZ}{\mathbb{Z}}
\newcommand{\RR}{\mathbb{R}}
\newcommand{\CC}{\mathbb{C}}
\newcommand{\QQ}{\mathbb{Q}}
\newcommand{\GGG}{\mathscr{G}}
\newcommand{\TTT}{\mathscr{T}}
\newcommand{\bfw}{ {\boldsymbol a} }
\newcommand{\tfk}{\mathfrak{t}}
\newcommand{\sfV}{\mathsf{V}}
\newcommand{\sfH}{\mathsf{H}}
\newcommand{\sfL}{\mathsf{L}}
\newcommand{\sfE}{\mathsf{E}}
\newcommand{\A}{\mathcal{A}}
\renewcommand{\P}{\mathcal{P}}
\renewcommand{\L}{\mathcal{L}}
\renewcommand{\S}{\mathcal{S}}
\newcommand{\M}{\mathcal{M}}
\newcommand{\R}{\mathcal{R}}
\newcommand{\T}{\mathcal{T}}
\newcommand{\C}{\mathcal{C}}
\renewcommand{\H}{\mathcal{H}}
\renewcommand{\O}{\mathcal{O}}
\newcommand{\U}{\mathcal{U}}
\renewcommand{\leq}{\leqslant}
\renewcommand{\geq}{\geqslant}
\renewcommand{\tilde}{\widetilde}
\renewcommand{\bar}{\overline}
\newcommand{\gen}[1]{\langle #1 \rangle}
\newtheorem{theo}{Theorem}[section]
\newtheorem*{theom}{Theorem}
\newtheorem{prop}[theo]{Proposition}
\newtheorem{coro}[theo]{Corollary}
\newtheorem{lem}[theo]{Lemma}
\newtheorem{Notation}[theo]{Notation}
\theoremstyle{definition}
\newtheorem{defi}[theo]{Definition}
\theoremstyle{remark}
\newtheorem{remark}[theo]{Remark}
\newenvironment{rem}[1]{
    \begin{remark}#1}{
    \xqed{\blacklozenge}\end{remark}
}
\theoremstyle{remark}
\newtheorem{example}[theo]{Example}
\newenvironment{expl}[1]{
    \begin{example}#1}{
    \xqed{\lozenge}\end{example}
}
\theoremstyle{remark}
\newtheorem{observation}[theo]{Observation}
\newenvironment{obs}[1]{
    \begin{observation}#1}{
    \xqed{\blacklozenge}\end{observation}
}
\newcommand{\xqed}[1]{
    \leavevmode\unskip\penalty9999 \hbox{}\nobreak\hfill
    \quad\hbox{\ensuremath{#1}}}
\def\floor (#1) at (#2,#3); {
    \node[draw,ellipse, minimum width=1cm, minimum height = 0.6 cm] (#1) at (#2,#3) {$\bullet$} ;
}
\def\ufloor (#1) at (#2,#3) (#4); {
    \node[draw,ellipse, minimum width=1cm, minimum height = 0.6 cm] (#1) at (#2,#3) {\scriptsize #4} ;
}
\def\marked (#1) to (#2) pos=#3 in=#4 out=#5; {
   \draw (#1) to[out=#5,in=#4] node[pos=#3] {$\bullet$} (#2) ;
}
\def\leftedge (#1) to (#2) pos=#3 in=#4 out=#5 w=#6; {
   \draw (#1) to[out=#5,in=#4] node[midway,left] {$#6$} (#2) ;
}
\def\leftmarked (#1) to (#2) pos=#3 in=#4 out=#5 w=#6; {
   \draw (#1) to[out=#5,in=#4] node[pos=#3] {$\bullet$} node[midway,left] {$#6$} (#2) ;
}
\def\wlmarked (#1) to (#2) pos=#3 in=#4 out=#5 w=#6; {
   \draw (#1) to[out=#5,in=#4] node[pos=#3] {$\bullet$} node[midway,left] {$#6$} (#2) ;
}
\def\rightedge (#1) to (#2) pos=#3 in=#4 out=#5 w=#6; {
   \draw (#1) to[out=#5,in=#4] node[midway,right] {$#6$} (#2) ;
}
\def\rightmarked (#1) to (#2) pos=#3 in=#4 out=#5 w=#6; {
   \draw (#1) to[out=#5,in=#4] node[pos=#3] {$\bullet$} node[midway,right] {$#6$} (#2) ;
}
\def\doublemarked (#1) to (#2) pos=#3 in=#4 out=#5; {
   \draw (#1) to[out=#5,in=#4] node[pos=#3] {$\bullet$} (#2) ;
}
\def\l@subsection{\@tocline{2}{0pt}{2.5pc}{5pc}{}}
\renewcommand{\l@section}{\@tocline{1}{0pt}{10pt}{1pc}{\bfseries}}
\title{Correlated double ramification cycle formula} 
\author{Thomas Blomme, Francesca Carocci}
\address{Université de Neuchâtel, rue \'Emile Argan 11, Neuchâtel 2000, Suisse}
\email{thomas.blomme@unine.ch}
\address{Università di Roma Tor Vergata, Via della Ricerca Scientifica 1, Roma 00133, Italy}
\email{carocci@mat.uniroma2.it}
\subjclass[2020]{14N35; 14N10; 14J26}
\keywords{Enumerative geometry, double ramification cycle, Gromov-Witten invariants, Correlated invariants}
\begin{document}

\begin{abstract}
We prove a refinement of Pixton's formula for the double ramification cycle with target variety which takes into account the correlator of a rubber map introduced in \cite{blomme2024correlated}. To do so, we need to: reinterpret the correlator in terms of (logarithmic) roots of the trivial line bundle;  refine the natural stratification of the boundary of  moduli of maps to keep track of how torsion line bundles on the target variety pull-back. 
We apply the refined DR cycle formula to compute 0-correlated invariants for genus g curve with points and a $\lambda$-class insertion. 
\end{abstract}
\maketitle

\setcounter{tocdepth}{2}
\tableofcontents


\section{Introduction}

    \subsection{Setting, DR with target, correlation and goal}

Let $X$ be a smooth projective variety and let $L\in\Pic^0(X).$ 
We consider the $\PP^1$-bundle $Y=\PP_X(\O\oplus L)$, together with its boundary divisor $D^\pm=D^+ + D^-=\PP_X(L)\oplus\PP_X(\O)$.

\subsubsection{Relative Gromov-Witten invariants}
We denote by $\M(Y|D^\pm)=\M_{g,m}(Y|D^\pm,\beta,\bfw)$ the moduli of stable log maps $F:(C,\mathbf{p},\mathbf{q})\to Y$ with the following discrete data:
\begin{itemize}
    \item  $g,n,m$ are positive integers;
    \item  $\beta\in H_2(X,\ZZ)$ is an effective homology class;
    \item $\bfw=(a_1,\dots,a_n)$ is a $n$-tuple of non-zero integers with $\sum a_i=0$. 
\end{itemize}

The space $Y$ is endowed with a $\CC^*$-action scaling along the fibers. Let $R\M(Y|D^\pm)$ denote the moduli space of \textit{rubber logarithmic maps} to $(Y,D^\pm)$ with same combinatorial data as before (see for example \cite{janda2020double,marcuswiselog} for definitions). 
There is a natural morphism $\M(Y|D^\pm)\to R\M(Y|D^\pm)$ whose fibers are the lifts of a rubber map to a  relative one. The computation of certain relative Gromov-Witten invariants may be carried out on the rubber space. This is the case of the invariant with point insertions when $m=1$, computed for in \cite{blomme2024correlated}. 

\subsubsection{DR-cycle and Pixton's formula} The rubber space comes with a natural forgetful map to the moduli space of stable maps $\M_{g,n+m}(X,\beta)$, shortly denoted by $\M(X)$:
$$\epsilon\colon R\M(Y|D^\pm) \to \M(X).$$
The push-forward of the virtual fundamental class by $\epsilon$ is  the so-called \textit{double ramification cycle with target $X$}, as defined in \cite{janda2020double}:
\[\DR_{g,\bfw}(X,\beta,L)=\epsilon_*\vir{R\M(Y|D^\pm)}.\]
When $\beta$ and $L$ are clear, we shortly denote the DR-cycle by $\DR_{g,\bfw}(X)$.
Double ramification cycle have a recent yet extremely rich history, and a vast literature on the topic is available  \cite{janda2017DRcurves,janda2020double,bae2023pixton,holmes2021extending,holmes2025logDR}.
In \cite[Section 0]{janda2020double}, the authors provide ax explicit formula,  known as the Pixton formula, expressing the DR-cycle as a combination of \emph{decorated strata classes}, generalizing the previously known case of target a point\cite{janda2017DRcurves}.

In the more modern approach to the study of the DR classes, both results can be recovered as an application of the 
 \textit{universal DR operational class} introduced and studied in  \cite{bae2023pixton}. In [ibid.], the authors define an operational class in the  operational Chow of the universal Picard stack over the moduli space of pre-stable curves:
\[\DR^{\mathrm{op}}_{g,m,\bfw}\in \operatorname{CH}^g_{\mathrm{op}}(\mathfrak Pic_{g,m+n}),\]
which also admits a Pixton's formula expression. Even if the proof requires the results of \cite{janda2020double}, after the  facts, all the previous instances of a DR formula can be recovered as an application of the \textit{universal DR}.

In the case of DR with target variety, the relation between universal DR goes as follows. Given $\L$ a line bundle on a smooth projective variety $X$ there is a natural morphism:

\[
    \Phi:\M_{g,m+n}(X,\beta)\to\mathfrak{Pic}_{g,n+m},\qquad
    (f\colon C\to X)\mapsto f^*\L.
\]
Then:
\[\DR_{g,\bfw,m}(X,\beta,L)=\Phi^*(\DR^{\mathrm{op}}_{g,\bfw,m})\cap \vir{\M_{g,n+m}(X,\beta)}.\]

\subsubsection{Towards a Correlated  DR cycle}

Assume that $\delta$ divides the g.c.d of the tangency orders $a_i$. 
We can then define a map $\kappa^{\delta}\colon \M(Y|D^\pm)\to \Alb(X)$ as follows: let $f:C\to X$ be stable map coming from a log stable map $F:C\to Y$, we define the \textit{correlator} to be
\[\kappa^\delta(F:C\to Y) = \sum_i \frac{a_i}{\delta}a_X\circ f(p_i),\]
where $a_X\colon X\to\Alb(X)$ is the Albanese map and the sum is performed in the Albanese variety of $X$. 
In \cite{blomme2024correlated}, it is proven that 
$\kappa^\delta$  takes values in the $\Alb(X)[\delta]$-torsor $T^\delta(L,\beta)$ of $\delta$-roots of $\varphi_{\beta}(L)$, a certain element in $\Alb(X)$ which only depends on $L$ and $\beta$. 
Throughout this paper, $-[\delta]$ means $\delta$-torsion. 

\begin{obs}
When the domain is a smooth curve $C$ we can reinterpret $\kappa^{\delta}$ as follows. Let $f_*\colon\Pic^0(C)\xrightarrow{\cong}\Alb(C)\to \Alb(X) $ be the composition of the inverse of the Abel-Jacobi map and the dual to the pull-back. Then $\kappa^\delta(F:C\to Y)=f_*\O(\sum\frac{a_i}{\delta}p_i)$, where the line bundle 
$\O(\sum\frac{a_i}{\delta}p_i)$ is a $\delta$-root of $f^*L$ which carries the \textit{correlation} information.
\end{obs}

As the correlator takes discrete values, the \textit{correlating map} $\kappa^\delta$ thus splits the moduli space $\M(Y|D^\pm)$ into disjoint (possibly still disconnected) components $\M^\theta(Y|D^\pm)$ indexed by the correlators $\theta\in T^\delta(L,\beta)$. In particular, the virtual class splits as a sum of  the so-called \textit{correlated classes} $\vir{\M^\theta(Y|D^\pm)}$. We define \textit{correlated GW invariants} by integrating cohomology classes on the correlated virtual classes. These invariants refine the usual Gromov-Witten theory on $\M(Y|D^\pm)$. We refer to \cite[Section 1.6]{blomme2024correlated} for details motivating the study of correlated GW invariants.

Since the image of $\kappa^\delta$ does not depend from the choice of the rational  section of $f^*L$ with given zeros and poles, the correlating map factors through the moduli space of rubber maps, which thus also splits naturally, as well as its virtual class. This induces a splitting of the DR cycle with target $X$:
$$\DR_{g,\bfw}(X) = \sum\epsilon_*\vir{R\M^\theta(Y|D^\pm)}.$$
The terms appearing in the right hand-side are  called \textit{correlated double ramification cycles}, denoted by $\DR_{g,\bfw}^\theta(X,\beta,L)$, or in a shorter way $\DR_{g,\bfw}^\theta(X)$ when the discrete data is clear.

\medskip

The main focus of this paper is to get a formula for the $\DR_{g,\bfw}^\theta(X).$ More precisely, we provide a Pixton type formula for $\DR_{g,\bfw}^0(X)$ in the case where $L=\O$. Deformation invariance, proven in \cite{blomme2024correlated}, ensures that the formula also applies for \emph{special correlators} ( See \cite[Theorem~5.3]{blomme2024correlated}) in the case where $L$ is non-trivial. 

When the mapping class group of $X$ acts transitively over the torsion elements of its Albanese variety, $\DR_{g,\bfw}^0(X)$ is enough to compute all $\DR_{g,\bfw}^\theta(X)$ through \textit{unrefinement relations} \cite[Lemma~3.12]{blomme2024correlated}.
Such an assumption is true when $X$ is a curve. This fact is already used in \cite{blomme2024correlated} to compute punctual invariants in the case where $X$ is an elliptic curve.

\subsection{Strategy}\label{sec:Strategy}

In order to get a Pixton formula for the correlated DR cycles $ \DR_{g,\bfw}^\theta(X):=\epsilon_*\vir{R\M^\theta(Y|D^\pm)},$ one could attempt 
carefully going along the proof of \cite{janda2020double} using the correlated decomposition formula \cite[Theorem 4.7]{blomme2024correlated} instead of the usual decomposition formula from \cite{li2002degeneration} . Instead, we prefer to use a different approach, relying on the application of the universal DR formula.


\medskip

The above definition of correlated DR cycles is obtained splitting the moduli space of rubber maps $R\M(Y|D^\pm)$ using the discrete values of $\kappa^\delta$. In order to obtain a Pixton type formula for the correlated components, the naive idea would be to use the map $\kappa^\delta$, whose definition makes sense on all $\M(X)$ (i.e. not only on the image of $\epsilon)$ to split the latter in open and closed components, and then apply the universal DR formula to each of them.
However, this does not quite work as the $\kappa^\delta$ defined above in terms of images of the marked point $q_i$ through $f$ has no reason to take discrete values: away from the DR locus there is no relation between $\kappa^{\delta}(f)$ and the $\delta$-roots of $f_*L$. 

\medskip

To remedy this problem, we consider the moduli of \textit{spin stable maps} $\M(X)(1/\delta)$, adding the data of a $\delta$-root $\T$ of $f^*L$ to the stable map to $X$. Constructing such space requires some care since the universal curve over $\M(X)$ does not only have smooth fibers. The problem has been solved using logarithmic geometry methods by \cite{holmes2023root}; we review their approach and how to apply it to our situation in Section~\ref{sec-various-moduli-of-curves-including-spin}.
 In the end, the space $\M(X)(1/\delta)$ comes endowed with a universal root $\T$; to be precise, this will be a root of $f^*L$ thought as a \emph{logarithmic line bundle} in the sense of \cite{molchowiselog}. 

\medskip

 For the moment, in order to give an idea of our approach, let's ignore the difficulties that arise at the boundary, i.e. when the curve becomes nodal.
\medskip

The key point is that it is possible to define a \textit{correlator} $\theta$ for a spin stable map as the image of the $\delta$-root $\T$ by the push-forward map
\[f_*\colon \Alb(C)\simeq\Pic^0(C)\longrightarrow\Alb(X).\]
The relation between this definition and the one from \cite{blomme2024correlated} presented above is explained in Section \ref{sec-link-correlators-line-bundle-stable-maps}, where, said imprecisely, we prove that the two definitions coincide over DR loci.

Using the new definition of the correlator map, we show that the moduli of spin stable maps splits into  a union of open and closed distinct (possibly still disconnected) components. Again, extending the definition over the locus of non smooth curves requires work and, again, the solution is found within logarithmic geometry.

\medskip

We \emph{now}  want to apply the universal DR-formula on each component to get a correlated-refined class for a suitable compactification of  the DR locus where $\T=\O(\sum\frac{a_i}{\delta}p_i)$ and finally push forward to to $\M(X)$.

\begin{rem}
The correlation refinement can also be seen as coming from the following observation. Let us keep working on the locus of smooth curves to avoid technicalities related to the issues mentioned above. The DR-locus consists of stable maps $f\colon C\to X$ such that $f^*L=\O(\sum a_i p_i)$. Now, two line bundles agree if and only if their torsors of $\delta$-roots also agree. However, both torsors carry natural trivializing sections:
\begin{itemize}
    \item $\O(\sum\frac{a_i}{\delta}p_i)$ in the case of $\O(\sum a_ip_i)$;
    \item $f^*L_0$ where $L_0$ is a $\delta$-root of $L$ on $X$ in the case of $f^*L$.
\end{itemize}
These sections may not agree, and their difference is some torsion line bundle. Any continuous discrete function on the group of torsion line bundle, e.g. the order in the group, can be used to split the DR-locus. In our case, we use the \textit{Weil pairing} with pull-back of torsion line bundle from $X$, which is yet another way to incorporate the correlator information.

In a nutshell,
 \textit{correlation} is about comparing the natural sections of these torsors.
\end{rem}
Though the plan sounds easy enough , we need along the way to overcome some technical problems arising, as already mentioned above, when dealing with singular curves. We summarize these issues below.
\begin{itemize}[leftmargin=0.4cm]
    \item First, the degree of the projection from spin stable maps with fixed correlator to stable maps is not constant but depends  on the cardinality of
    the kernel $K$ of the pull-back map $f^*:\Pic^0(X)[\delta]\to\Pic^0(C)[\delta].$
The key observation is contained in Lemma \ref{lem-pull-back-open-closed} which proves that
$\ker(f^*)\subseteq \Pic^0(X)[\delta]$ is locally constant.  

We denote by $\M_K(X)$ the open and closed component  where $\ker(f^*)=K$ (Section~\ref{sec-refined-strat-overview}).
  Later in Section \ref{sec-correlator-torsion line bundle}, we discuss how this splitting in components relate to the one induced by the correlator map allowing us to compute the desired pushforward. 
 
    \item The ideas presented above are valid when the domain $C$ is smooth, but need to be extended to nodal curves, for which we encounter the following issues:
        \begin{itemize}[leftmargin=0.4cm,label=$\triangleright$]
            \item The group of multidegree $0$ line bundles $\Pic^{[0]}(C)$ is not an abelian variety anymore, so we need to make sense of $\Alb(C)$ and $f_*;$
            \item The number of $\delta$-torsion line bundles depends on the genus of the dual graph.
        \end{itemize}
\end{itemize}
After solving these  technical issue, we can  write down  a formula for the correlated DR  cycle.

\subsection{Sections content  and results}

We explain more carefully the content of the  various sections. Sections \ref{sec-abelian-stuff}, \ref{sec-log-picard-group}, \ref{sec-refined-stratification-via-coverings} and \ref{sec-various-moduli-of-curves-including-spin} are independent of each other and may be of seperate interest, besides their application to Section \ref{sec-correlated-DR-where-evth-comes-to-place}, where we prove the correlated DR-formula.

\subsubsection{Abelian varieties and Weil pairing} In Section \ref{sec-abelian-stuff}, we recall the construction of the Weil pairing for smooth abelian varieties and the definition of log-abelian varieties \cite{kajiwara2008logabelian2}. We then propose an extension of the Weil pairing to the world of log-abelian varieties in the particular case of log-abelian varieties with constant degeneration. We expect this definition to extend to the case of a general log-abelian variety.

\subsubsection{Log-Picard group} Section \ref{sec-log-picard-group} is devoted to  the log-Picard group of a log smooth curve, as introduced in \cite{molchowiselog}. It is proved in \cite{molchowiselog}, that  the degree 0 part of the log Picard of a log curve with constant degeneration is a log-torus in the sense of \cite{kajiwara2008logabelian2}. To prove it is a log-abelian variety, we show that it possesses a \textit{polarization}, which furthermore provides an isomorphism with its \textit{dual} by results of \cite{delignepair}. We also provide a description of the logarithmic Abel-Jacobi map, and prove that the logarithmic Picard group with is initial for morphism from log smooth curves to abelian varieties;  we later use these results to relate the definition of correlator of the present paper to the one from \cite{blomme2024correlated}.

\subsubsection{Splitting of $\M(X)$}

In Section \ref{sec-refined-stratification-via-coverings}, we prove that $\M(X)$ splits into disjoint components, depending on how the  kernel of the pull-back map $f^*:\Pic^0(X)[\delta]\to\Pic^{[0]}(C)[\delta]$  varies.
We denote by $\M_K(X)$ the open and closed component  where $\ker(f^*)=K$.

The virtual classes induced by the natural obstruction  theory on $\M_K(X)$ are related, up to inclusion-exclusion combinations, to the classes  of moduli spaces of stable maps to certain coverings of $X$, which we call $H$-coverings, introduced in Section \ref{sec-H-covering}. This enables computations with these classes, proving they are \textit{tautological}.

The moduli space $\M(X)$ inherits a natural stratification from the forgetful map to the Artin stack of nodal curves, where strata are indexed by dual graphs. This stratification is naturally refined by keeping track of how the curve class splits; the strata are then indexed by the so-called $X$-valued stable graph, where the graphs a decorated by an homology class $\beta_v\in H_2(X,\ZZ)$ attached to each vertex.
The stratification is \textit{recursive} since each strata is related to the product of moduli spaces indexed by vertices of the dual graph. 

This recursive structure and natural stratification  interact with the above splitting in open and closed components indexed by the  restriction-kernel $K.$ The description of of the boundary  of $\M_K(X)$ occupies the remainder of the section. More precisely, restricting to $\M_K(X)$, the boundary strata are indexed by $(\Gamma,\widetilde{K},\varphi)$ where
    \begin{itemize}
        \item $\Gamma$ is a $X$-valued stable graph,
        \item $\widetilde{K}$ is a subgroup of $\Pic^0(X)[\delta]$ containing $K$,
        \item $\varphi\colon\widetilde{K}\otimes\rmH_1(\Gamma,\ZZ_\delta)\to\mu_\delta$ is a pairing with left kernel $K$: it induces $\widetilde{K}/K\hookrightarrow \rmH^1(\Gamma,\mu_\delta)$.
    \end{itemize}
Concretely, $\widetilde{K}$ is the subgroup of line bundles that pull-back trivially componentwise, and the map $\varphi$ determines the global pull-back on a curve in the corresponding stratum.  Considering $\M_K(X)$ with the logarithmic structure induced by the underlying pre-stable curve, we have that 
 the above discrete data also indexes the cones of the cone stack $\Sigma_K(X)$ which we call the tropicalization of $\M_K(X)$.

\subsubsection{Moduli of spin stable maps} In Section \ref{sec-various-moduli-of-curves-including-spin}, following \cite{holmes2023root}, we construct the moduli $\M(X)(\frac{1}{\delta})$ of \textit{spin} stable maps to $X$ by adjoining the data of a (log-)$\delta$-root of the trivial bundle $\O$. The necessity to consider log-roots arises for several reasons, one of them being to always have $\delta^{2g}$ $\delta$-roots to a given line bundle, even when going to the boundary.

From the projection to $\M(X)$, the moduli of spin stable maps inherits a stratification: restricting to the component $\M_K(X)$, the cones of the tropicalization (intended in the sense  of the previous section) $\Sigma_K(X)(\frac{1}{\delta})$ of $\M_K(X)(\frac{1}{\delta})$ are indexed by $(\Gamma,\widetilde{K},\varphi,\mathtt{D})$, where $(\Gamma,\widetilde{K},\varphi)$ is as above and $\mathtt{D}$ is a linear equivalence class of $\delta$-torsion tropical divisor on $\widetilde{\Gamma}$, the graph obtained subdividing any edge of $\Gamma$ into $\delta$ parts of equal length.
Furthermore, the construction from \cite{holmes2023root} exhibits a particular choice $D$ of tropical divisor in the equivalence class $\mathtt{D}$.

We will use that,  via the correlator map, we can see that this moduli space is a union of open and closed components indexed by the correlator. 
Considering each of these components with the induced logarithmic structure, we get a cone stack, the tropicalization of a component with fixed correlator, 
whose cones are indexed by a  refined data keeping track of the correlator information. 

\subsubsection{The correlated DR formula and its refinement} In Section \ref{sec-correlated-DR-where-evth-comes-to-place}, all previous ingredients come together to prove the correlated DR formula, stated in Theorem \ref{theo-correlated-DR-formula}. The section is organized as follows:
    \begin{itemize}[leftmargin=0.5cm]
        \item First, we recall the Pixton formulas for DR-cycle with target \cite{janda2020double} and universal DR \cite{bae2023pixton}.
        \item  We define correlators for torsion log-line bundles (extending the definition for smooth curves mentioned in Section~\ref{sec:Strategy}). We use the latter to split the moduli space of spin stable maps. We then investigate the interaction between the correlation refinement and its natural boundary stratification.
        \item We relate the definition of correlators for torsion log-line bundles and the definition of correlators for stable log map to $Y$ from \cite{blomme2024correlated} via the log Abel-Jacobi map.
        \item Finally, we apply the universal DR formula to components of the moduli space of spin stable maps with fixed correlator and then push-forward to $\M(X)$ to obtain the correlated DR-formula.
    \end{itemize}

More precisely, we have the following splitting:
$$\M(X)(\frac{1}{\delta}) = \bigsqcup_\theta \M^\theta(X)(\frac{1}{\delta}),$$
which we can also restrict to the component $\M_K(X)(\frac{1}{\delta})$. We then consider the $0$-correlator part. Due to the interaction between the correlation and the refined stratification induced by restriction-kernel, the tropicalization $\Sigma_K^0(X)(\frac{1}{\delta})$ of $\M_K^0(X)(\frac{1}{\delta})$ has cones indexed by $(\Gamma,\widetilde{K},\varphi,\mathtt{D})$ where $\mathtt{D}$ lies in the right kernel of $\varphi$. This means every strata (i.e. any choice of $\mathtt{D}$) does not show up on the boundary of $\M_K^0(X)(\frac{1}{\delta})$.

It  is  convenient to rephrase the  formula in terms of piece-wise polynomials. We refer the reader to \cite[Section~6]{holmes2025logDR} and references therein for results  on piece-wise polynomial on  Artin fans  and a vocabulary  translating these into decoarted strata classes. We have a map $\Sigma_K^0(X)(\frac{1}{\delta})\to\Sigma(\frac{1}{\delta})$, so that we can pull-back piecewise polynomial functions on $\Sigma(\frac{1}{\delta})$. We consider the following ones, which are a  slight variation  of those  already  considered  in \cite{holmes2023root,holmes2025logDR} to prove Pixton's  formula for Spin and Log DR respectively.:
\begin{enumerate}[leftmargin=0.5cm]
    \item $\mathfrak{P}$ defined on the cone indexed by $(\Gamma,\widetilde{K},\varphi,\mathtt{D})$ by the formula
    $$\mathfrak{P}|_{\widetilde{\sigma}_{\Gamma,\widetilde{K},\varphi,\mathtt{D}}} = \left.\sum_{w\in W_{\widetilde{\Gamma},r}(\bfw/\delta)}\frac{1}{r^{b_1(\Gamma)}}\prod_{e=(h,h')} e^{\frac{w(h)w(h')}{2}l_e}\right|_{r=0},$$
    where the sum is over weightings with divergence $D$, the preferred divisor in the class $\mathtt{D}$, and slopes at infinity $\bfw/\delta$. The expression is a polynomial in $r$ for $r$ big enough, the $|_{r=0}$ means we take the constant coefficient;
    
    \item $\mathfrak{L}$ defined on the cone indexed by $(\Gamma,\widetilde{K},\varphi,\mathtt{D})$ by the formula
    $$\mathfrak{L}|_{\widetilde{\sigma}_{\Gamma,\widetilde{K},\varphi,\mathtt{D}}} = \frac{1}{\delta^2}\sum_{v\in\sfV(\widetilde{\Gamma})}\alpha_D(v)\cdot\delta D(v),$$
    where $\alpha_D$ is piecewise linear function on $\widetilde{\Gamma}$ with divisor $\delta D$, so that $\delta D(v)$ is actually the degree of $\O(\alpha_D)$ at $v$.
\end{enumerate}
We then consider the piecewise polynomial function $e^{-\frac{1}{2}\mathfrak{L}}\cdot\mathfrak{P}$. Its expression may seem to depend on the choice of representatives $D$ as both factor do, but \cite[Lemma 6.4]{holmes2023root} proves it is actually not the case, and it only depends on $\mathtt{D}$.

We have a map of fans $\Sigma_K^0(X)(\frac{1}{\delta})\to\Sigma_K(X)$ forgetting the tropical torsion information. Thus, we can push forward  piecewise polynomial functions once we know  the degree of the projection on each stratum. The computation of this degree carried in Lemma \ref{lem-degree-proj-corr0}; this is the main reason for introducing the new refined stratification on $\M(X)$. We define a piecewise polynomial function on the cone $\sigma_{\Gamma,\widetilde{K},\varphi}$ of $\Sigma_K(X)$ summing over the cones $\sigma_{\Gamma,\widetilde{K},\varphi,\mathtt{D}}$ of $\Sigma_K^0(X)(\frac{1}{\delta})$ for all $\mathtt{D}$ in the right kernel of $\varphi$. We get
$$\mathfrak{DR}_K|_{\sigma_{\Gamma,\widetilde{K},\varphi}} = \delta^{2g-2q(X)}\frac{|\widetilde{K}|}{\delta^{b_1(\Gamma)}}\sum_{\mathtt{D}\in\TTT} e^{-\frac{1}{2}\mathfrak{L}|_{\widetilde{\sigma}(\Gamma,\widetilde{K},\varphi,\mathtt{D})}}\mathfrak{P}|_{\widetilde{\sigma}(\Gamma,\widetilde{K},\varphi,\mathtt{D})}.$$
The correlated DR-cycle is then obtained summing over the various components $\M_K(X)$.

\begin{theom}\textbf{\ref{theo-correlated-DR-formula}}
As a strict piecewise polynomial function, the correlator $0$ part of the DR-cycle has the following expression:
    $$\DR_{g,\bfw}^0(X,\beta) = \sum_{K} e^{-\frac{1}{2}\sum(\frac{a_i}{\delta})^2\psi_i}\mathfrak{DR}_K,$$
    where each summand yields a class in $\M_K(X)$, and we take the degree $g$ part.
\end{theom}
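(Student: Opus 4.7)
The plan is to lift the DR-cycle computation from $\M(X)$ to the moduli of spin stable maps $\M(X)(\frac{1}{\delta})$, apply the universal DR formula of \cite{bae2023pixton} there, restrict to the $0$-correlator component, and finally push forward to $\M(X)$. Since $L=\O$, the DR-condition $\O(\sum a_i p_i)\simeq \O$ admits a universal $\delta$-fold factorisation: it holds precisely when the tautological log $\delta$-root $\T$ of the trivial line bundle coincides with $\O(\sum\frac{a_i}{\delta}p_i)$, up to torsion data that is tracked by the refined stratification.

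First, I would set up the projection
$$\pi_K\colon\M_K^0(X)(\tfrac{1}{\delta})\to\M_K(X)\subset\M(X),$$
and apply the universal DR operational class pulled back along the tautological morphism $(f,\T)\mapsto\T\otimes\O(-\sum\frac{a_i}{\delta}p_i)$ into $\mathfrak{Pic}$. This yields a Pixton-type formula on $\M_K^0(X)(\frac{1}{\delta})$, which I would then unpack stratum-by-stratum on the cones $\widetilde{\sigma}_{\Gamma,\widetilde{K},\varphi,\mathtt{D}}$ of $\Sigma_K^0(X)(\frac{1}{\delta})$. The marked-point psi-class contributions, with tangency profile $\bfw/\delta$, produce the prefactor $e^{-\frac{1}{2}\sum(\frac{a_i}{\delta})^2\psi_i}$; the sum over weightings of divergence $D$ on $\widetilde{\Gamma}$ produces the piecewise polynomial $\mathfrak{P}$; and the correction that transforms $\T$ back into $\O(\sum\frac{a_i}{\delta}p_i)$ contributes the exponential $e^{-\frac{1}{2}\mathfrak{L}}$. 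That the combined product $e^{-\frac{1}{2}\mathfrak{L}}\cdot\mathfrak{P}$ depends only on $\mathtt{D}$ rather than on the chosen representative $D$ is already established by \cite[Lemma~6.4]{holmes2023root}.

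Next, I would push forward along $\pi_K$. Over each cone $\sigma_{\Gamma,\widetilde{K},\varphi}$ of $\Sigma_K(X)$, the fibre of $\pi_K$ decomposes along the choices of $\mathtt{D}$ in the right kernel of $\varphi$; this restriction is precisely the constraint that the $0$-correlator condition imposes on the boundary, and explains why the cones of $\Sigma_K^0(X)(\frac{1}{\delta})$ are indexed as they are. On each such piece, the degree of the projection is computed in Lemma~\ref{lem-degree-proj-corr0} to equal $\delta^{2g-2q(X)}|\widetilde{K}|/\delta^{b_1(\Gamma)}$. Summing over $\mathtt{D}$ in the right kernel of $\varphi$ produces exactly the piecewise polynomial $\mathfrak{DR}_K$, and since $\M(X)=\bigsqcup_K\M_K(X)$, summing the pushforwards over $K$ assembles the full correlated DR-cycle in the form claimed.

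The main obstacle is this pushforward step: it requires a precise understanding of how the $0$-correlator condition cuts out the subset of boundary strata on which $\mathtt{D}$ lies in the right kernel of $\varphi$, and how the universal root-count behaves under this restriction. This is the payoff of the earlier sections. The refined stratification of Section~\ref{sec-refined-stratification-via-coverings} organises the $\delta^{2g}$ log roots into pieces indexed by $(\widetilde{K},\varphi)$, so that the resulting subspaces are open-and-closed and amenable to inclusion-exclusion. The log-Picard interpretation of correlators developed in Section~\ref{sec-log-picard-group} identifies the $0$-correlator locus as the one where $\mathtt{D}$ lies in the right kernel via the Weil pairing, making the pushforward a finite étale cover of computable degree on each stratum. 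Once these ingredients are in place, the formula follows by assembling the stratum-wise Pixton expressions and using the compatibility between universal DR and pullback from $\mathfrak{Pic}$.
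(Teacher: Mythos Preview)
Your proposal is correct and follows essentially the same approach as the paper: split $\M(X)$ into the components $\M_K(X)$, apply the universal DR formula on the spin moduli $\M_K^0(X)(\frac{1}{\delta})$, use Corollary~\ref{coro-tropicalization-corr0-strata} to identify the surviving strata as those with $\mathtt{D}$ in the right kernel of $\varphi$, and push forward using the degree computed in Lemma~\ref{lem-degree-proj-corr0}. The paper's proof is slightly terser but invokes exactly the same ingredients in the same order.
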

    
\subsubsection{Applications to concrete computations} Using the correlated DR-formula we are able to recover the computation of \cite{blomme2024correlated} for correlated invariants with point constraints, and we are able to include a $\lambda$-class as well.

Let $X=E$ be an elliptic curve and $Y=E\times\PP^1$, $\beta=d[E]$, $g=1$, $m=1$ interior marked point and $n$ log-points with associated weights $a_i$. We then also set $a_0=0$. Let $\delta$ be common divisor of the weights $a_i$ to consider the level $\delta$ refinement. Insertions are chosen to be points for every point except for the first log point, whose weight is $a_1$. The  correlator $0$ counterpart for level $\delta$ is denoted by:
$$N^0_d(\bfw) = \gen{\pt_0,1_{a_1},\pt_{a_2},\dots,\pt_{a_n}}^{Y/D^\pm,0}_{1,d[E]}.$$

\begin{theom}\textbf{\ref{theo-computation-points}}
We have the following identity
$$N^0_d(\bfw) = \left(\frac{a_1}{\delta}\right)^2d^{n-1}\sum_{\omega|\delta}J_2(\omega)\sigma\left(\frac{d}{\omega}\right) = a_1^2\cdot d^{n-1}\cdot \sum_{l|d}\frac{d}{l}\cdot \frac{\gcd(l,\delta)^2}{\delta^2}.$$
\end{theom}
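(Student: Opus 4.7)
The plan is to apply the correlated DR formula of Theorem~\ref{theo-correlated-DR-formula} on $\M_{1,n+1}(E,d[E])$, reduce to the main stratum using the explicit geometry of degree-$d$ covers of an elliptic curve, and identify the resulting combinatorial sum with both forms of the stated formula.

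\emph{Reduction to the main stratum.} First I will argue that, by Riemann--Hurwitz, any stable map $f\colon C\to E$ with $g(C)=1$ and $f_\ast[C]=d[E]$ must be étale of degree $d$, so $C$ is a smooth genus one curve and $f$ is the composition of an isogeny and a translation. Using the fact (as in \cite{blomme2024correlated}) that rational bubbles cannot carry non-zero tangency orders $a_i$, only the stratum indexed by the trivial $E$-valued stable graph $\Gamma_0$ contributes to the integral. On this stratum $b_1(\Gamma_0)=0$, the tropical divisor class $\mathtt D$ is forced to be trivial, $\mathfrak L=0$ and $\mathfrak P=1$; combined with the prefactor $\delta^{2g-2q(E)}=1$ this gives $\mathfrak{DR}_K|_{\Gamma_0}=|\widetilde K|=|K|$.

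\emph{Classification of isogenies and computation of $K$.} I will parametrize étale degree $d$ maps to $E$, up to translation, by the $\sigma(d)$ index-$d$ sublattices of $H_1(E,\ZZ)\cong\ZZ^2$, which I will re-group by divisors $l\mid d$ with multiplicity $d/l$. For the isogeny $\phi$ of type $l$, the pull-back $\phi^\ast\colon \Pic^0(E)[\delta]\to\Pic^{[0]}(C)[\delta]$ is Pontryagin-dual on $\delta$-torsion to the inclusion $\ker\phi\hookrightarrow E[\delta]$, and a direct computation gives $|K|=\gcd(l,\delta)^2$. The sum over restriction-kernels in Theorem~\ref{theo-correlated-DR-formula} then reorganizes into a sum over these isogeny types, contributing a weight $\gcd(l,\delta)^2/\delta^2$ to each.

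\emph{Integration against insertions.} On the main stratum, the moduli is the product of $E$ (parametrizing the translation $\tau$) with the configuration space of marked points on $C$, quotiented by $C\simeq\Aut^0(C)$. The interior insertion $\ev_0^\ast(\pt)$ rigidifies $\tau$ uniquely, while each constraint $\ev_i^\ast(\pt)$ for $i\ge 2$ is pulled back through the degree-$d$ isogeny and contributes a factor $d$, yielding $d^{n-1}$ in total. The remaining one-dimensional fiber is integrated against the degree-one term of $e^{-\frac12\sum(a_i/\delta)^2\psi_i}$; dilaton/string-type manipulations on the rigidified moduli, combining the $\psi_i$ contributions at the constrained marked points with the coefficient $-\tfrac12(a_1/\delta)^2\psi_1$, produce the clean coefficient $(a_1/\delta)^2$ on the free marked point $q_1$. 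Assembling everything yields
$$N^0_d(\bfw)=\left(\frac{a_1}{\delta}\right)^2 d^{n-1}\sum_{l\mid d}\frac{d}{l}\cdot\frac{\gcd(l,\delta)^2}{\delta^2},$$
and the Jordan-totient form follows from the standard Dirichlet convolution identity $\sum_{l\mid d}\frac{d}{l}\gcd(l,\delta)^2=\sum_{\omega\mid\delta}J_2(\omega)\sigma(d/\omega)$.

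\emph{Main obstacle.} The hardest step will be the vanishing of boundary-strata contributions after the correlated refinement: on any non-trivial $E$-valued stable graph, I must verify that either the tangency conditions on bubbled components or the refined tropical divisor data $\mathtt D$ (restricted to lie in the right kernel of $\varphi$) forces the integrand to vanish. A secondary subtlety is the sign-and-factor bookkeeping in the $\psi$-expansion, identifying the half-integer coefficients in the exponential with the $\psi$-integrals over the constrained marked points, so that the clean prefactor $(a_1/\delta)^2$ emerges rather than $-\tfrac12(a_1/\delta)^2$.
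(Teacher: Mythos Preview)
Your reduction to the trivial graph $\Gamma_0$ is incorrect, and this is not a technical obstacle that can be overcome---the boundary strata genuinely contribute. Riemann--Hurwitz only constrains the non-contracted part of a stable map; the domain can still acquire degree-$0$ rational tails carrying marked points. The assertion that ``rational bubbles cannot carry non-zero tangency orders $a_i$'' is a misreading: on $\M_{1,n+1}(E,d)$ the $a_i$ are just numerical parameters of the Pixton polynomial, not tangency conditions constraining the geometry. The paper's own computation of the uncorrelated invariant (the Proposition immediately preceding the theorem) makes this explicit: besides $\Gamma_{\mathrm{sm}}$, each two-vertex tree $\Gamma_i$ (a genus-$1$ vertex plus a contracted rational tail carrying markings $1$ and $i$) contributes $-\tfrac12(a_1+a_i)^2\,\sigma(d)d^{n-1}$, and only after summing \emph{all} of these against the $\psi$-contributions does one obtain the clean coefficient $a_1^2$. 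Your $\psi$-class integration alone would produce $\tfrac12(na_1^2+\sum_{i\ge 2}a_i^2)$, not $a_1^2$.

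The correct simplification is weaker than what you claim but still decisive: all graphs meeting the constraints are \emph{trees} (positive-degree vertices must have genus~$1$; with point insertions a degree-$0$ vertex can carry at most one constrained marking). Since $b_1(\Gamma)=0$ there is no monodromy data ($\widetilde K=K$, $\varphi$ trivial, $\TTT=\{0\}$), so on every contributing cone $\mathfrak{DR}_K = |K|\cdot\mathfrak P_{\bfw/\delta}$, i.e.\ the correlated class restricted to $\M_K(E)$ is $|K|$ times the ordinary DR class for weights $\bfw/\delta$. The paper then avoids any direct isogeny-by-isogeny analysis: writing $|K|=\sum_{\L\in K}1$ and swapping sums converts $\sum_K|K|(\cdots)|_{\M_K}$ into $\sum_{\L}(\cdots)|_{\M^{\langle\L\rangle}}$, and each term is computed on the $\langle\L\rangle$-covering $E_{\langle\L\rangle}$ via the already-known uncorrelated formula. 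Grouping by $\omega=|\langle\L\rangle|$ gives the $J_2(\omega)\sigma(d/\omega)$ form directly. Your proposal to parametrize isogenies by a ``type~$l$'' with $|K|=\gcd(l,\delta)^2$ is not correct as stated (for a cyclic degree-$d$ isogeny the restriction kernel has order $\gcd(d,\delta)$, not its square); the expression $\sum_{l\mid d}\tfrac{d}{l}\gcd(l,\delta)^2$ arises in the paper only as an arithmetic rewriting of the $J_2$-sum, not from a geometric grouping of covers.
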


We now increase the genus from $1$ to $g$, and to match the virtual dimensions we insert an additional $\lambda$-class constraint $\lambda_{g-1}$, which is the $(g-1)$th-chern class of the Hodge bundle:
$$N^0_{g,d}(\bfw) = (-1)^{g-1}\gen{\lambda_{g-1};\pt_0,1_{a_1},\pt_{a_2},\dots,\pt_{a_n}}^{Y/D^\pm,0}_{1,d[E]}.$$

\begin{theom}\textbf{\ref{theo-computation-lambda}}
    We have the following identity:
    $$N^0_{g,d}(\bfw) = \frac{a_1^2}{a_1\cdots a_n}\left(\sum_{S\subset\{1,\cdots,n\}}(-1)^{|S|}a_S^{2g-2+n}\frac{(-1)^{n+g-1}}{(n+2g-2)!}\right)\cdot d^{n-1}\sum_{k|d}\left(\frac{d}{k}\right)^{2g-1}\frac{\gcd(k,\delta)^2}{\delta^2}.$$
\end{theom}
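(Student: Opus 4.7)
The plan is to apply the correlated DR-formula (Theorem~\ref{theo-correlated-DR-formula}) with $X=E$ and to push through the same reduction as in Theorem~\ref{theo-computation-points}, with the $\lambda_{g-1}$ insertion producing a Hodge integral over the genus-$g$ moduli now allowed by the increased genus.

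First, I would apply Theorem~\ref{theo-correlated-DR-formula} to $\DR^0_{g,\bfw}(E,d[E])$ and multiply by the point insertions at the interior marked point, at the log points of weights $a_2,\dots,a_n$, and by $\lambda_{g-1}$. Since $q(E)=1$, the overall normalization in $\mathfrak{DR}_K$ is $\delta^{2g-2}$. The point insertions rigidify the non-contracted components of $f:C\to E$: each such component must be a degree-$k$ cover of $E$ for some $k\mid d$ passing through the prescribed points, while the remaining moduli is carried by a genus-$g$ tail attached to this cover. The $\lambda_{g-1}$ factor kills, via the standard vanishing of $\lambda$-classes on boundary strata containing positive-genus cycles, all decorated strata $(\Gamma,\widetilde K,\varphi,\mathtt D)$ appearing in the Pixton sum except those supported on a single genus-$g$ vertex carrying the full class $d[E]$, with the rational tails attached to it.

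Second, I would split the surviving integral into an \emph{abelian factor} and a \emph{Hodge factor}. The abelian factor comes from the enumeration of degree-$d$ covers with the correlator-$0$ constraint: the sum over $(K,\widetilde K,\varphi)$ in $\mathfrak{DR}_K$, combined with the term $|\widetilde K|/\delta^{b_1(\Gamma)}$ and with the enumeration of multicovers of each degree-$k$ isogeny, reproduces $\sum_{k\mid d}(d/k)^{2g-1}\gcd(k,\delta)^2/\delta^2$; the exponent $2g-1$ generalizes the $(d/k)^1$ of Theorem~\ref{theo-computation-points} and matches the Hurwitz enumeration of simply-ramified degree-$d$ covers of $E$ by a genus-$g$ curve, which is exactly what the $\lambda_{g-1}$-localized contribution of the correlated DR cycle enumerates. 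The Hodge factor is obtained by expanding the exponential $e^{-\frac12\sum(a_i/\delta)^2\psi_i}$ and the polynomial $\mathfrak P$ of Theorem~\ref{theo-correlated-DR-formula}, and evaluating $\int_{\overline{M}_{g,n+1}}\lambda_{g-1}\prod_i\psi_i^{k_i}$ via the classical Hodge integral formulas. The standard $n$-fold finite-difference identity repackages the resulting multinomial sum over $k_1+\dots+k_n=2g-2+n$ into $\tfrac{1}{a_1\cdots a_n}\sum_S(-1)^{|S|}a_S^{2g-2+n}\tfrac{(-1)^{n+g-1}}{(n+2g-2)!}$; the extra prefactor $a_1^2$ reflects the weight of the distinguished log point at which the insertion is $1_{a_1}$ rather than $\pt_{a_1}$.

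The main obstacle will be the stratum-vanishing step: showing that only the expected strata contribute to $\mathfrak{DR}_K$ after capping with $\lambda_{g-1}$ and the $n$ point insertions. This combines a dimension count on the cover components with the vanishing of $\lambda$-classes on boundary strata with cycles, and is where the interaction between the refined tropicalization of Section~\ref{sec-various-moduli-of-curves-including-spin} and the usual Pixton boundary combinatorics must be tracked carefully. Once this reduction is in place, the product of the abelian and Hodge factors assembles into the claimed identity.
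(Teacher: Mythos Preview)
Your opening move is right: apply Theorem~\ref{theo-correlated-DR-formula}, and use the $\lambda_{g-1}$ vanishing together with the point insertions to kill all strata with $b_1(\Gamma)>0$, so that only compact-type graphs survive and hence $\widetilde K=K$, $\varphi$ is trivial, and there is no tropical torsion. So far so good.

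But from there your sketch diverges from what actually happens, and two of the steps do not go through as written. First, your geometric picture is off: you describe ``a degree-$k$ cover of $E$ \dots\ while the remaining moduli is carried by a genus-$g$ tail attached to this cover''. There is no separate contracted genus-$g$ tail. The $\lambda$-class splitting forces all the genus onto the unique vertex that also carries the full class $d[E]$; the only other vertices are genus-$0$ degree-$0$ rational tails. Second, your ``Hodge factor'' wants to reduce to integrals $\int_{\overline M_{g,n+1}}\lambda_{g-1}\prod\psi_i^{k_i}$, but the classes you have live on $\M_{g,n+1}(E,d)$, not on $\overline M_{g,n+1}$. Passing from one to the other is exactly the content of the Oberdieck--Pixton computation of the \emph{uncorrelated} invariant $N_{g,d}(\bfw)$, and is not something you can bypass with ``classical Hodge integral formulas''.

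The paper's route avoids both problems by never unpacking the Hodge integral. Once only tree strata survive, the correlated DR on $\M_K(E)$ is literally $\delta^{2g-2}|K|$ times the ordinary DR with weights $\bfw/\delta$, so $N^0_{g,d}(\bfw)=\sum_K\delta^{2g-2}|K|\,f_K(g,d)$ with $f_K$ the cap product on $\M_K(E)$. Then the $H$-covering identity $\sum_{K\supset H}f_K(g,d)=|H|^{n-1}N_{g,d/|H|}(\bfw/\delta)$ converts the right-hand side into \emph{uncorrelated} invariants of the covering elliptic curve $E_H$, which are already known in closed form. Writing $|K|=\sum_{\L\in K}1$, swapping sums, and grouping by the order $\omega=|\langle\L\rangle|$ turns the whole thing into $\sum_{\omega|\delta}J_2(\omega)\,\omega^{n-1}N_{g,d/\omega}(\bfw/\delta)$, and substituting the known formula for $N_{g,d}$ gives the claim directly. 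Your ``abelian factor / Hodge factor'' split is morally the same decomposition, but the paper's argument outsources the hard part (the Hodge side) entirely to the uncorrelated result rather than recomputing it.
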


\bigskip

\textit{Acknowledgements.} The authors would like to thank Dhruv Ranganathan for discussions in Belalp suggesting the universal DR approach to tackle the problem, and Patrick Kennedy-Hunt, Sam Molcho, Jonathan  Wise for discussions about the log Abel-Jacobi theorem and  in particular Sam Molcho, Jonathan  Wise and Martin Ulirsch for communicating their results on LogPic. They further thank David Holmes for answering several questions about the  spin DR formula. T.B. thanks F.C. for the nice working conditions in Rome where part of the work was handled. T.B. is partly funded by the SNF grant 204125.

\section{About abelian varieties and Weil pairing}
\label{sec-abelian-stuff}

In this section we review the definition of abelian and log-abelian varieties, a generalization introduced in \cite{kajiwara2008logabelian2}. We then review several prospective on the Weil pairing for smooth abelian varieties and propose an extension in the setting of log-abelian varieties with constant degeneration.

\subsection{Abelian varieties}
     Following \cite[Chapter 2.6]{griffiths2014principles}, a \textit{complex torus} is the quotient of a complex vector space $V$ by a full-dimensional lattice $L\subset V$. The complex torus $V/L$ is an \textit{abelian variety} when it is a projective variety. This is equivalent to the existence of an ample line bundle on $V/L.$

    \subsubsection{Dual torus}
    \label{sec-dual-torus-abelian-var}
  Consider the complex torus $A=V/L$. 
  
    The dual torus parametrizes holomorphic degree $0$ complex line bundles over $V/L$. It may be obtained as follows.
 Following \cite{griffiths2014principles}, a complex line bundle on $A$ is determined by an action of $L$ on the trivial bundle $V\times\CC$. In the degree $0$ case, such an action is determined by the choice of a character $\chi:L\to\CC^*$;  the line bundle on $A$ is then the quotient of $V\times\mathbb C$ by the action
     $$l\cdot(v,\xi)=(v+l,\chi(l)\xi).$$
    Up to choosing a lift for a basis of $L$, every character $\chi:L\to\CC^*$ is of the form $\chi(l)=e^{2i\pi\varphi(l)}$ for some $\varphi:L\to\CC$. We thus consider $\Hom_\ZZ(L,\CC)$, naturally identified with  $\Hom_\RR(V,\CC)$ since $V$, as real vector space, is isomorphic to $L\otimes\RR$. 

We denote by $J$ the multiplication by imaginary unit $i$ of the underlying real vector space $V$; this induces an action on $\Hom_\RR(V,\CC)$ by pre-composing and a splitting of the latter in its $i$-eigenspace and $(-i)$-eigenspace. 

In the identification with $\Hom_\ZZ(L,\CC)$, they correspond to the restriction of morphisms $V\to\CC$ which are $\CC$-linear or $\CC$-anti-linear. The dual torus is a quotient $\Hom_\RR(V,\CC)$ by:
     \begin{itemize}
         \item The subspace $\Hom_\CC(V,\CC)\subset\Hom_\RR(V,\CC)$ of $\CC$-linear morphisms, since the line bundle associated to $\varphi:V\to\CC$ possesses a holomorphic section given by $v\mapsto e^{2i\pi\varphi(v)}$. This gives the complex $g$-dimensional vector space $\Hom_\RR(V,\CC)/\Hom_\CC(V,\CC)$.
         \item The sublattice $\Hom_\ZZ(L,\ZZ)\subset\Hom_\RR(V,\CC)$, since such elements do not change the value of the character $\chi$.
     \end{itemize}

     \begin{defi}
    The dual torus $A^\vee$ of $A=V/L$ is quotient of the $g$-dimensional vector space $\Hom_\RR(V,\CC)/\Hom_\CC(V,\CC)$ by the projection of the lattice $\Hom_\ZZ(L,\ZZ)$.
    \end{defi}
\
\subsubsection{Torsion line bundle and their  monodromy presentation}     The $n$-torsion elements of the dual torus come from elements $\varphi\in\Hom_\RR(V,\CC)$ such that $n\varphi$ projects to a lattice element, i.e. belongs to $\Hom_\CC(V,\CC)+\Hom_\ZZ(L,\ZZ)$. The $\RR$-subspaces $\Hom_\RR(V,\RR)$ and $\Hom_\CC(V,\CC)$ are direct summands. Therefore, for each such $\varphi$, we have that $\varphi+\Hom_\CC(V,\CC)$ intersects $\Hom_\RR(V,\RR)$ in a unique point. As $n\varphi$ projects to a lattice element, it maps $L$ to $\frac{1}{n}\ZZ$ and thus belongs to $\Hom_\ZZ(L,\frac{1}{n}\ZZ)$. In particular, there is a unique character $\chi=e^{2i\pi\varphi}$ in the equivalence class with values in $\mu_n$, the $n$th-roots of unity. 
     
Therefore, torsion line bundles over $V/L$ are in bijection with $\Hom(L,\mu_n)$, which are elements coming from morphisms $L\to\frac{1}{n}\ZZ$. We say that the line bundle associated to $\chi:L\to\mu_n$ has monodromy $\chi(l)\in\mu_n$ along the loop $l\in L\simeq\pi_1(V/L)$.

  \subsection{Log-abelian varieties}
    \label{sec-log-abelian}

    We refer to \cite{kajiwara2008logabelian2} and references therein for details on log-abelian varieties. We will mostly restrict to the case of constant degeneration.

    \subsubsection{Log-tori}
    We start with the definition of a log-torus over a fs log-scheme $(S,M_S)$ with constant degeneration, i.e. $\bar M_S:=\bar M$ is a constant sheaf of monoids.
    
    We recall the definition of $\Gmlog$ and $\Gmtrop$ the abelian groups on fs log schemes whose value on $(S,M_S)$ is $\rmH^0(S,M_S^\gp)$ and $\rmH^0(S,\bar{M}_S^\gp)=\bar{M}^\gp$ in the constant degeneration case.
    
    These fit into an exact sequence of abelian groups over fine and saturated log schemes:
\[1\to\Gm\to\Gmlog\to\Gmtrop\to 1.\]
    
        Let $G$ be a complex semi-torus over $S$, sitting in the following exact sequence
    $$0\to T\to G\to B\to 0,$$
    where $T=\Hom(X,\Gm)$ is an algebraic torus with character lattice $X$, and $B$ a complex torus. We then consider the push-out with $T_{\log}=\Hom(X,\Gmlog)$:
    \bcd
 0 \arrow[r] & T \arrow[r]\arrow[d]\arrow[dr, phantom, "\lrcorner"] & G \arrow[r]\arrow[d] & B \arrow[r]\arrow[d,equal] & 0 \\
 0 \arrow[r] & T_{\log} \arrow[r] & G_{\log} \arrow[r] & B \arrow[r] & 0.
    \ecd

    In particular, we have that $G_{\log}/G\simeq T_{\log}/T = \Hom(X,\Gmtrop)$. 
    
    Suppose we are given a $Y$ be a second lattice of the same rank as $X$,(sheaf of lattices over $S$ to be precise) and a morphism $u:Y\to G_{\log}$. This data is called s \textit{log-$1$-motif} $[Y\xrightarrow{u}G_{\log}]$. 
     We will denote by $\pi$ the composition 
    \[\pi\colon Y\to G_{\log}\to B.\]
    Composing $u$ with the quotient map to $G_{\log}/G\cong \Hom(X,\Gmtrop)$, we obtain a pairing
    $$\langle-,-\rangle\colon X\otimes Y\to\Gmtrop.$$

    We denote by $ \Hom(X,\Gmtrop)^{(Y)}$ the subgroup of $ \Hom(X,\Gmtrop)$
of elements which are \textit{locally bounded by $Y$}.i.e.:
    \begin{equation}\label{eq:bounded monodromy1}
      \Hom(X,\Gmtrop)^{(Y)} = \{\varphi\; |\;\forall x,\ \exists y_x,y'_x\in Y \text{ with }\langle x,y_x\rangle |\varphi(x)|\langle x,y'_x\rangle\}
    \end{equation}
    where $a|b$ in $\Mbargp$ means that $ba^{-1}\in\overline{M}$.
    
    We then denote by $G^{(Y)}_{\log}$ the pre-image of $\Hom(X,\Gmtrop)^{(Y)} $ in $G_{\log}$ . By construction, $u(Y)$ is contained in the latter. 
    
    We will refer to $G^{(Y)}_{\log}$ as the subgroup of element with \emph{bounded monodromy}. The log torus associated with the data above is defined to be the quotient:

    \[\A:=G^{(Y)}_{\log}\slash Y.\]

   We call \emph{tropicalization} of $\A$ and denote by $\operatorname{Trop}(\A)$ the quotient 
   \[\operatorname{Trop}(\A):=\Hom(X,\Gmtrop)^{(Y)}\slash Y.\]
   There is an exact sequence of pre-sheaves of groups over fs log-schemes over $(S,M_S)$ given by:
   \[0\to G\to\A\to \operatorname{Trop}(\A)\to 0\]
   If we consider a log point $(\Spec k,\mathbb R_{\geq 0})$ with a  morphism to $(S,M_S),$ i.e. a point of $S$ together with a monoid morphism $\bar{M_S}\to \mathbb R_{\geq 0}$, then $\operatorname{Trop}(\A)(\mathbb R)$ is a tropical torus as considered in \cite{mikhalkin2008tropical}.

   \medskip

    A log-abelian variety is a log-torus endowed with a polarization, which is a morphism to its dual log-torus which we now define.

\begin{rem}
    The quotient presentation above is well suited to study log abelian varieties in the so called \emph{constant degeneration case}. In \cite{kajiwara2008logabelian2} the authors also introduce a more general notion of a log torus  $\mathcal A$ over a fs log scheme  $S$. Up to a technical assumption\footnote{Namely that the diagonal $\A\to\A\times\A$ is representable by a finite morphism of log schemes}, $\mathcal A$ is defined to be a  sheaf  of abelian groups on fs log scheme such that, \'etale locally on $S$, there exists  a pair of lattices $X,Y$ and a \emph{non-degenerate} pairing  $\langle,\rangle\colon X\times Y\to \mathbb G_{\rm{trop},S}$
    such that $\mathcal A$ admits on this \'etale chart a quotient presentation as above
$\mathcal A$ is  then said a log abelian variety  over $S$ if, for  any $s\in S$ endowed with the pull-back log structure, $\mathcal  A_s$ is polarizable in the sense explained below.
\end{rem}

    \subsubsection{Dual log-torus} Suppose we are given a log torus as in the previous paragraph. Using the same notations as above, we define:
        \begin{itemize}[leftmargin=0.4cm]
            \item Let $B^\vee=\mathcal Ext^1(B,\mathbb G_m)$ be the dual of $B$ \cite[I.Proposition~9.3]{milneAV} and set
            $$G^\vee=\{(F,h) \text{ with }F\in B^\vee \text{ and }h:Y\to F \text{ s.t. }\mathrm{pr}\circ h=\pi\}$$
            where $\mathrm{pr}\colon F\to B$ is the projection to $B$.            
            In particular, $G^\vee_{\log}$ is the set of pairs $(F_{\log},h)$ where $F_{\log}$ is obtained from $F$ thought as a $\mathbb G_m$ torsor over $B,$ extending $\mathbb G_m$ to $(\mathbb G_m)_{\log}$, and $h\colon Y\to F_{\log} $ satisfies the same condition as before.
            \item Let $T^\vee=\Hom(Y,\mathbb G_m)$, endowed with the following map to $G^\vee$: take $F=B\times\mathbb G_m$ the trivial bundle and for $\chi\in T^\vee$ take $h_\chi(y)=(\pi(y),\chi(y))$.
            \item There is a forgetful map $G^\vee\to B^\vee$ forgetting $h$, so that we have the short exact sequence
            $$0\to T^\vee\to G^\vee\to B^\vee\to 0.$$
            \item There is a natural map $u^\vee:X\to G^\vee_{\log}$ defined as follows: for $x\in X$, define  $F(x)\in B^\vee$ as the push-out 
            
            \bcd
            T=\Hom(X,\mathbb G_m)\ar[r]\ar[d,"\ev_x"] \arrow[dr, phantom, "\lrcorner"]  & G\ar[d] \ar[r] & B \arrow[d,equal] \\
            \mathbb G_m\ar[r] & F(x)  \ar[r] & B 
            \ecd
             and take $h_x$ to be the composition $Y\to G_{\log}\to F(x)_{\log}$.
        \end{itemize}

    Similarly to the definition of $G_{\log}^{(Y)}$, we can consider the subgroup of elements bounded by $X$, denoted by $(G_{\log}^\vee)^{(X)}$, which naturally contains the image of $X$. The dual log-torus is then defined as the quotient:
    \[\A^\vee:=(G_{\log}^\vee)^{(X)}\slash  X.\]

    The tropicalization of the dual log torus is then given by
    \[\operatorname{Trop}(\A^\vee)=\Hom(Y,\Gmtrop)^{(X)}\slash X\]

\subsubsection{The  dual log torus as moduli space of  $\Gmlog$-torsors}\label{sec:logpicofabelian} 
We can view  $\A^\vee$ as parametrizing certain $\Gmlog$-torsors with \emph{bounded monodromy }over $\A$.

\medskip

First we observe that, given the quotient presentation of $\A$, a $\Gmlog$  torsors over $\A$ correspond to a $\Gmlog$  torsors $\mathcal F$ over $G^{(Y)}_{\log}$ together with an action $Y\xrightarrow{h}\mathcal F$ compatible with $Y\xrightarrow{u}G_{\log}.$ 
Consider those $\mathcal F$ obtained with the following construction: take $F$ a  degree zero line bundle on $B$, i.e. a point of $B^\vee$, consider its extension to a log line bundle over $B$, i.e. $F_{\log}=F\otimes_{\mathcal O_B^*}M^{\gp}$ and finally take its pull-back along the map $G_{\log}\to B.$
These are precisely the first part of data of an object in $G_{\log}^\vee$ considered in the definition of $\A^\vee$  recalled above.
Now to get a torsor on $\A$ we need an action of the lattice $Y$ on $\mathcal F$, and  that is precisely the second part of data of an object in   $G^\vee_{\log}.$ 
  
The requirement $Y\xrightarrow{h}F_{\log}\xrightarrow{pr} B$ coincide with $\pi$ imposed in the definition of $G^\vee_{\log}$ is simply imposing the action on $F_{\log}$ and $G_{\log}$ to be compatible; the condition is necessary for the torsor to descend to the log abelian variety.

The bounded monodromy condition is simply a condition on the  induced morphism $h\colon Y\to F_{\log}\slash F\cong\Gmtrop.$

\medskip

Finally we notice that the elements in the image of $X\xrightarrow{u^\vee} B^\vee$ may be considered as trivial torsors since they possess a section over $G_{\log}$: the map $G_{\log}\to F(x)_{\log}$. Hence, $\A^\vee$ is obtained quotienting by $X.$
 
 \medskip

We call a \emph{degree zero log line bundle} on $\A$ to be a pair $(F_{\log},h)$ as before such that $h$ has bounded monodromy.

 \medskip

The above interpretation implies that the dual tropical torus 
$\operatorname{Trop}(\A^\vee)$ is a parameter space of $\Gmtrop$-torsors on $\operatorname{Trop}(\A).$ Indeed, the quotient $F_{\log}\slash F$ is by construction a $\Gmtrop$ torsor over $T_{\log}\slash T$ and the homomorphism $\bar{h}\colon Y\to F_{\log}\slash F$ is the data necessary to descend it to $\operatorname{Trop}(\A).$

  \begin{rem}
        As stated in \cite[Definition 2.7.4]{kajiwara2008logabelian2}, one can show that the dual of the dual is canonically identified with the initial log-torus.
    \end{rem}

\begin{rem}
The  reason why one should  define a  \emph{degree zero log line bundle} on $\A$ as we did above  is, on one  hand, the construction of the dual log torus given in
\cite{kajiwara2008logabelian2}; on the  other hand that, generalizing what we know in the usual abelian variety setting,  the extension in the category of groups over log scheme $\mathcal Ext^1(\A,\Gmlog)$  are given by $G_{\log}^\vee\slash X$ \cite[Theorem~7.4]{kajiwara2008logabelian2}.

For  other applications it might be interesting to consider more general $\Gmlog$ on $\A$: e.g. coming  from line bundles of  degree non zero on $B$ or also defined as a quotient of $Y$ acting on $F_{\log} $
 not necessarily coming from a group homomorphism $h\colon Y\to F_{\log}. $  
 These more general actions  should be defined through a logarithmic analogue of Theta function and should yield log line bundle with non zero the tropical degree ( in the sense of \cite[Section~5]{mikhalkin2008tropical}.)

We  intend to return  to  this analysis in subsequent work.
\end{rem}

\subsubsection{Polarization} Following \cite[Section 2.3]{kajiwara2008logabelian2}, a map between two log-tori $u:Y\to G_{\log}$ and $u':X'\to G'_{\log}$ is a commutative diagram
\bcd
Y \ar[r]\ar[d,"h_{-1}"'] & G_{\log} \ar[d,"h_0"] \\
Y' \ar[r] & G'_{\log}.
\ecd

A \textit{polarization} on $\A=[u:Y\to G_{\log}]$ is a morphism $h:\A\to \A^\vee=[u^\vee:X\to G^\vee_{\log}]$ satisfying the following conditions:
\begin{enumerate}[label=(\alph*)]
    \item the morphism $B\to B^\vee$ induced by $h_0$ is a polarization on $B$;
    \item the morphism $h_{-1}\colon Y\to X$ has finite cokernel;
    \item the pairing $\langle-,h_{-1}(-)\rangle$ on $Y\times Y$ is positive definite in the sense that the value at $(y,y)$ belongs to $\overline{M}\subset\Mbargp$;
    \item the morphism $T_{\log}\to T^\vee_{\log}$ induced by $h_0$ is induced by $Y\to X$.
\end{enumerate}

We prove in the next section that the log-Picard group possesses a natural \textit{principal} polarization, so that it is actually isomorphic to its dual.

    \subsection{Weil pairing for smooth abelian varieties} 
    We review below several ways of thinking about the Weil pairing, including the special case of curves. 

    Let $A[n], A^\vee[n]$ denote the subgroup of $n$-torsion elements of the complex tori $A$ and $A^\vee$ respectively.
 The Weil pairing is a bilinear map
    $$W_n:A[n]\otimes A^\vee[n]\to\mu_n.$$

Depending on the context, we will find convenient to consider the pairing to have values in $\mu_n$, $\ZZ_n$ or the $n$-torsion subgroup of $\RR/\ZZ$, which are all canonically isomorphic.

    \subsubsection{Weil pairing in additive notation} Assume that the complex torus $A$ is written as the quotient of a complex vector space $V$ of complex dimension $g$ by a lattice $L\subset V$ of rank $2g$. In this setting, we saw that the dual torus $A^\vee$ can be seen as $\Hom_\RR(V,\RR)/\Hom_\ZZ(L,\ZZ)$ where $L^\vee=\Hom_\ZZ(L,\ZZ)\hookrightarrow\Hom_\RR(V,\RR)$ is defined by associating to $\lambda:L\to\ZZ$ its unique $\RR$-linear extension to $V$. The respective torsion subgroups of the $A$ and $A^\vee$ have the following identifications:
    $$A[n]=\left(\frac{1}{n}L\right)/L\simeq L\otimes\ZZ_n \text{ and } A^\vee[n]=\left(\frac{1}{n}L^\vee\right)/L^\vee\simeq L^\vee\otimes\ZZ_n.$$
    The pairing \textit{Weil pairing} is  the  one induced by the natural pairing between $L$ and $L^\vee$: if $l\in L$ and $l^\vee\in L^\vee$,  set
    $$W_n\left(\frac{l}{n},\frac{l^\vee}{n}\right) =\frac{\langle l,l^\vee\rangle}{n}\in\RR/\ZZ[n],$$
    whose value does indeed not depend on the representative of the elements mod $L$ and $L^\vee$. 
    Up to taking the exponential, it may also be seen as taking values in $\mu_n\subset\CC^*$.

    \begin{lem}
        The Weil pairing is non-degenerate.
    \end{lem}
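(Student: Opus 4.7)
The plan is to deduce non-degeneracy directly from the fact that the underlying evaluation pairing $L \otimes L^\vee \to \ZZ$ is a perfect pairing of free $\ZZ$-modules of the same rank. The Weil pairing is by construction obtained from this pairing by the single step of dividing by $n$ and reading the result modulo $\ZZ$, so non-degeneracy of $W_n$ should fall out of non-degeneracy of the integral evaluation.

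More concretely, suppose $\ell/n \in A[n]$ satisfies $W_n(\ell/n, l^\vee/n) = 0$ for every $l^\vee \in L^\vee$. By the definition recalled above, this means $\langle \ell, l^\vee \rangle/n \in \ZZ$ for every $l^\vee \in L^\vee$, i.e., $\langle \ell, l^\vee \rangle \in n\ZZ$ for every $l^\vee$. Since the evaluation $L \to \Hom_\ZZ(L^\vee,\ZZ)$ is an isomorphism (this is the very definition $L^\vee = \Hom_\ZZ(L,\ZZ)$ together with reflexivity of finite rank lattices), the image of $\ell$ in $\Hom_\ZZ(L^\vee,\ZZ)$ lies in $n \Hom_\ZZ(L^\vee,\ZZ)$, so $\ell \in nL$. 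But then $\ell/n \in L$, which represents the zero class in $A[n] = (\frac{1}{n}L)/L$. The symmetric argument shows that $W_n(\ell/n, l^\vee/n) = 0$ for all $\ell \in L$ forces $l^\vee \in nL^\vee$.

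A slightly more conceptual reformulation, which I would use if space allowed, is to observe that the short exact sequence $0 \to L \xrightarrow{\cdot n} L \to L/nL \to 0$ (and its analogue for $L^\vee$) together with the perfect pairing between $L$ and $L^\vee$ yields a perfect pairing on the torsion subgroups $L/nL \otimes L^\vee/nL^\vee \to \frac{1}{n}\ZZ/\ZZ \simeq \mu_n$, which is exactly the Weil pairing under the identifications $A[n] = L/nL$ and $A^\vee[n] = L^\vee/nL^\vee$.

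I do not foresee a real obstacle here: the statement is a direct consequence of the standard duality $L^{\vee\vee} = L$ for finitely generated free abelian groups, and the only thing to be careful about is to match the identifications of $A[n]$ and $A^\vee[n]$ with $L \otimes \ZZ_n$ and $L^\vee \otimes \ZZ_n$ already given in the text so that the Weil pairing indeed reduces to the tautological evaluation pairing mod $n$.
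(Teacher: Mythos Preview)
Your proof is correct and is exactly the approach the paper takes: the paper's own proof is the single sentence ``This follows from the non-degeneracy of the pairing between $L$ and $L^\vee$,'' and your argument is a careful unpacking of precisely that assertion.
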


    \begin{proof}
        This follows from the non-degeneracy of the pairing between $L$ and $L^\vee$.
    \end{proof}

\subsubsection{Weil pairing and subgroups}    If $K\subset A[n]$ (resp. $A^\vee[n]$) is a subgroup, we denote by $K^\perp\subset A^\vee[n]$ (resp. $A[n]$) its orthogonal via the Weil pairing. Using non-degeneracy, we thus have a natural isomorphism
    $$A^\vee[n]/K^\perp \simeq \widehat{K},$$
    where $\widehat{K}=\Hom(K,\mu_n)$ is the Pontrjagin dual of $K$, isomorphic to $K$ as a group but non canonically. Indeed, consider the restriction to $K$ of the adjoint of the Weil pairing: $\L\mapsto W(-,\L)|_K$; it is easy to see that this map is surjective, and its kernel is by definition $K^\perp$.

    \begin{lem}\label{lem-change-order-torsion}
        If $(x,y)\in A\times A^\vee$ are $n$-torsion elements and $k\in\NN$, they are also $kn$-torsion elements. The relation between the associated pairings is
        $$W_{kn}(x,y)=kW_n(x,y)\in\RR/\ZZ.$$
    \end{lem}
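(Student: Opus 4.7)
The plan is to unpack the additive definition of the Weil pairing given just before the lemma and perform a direct calculation. Since $n$-torsion elements are automatically $kn$-torsion (as $kn \cdot x = k(nx) = 0$), the only content is the second assertion.

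I would first choose representatives: write $x = l/n \in (\tfrac{1}{n}L)/L$ with $l \in L$, and $y = l^\vee/n \in (\tfrac{1}{n}L^\vee)/L^\vee$ with $l^\vee \in L^\vee$. By definition,
$$W_n(x,y) = \frac{\langle l, l^\vee\rangle}{n} \in \RR/\ZZ.$$
To view $x$ and $y$ as $kn$-torsion, I would rewrite them as $x = (kl)/(kn)$ and $y = (kl^\vee)/(kn)$, with $kl \in L$ and $kl^\vee \in L^\vee$. Applying the definition of the Weil pairing at level $kn$ gives
$$W_{kn}(x,y) = \frac{\langle kl, kl^\vee\rangle}{kn} = \frac{k^2 \langle l, l^\vee\rangle}{kn} = k \cdot \frac{\langle l, l^\vee\rangle}{n} = k\, W_n(x,y),$$
as desired.

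The only subtle point is verifying that this is independent of the chosen representatives, but this is already part of the well-definedness of $W_n$ and $W_{kn}$ as set up in the preceding subsection, so nothing new is required. There is no real obstacle here: the lemma is essentially a compatibility statement that falls out of the bilinearity inherent in the definition, and the proof fits in a few lines.
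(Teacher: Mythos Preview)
Your proof is correct and essentially identical to the paper's own argument: both choose representatives $x=l/n$, $y=l^\vee/n$, rewrite them as $(kl)/(kn)$ and $(kl^\vee)/(kn)$, and compute $W_{kn}(x,y)=\langle kl,kl^\vee\rangle/(kn)=k\langle l,l^\vee\rangle/n=kW_n(x,y)$.
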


    \begin{proof}
        We may write $x=\frac{l}{n}=\frac{k\cdot l}{kn}$ and $y=\frac{l^\vee}{n}=\frac{k\cdot l^\vee}{kn}$. Therefore,
        $$W_{kn}(x,y)=\frac{\langle k\cdot l,k\cdot l^\vee\rangle}{kn} = k\frac{\langle l,l^\vee\rangle}{n}=k\cdot W_n(x,y)\in\RR/\ZZ.$$
    \end{proof}

    \begin{rem}
    In the particular case where $A$ is the Albanese variety of a complex manifold $X$, which turns out to be also the general case since an abelian variety is its own Albanese variety, the lattices involved are the free part of $\rmH_1(X,\ZZ)$ and $\rmH^1(X,\ZZ)$, endowed with their natural pairing.
    \end{rem}

    It also follows from the definition that the Weil pairing is functorial in sense of the following lemma.
    
    \begin{lem}
    If $f_*\colon A\to B$ is a morphism between two complex tori and $f^*\colon B^\vee\to A^\vee$ its dual map, then we have for $x\in A[n]$ and $y\in B^\vee[n]$,
    $$W_A(x,f^*(y)) = W_B(f_*(x),y).$$
    \end{lem}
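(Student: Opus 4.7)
The plan is to unpack the additive description of the Weil pairing via lattices established in the previous subsection, so that functoriality becomes a formal consequence of the adjunction between a linear map and its dual.

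First I would choose lattice presentations $A = V/L$ and $B = W/M$ and observe that the morphism $f_*\colon A \to B$ lifts to an $\RR$-linear map $\tilde f\colon V \to W$ sending $L$ into $M$. Then the dual map $f^*\colon B^\vee \to A^\vee$ is induced by the transpose $\tilde f^t\colon \Hom_\RR(W,\RR) \to \Hom_\RR(V,\RR)$, which sends $M^\vee = \Hom_\ZZ(M,\ZZ)$ into $L^\vee = \Hom_\ZZ(L,\ZZ)$ by pre-composition.

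Next, I would pick representatives: write $x = l/n$ with $l \in L$ and $y = m^\vee/n$ with $m^\vee \in M^\vee$. Under the description of the $n$-torsion subgroups, $f_*(x)$ is represented by $\tilde f(l)/n \in M/n$, and $f^*(y)$ is represented by $(m^\vee \circ \tilde f)/n \in L^\vee/n$. Plugging into the additive formula for the Weil pairing gives
$$W_A(x, f^*(y)) = \frac{\langle l, m^\vee \circ \tilde f\rangle}{n} = \frac{m^\vee(\tilde f(l))}{n} = \frac{\langle \tilde f(l), m^\vee\rangle}{n} = W_B(f_*(x), y),$$
where the middle equality is just the definition of the natural pairing between a lattice and its dual. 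One should also check that the result is independent of the choice of representatives $l, m^\vee$, but this is automatic since both sides are invariant modulo $L$ and $M^\vee$ respectively, by the same calculation applied to elements of these lattices.

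There is no genuine obstacle: the statement is the classical functoriality of the Weil pairing, and in the additive presentation it reduces to the tautological adjunction $\langle l, \tilde f^t(m^\vee)\rangle = \langle \tilde f(l), m^\vee\rangle$. The only minor point worth flagging is that the lift $\tilde f$ is not unique, but two lifts differ by a map landing in $M$, which does not affect the value modulo $\ZZ$.
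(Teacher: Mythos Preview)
Your proof is correct and matches the paper's approach: the paper simply asserts that the statement ``follows from the definition'' and gives no further argument, and your computation is exactly the unfolding of the additive definition that this phrase points to. One small remark: since $f_*$ is a homomorphism of complex tori, its lift $\tilde f\colon V\to W$ sending $0$ to $0$ is in fact a uniquely determined $\CC$-linear map, so your closing caveat about non-uniqueness of the lift is unnecessary (though harmless).
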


    \subsubsection{Weil pairing through line bundles} Using the identification $A^\vee=\Pic^0(A)$  we may provide a second definition of the Weil pairing. Let $\P$ be the unique line bundle on $A\times\Pic^0(A)$ in the sense of \cite[Section 4]{bosch2012neron} such that $\P|_{A\times\{\L\}}\cong\L$ and $\P|_{\{0_A\}\times A^\vee}=\O_{A^\vee}$, where the second equality means that the line bundle has been trivialized over the $0_A$-section. This line bundle is called the Poincar\'e line bundle.

    Let $\P_{x,\L}$ denote the fiber of $\P$ over $(x,\L)\in A\times A^\vee$. We have natural isomorphisms inherited from the compatibility with group operations: if $x,x'\in A$ and $\L,\L'\in A^\vee$,
    $$ \P_{x,\L\otimes\L'}\simeq\P_{x,\L}\otimes\P_{x,\L'} \text{ and } \P_{x+x',\L}\simeq\P_{x,\L}\otimes\P_{x',\L}.$$
    Furthermore, the two isomorphisms
    $$\P_{x+x',\L\otimes\L'}\simeq\P_{x,\L}\otimes\P_{x',\L}\otimes\P_{x,\L'}\otimes\P_{x',\L'},$$
    obtained from the previous ones coincide. 
   In particular, if $x$ and $\L$ are torsion elements, we have natural isomorphisms:
        \begin{itemize}
            \item using the second coordinate $\P_{x,\L}^{\otimes n}\simeq\P_{x,\L^{\otimes n}}=\P_{x,\O}\simeq\CC$,
            \item using the first coordinate $\P_{x,\L}^{\otimes n}\simeq\P_{nx,\L}=\P_{0_A,\L}\simeq\CC$.
        \end{itemize}
    As these two isomorphisms may not agree, their composition defines an automorphism of $\CC$, i.e. multiplication by an element $\lambda$ of $\CC^*$. Using that $\P_{x,\L}^{\otimes n^2}=\P_{nx,\L^{\otimes n}}=\P_{0_A,\O}=\CC$, we have the following commutative diagram
    \bcd
    \P_{0_A,\L}^{\otimes n} \ar[rr,"\lambda^n"]\ar[rd] & & \P_{x,\O}^{\otimes n}. \ar[dl] \\
     & \P_{0_A,\O} & 
    \ecd
    We conclude that this automorphism is actually multiplication by a root of unity $\lambda\in\mu_n$. The Weil pairing $W(x,\L)$ evaluated at the $n$-torsion elements $(x,\L)\in A[n]\times A^\vee[n]$ is defined to be this root of unity $\lambda.$

    \subsubsection{Weil pairing for Jacobian of curves} We end with the special case of the Jacobian of curves, which was historically the case where Weil introduced his pairing. If $C$ is a smooth complex curve, its Jacobian $\operatorname{Jac}(C)=\Pic^0(C)$ is endowed with a natural principal polarization defining an isomorphism $\Alb(C)\cong\Pic^0(C)$. In this  case the Weil pairing can be presented as a pairing between the torsion-elements of the \textit{same} group. The pairing is then skew-symmetric.

    The Weil pairing revolves around the \textit{Weil's reciprocity law}, which is stated as follows. If $f,g:C\to\CC$ are meromorphic functions. We can consider the local symbol at $p\in C$:
    $$(f,g)_p = (-1)^{\nu_p(f)\nu_p(g)}\left.\frac{f^{\nu_p(g)}}{g^{\nu_p(f)}}\right|_p,$$
    where $\nu_p(f)\in\ZZ$ is the vanishing order of $f$ at $p\in C$. If $p$ is neither a pole or a zero of $f$ or $g$, then $(f,g)_p=1$. In particular, only a finite number of points have a non-trivial symbol. The local symbols are of course multiplicative in each variable.
    
    \begin{theo}\cite[Chapter 2.3]{griffiths2014principles}
        If $f,g$ are meromorphic functions on $C$, we have
        $$\prod_p (f,g)_p=1.$$
    \end{theo}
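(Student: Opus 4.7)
The plan is to reduce Weil's reciprocity on $C$ to the case of $\PP^1$ via a norm-along-a-cover argument, and then verify the $\PP^1$ statement by bilinearity and factorization into linear factors. Concretely, we may assume $f$ is non-constant (otherwise swap $f$ and $g$; if both are constant the identity is trivial), so $f\colon C\to\PP^1$ is a finite branched cover of degree $d=\deg f$. For any meromorphic $g$ on $C$, define its norm $N_f(g)\in\CC(\PP^1)$ by $N_f(g)(z) = \prod_{q\in f^{-1}(z)} g(q)$, counted with multiplicity. The key local identity I would establish is
\[\prod_{q\in f^{-1}(z)}(f,g)_q \;=\; \bigl(z,\,N_f(g)\bigr)_z \qquad \text{for every } z\in\PP^1,\]
where the symbol on the left is computed in $C$ and the one on the right in $\PP^1$. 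Granting this, taking the product over all $z\in\PP^1$ replaces the assertion on $C$ with the same assertion on $\PP^1$ applied to the pair $(z,N_f(g))$.

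\medskip

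\emph{The case of $\PP^1$.} The local symbol is multiplicative in each argument, and $\prod_p (c,g)_p = c^{\sum_p \nu_p(g)} = 1$ for any constant $c\ne 0$. On $\PP^1$ every nonzero meromorphic function factors as a constant times a product of $(z-a)^{\pm 1}$, so it suffices to verify
\[\prod_{p\in\PP^1}(z-a,\,z-b)_p=1 \qquad (a\ne b),\]
which is a three-point computation: at $p=a$ one gets $1/(a-b)$, at $p=b$ one gets $b-a$, and at $p=\infty$ (in the coordinate $w=1/z$, where both factors read $w^{-1}\cdot\text{unit}$) one gets $-1$; the product is $1$. The case of equal linear factors is handled by bilinearity and a similar direct check, giving $(-1)\cdot(-1)=1$.

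\medskip

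\emph{Main obstacle.} The only non-routine point is the local norm identity of Step~1. Above a value of $\PP^1$ at which $f$ is unramified, it follows immediately from multiplicativity of the symbol in its second argument, since $f$ then serves as a common local uniformizer upstairs and downstairs. Above a value $z_0$ with a preimage $q$ of ramification index $e$, one picks a local coordinate $w$ at $q$ with $f=w^e$, computes $(w^e,g(w))_q$ directly, and matches it against $(z,N_f(g))_{z_0}$ using the local expansion $N_f(g)(w^e)=\prod_{\zeta^e=1}g(\zeta w)$ together with multiplicativity of the symbol downstairs. The main bookkeeping concerns the $(-1)^{\nu(f)\nu(g)}$ sign and the leading coefficients of the factorizations; once the identity is checked at each preimage and combined across branches, the $\PP^1$ case closes the argument.
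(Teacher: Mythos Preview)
The paper does not supply its own proof of this statement: it is quoted verbatim from \cite[Chapter~2.3]{griffiths2014principles} and used as a black box to define the Weil pairing on Jacobians. So there is nothing in the paper to compare your argument against directly.

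Your approach is a correct and classical one (essentially the proof in Serre's \emph{Groupes alg\'ebriques et corps de classes}, III.4, or Lang's \emph{Algebra}): push the problem down to $\PP^1$ via the norm along the finite cover $f\colon C\to\PP^1$, and check $\PP^1$ by hand using bilinearity. Your $\PP^1$ computation is fine. The local norm identity you isolate as the main obstacle is indeed the crux; your sketch is on the right track, though note that the genuine case distinction is not ``ramified vs.\ unramified'' but rather ``$p\in\{0,\infty\}$ vs.\ $p\notin\{0,\infty\}$'' (since $\nu_q(f)\ne 0$ exactly over $0$ and $\infty$). Away from $0,\infty$ both sides reduce to $p^{\nu_p(N_f(g))}$ immediately; over $0$ and $\infty$ you need the local computation you sketch, tracking ramification indices and the sign carefully.

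For contrast, the reference the paper cites (Griffiths--Harris) gives an analytic proof via a residue/monodromy argument with the form $\log(g)\,\dd\log f$ on the complement of a cut system, rather than the algebraic norm descent you use. Both are standard; yours has the advantage of working over any field, while theirs stays closer to the complex-analytic setting the paper is in.
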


    Now, we can use the law to define the Weil pairing. Assume $D_1$ and $D_2$ are two $n$-torsion divisors. Let $f_1,f_2:C\to\CC$ be meromorphic functions with respective divisors $nD_1$ and $nD_2$. In particular, the order of $f_1$ and $f_2$ at every point is divisible by $n$. These functions are unique up to multiplication by a constant. Weil's reciprocity law ensures that the following quantity, which does not depend on the actual choice of $f_1$ and $f_2$, is actually a root of unity:
    $$W(D_1,D_2)=\prod_p (-1)^{\nu_p(f_1)\nu_p(f_2)/n}\left.\frac{f_1^{\nu_p(f_2)/n}}{f_2^{\nu_p(f_1)/n}}\right|_p\in\mu_n\subset\CC^*.$$
    Weil's reciprocity law also ensures that the value does not change when adding a principal divisor to $D_1$ or $D_2$. Therefore, we get a well-defined pairing on the torsion elements of the Jacobian.

    \begin{rem}
        It would be interesting to define the log-Weil pairing on the Jacobian of a log-curve by proving a similar Weil reciprocity law for sections of $\mathbb{G}_{\log}$.
    \end{rem}

    \subsubsection{Weil pairing and monodromy of torsion line bundles} \label{sec-weil-pairing-line-bundles} 
    
Given $X$ a projective variety over $\mathbb C$, we have a 
     short exact sequence of sheaves of groups on $X$
    $$0\to\mu_n\to\O^\times\xrightarrow{\cdot^n}\O^\times\to 0,$$
    inducing the following long exact sequence in cohomology:
    $$0\to \rmH^1(X,\mu_n)\to \rmH^1(X,\O^\times)\to \rmH^1(X,\O^\times).$$
   
   From the latter we see that isomorphisms classes of $n$-torsion line bundles are naturally identified with $\mu_n$-principal bundles on $X$. In turn, a principle $\mu_n$-bundle is characterized by its monodromy, namely it is determined by a morphism $\pi_1(X)\to\mu_n$. Such morphism factors through $\rmH_1(X,\ZZ)$ since $\mu_n$ is abelian, and even $\rmH_1(X,\ZZ_n)$ given that $\mu_n$ is of $n$-torsion.

   When $X$ is assumed to be smooth $\Pic^0(X)$ is an abelian variety with dual
   $Alb(X)$ and every degree $0$ line bundle on $X$ is the pull-back of a degree $0$ line bundle on $\Alb(X)$ via $X\to\Alb(X).$    
    In this case we can describe the monodromy representative of a torsion line bundle using the Weil pairing:
    \begin{prop}
        Let $\L$ be a degree $0$ $n$-torsion line bundle on $X$ coming from a line bundle on $\Alb(X)$, also denoted by $\L$. If $\gamma\in\pi_1(X)$ is a loop and $[\gamma]\in \rmH_1(X,\ZZ_n)$ the class it realizes in homology, the monodromy of the the $\mu_n$-torsor associated to $\L$ along the loop $\gamma$ is $W_n([\gamma],\L)\in\mu_n$.
    \end{prop}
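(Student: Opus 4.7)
The plan is to reduce the statement to the case $X=\Alb(X)$ by functoriality, and then match two explicit computations: the monodromy of a $\mu_n$-torsor arising from a character $\chi\colon L\to\mu_n$ equals $\chi([\gamma])$, and the additive formula for the Weil pairing evaluates to the same number.

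First I would observe that both sides of the claimed equality depend on $\gamma$ only through its homology class $[\gamma]\in \rmH_1(X,\ZZ_n)$, and are compatible with the Albanese map $a_X\colon X\to \Alb(X)$. On the line bundle side, since $\L$ on $X$ is the pull-back of $\L$ on $\Alb(X)$, the associated $\mu_n$-torsor on $X$ is the pull-back of the one on $\Alb(X)$, and its monodromy along $\gamma$ coincides with the monodromy of the $\mu_n$-torsor on $\Alb(X)$ along $(a_X)_*\gamma$. On the Weil pairing side, functoriality (recalled in the lemma just after Lemma~\ref{lem-change-order-torsion}) gives $W_n([\gamma],\L) = W_n((a_X)_*[\gamma],\L)$ with the right-hand pairing computed on $\Alb(X)$. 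Therefore it suffices to establish the statement when $X=\Alb(X)=V/L$, under the identification $\rmH_1(X,\ZZ)\simeq L\simeq\pi_1(X)$.

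Now assume $X=V/L$. Following the description in Section~\ref{sec-dual-torus-abelian-var}, an $n$-torsion class $\L\in\Pic^0(X)[n]$ is represented by a character $\chi\colon L\to\mu_n$ of the form $\chi(l)=e^{2i\pi\varphi(l)}$ with a unique $\varphi\in\Hom_\ZZ(L,\tfrac{1}{n}\ZZ)$; writing $\varphi=l^\vee/n$ for the corresponding $l^\vee\in L^\vee=\Hom_\ZZ(L,\ZZ)$, the associated $\mu_n$-torsor is the quotient of $V\times\mu_n$ by the action $l\cdot(v,\zeta)=(v+l,\chi(l)\zeta)$. By construction, its monodromy along the loop represented by $l\in L\simeq\pi_1(X)$ is exactly $\chi(l)=e^{2i\pi\langle l,l^\vee\rangle/n}$.

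On the other hand, under the identifications $A[n]=(\tfrac{1}{n}L)/L$ and $A^\vee[n]=(\tfrac{1}{n}L^\vee)/L^\vee$ recalled in the text, the class $[\gamma]$ corresponds to $l/n$ and $\L$ corresponds to $l^\vee/n$. The additive formula for the Weil pairing then yields
\[
W_n([\gamma],\L) \;=\; \frac{\langle l,l^\vee\rangle}{n}\;\in\;\RR/\ZZ[n],
\]
which under the canonical identification $\RR/\ZZ[n]\simeq \mu_n$ becomes $e^{2i\pi\langle l,l^\vee\rangle/n}$, matching the monodromy computed above. There is no real obstacle here: the content is a direct comparison of definitions, with the reduction to the Albanese variety taking care of the generality of $X$.
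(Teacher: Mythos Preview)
Your proof is correct and follows essentially the same approach as the paper: both unravel the definition of the dual torus to identify the monodromy character $\chi(l)=e^{2i\pi\langle l,l^\vee\rangle/n}$ with the additive Weil pairing formula. The only difference is that you spell out the reduction to $X=\Alb(X)$ via functoriality of both sides, whereas the paper works directly on $V/L$ (this being implicit in the hypothesis that $\L$ is pulled back from $\Alb(X)$).
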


    \begin{rem}
        The above formula is compatible with Lemma \ref{lem-change-order-torsion} since choosing a bigger power annihilating $\L$ also changes the order of $[\gamma]$ in $\rmH_1(X,\ZZ_n)$.
    \end{rem}

    \begin{proof}
        We unravel the definition of the dual torus. Let $l^\vee\in\Hom_\ZZ(L,\ZZ)$ be an element in the dual lattice and consider $\frac{l^\vee}{n}\in\Hom_\RR(V,\RR)\subset\Hom_\RR(V,\CC)$, which induces a complex line bundle on $V/L$. By definition of the dual torus in Section \ref{sec-dual-torus-abelian-var}, the identification between the fibers over $v$ and $v+l$ for $l\in L$ is via $\chi(l)=e^{2i\pi\left(\frac{l^\vee}{n}\right)(l)}$, which is precisely the Weil pairing between the torsion element $\frac{l^\vee}{n}$ and the image of the class of $l$ in $\rmH_1(V/L,\ZZ_n)$.
    \end{proof}

    \subsection{Weil pairing for log-abelian varieties}

    We now propose an extension of the definition of Weil pairing to the case of log-abelian varieties with constant degeneration. We expect that there is a natural way to define the Weil pairing  for general log-abelian schemes.
    
    We'll mainly use the log-Weil pairing for log-Picard group of curves, though in the future we plan to use it for more general log-abelian varieties obtained as log-Albanese varieties of simple normal crossing degenerations of the target $X$.

    \subsubsection{Definition through Poincar\'e bundle}\label{sec:logPoincare'} We aim to define the Weil pairing for log-tori using the line bundle approach. To do that, we define a natural $\Gmlog$-torsors on $\A\times\A^\vee,$ which we call log-Poincar\'e bundle.

    With the notations from Section \ref{sec-log-abelian}, the product $\A\times\A^\vee$ is constructed from the semi-abelian variety
    $$0\to T\times T^\vee\to G\times G^\vee\to B\times B^\vee \to 0,$$
    using the map $(u,u^\vee):Y\times X\to G_{\log}\times G^\vee_{\log}$.

    Given the point of view on $\Gmlog$-torsors on log tori discussed in Section~\ref{sec:logpicofabelian} to construct the 
    the log Poincar\'e bundle, we start with the classical Poincar\'e  line bundle $\P$ on $B\times B^\vee$ and then describe the monodromy  action $h\colon Y\times X\to \P_{\log}$ for its pull-back to $G_{\log}\times G^{\vee}_{\log}$. 
    By definition, $\P$ restricts to trivial bundles over the zero sections of $B$ and $B^\vee$. We shall now find a map $h:Y\times X\to\P_{\log}$ such that the composition with the projection is $(\pi,\pi^\vee)$. This means that $h(y,x)$ should belong to \[(\P_{\pi(y),\pi^\vee(x)})_{\log} = (F(x)_{\pi(y)})_{\log}=(F^\vee(y)_{\pi(x^\vee)})_{\log}\]
    where we are denoting by $F(x)$ (respectively $F^\vee(y)$) the line bundle on $B$ (on $B^\vee$) corresponding  to $\pi^\vee(x)\in B^\vee$ (to $\pi_(y)\in B$) and $F(x)_{\log}$ is the $M^\gp$-torsor 
 on $G_{\log}$ ( on $G^\vee_{\log}$ ) obtained  first extending $\Gm\to\Gmlog$
and then  pulling-back.
     
     It follows from the definition of $G^\vee$ that  $F(x)_{\log}$ comes equipped with a morphism $h_x:Y\to F(x)_{\log}$ defining the monodromy action and thus inducing a $M^\gp$-torsor over $G_{\log}^{(Y)}/Y$, but also with a section $G_{\log}\to F(x)_{\log}$. We thus consider the following function:
    $$h(y,x)=h_x(y)\in F(x)_{\log}\rvert_{\pi(y)}.$$
    The latter indeed projects to $\pi(y)\in B$ thanks to the assumption on $h_x$, and the fact that $F(x)=\P|_{B\times\{\pi^\vee(x)\}}$. 
    The same reasoning apply exchanging the role of  of $B$ and  $B^\vee$ and it is not difficult to convince oneself that $h(y,x)=h_y^\vee(x)$ where $h_y^\vee\colon X\to F^\vee(y)$ is the morphis naturally obtained identifying $G$ with $(G^\vee)^\vee.$

\medskip

    We can now proceed as in the classical setting: torsion elements of the log-torus provide torsion log-line bundles over the dual. Using the Poincar\'e bundle, we get two trivializations of the fiber over the pair of torsion elements. These two isomorphisms differ by a $n$-torsion element, which is necessarily a root of unity, finishing the definition of the log-Weil pairing.

    In the case of LogPic, the principal polarization identifies $\A$ with its dual, so that we may consider the log-Weil pairing as a (skew-symmetric) pairing on $n$-torsion elements of $\A$.

    \subsubsection{A few properties} Let us consider a log-torus with the previous notations. First, applying $n$-torsion to the short exact sequence defining the semi-torus $G$ yields the following, with of course $G[n]=G_{\log}[n]$ since the monoid elements do not add any more torsion:
    $$0\to T[n]=\Hom(X,\mu_n)\to G[n] \to B[n] \to 0.$$
    We have a second short exact sequence:
    $$0\to G \to \A \to \mathrm{Trop}\ \A\to 0,$$
    where $\mathrm{Trop}\ \A=\Hom(X,\Mbargp)^{(Y)}/Y$ is the tropicalization of $\A$. After a base change of monoid, the $n$-torsion of the latter is isomorphic to $Y\otimes\ZZ_n$. Thus, we get
    $$0\to G[n] \to \A[n] \to Y\otimes\ZZ_n \to 0.$$
    In particular, $G[n]$ is a natural subgroup of $\A[n]$, and $T[n]\subset G[n]$ as well.

    \begin{prop}\label{prop:logweil}
        We have the following:
        \begin{enumerate}[label=(\arabic*)]
            \item The restriction of the log-Weil pairing between $G[n]$ (resp. $G^\vee[n]$) and $T^\vee[n]$ (resp. $T[n]$) is trivial. 
            \item The induced quotient pairing between $B[n]$ and $B^\vee[n]$ is the Weil pairing of $B$.
            \item The induced pairing between $T[n]=\Hom(X,\mu_n)$ and $\mathrm{Trop}\ \A^\vee[n]=X\otimes\ZZ_n$ is the natural pairing, similarly for the duals.
            \item The log-Weil pairing is non-degenerate.
        \end{enumerate}
    \end{prop}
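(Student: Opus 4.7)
My plan is to verify the four assertions by unpacking the log-Poincar\'e construction of Section~\ref{sec:logPoincare'} along the three-step filtrations
$$0 \subset T[n] \subset G[n] \subset \A[n], \qquad 0 \subset T^\vee[n] \subset G^\vee[n] \subset \A^\vee[n],$$
with successive graded pieces $(T[n], B[n], Y \otimes \ZZ_n)$ and $(T^\vee[n], B^\vee[n], X \otimes \ZZ_n)$; right-exactness of these sequences follows from $T/nT = 0$ and the divisibility of $G^\vee$ (as an extension of $B^\vee$ by $T^\vee$). For (1), an element $\chi \in T^\vee[n] = \Hom(Y, \mu_n)$ has zero image in $B^\vee$, so in the construction of $G^\vee$ the underlying classical bundle $F(\chi) = B \times \Gm$ on $B$ is trivial with canonical section, and all nontrivial data sits in the monodromy $h_\chi(y) = (\pi(y), \chi(y))$. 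For $x \in G[n]$, the fiber of the log-Poincar\'e bundle at $(x, \chi)$ is canonically trivialized by this section, and the two natural trivializations of its $n$-th tensor power --- one from $nx = 0_{\A}$ and the other from $n\chi = 0_{\A^\vee}$ --- both reduce to this canonical trivialization, so their ratio is $1$; the dual statement for $G^\vee[n] \otimes T[n]$ follows by symmetry.

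For (2), since (1) gives both $G[n] \otimes T^\vee[n] \to 0$ and $T[n] \otimes G^\vee[n] \to 0$, the pairing on $G[n] \otimes G^\vee[n]$ descends to one on $B[n] \otimes B^\vee[n]$; under the quotients $G \to B$ and $G^\vee \to B^\vee$ the log-Poincar\'e bundle becomes the classical Poincar\'e bundle with its classical trivializations, so the descended pairing is by definition the Weil pairing of $B$. For (3), let $\chi \in T[n] = \Hom(X, \mu_n)$ and let $\tilde x \in (\mathrm{Trop}\ \A^\vee)[n]$ be represented by $x/n$ with $x \in X$; pick a lift $\hat{\tilde x} \in \A^\vee[n]$ (the value of the pairing being independent of the lift by (1)). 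Then the trivialization of the $n$-th power of the log-Poincar\'e fiber at $(\chi, \hat{\tilde x})$ coming from $n\chi$ being the trivial character lives on the classically trivial bundle, while the one coming from $n\hat{\tilde x} \equiv u^\vee(x)$ modulo $X$ picks up the monodromy of $u^\vee(x)$ evaluated at the $T$-coordinate $\chi$, which by the very definition of $u^\vee$ via the evaluation map $\ev_x$ is precisely $\chi(x) \in \mu_n$; hence the induced pairing is the natural evaluation pairing between $T[n]$ and $X \otimes \ZZ_n$, and the dual statement for $(Y \otimes \ZZ_n) \otimes T^\vee[n]$ is symmetric.

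For (4), assume $\alpha \in \A[n]$ pairs to zero with every element of $\A^\vee[n]$. Pairing with $T^\vee[n]$ descends through $\A[n] \to Y \otimes \ZZ_n$ by (1), and is the non-degenerate natural pairing by the dual of (3), forcing $\alpha \in G[n]$. Pairing of $\alpha$ with $G^\vee[n]$ then descends to $B[n] \otimes B^\vee[n]$ by (1), and is non-degenerate by (2), forcing $\alpha \in T[n]$. Finally, pairing of $\alpha$ with all of $\A^\vee[n]$ descends to $T[n] \otimes (X \otimes \ZZ_n)$ again by (1), and is non-degenerate by (3), forcing $\alpha = 0$. The main technical step is (3), since it is there that the log structure of the Poincar\'e bundle genuinely interacts with the tropical direction; (1) and (2) are essentially formal once the construction is unfolded, and (4) is a clean filtration-by-filtration argument assembling the three non-degenerate pieces.
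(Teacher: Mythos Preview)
Your proof is correct and follows essentially the same approach as the paper: both arguments exploit the two-step filtration $T[n]\subset G[n]\subset\A[n]$ (and its dual), verify (1) by noting that $T^\vee$ projects to $0\in B^\vee$ so the Poincar\'e fiber is canonically trivial, deduce (2) by descent to the classical Poincar\'e bundle on $B\times B^\vee$, compute (3) by tracking the monodromy action $h$ through the construction of $u^\vee$, and assemble (4) by walking down the filtration using the non-degeneracy of each graded piece. The only cosmetic differences are that you prove (3) for the pair $(T[n],X\otimes\ZZ_n)$ while the paper does the symmetric pair $(T^\vee[n],Y\otimes\ZZ_n)$, and you run the non-degeneracy argument (4) from the $\A[n]$ side rather than the $\A^\vee[n]$ side; your added remark on right-exactness (via $T/nT=0$ and divisibility of $G$) is a helpful clarification the paper leaves implicit.
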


    \begin{proof}
    \begin{enumerate}[label=(\arabic*)]
        \item We need to prove that the pairing between elements of $G[n]$ and $T^\vee[n]$ is $0$. For such elements, since $G$ is a subgroup of $\A$ and $T^\vee$ a subgroup of $\A^\vee$, the quotients by $Y$ and $X$ plays no role. the fiber of $\P_{\log}$ is simply the pull-back of the fiber of $\P$. As $T^\vee$ projects to $0\in B^\vee$, the pairing is $0$.
        \item In particular, the pairing between elements in $G[n]$ and $G^\vee[n]$ only depends on their projection in $B[n]$ and $B^\vee[n]$, inducing a quotient pairing. The same argument as before shows that the pairing between elements of $G[n]$ and $G^\vee[n]$ is the pairing between their projections in $B[n]$ and $B^\vee[n]$.
        \item In particular, we have a well-defined pairing between the elements of $T[n]=\Hom(X,\mu_n)$ (resp. $T^\vee[n]=\Hom(Y,\mu_n)$) and the tropical torsion of the dual $\mathrm{Trop}\ \A^\vee[n]=X\otimes\ZZ_n$ (resp. $Y\otimes\ZZ_n$). We claim this pairing is the natural pairing. To see that, let $(\O,h)\in T^\vee[n]$ be a torsion element, represented by $h:Y\to\mu_n$, and let $g\in G_{\log}$ with $g^n=u(y)$ for some $y\in Y$. We then gather the two isomorphisms:
            \begin{itemize}
                \item $\P_{g,\O}^{\otimes n}\simeq \P_{g,\O}$ since $\O^{\otimes n}=\O$, identified with $\CC^*$ given it's the trivial bundle over $G$;
                \item  $\P_{g,\O}^{\otimes n} \simeq \P_{g^n,\O}=\P_{u(y),\O}$, which is identified with $\CC$ via multiplication by $h(y)$, which is the expected result.
            \end{itemize}
        Therefore, the pairing is as expected.
        \item The pairing is non-degenerate: assume an element of $\A^\vee[n]$ is paired trivially with every other element. Then,
            \begin{itemize}
                \item It is paired trivially with every element of $T[n]$ so that its class in $\mathrm{Trop}\ \A^\vee[n]$ is $0$ and the element belongs to $G^\vee[n]$.
                \item It is paired trivially with every element of $G[n]$ so that its class in $B^\vee[n]$ is $0$ and it belongs to $T^\vee[n]$. The latter is paired trivially with every element of $\mathrm{Trop}\ \A[n]$ so that it is in fact $0$.
            \end{itemize}
    \end{enumerate}
    \end{proof}

    \begin{rem}
        This definition of log-Weil pairing suggests there is no tropical analog of the Weil pairing, since we rather have a pairing between the tropicalization of $\A$, which is a tropical abelian variety, and the fiber of tropicalization map for the dual. There is a priori no natural pairing between $\mathrm{Trop}\ \A[n]$ and $\mathrm{Trop}\ \A^\vee[n]$.
    \end{rem}

\section{Log-Picard group  of curves}
\label{sec-log-picard-group}

\subsection{Jacobians for smooth curves}
One of the most studied and best understood example of abelian scheme is the relative Jacobian of a family $C\to S$ of smooth, projective curve.
We recall the some known results about $\Pic^0(C\slash S)$ and refer the reader for example to \cite[Chapter~II]{milneAV}.

Let us assume that $C\to S$ is a family of smooth, projective genus $g$ curve and $\sigma_0\colon S\to C$ a section.

\begin{itemize}
    \item  $\Pic^0(C\slash S)\to S$ is a proper, smooth family of rank $g$-dimensional abelian varieties. Furthermore $\Pic^0(C\slash S)$ is a fine moduli space parametrizing line bundles on $C$ of degree zero along the fibers.
    \item The Abel-Jacobi map is the morphism  $AJ_{\sigma_0}:C\to\Pic^0(C/S)$ given by:
\[\begin{array}{rccl}
    AJ_{\mathcal \sigma_0}\colon & C &\longrightarrow & \Pic^0(C/S)\\
    & (T\to C) & \longmapsto & \mathcal O_{C_T}(p_T -\sigma_0\rvert_T).
\end{array}\]
where $p_T:T\to C\times_S T$ is the induced section of $ C_T=\C\times_S T.$
\item The pull-back along the Abel-Jacobi map 
\[    AJ_{\mathcal \sigma_0}^*\colon \Pic^0(C/S)^\vee\to\Pic^0(C/S)\]
identify $\Pic^0(C/S)$ with its dual, in the sense of Abelian schemes; the reader can find a proof also in \cite[9.5.26]{fantechi2006fundamental}. In other words, the Jacobian of a smooth curve is principally polarized. A proof of the self duality isomorphism by exhibiting an explicit principal polarization can be found for example in \cite[Chapter~III]{milneAV}

\item The self duality isomorphism above in particular shows that $(AJ_{\mathcal \sigma_0},\Pic^0(C/S))$ is initial for morphism  $C\xrightarrow{\varphi} A$  from $C$ to an abelian scheme over $S$ such that $\varphi(\sigma_0)=0_A,$ i.e. $\Pic^0(C/S)$ is the Albanese variety $\operatorname{Alb}(C/S)$ of $C/S.$
\end{itemize}
The last two properties often go under the name \emph{Abel-Jacobi Theorem} for smooth projective curves.

In particular, for any morphism $F\colon C/S\to X/S$ one gets a natural morphism
\[F_*\colon\Pic^0(C/S)\to\operatorname {Alb}(X/S)\]
coming, via the self duality isomorphisms, from the universal property of the Albanese variety. In particular it satisfies $F_*\circ AJ_{\sigma_0}=a_{F(\sigma_0)}\circ F$ where $a_{F(\sigma_0)}\colon X/S\to \operatorname {Alb}(X/S)$ is the Albanese map for $X.$
\begin{rem}
We notice here that it is true more generally (e.g. also for  any $X\to S$ smooth, projective with a section $\sigma$) that $\Pic^0(X/S)\to S$ is an abelian scheme over $S$ representing the moduli functor of line bundle on $X\to S$ of degree zero along the fibers and that $\Pic^0(X/S)^\vee\cong \operatorname{Alb}(X/S).$
This last statement is almost tautological once we think of an abelian variety as fine moduli space of degree zero line bundle on its dual. A detailed proof can be found in \cite[Appendix]{mochizuki2012topics}.

The identification $\Pic^0(X/S)^\vee\cong \operatorname{Alb}(X/S)$ implies that the homorphism $F_*$ described above is nothing but the dual homomorphism in the category of abelian varieties of the pull-back $F^*.$
\end{rem}

To carry out our study of correlated GW invariants we want to extend the definition of the morphism $F_*$  to $C\to S$ the universal curve over the moduli space of stable maps to $X$ (and later over the moduli space of Spin stable maps). However in such case $C\xrightarrow{\pi} S$ has nodal fibers and there is no easy extension of the Abel-Jacobi theorem. We still have a semi-abelian scheme (i.e. smooth, but with fibers which are only quasi projective) $\Pic^{[0]}(C/S)\to S$ whose fiber over $s\in S$ parametrizes multidegree zero line bundles on $C_s.$ The fibers of $\Pic^{[0]}(C_s)$ are extension of an abelian variety, namely the Jacobian of the normalization of the curve, and an algebraic torus of rank the genus of the dual graph of $C_s.$ Notice that as the curve vary in the family, the rank of the abelian part and the torus part of the extension presentation jump.

\begin{rem}
   In various situations, existence of an Abel-Jacobi map and self-duality can be saved. Indeed, working with \emph{fine compactified Jacobian} \cite{melo2015compactifications,MeloRapagnettaViviani} one recovers a self duality statement for the Jacobian, and as long as we look at a family of integral curves there is a natural extension of the Abel-Jacobi map with target the compactified Jacobian.

   However, these partial extensions are not sufficient to deal with the universal curve over moduli of maps.
\end{rem}

The solution to these problems comes from working with logarithmic abelian varieties. 
We start by recalling some results about the logarithmic Picard group of a log smooth vertical curve $(C,M_C)\to (S, M_S)$ constructed in \cite{molchowiselog}. The \emph{logarithmic Jacobian} will turn out to be the right replacement for the Jacobian of a smooth curve in the case of nodal curves.

\subsection{Logarithmic, tropical curves  and their sheaves of functions}
We assume the reader to be already familiar with logarithmic curves, and more in general with the notion of tropicalization of a fine and saturated logarithmic scheme as well as the theory of Artin fans. We refer for example to \cite{abramovich2014comparison,cavalieri2020moduli} and references therein for an introduction.

\medskip

To fix notation: we denote by $(C,M_C)\to (S,M_S)$ be a proper, vertical, logarithmic smooth curve  and  $\Gamma\to (S,M_S)$ be the associated tropical curve over $S;$ see for example \cite[Section 2.4]{molchowiselog} for more details on tropical curves over log schemes.
The latter is the data of: for each  geometric point $s\in S$ the metrization of $\Gamma_s$ (the dual complex of  $C_s$) with values in $\bar{M}_{S,s}$ induced by the logarithmic structure; for each geometric specialization $t\rightsquigarrow  s$ a edge contraction $\Gamma_s\to\Gamma_t$ where the contracted edges correspond to the  nodes of $C_s$ smoothed in $C_t$ and  are precisely the edges whose length $\ell(e)\in \bar{M}_{S,s}$ goes to zero under the morphism $\bar{M}_{S,s}\to \bar{M}_{S,t}.$

\medskip

When the underlying scheme of $(S,M_S)$ is a point (or more generally for a family of log curves with constant degeneration) there is an identification of $\rmH^0(C,\bar{M}_C^\gp)$ and \textit{strict piece-wise} linear functions (sometimes also called \textit{cone-wise} linear functions). These are functions $\alpha\colon\Gamma\to \bar{M}_S^\gp$ which are linear with integral slope along the edges, i.e. $\frac{\alpha(v)-\alpha(w)}{\ell(e)}\in\mathbb Z$ for $e$ the edge between $v$ and $w.$

\medskip

We now endow $\Gamma$ with the \emph{tropical topology} (as defined in \cite[Section~3]{molchowiselog}) whose fundamental opens are the stars of the vertices and two such opens intersect  in a disjoint union of open intervals. We can consider sheaves in this topology; for example the sheaf $\mathcal{PL}$ of strict piecewise linear functions is the sheaf whose sections over an open set $U$ are the data of: a function $\alpha$ on the vertices of $U$ with values in $\bar{M}_S^{\gp}$  a collection $(w_e)$ of slopes for edges in $U$ such that $\frac{\alpha(v)-\alpha(w)}{\ell(e)}=w_e$ if $v,w,e\in U$. We also consider the following two sub-sheaves:
    \begin{itemize}[label=$\circ$]
        \item the constant sheaf $\bar{M}_{S,s}^\gp$ on $\Gamma_s$,
        \item the subsheaf $\mathcal L\hookrightarrow\mathcal P\mathcal L$ of 
\emph{balanced} strict piece-wise linear function, i.e the sum of the outgoing slopes at each vertex is zero.
    \end{itemize}

The sheaf $\L$ should be thought as the tropical analogue of invertible regular function on $\Gamma,$ while $\mathcal{PL}$ should be thought as the analogue of the rational function.
These two sheaves of functions were already considered by \cite{mikhalkin2008tropical} in their construction of the tropical Jacobian.

These sheaves fit together (see \cite[Section~3.4]{molchowiselog} ) in the following commutative diagram:

\begin{equation}\label{eq:tropicalsheaves}
    \begin{tikzcd}
& & 0\ar[d] & 0\ar[d]\\
0\ar[r] & \bar{M}_{S,s}^{\gp}\ar[d,"="]\ar[r]&\mathcal L\ar[d]\ar[r] &\mathcal H\ar[d]\ar[r] & 0\\
0\ar[r] & \bar{M}_{S,s}^{\gp}\ar[r]&\mathcal P\mathcal L\ar[d]\ar[r] &\mathcal E\ar[d]\ar[r] & 0\\
& & \mathcal V\ar[r,"="] \ar[d]& \mathcal V\ar[d]\\
& & 0 & 0
\end{tikzcd}
\end{equation}
where $\mathcal E$ and $\mathcal V$ are the constant groups sheaves freely generated by the edges and the vertices respectively and the map between them is the homology boundary map.
 These sequences induce log exact sequence in cohomology, meaning Čech cohomology with respect to an acyclic cover in the tropical topology (take for example the cover given by finitely many stars at the vertices).
All the following results about the cohomologies of these sheaves can be found in \cite[Section~3.4]{molchowiselog}. 
\begin{itemize}
\item For  $\mathcal{PL}$ we have, simply unraveling definitions:
\[\rmH^0(\Gamma,\mathcal P\mathcal L)=\rmH^0(C,\bar{M}_C^\gp),\qquad \rmH^1(\Gamma,\mathcal P\mathcal L)=\rmH^1(C,\bar{M}_C^\gp).\]
\item  $\rmH^0(\Gamma,\mathcal V)$ is the set of divisors supported on the vertices of $\Gamma$. 
\item $\rmH^i(\Gamma,\mathcal V)$ and $\rmH^i(\Gamma,\mathcal E)$ are zero for $i>0;$ 
\item  $\rmH^0(\Gamma,\mathcal H)$ is identified with the first homology group $\rmH_1(\Gamma,\mathbb Z)$ of the graph underlying $\Gamma$ and  $\rmH^0(\Gamma,\mathcal H)$ with $\rmH_0(\Gamma,\mathbb Z);$
\item there is a natural isomorphsims 
\[\rmH^1(\Gamma,\bar{M}_{S,s}^{\gp})\cong\operatorname{Hom}(\rmH_1(\Gamma,\mathbb Z),\bar{M}_{S,s}^{\gp}).\]
\end{itemize}
If $\Gamma\xrightarrow{\pi} S$ is a tropical curve over a general fs logarithmic scheme, we can still define these sheaves and the maps between them by  considering the definitions we just gave on geometric points and imposing compatibility along the maps induced by specialization $t\rightsquigarrow s.$

\subsection{Tropical Jacobian and tropical Picard group}

\begin{defi}\cite{molchowiselog}
An element $f\in \operatorname{Hom}(\rmH_1(\Gamma,\mathbb Z),\bar{M}_{S,s}^\gp)$ has \emph{bounded monodromy} if for each cycle $\gamma\in\Gamma,$  $f(\gamma)$ is bounded by the length of $\gamma$, meaning, there exists two integer $m,n$ such that $m\ell(\gamma)\leq f(\gamma)\leq n\ell(\gamma)$ in the partial order on $\bar{M}_{S,s}^\gp$ induced by $\bar{M}_{S,s}.$
\end{defi}

A $\mathcal P\mathcal L$-torsor $Q\in \rmH^1(\Gamma,\mathcal P\mathcal L)$ on $\Gamma$ has bounded monodromy if a lift $\tilde{Q}\in \operatorname{Hom}(\rmH_1(\Gamma,\mathbb Z),\bar{M}_{S,s}^\gp)$ of $Q$ has bounded monodromy (and consequently all lifts have bounded monodromy). 

\begin{defi}
If we fix an orientation of the edges of $\Gamma,$ then there is a natural pairing, called the \emph{intersection pairing}:
\[\rmH_1(\Gamma,\mathbb Z)\times \rmH_1(\Gamma,\mathbb Z)\xrightarrow{\cdot} \bar{M}_{S,s}^\gp.\]
The latter is defined by restricting to $\rmH_1(\Gamma,\ZZ)$ the following pairing on the free abelian group generated by edges:
\[e\cdot f=\begin{cases} \ell(e)\;\; \text{if }\;\; e=f\\
-\ell(e)\;\; \text{if }\;\; e=-f\\
0\;\;\text{otherwise.}
\end{cases}\]
\end{defi}

The definition of bounded monodromy given above then coincides with the one of \eqref{eq:bounded monodromy1} from \cite{kajiwara2008logabelian2} taking $X=Y=\rmH_1(\Gamma,\mathbb Z)$ and $\langle-,-\rangle$ to be the intersection pairing.

\begin{defi}
Let $\Gamma\to S$ be a tropical curve.
\begin{itemize}[label=$\circ$]
    \item The {\bf{tropical Jacobian }}is defined as:
\[\TroJac(\Gamma)=\operatorname{Hom}(\rmH_1(\Gamma,\mathbb Z),\bar{M}_{S,s}^\gp)^\dagger\slash \rm{H}_1(\Gamma,\mathbb Z)\cong\operatorname{Ker}(\rmH^1(\Gamma,\mathcal L)^\dagger\to\rm{H}_0(\Gamma,\mathbb Z)),\]
where $\dagger$ denotes the subgroup of elements with bounded monodromy.
    \item The {\bf{tropical Picard group}} is defined as:
\[\TroPic(\Gamma)=\rmH^1(\Gamma,\mathcal L)^\dagger.\]
\end{itemize}

\end{defi}
Notice that the tropical Jacobian of a tropical curve $\Gamma\to(S,M_S)$ over a fs log scheme is a sheaf of groups on fs log schemes. Its points over $(\Spec k,\mathbb R_{\geqslant 0})$ recover the tropical Jacobian in the sense of \cite{mikhalkin2008tropical}.

\begin{rem}
 Molcho-Wise give two useful insights about the necessity of imposing the bounded monodromy condition.
First, by  \cite[Proposition 3.5.1]{molchowiselog}, a $\mathcal L$-torsor $Q\in \rm{H}^1(\Gamma,\mathcal L)$ on $\Gamma$ has bounded monodromy if and only if, up to enlarging the base monoid $\bar{M}_{S,s}\to\widetilde{\bar{M}}_{S,s}$ (e.g. adding some roots) and passing to a subdivision $\widetilde\Gamma\xrightarrow{b}\Gamma$, $b^*Q$ can be trivialized.
They show (it can be seen chasing the long exact sequnces in cohomology induced by Diagram~\eqref{eq:tropicalsheaves} )that this in particular implies that there exists a divisor $D$ supported on the vertices of  $\widetilde{\Gamma}$ representing  $b^*Q$.

Secondly, as explained in \cite[Section~3.6, Proposition~3.7.5,Example~3.6.4]{molchowiselog} the bounded monodromy condition is necessary to ensure that $\TroJac(\Gamma)$ defines a functor, and thus a \emph{tropical moduli problem} on the category of monoids $P$ receiving a map from $\bar{M}_{S,s}.$ 
\end{rem}

Choosing a divisor representative of $Q\in \rmH^1(\Gamma,\mathcal L),$ possibly on a subdivision of the tropical curve, we can think of the map $\rmH^1(\Gamma,\mathcal L)\to \rmH_0(\Gamma,\mathbb Z)$, coming from the connecting homorphism in cohomology for the first row of Diagram~\eqref{eq:tropicalsheaves}, as the \emph{total degree map}.

This is well-defined since two divisors representing $Q$ differ by a strict piece-wise linear function and the total degree of the divisor associated to such a function is zero.

In conclusion, we can identify the tropical Jacobian as the subgroup of $\TroPic(\Gamma)$ of total degree $0$ $\mathcal L$-torsors on the tropical curve. We will sometimes refer to $\mathcal L$-torsors as \emph{tropical line bundles}.
This is the name given to these objects in \cite{mikhalkin2008tropical}. Their descriptions of tropical Picard and tropical Jacobian coincide with the one given in this section if one specializes to the case $\bar{M}_{S,s}=\mathbb R_{\geqslant 0}.$
In particular in  \cite{mikhalkin2008tropical} the bounded monodromy condition is not mentioned as it is automatically satisfied for classical tropical curves.

\subsection{Logarithmic Picard group}

Let $(C,M_C)\to(S,M_S)$ be a proper, vertical, log smooth curve over a log point (i.e. the underlying scheme of $S$ is a point).
Taking the long exact sequence associated to 
\[
0\to\mathcal O_C^{\times}\to M_C^\gp\to\overline{M}_C^\gp\to 0,\]
we obtain
\begin{align}\label{eq:exact}
    \cdots\to\rm{H}^0(C,\overline{M}_C^\gp)\to
    \rmH^1(C,\mathcal O_C^{\times})\to
    \rmH^1(C,M_C^\gp)\to\rm{H}^1(C,\overline{M}_C^\gp).
\end{align}
The $\rmH^1(C,M_C^\gp)$ parametrizes isomorphism classes of $\Gmlog$-torsors on $C.$
Given such a torsor $P$, we denote by $\bar{P}$ the associated $\Gmtrop$-torsor. These induce a $\mathcal{PL}$-torsor on the tropicalization $\Gamma$ of the curve $C$. We say that $P$ has bounded monodromy if $\bar P$ does, where the latter is identified with an isomorphism class of $\mathcal P\mathcal L$-torsor on the tropical curve: $\rmH^1(C,\overline{M}_C^\gp)=\rmH^1(\Gamma,\mathcal P\mathcal L).$

\begin{defi}\cite{molchowiselog}
    Let $(C,M_C)\xrightarrow{\pi}(S,M_S)$ be a proper, vertical, log smooth curve. A \emph{logarithmic line bundle}
on $C$ is a $\Gmlog$-torsor on $C$ in the strict \'etale topology whose fibers over $S$ have bounded
monodromy. Then the logarithmic Picard group is the sheaf of  abelian groups over log schemes parametrizing isomorphism classes of logarithmic line bundles. i.e:
\[\LogPic(C\slash S)= (R^1\pi_* M_C^{\gp})^\dagger,\] 
the subsheaf of $R^1\pi_* M_C^{\gp}$  where bounded monoodromy is satisfied.
\end{defi}

\begin{rem}
    As explained in \cite[Section~4.3]{molchowiselog}, for $(C,M_C)\to(S,M_S)$ a proper, vertical, log smoooth curve, a  $M_C^\gp$ -torsor $P$ has bounded monodromy along the fibers if and only if there is a logarithmic modification (log blowups and root construction) $\widetilde{S}\to S$ possibly  followed by a subdivision
    $\widetilde{C}\to C\times_{S}\widetilde{S} $ such that the pull-back $\widetilde{P}$  on $\widetilde{C}$ is represented by a line bundle, i.e. $\widetilde{P}$ is in the image  of

\[\Pic(\widetilde{C}\slash\widetilde{S})=\rmH^0(\widetilde{S},R^1\pi_*\mathcal O_{\widetilde{C}}^{\times})\to \LogPic(\widetilde{C}\slash\widetilde{S})\subseteq \rmH^0(\widetilde{S},R^1\pi_*M_C^\gp). \]
\end{rem}

Let $(C,M_C)\to(S,M_S)$ be a curve, with $S$ a point. Given $P$ a log line bundle, by the remark above there is a line bundle $L\in\Pic(\widetilde{C})$ for $\widetilde{C}$ over some enlargement $M_{\widetilde{S}}$ (we will refer to these as  \emph{models} of $C$)  representing $P$. Then the quantity $\mathrm{totdeg}(L)$does not depend on the above choice. Indeed, we see from the usual exact sequence \eqref{eq:exact} that given $L_1$ and $L_2$ on some model of $C$ representing $P$, we have $L_1=L_2\otimes\mathcal O_{\widetilde{C}}(\alpha)$ for some  strict piece-wise linear function $\alpha$. Since the total  degree of  $\mathcal O_{\widetilde{C}}(\alpha)$ is zero, we can conclude.

\begin{defi}\label{defi:degreeloglb}
Given $(C,M_C)\to(S,M_S)$ with  $S$ a point, there is a well-defined degree map 
\[\deg\colon\LogPic(C)\to\ZZ.\]
\end{defi}

Furthermore, for $(C,M_C)\to (S,M_S)$ over a  more general logarithmic  scheme the degree is locally constant as proved in \cite[Proposition~4.5.2]{molchowiselog}.

\subsection{Log abelian variety structure}

\begin{prop}\cite[Corollary~4.6.3]{molchowiselog}
   Let $\LogPic^0(C\slash S)$ be the connected component  of degree $0$ logarithmic line bundles. Then this is a logarithmic  torus over $S$ (in fact a logarithmic abelian variety over $S$) in the sense of \cite{kajiwara2008logabelian2}.
\end{prop}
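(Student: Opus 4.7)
The plan is to verify the two conditions spelled out in Section~\ref{sec-log-abelian}: first that $\LogPic^0(C/S)$ admits a quotient presentation as a log-torus, i.e.\ can be written in the form $G_{\log}^{(Y)}/Y$ for some log-$1$-motif $[Y\xrightarrow{u} G_{\log}]$; and second that it carries a principal polarization, making it log-abelian. Working étale-locally on $S$ one may assume constant degeneration, so that $\Gamma\to S$ is a fixed metrized graph with lengths in $\bar{M}_S^\gp$.

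First I would identify the data $(G,Y)$. Applying $R\pi_*$ to the short exact sequence $0\to\O_C^\times\to M_C^\gp\to\bar{M}_C^\gp\to 0$ and extracting the degree zero, bounded monodromy part yields an exact sequence
\[0 \to \Pic^{[0]}(C/S) \to \LogPic^0(C/S) \to \TroJac(\Gamma/S) \to 0,\]
with $\Pic^{[0]}(C/S)$ the connected component of multidegree zero line bundles. The latter is a semi-abelian scheme $G$ fitting in $0\to T\to G\to B\to 0$, with $B=\Pic^0(C^\nu/S)$ the Jacobian of the normalization (an abelian scheme, to which the classical Abel--Jacobi theorem applies) and $T=\Hom(X,\Gm)$ the character torus recording how sections glue across the nodes, where $X:=\rmH_1(\Gamma,\ZZ)$. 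I would then set $Y:=X=\rmH_1(\Gamma,\ZZ)$.

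Next I would construct $u\colon Y\to G_{\log}$. Since $\TroJac(\Gamma/S) = \Hom(X,\Gmtrop)^{(Y)}/Y$ by definition, where the intersection pairing $\langle-,-\rangle\colon X\otimes Y\to\Gmtrop$ (sending a pair of cycles to the signed sum of common edge lengths) controls the bounded monodromy condition \eqref{eq:bounded monodromy1}, the tautological map $Y\to\Hom(X,\Gmtrop)$ induced by this pairing lifts to a morphism $u\colon Y\to G_{\log}$. Concretely, for $\gamma\in\rmH_1(\Gamma,\ZZ)$ one reads off $u(\gamma)$ as the class, on a sufficiently fine subdivision where any bounded monodromy torsor is represented by an honest line bundle, of the tautological line bundle along $\gamma$, together with its tropical monodromy. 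Pushing out $0\to T\to G\to B\to 0$ along $T\to T_{\log}$ gives $0\to T_{\log}\to G_{\log}\to B\to 0$; the diagram chase comparing this with the exact sequence above identifies $\LogPic^0(C/S)$ with the log torus $G_{\log}^{(Y)}/Y$, establishing the log-torus structure.

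Finally I would produce a principal polarization level by level: on $B$, take the canonical principal polarization on the Jacobian of the smooth curve $C^\nu$; on the lattice, take $Y\to X$ to be the identity, whose cokernel is trivial; the positivity condition~(c) of Section~\ref{sec-log-abelian} reduces to the fact that the intersection pairing on cycles of $\Gamma$ takes values in $\bar{M}_S^\gp$ with diagonal values lying in $\bar{M}_S$ (edge lengths are non-negative monoid elements, so $\langle\gamma,\gamma\rangle\in\bar{M}_S$ is positive for $\gamma\neq 0$); condition~(d) is automatic from the choice $Y=X$, and condition~(a) is the Theta polarization. Alternatively, as indicated in Section~\ref{sec-log-picard-group}, one can combine the Abel--Jacobi map on the normalization with the results of \cite{delignepair} to identify $\LogPic^0(C/S)$ with its dual log-torus and read off the polarization from there. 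The main technical obstacle is the careful construction of $u$ and the verification that $G_{\log}^{(Y)}/Y\cong\LogPic^0(C/S)$ as sheaves on log schemes, especially without the constant degeneration assumption: one must check compatibility under specializations $t\rightsquigarrow s$ with respect to the edge contraction $\Gamma_s\to\Gamma_t$, and ensure that both $G$ and the lattice $Y$ degenerate in a coherent way so that the presentation glues globally on $S$.
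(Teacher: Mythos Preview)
Your outline is essentially the same as the paper's sketch: reduce to constant degeneration, identify $G=\Pic^{[0]}(C/S)$ as a semi-abelian scheme with torus part $\rmH^1(\Gamma,\Gm)=\Hom(\rmH_1(\Gamma,\ZZ),\Gm)$ and abelian part the Jacobian of the normalization, take $X=Y=\rmH_1(\Gamma,\ZZ)$ with the intersection pairing, and exhibit $\LogPic^0(C/S)$ as $G_{\log}^{(Y)}/Y$.

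The one substantive difference is \emph{which} short exact sequence you push through. You use $0\to\O_C^\times\to M_C^\gp\to\bar{M}_C^\gp\to 0$, which yields $0\to G\to\LogPic^0\to\TroJac\to 0$; from here you must still \emph{lift} the lattice map $Y\to\Hom(X,\Gmtrop)$ to $u\colon Y\to G_{\log}$ and check the resulting extension matches, and your description of this lift (``the class of the tautological line bundle along $\gamma$'') is vague. The paper instead uses the relative sequence $0\to\pi^*M_S^\gp\to M_C^\gp\to\bar{M}_{C/S}^\gp\to 0$. The point is that $\rmH^1(C,\pi^*M_S^\gp)^{[0]}$ \emph{is} the pushout $G_{\log}$, and $\rmH^0(C,\bar{M}_{C/S}^\gp)=\ZZ^{\sfE(\Gamma)}$, so the Snake Lemma applied to the degree diagram produces $u\colon\rmH_1(\Gamma,\ZZ)\hookrightarrow G_{\log}^\dagger$ directly as a connecting homomorphism, with quotient $\LogPic^0$. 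This bypasses the lifting step you flag as the ``main technical obstacle''.

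For the polarization, your direct verification of conditions (a)--(d) of Section~\ref{sec-log-abelian} is a valid alternative to the paper's route via the log Abel--Jacobi map; the paper defers this to Section~\ref{sec:logAJ} and ultimately to \cite{delignepair}.
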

This means  that  over a point $S=\Spec k$ (and in fact more in general for $(C,M_C)\to(S,M_S)$ a proper vertical, log smooth curve with constant degeneration), we can present $\LogPic^0(C\slash S)$ as the quotient of the logarithmic extension of a semi-abelian variety (a semi-abelian log scheme over $S$)  by a lattice.

\medskip

While we refer the reader to Molcho-Wise for the proof, let us give an idea of the  construction.

Let $\Gamma$ be the dual  graph of the curve $C\to S$; working with the constant degeneration case implies that $\Gamma_s$ is constant.
Moreover, in this case $C\to S$ admits a simultaneous normalization
$\prod_{v\in V(\Gamma)} C_v\xrightarrow{\nu} C$ and the
multi-degree $0$ component of Picard group is the semi-abelian scheme over $S$ given by the following extension:

\[0\to \rmH^1(\Gamma,\mathcal O_S^*) \to \Pic^{[0]}(C\slash S) \to \prod_v \Pic^0(C_v\slash S) \to 0.\]
   The algebraic torus $\rmH^1(\Gamma,\mathcal O_S^*)$ parametrizes line bundles which on each fiber are trivial on the components $(C_v)_s$.

The commutative group over log schemes obtained extending $\Gm^{b_1(\Gamma)}$ to $\Gmlog^{b_1(\Gamma)}$ and then taking the push-out of the exact sequence above is nothing but \[\rmH^1(C,\pi^*M_S^\gp)^{[0]}.\]

Then looking  at the long exact sequence in cohomology induced by
\[0\to \pi^*M_S^\gp\to M_C^\gp\to\bar{M}_{C\slash S}^\gp\to 0\]
we get a diagram:

\bcd
0\ar[r] &\rmH^0(C,\bar{M}_{C\slash S}^\gp)\ar[r]\ar[d,"="] &  \rmH^1(C,\pi^*M_S^\gp)\ar[r]\ar[d] & \rmH^1(C,M_C^\gp)\ar[r] \ar[d]&0\\
& \mathbb Z^{E(\Gamma)}\ar[r] &\mathbb Z^{V(\Gamma)}\ar[r] &\mathbb Z\ar[r] & 0.
\ecd

By the Snake Lemma, the latter induces a short exact sequnce
\[0\to \rmH_1(\Gamma,\mathbb Z)\to (\rmH^1(C,\pi^*M_S^\gp)^{[0]})^\dagger\to \LogPic(C)\to 0\]
where $(\rmH^1(C,\pi^*M_S^\gp)^{[0]})^\dagger$ are the bounded monodromy elements in $\rmH^1(C,\pi^*M_S^\gp)^{[0]})$ in the sense of \cite{kajiwara2008logabelian2} with $X=Y= \rmH_1(\Gamma,\mathbb Z) $ and pairing the intersection pairing.

\subsection{Logarithmic Abel-Jacobi theory}\label{sec:logAJ}

In a soon to appear paper \cite{delignepair}, Molcho, Ulirsch  and Wise prove (in  particular) that the logarithmic Jacobian $\LogPic^0(C/S)$ of a family of log smooth curves satisfies the natural generalization in the setting of logarithmic geometry of the Abel-Jacobi theorem. 

More  precisely, let consider $\mathcal G\to\operatorname{LogSch}\slash S$ with $\mathcal G$  being $\Gmlog$, $\mathbf{B}\Gmlog$ or $\A$ an abelian (or log abelian) scheme.
Then they prove the following:
\begin{theom}\cite{delignepair}\footnote{They actually consider more general $\mathcal G$,  but since the presented level of generality is sufficient for our applications,  we state their theorem in this form}
Let $\mathcal G$ as above and let $ C\to S$ be a vertical,  log smooth curve. Then
\begin{itemize}
\item Every  $S$-morphism $ C\to\mathcal G$  factors through  the  Abel -Jacobi map $C\to \LogPic^0(C/S);$
\item One obtains a pairing
\[\mathcal G( C\slash S)\times \LogPic^0(C/S)\to \mathcal G \] 
which is perfect, namely it induces an equivalence
\[\mathcal G( C\slash S)\to\Hom(\LogPic^0(C/S),\mathcal  G).\]
\end{itemize}
\end{theom}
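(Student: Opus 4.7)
The plan is to reduce the theorem to the case $\mathcal G = \mathbf{B}\Gmlog$, where both assertions become the self-duality of $\LogPic^0(C/S)$ as a log abelian variety, and then bootstrap the remaining cases $\mathcal G = \Gmlog$ and $\mathcal G = \A$ from there.

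For $\mathcal G = \mathbf{B}\Gmlog$, one has tautologically $\mathcal G(C/S) = \LogPic(C/S)$, so the statement requires constructing a canonical equivalence between log line bundles on $C/S$ and homomorphisms $\LogPic^0(C/S) \to \mathbf{B}\Gmlog$. The natural vehicle is a \emph{log Poincaré bundle} $\mathcal P$ on $C \times_S \LogPic^0(C/S)$ whose restriction to $C \times \{L\}$ recovers $L$ and whose restriction along the chosen basepoint section is trivial. I would construct $\mathcal P$ by starting from the classical Poincaré bundle on the semi-abelian locus $C^{\mathrm{sm}} \times \Pic^{[0]}$ and extending it logarithmically using the presentation $\LogPic^0 = (\rmH^1(C,\pi^*M_S^{\gp})^{[0]})^\dagger / \rmH_1(\Gamma,\ZZ)$ from the previous section. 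The extension is controlled by a Deligne pairing for log line bundles, biadditive and compatible with the bounded monodromy condition, which is essentially the content of \cite{delignepair}. The universal property of $\mathcal P$ then says exactly that $L \mapsto \mathcal P|_{C\times L}$ is a bijection between $\LogPic^0(C/S)(T)$ and log line bundles on $C_T$ trivialized at the basepoint, i.e.\ $\LogPic(C/S)(T)$ modulo this normalization.

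For $\mathcal G = \Gmlog$, I would exploit the exact sequence $1 \to \Gm \to \Gmlog \to \Gmtrop \to 1$: morphisms from the log-proper $\LogPic^0$ to the affine $\Gm$ factor through constants, so the interesting content lives in the tropical quotient, where a morphism $C \to \Gmtrop$ is a balanced piecewise linear function on $\Gamma$ and extends uniquely to a homomorphism $\LogPic^0 \to \Gmtrop$ via the description of $\LogPic^0$ in terms of bounded-monodromy $\L$-torsors on $\Gamma$. Lifting along $\Gmlog \to \Gmtrop$ through the snake-lemma diagram of \cite{molchowiselog} completes this case. For a general log abelian $\mathcal G = \A$, I would use the quotient presentation $\A = G^{(Y)}_{\log}/Y$: a morphism $C \to \A$ decomposes \'etale locally into an abelian part (handled by the classical Abel-Jacobi theorem applied to the normalization) and a log-torus part (handled by the $\Gmlog$-case), with bounded-monodromy compatibility ensuring the factorization lands in $\LogPic^0$ rather than merely its universal cover.

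The principal technical hurdle I foresee is the construction of the log Poincaré bundle with bounded monodromy: the classical Poincaré bundle only extends to a $\Gmlog$-torsor after passing to a log modification of $S$, and one must show that the universal object descends back to $C \times \LogPic^0$ without losing functoriality on arbitrary log-scheme test objects. A secondary subtlety is that every factorization is canonical only up to translation by the image of the basepoint section $\sigma_0$, so careful bookkeeping of basepoints is needed throughout the reductions, particularly when passing from $\Pic^0$ via $\Alb$ to the log setting.
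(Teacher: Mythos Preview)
The paper does not contain a proof of this theorem: it is stated with attribution to the forthcoming work \cite{delignepair} of Molcho, Ulirsch and Wise, and the surrounding text explicitly says so. What the paper \emph{does} prove is a strict special case, namely Proposition~\ref{prop:logalb}: the universal property for morphisms to a \emph{honest} abelian variety $\A$ (not a log abelian variety, not $\Gmlog$, not $\mathbf{B}\Gmlog$), and only in the constant degeneration case. So there is no ``paper's own proof'' of the full statement to compare against.

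That said, your proposal and the paper's proof of the special case are genuinely different in spirit. The paper's argument for Proposition~\ref{prop:logalb} is entirely constructive and bypasses any Poincar\'e bundle or duality statement: given $\varphi\colon C\to\A$ with $\A$ classical abelian, it builds the factoring map $\phi\colon\LogPic^0(C/S)\to\A$ by hand. A log line bundle is represented on some model $\widetilde{C}$; Lemma~\ref{lem:degreeonedges} twists by a piecewise linear function so that the degree is zero on the original components $C_v$; then $\phi$ is defined as the sum over $v\in\sfV(\Gamma)$ of the classical pushforwards $\phi_{v,*}$ (dual to $\varphi|_{C_v}^*$), corrected by an explicit translation $\tau(\alpha)$ supported at the nodes. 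The crucial input is that morphisms from an algebraic torus to an abelian variety are constant, which kills the $\Gm^{b_1(\Gamma)}$-ambiguity in $\Pic^{[0]}$ and makes the node images $\varphi(q_e)$ well-defined constants. This is exactly the step that would fail for $\mathcal G = \Gmlog$ or a genuine log abelian variety, which is why the paper stops there.

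Your bootstrap strategy is a plausible outline for the full theorem, but as written it is not a proof: the $\mathbf{B}\Gmlog$ case is precisely the construction of the log Poincar\'e bundle, and you defer that to ``essentially the content of \cite{delignepair}'', which is circular. The reduction of the general log-abelian case to $\Gmlog$ plus classical Abel--Jacobi via the quotient presentation $\A = G_{\log}^{(Y)}/Y$ is also only sketched; the gluing of the vertex-wise classical data along the tropical skeleton is where the actual work lies, and that is what the paper carries out explicitly in its restricted setting.
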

In particular, for $\mathcal G=\mathbf{B}\Gmlog$ the Theorem implies that $\LogPic^0(C/S)$ is self dual in the category of log abelian varieties. Moreover, for $\mathcal G$ a (log abelian) variety, the Theorem shows that $\LogPic^0(C/S)$  satisfy the log Albanese property.

\medskip

For the applications in this paper we however still  require an explicit description of the log Abel-Jacobi morphism as well as an explicit description of the morphism $f_*\colon  \LogPic^0(C/S)\to A$ through  which $f\colon C\to A$  factors  in the  case  where  $A$ is a honest  abelian variety.
In this section  we thus:
\begin{itemize}
    \item Explicitly define the  logarithmic Abel-Jacobi map. In particular  this induces a polarization on $\LogPic^0(C/S)$ in the sense of  \cite{kajiwara2008logabelian2}.
    \item Prove, through an explicit description of the homorphism,   that $\LogPic^0(C/S)$  with the log Abel-Jacobi section is initial for morphism to  abelian varities. This is done in the constant degeneration case.
\end{itemize}

\subsubsection{Logarithmic Abel-Jacobi map}\label{sec:logabeljacobi}

We want to define a \emph{logarithmic Abel Jacobi map}
\[\rm{logAJ}_{\sigma_0}\colon C\to\LogPic^0(C\slash S).\]

\begin{defi}
    
A \emph{logarithmic section}  of $(C,M_C)\xrightarrow{\pi} (S, M_S)$,  is a logarithmic map \[(S, M_S)\xrightarrow{p}  (C,M_C),\] 
such that $\pi\circ p\colon (S, M_S)\to (S, M_S)$ is the identity over fine and saturated logarithmic schemes.
In particular, given a logarithmic section we get a morphism
\begin{equation}\label{eq:logsection}
 M_{C,p}\to M_S,
\end{equation}
compatible under the structure map with the schematic map $\mathcal O_{C,p}\to \mathcal O_S$
and such that the composition $M_S\xrightarrow{\bar{\pi}\circ\bar{p}} M_S$ is the identity.
\end{defi}

Such data induces via tropicalization a maps of generalized cone complexes (to be precise cone stacks in the sense of \cite{cavalieri2020moduli}):

\bcd
\Sigma\ar[r, "\bar{p}
"]\ar[dr,"\cong"] &\Gamma\ar[d,"\bar{p}"] \\
& \Sigma,
\ecd

where we denote by $\Sigma$ the tropicalization of $(S,M_S).$ We can think of $\Sigma$ as a diagram of cones with face maps (including self maps and multiple face maps) or as an algebraic stack, an Artin fan. The results of \cite{cavalieri2020moduli} allow us to use the two points of view interchangeably. 

\medskip

When $p$ lies in the smooth locus of  $C$ then $\bar{M}_{C,p}=\pi^*\bar M_S\rvert_p$ and the morphism \eqref{eq:logsection} is determined by the schematic data, the logarithmic Abel-Jacobi map is defined essentially as before by 
\[p\mapsto[\mathcal O_C(p-\sigma_0)],
\]
where $[\mathcal O_C(p-\sigma_0)]$ denotes the  image  under the the morphism
\[\Pic^{\rm totdeg=0}(C\slash S)\to\LogPic^0.(C\slash S),\]
well-defined using Definition~\ref{defi:degreeloglb},

In the case where $p$ is intersecting  the singular locus, the definition we gave is too restrictive and $p$ cannot always be lifted to a strict logarithmic  section.
Indeed, let's restrict to a fiber $C_s$ such that $p(s)$ is a node. Requiring to have a map of monoids $\bar{p}\colon\bar{M}_{C,p}\to\bar{M}_S$, where
\[\bar{M}_{C,p}=\left\{(a,b)\in \bar{M}_S\times\bar{M}_S\;|\;a-b=\ell(e)\mathbb Z\right\}\subseteq  \bar{M}_S\times\bar{M}_S,\]
such that $\bar{M}_S\xrightarrow{\bar \pi}\bar{M}_{C,p}\xrightarrow{\bar{p}}\bar{M}_S$ is the identity  is equivalent to require the existence of a section passing through the node for the local toric model:

\[\Spec( k[\bar{M}_S][x,y]\slash (xy-t^{\ell(e)})\to \Spec( k[\bar{M}_S]),\]
which is not always possible for $t$ a non invertible parameter in $k[\bar{M}_S]$. In other words, the image of $\bar{p}$ has to tropicalize to a lattice point of the edge $e$, of which there may not be any. Such points may appear after a base change.

\begin{example}
    Let $(C, M_C)\to (\Spec k,\NN)$ be a log smooth curve over the standard log point and let $p\in C$ a node. Then, étale locally around $p$ we have $\mathcal O_{C,p}=k[x,y]/(xy).$ A quasi-log section with underlying section $\Spec k\to p$ is the data of a monoid morphism
    \[\bar{M}_{C,p}\cong\left\{(a,b)\in\mathbb N^2_{f_1,f_2}\;|\; a-b=\ell(e)\mathbb Z\right\}\xrightarrow{(m\;n)}\mathbb N,\]
    with $m,n\neq 0.$ On the other hand, the structure morphism $\bar{M}_S=\mathbb N_f\xrightarrow{\pi} \bar{M}_{C,p}$ identifies the image of the generator $f$  with $\frac{f_1+f_2}{\delta(e)}$. So the composition $\bar{p}\circ\bar{\pi}\colon \mathbb N\to\mathbb N$ sending $f\to \frac{n+m}{\ell(e)}f$ is the identity only when it corresponds to multiplication by one, which is impossible if $\delta(e)=1.$ On the other hand, the induced map of rays $\rho\cong\mathbb R_{\geq 0}$ is always a isomorphism. Up to refining the latticed $\mathbb Z$ to $\frac{1}{n+m}\mathbb Z$ the isomorphism respect the integral structure.
\end{example}

We  thus give the following definition:

\begin{defi}
A \emph{quasi-logarithmic section}  of   $(C,M_C)\xrightarrow{\pi}(S,M_S)$  is a logarithmic map \[(S, M_S)\xrightarrow{p}  (C,M_C)\] 
 such that the induced maps of cone stacks $\bar{p}\colon\Sigma\to\Gamma$ is a section in the sense that $\bar{\pi}\circ\bar{p}\colon\Sigma\to\Sigma$ is a rational cone stack isomorphism, i.e. an isomorphism cone by cone compatible with face morphisms. Local quasi-log sections, i.e. section on a \'etale or Zariski $U\to S$ are defined the same way.
 \end{defi}
A logarithmic quasi-section induces a commutative diagram which by \cite{KatoF},  \'etale locally on $S,$ looks as follows

\bcd
\bar{M}_{C,p}=\bar{M}_S\oplus_{\mathbb N}\oplus\mathbb N^2\oplus \mathcal O^*_{C,p}\ar[r, "p"]\ar[d] &\bar{M}_S\oplus \mathcal O_{S}^*.\ar[d]\\
\mathcal O_{C,p}=\mathcal O_S[x,y]\slash (xy-t)\ar[r] & \mathcal O_S
\ecd

If we restrict to the locus $t=0,$ then 
since the generator $e_1$ and $e_2$ of $\mathbb N^2$ go to $x$ and $y$ under the left vertical map and so to $0$  in $\mathcal O_S\slash (t),$ it follows by the commutativity that neither of the two can map to $0$ in $\bar{M}_S$ under $\widetilde{p}.$

Namely, working relatively to the base, over a fixed point $q\in\Sigma$ we have that $\widetilde{p}(q)$ 
lies in the interior of the edge $e\in\Gamma_q$  corresponding  to the node $p.$

\begin{lem}
Let $p$ be a logarithmic point in the sense above.
  There exists a logarithmic modification $(S',M_S')\to (S,M_S)$  and a subdivision $(\widetilde{C},M_{\widetilde{C}})\to (C',M_C')$ such that the pull-back $p'$ of $p$ to $S'$ is a a strict log point which admits a lift  $\widetilde{p}$ to $(\widetilde{C},M_{\widetilde{C}}).$
\end{lem}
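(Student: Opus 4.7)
The plan is to reduce to a strict log section in two steps: a root-type log modification of the base to make the image point integral along the edge, followed by a subdivision of the curve at that point to smoothen it.

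First, I would analyze the tropical picture at the node. Fix a geometric point $s \in S$ and let $\sigma \subset \Sigma_s$ be a cone whose image $\bar{p}(\sigma)$ lies in the interior of the edge $e \in \Gamma_s$ corresponding to the node $p$. In the étale-local chart
\[\bar{M}_{C,p} \cong \bar{M}_S \oplus_{\mathbb{N}} \mathbb{N}^2,\]
the section is determined by a pair $(m,n) \in (\bar{M}_{S,s}^\gp)^2$ with $m + n = \ell(e)$ holding only after tensoring with $\mathbb{Q}$. The rational ratio $m:n$ is intrinsic and determines a unique interior ray of $e$, which is the one we want to promote to a genuine lattice point.

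Second, I would perform a Kummer log modification $(S', M_{S'}) \to (S, M_S)$ that extracts a sufficiently divisible root of $\ell(e)$, so that $m$ and $n$ lift to actual elements of $\bar{M}_{S', s'}$ with $m + n = \ell(e)$ on the nose. Étale-locally this amounts to replacing $\bar{M}_S$ by $\frac{1}{N}\bar{M}_S$ for a suitable integer $N$; globally, one assembles these local root constructions (one per node in the image of $p$) using the standard theory of root stacks of Deligne--Faltings log structures, after a preliminary log blow-up if needed to separate the nodes. With this done, the pulled-back section $p'$ becomes strict in the sense that $\bar\pi \circ \bar{p}'$ is the identity on $\bar{M}_{S'}$.

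Third, with $\bar{p}'(\sigma)$ now an interior lattice point of $e$, I would take the corresponding subdivision $\widetilde{C} \to C' = C \times_S S'$ that inserts a vertex $v$ at this position, splitting $e$ into edges of lengths $m$ and $n$. By the tropical--logarithmic dictionary this is a logarithmic modification; geometrically, it is the log blow-up of the node, introducing an exceptional $\mathbb{P}^1$-component $E$ whose interior corresponds to the new vertex. The lifted section $\widetilde{p}$ lands at the point of $E$ corresponding to $v$, which is a smooth point of $\widetilde{C}$; strictness is preserved since $\bar{p}'$ now factors through the tropicalization of $\widetilde{C}$, and at a smooth point the characteristic of the curve is just the pull-back of $\bar{M}_{S'}$.

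The main obstacle I expect is the bookkeeping in the second step: one must assemble per-node root constructions into a single log modification of $S$ that remains valid in families as $s$ varies, and choose $N$ uniformly enough to clear all the denominators arising from the various image points of $p$. Once suitable charts are fixed this reduces to a routine application of \cite{KatoF} and the functorial description of log blow-ups and root stacks; the remainder is a direct translation of the tropical subdivision into the scheme-theoretic picture.
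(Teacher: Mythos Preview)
Your approach is correct and essentially identical to the paper's: a Kummer (root) extension of the base to make the tropical image of $p$ integral, followed by a subdivision of the curve at that point so that the proper transform lands in the smooth locus. The paper handles your ``main obstacle'' (assembling the local modifications coherently in families) in one stroke by declaring that everything is pulled back from the universal toric geometry determined by the cone map $\sigma\xrightarrow{\mathfrak{p}}\Gamma$, which is exactly the mechanism you are reaching for when you invoke the tropical--logarithmic dictionary and \cite{KatoF}.
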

\begin{proof}
Everything is pulled back from the universal geometry, i.e. the local toric picture, where both $(S',M_S')$ and $(\widetilde{C},M_{\widetilde{C}})$ are determined by the maps of cones $\sigma\xrightarrow{\mathfrak{p}}\Gamma.$ 

In the universal geometry case $(S',M_S')\to (S,M_S)$ is the Kummer extension determined by the change of lattices necessary to make $\mathfrak{p}$ integral and  $(\widetilde{C},M_{\widetilde{C}})$ by  the subdivison of $\Gamma$ necessary to make  $\mathfrak{p}$ combinatorially flat and resolve the singularities of the total space of base change $C'$ . Finally $\widetilde{p}$ is simply the proper transform of $p'.$

\end{proof}
Let $p_0$  be a smooth point of $C,$ i.e. a strict section.
Then given a logarithmic point $p\colon (S,M_S)\to (C,M_C)$ we define

\[\rm{logAJ}_{p_0}(p)=[\mathcal O_{\widetilde{C}}(\widetilde{p}-p_0)]\in\LogPic(\widetilde{C}\slash S')=\LogPic(C\slash S).\]

It is now immediate to define $\rm{logAJ}_{p_0}$ for a curve over a log point $(C, M_C)\to(S=\Spec k, M_S).$
Then $\LogPic^0(C\slash S)$ represents the moduli functor over fine and saturated log schemes parametrizing isomorphism classes of logarithmic line bundles on  $(C,M_C)\to(S,M_S)$ and 
\[\rm{logAJ}_{\sigma_0}\colon C/S\to \LogPic^0(C/S)\]
is the  data of a log line bundle  $L$ on $(C,M_C)\times_{(S,M_S) }(C,M_C)\xrightarrow{\pi_1}(C,M_C)$ such that the restriction to $(p,M_S)\to (C,M_C)$ a log point of the base is the morphism described above. 
\begin{rem}
    Since $(C,M_C)\to(S,M_S)$ is combinatorially flat, the fiber product in the category of  fs Log scheme coincides with the  fiber product in coherent log schemes and in particular the underlying scheme is $C\times_S C$
\end{rem}

Let $(C, M_C)\xrightarrow{\Delta} (C\times C, M_C\times_{M_S} M_C)$ be the diagonal. 
Applying the previous Lemma to the diagonal section we obtain a
 logarithmic modification $(\widetilde{\mathcal C},M_{\widetilde{\mathcal C}})\to (C\times C, M_C\times_S M_C)$
such that the  strict transform of $\widetilde{\Delta}$ of the  diagonal is a section lying in the smooth locus.

Then the required log line bundle is the line bundle represented by the honest line bundle 

\[\mathcal O_{\widetilde{C}}(\widetilde{\Delta})\otimes\pi_2^*\mathcal O_C(-p_0)\]

\begin{lem}
Let $(C,M_C)\to (S,M_S)$ be a log smooth curve over a log point. Then the \emph{tropicalization} the logarithmic Abel-Jacobi map as defined above is the tropical Abel- Jacobi map of Mikhalkin-Zharkov\cite{mikhalkin2008tropical}.
    \end{lem}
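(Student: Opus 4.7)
The goal is to show that, after tropicalization, the map $\mathrm{logAJ}_{p_0}\colon C \to \LogPic^0(C/S)$ induces on the underlying cone complex exactly the tropical Abel-Jacobi map $\mu_{q_0}\colon \Gamma \to \TroJac(\Gamma)$ of \cite{mikhalkin2008tropical}, where $q_0 = \Trop(p_0)$ is a vertex of $\Gamma$ since $p_0$ lies in the smooth locus. Both maps are piecewise $\mathbb Z$-linear in the target point, so it is enough to check the equality on a dense set of $q \in \Gamma$ and extend by continuity.

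The plan is to verify both sides produce the same tropical divisor class. For a chosen rational point $q$ of $\Gamma$, I pick a subdivision $\widetilde{\Gamma}$ of $\Gamma$ in which $q$ becomes a vertex, together with a quasi-logarithmic section $p$ of $C$ with $\Trop(p)=q$; the lifting lemma above furnishes, after a Kummer base change on $S$ and the matching subdivision $\widetilde{C}\to C$, a strict section $\widetilde{p}$ landing in the smooth locus of $\widetilde{C}$. By construction $\mathrm{logAJ}_{p_0}(p)$ is represented by the honest line bundle $\mathcal{O}_{\widetilde{C}}(\widetilde{p}-p_0)$ of total degree $0$. On the other side, after the same subdivision, $\mu_{q_0}(q)$ is by definition the class in $\TroJac(\widetilde{\Gamma})=\TroJac(\Gamma)$ of the degree-zero tropical divisor $q-q_0\in \rmH^0(\widetilde{\Gamma},\mathcal V)$, i.e. the $\mathcal{L}$-torsor produced from $q-q_0$ via the connecting map of the first row of Diagram~\eqref{eq:tropicalsheaves}.

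The bridge between the two is the compatibility of the algebraic short exact sequence $0 \to \mathcal{O}_C^\times \to M_C^{\gp} \to \overline{M}_C^{\gp} \to 0$ with its tropical counterpart $0 \to \mathcal L \to \mathcal{PL} \to \mathcal E/\mathcal V\cdot\mathcal{V}\to 0$ from Diagram~\eqref{eq:tropicalsheaves}. Under the Molcho-Wise identification $\Trop(\LogPic^0(C/S))\cong \TroJac(\Gamma)$ recalled in the construction of $\LogPic$, a multidegree-zero line bundle $\mathcal{O}_{\widetilde{C}}(D)$ on a model $\widetilde{C}$ tropicalizes to the $\mathcal L$-torsor class on $\widetilde{\Gamma}$ attached to the tropical divisor $\Trop(D)$ obtained by recording the multiplicities of $D$ at each vertex. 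Applied to $D=\widetilde{p}-p_0$, this gives exactly $[q - q_0]\in\TroJac(\widetilde{\Gamma})$, matching $\mu_{q_0}(q)$.

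The only non-formal step is the last compatibility, i.e. that the snake-lemma map producing $\LogPic$ from $M_C^{\gp}$ commutes with the snake-lemma map producing $\TroJac(\Gamma)$ from $\mathcal{PL}$. This is a direct diagram chase: the vertical comparison is furnished by passing to $\overline M_C^{\gp}$ on the curve and identifying $\rmH^1(C,\overline M_C^{\gp})=\rmH^1(\Gamma,\mathcal{PL})$ and $\rmH^0(C,\overline M_C^{\gp})=\rmH^0(\Gamma,\mathcal{PL})$, and then using that the total-degree map of Definition~\ref{defi:degreeloglb} coincides with the boundary $\rmH^1(\Gamma,\mathcal L)\to \rmH_0(\Gamma,\mathbb Z)$. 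This is the one place where some care is required; everything else is an unraveling of definitions together with the observation that both maps are piecewise linear, hence determined by their values at rational points.
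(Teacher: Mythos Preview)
Your proposal is correct and follows essentially the same approach as the paper: both arguments unwind the definition of $\mathrm{logAJ}_{p_0}(p)=[\mathcal{O}_{\widetilde{C}}(\widetilde{p}-p_0)]$ on a model $\widetilde{C}$, and then check that under the tropicalization map $\LogPic^0(C/S)\to\TroJac(\Gamma)$ (which the paper imports from \cite[Section~4.9]{molchowiselog}, where your ``non-formal step'' is handled) this class goes to the tropical divisor class $[q-q_0]$. Two small remarks: the line bundle $\mathcal{O}_{\widetilde{C}}(\widetilde{p}-p_0)$ is total-degree zero, not multidegree zero as you wrote; and the ``dense set plus continuity'' framing is unnecessary since both maps are defined functorially on log points, so a direct pointwise comparison (as the paper does) already suffices.
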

    
\begin{proof}
It is proved in \cite[Section~4.9]{molchowiselog}  that the $\mathcal P\mathcal L$-torsor (automatically with bounded monodromy) associated to a log line budle $\mathcal F$ admits a canonical lift to a torsor under the sheaf $\mathcal L$ of linear functions. So we have am explicit description of the \emph{tropicalization map}:
\[\rm{trop}\colon\LogPic^0(C\slash S)\to\TroJac(\Gamma\slash S),\]
where $\Gamma$ is the tropical curve metrized in $\bar{M}_S$ associated to $(C,M_C)$
Furthermore, it follows from the bounded monodromy condition that every $\mathcal L$-torsor with bounded monodromy is representable by a tropical divisor of degree zero on some model of $\Gamma$, i.e. a tropical curve metrized in extensions of $\bar{M}_S$, obtained from $\Gamma$ subdividing the edges. In other words, in analogy with the $\LogPic$, the tropical Jacobian admits a cover (in the log étale topology of log fs schemes) by Jacobians of models of $\Gamma$\footnote{If we consider the classical setting of tropical curve metrized in $\mathbb R$ there is no need to extend the base monoid, i.e. all the models are just subdivision of the given curve, and the tropical Jacobian of $\Gamma$ simply parametrizes degree zero tropical divisors, up to tropical rational equivalence. We refer to \cite{mikhalkin2008tropical} for a thorough study of this case.}.

Using this point of view, the tropical Abel-Jacobi map is simply defined in \cite{mikhalkin2008tropical} as
\[\begin{array}{rccl}
    \operatorname{tropAJ}_{v_0}\colon &\Gamma & \longrightarrow & \operatorname{TropJac}(\Gamma) ,\\
    & p & \longmapsto & [p-v_0]
\end{array} \]
where $[p-v_0]$ denotes the class of the total degree zero divisor $p-p_0$.
Notice that the choice of $p\in\Gamma$ determines a model of $\Gamma$. To be precise, a point $p\in\Gamma$ is a section over the cone $\sigma$ dual to the base monoid $\bar{M}_S$

Now, given $p\colon (S,M_S)\to (C,M_C)$ a quasi-log section, this determines via tropicalization $C\to\Gamma$ a point $p\in\Gamma$ in the sense above, and thus a model $\widetilde{\Gamma}$ of $\Gamma$ metrized in $\bar{M}_{S'}$ and a corresponding model $(\widetilde{C},M_{\widetilde{C}})\to (S',\bar{M}_{S'})$ for the log curve such that the proper transform $\widetilde{p}$ is a section through the smooth (alias the strict) locus. Now the image of $p$ under the log Abel map is the class in $\LogPic^0(C/S)$ of $\mathcal O_{\widetilde{C}}(\widetilde{p}-\sigma_0).$ On the other hand this tropicalizes to $\mathrm{tropAJ}_{v_0}(p)=[p-v_0]$ showing that the map induced by tropicalization is the obvious one.
\end{proof}

\subsubsection{Polarization from the Abel-Jacobi Section}
Recall the construction of $\LogPic^0(C\slash S)^\vee$ as log dual torus. 
We have only discussed how to do so in the case of constant degeneration, but we notice that the semi-abelian scheme $(R^1\pi_*\mathcal O_{C}^*)^{[0]}$ and thus its dual in the sense of \cite{kajiwara2008logabelian2} is globally defined over $S$, as well as the sheaves $X=Y=R^1p_*\underline{\mathbb Z}$ where $p$ is the projection from the universal tropical curve $\Gamma$ to the base $S$; these restrict to sheaves of lattices over each logarithmic stratum of $S$.

\begin{lem}
    Via the log Abel Jacobi map one can define a polarization, namely a morphism
\[\LogPic^0(C\slash S)^\vee\xrightarrow{\rm{logAJ}^*_{\sigma_0}} \LogPic^0(C\slash S).\]
\end{lem}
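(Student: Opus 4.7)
The plan is to define the morphism by pulling back logarithmic line bundles along the logarithmic Abel-Jacobi map, and then to verify that the resulting morphism of log-tori satisfies the four axioms of a polarization listed in Section~\ref{sec-log-abelian}. Recall from Section~\ref{sec:logAJ} that $\LogPic^0(C/S)^\vee$ parametrizes (log) degree zero log line bundles on $\LogPic^0(C/S)$. Given such a universal log line bundle $\P$, the pull-back $\rm{logAJ}_{\sigma_0}^*\P$ is a log line bundle on $C$; after normalizing by $\sigma_0$ (so that it vanishes on the section $\sigma_0$), it has total degree $0$, thus defining an element of $\LogPic^0(C/S)$. Functoriality in $S$ gives a morphism of log-tori $\rm{logAJ}_{\sigma_0}^*\colon\LogPic^0(C/S)^\vee\to\LogPic^0(C/S)$.

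Next, I would unwind this map on the quotient presentation of $\LogPic^0$ obtained in the constant degeneration case, namely
$$0\to \rmH_1(\Gamma,\ZZ)\to(\rmH^1(C,\pi^*M_S^\gp)^{[0]})^\dagger\to\LogPic^0(C/S)\to 0,$$
to identify the three pieces of data constituting a morphism: the action on the abelian part $B=\prod_v\Pic^0(C_v/S)$, on the torus part $T=\rmH^1(\Gamma,\O_S^*)$, and on the lattice $Y=\rmH_1(\Gamma,\ZZ)$. The dual log-torus has $X=\rmH_1(\Gamma,\ZZ)$ and $B^\vee=\prod_v\Pic^0(C_v/S)^\vee$ (using self-duality of smooth Jacobians), so we expect the map on lattices $h_{-1}\colon Y\to X$ to be the identity and the map on abelian parts $h_0^{\rm ab}\colon B\to B^\vee$ to be the principal polarization obtained componentwise via Abel-Jacobi on each $C_v$. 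For the torus parts one uses the compatibility in the definition of $G^\vee$: the map $T\to T^\vee$ reads off the intersection pairing.

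The final step is to check the axioms (a)--(d) of Section~\ref{sec-log-abelian}. Axiom (a), that the induced map $B\to B^\vee$ is a polarization, reduces to the classical Abel-Jacobi self-duality of the Jacobian of each smooth component, recalled at the beginning of Section~\ref{sec-log-picard-group}. Axiom (b) is automatic since $h_{-1}$ is in fact an isomorphism, by the identification of $X$ and $Y$ with $\rmH_1(\Gamma,\ZZ)$ under the chosen presentations. Axiom (c), the positivity of the pairing $\langle-,h_{-1}(-)\rangle$, follows from the fact that this pairing on $\rmH_1(\Gamma,\ZZ)$ is precisely the (length-weighted) intersection pairing defined in Section~\ref{sec-log-picard-group}, which is positive definite: any cycle $\gamma$ pairs with itself to $\sum_e n_e(\gamma)^2\ell(e)\in\bar M_S$, where $n_e(\gamma)$ is the multiplicity of the edge $e$ in $\gamma$. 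Axiom (d), compatibility between the torus-level map and $h_{-1}$, is a direct computation from the construction of the push-out defining $G^\vee_{\log}$.

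The main obstacle is the bookkeeping of Step 2: verifying that pull-back along $\rm{logAJ}_{\sigma_0}$ induces exactly the intersection pairing on $\rmH_1(\Gamma,\ZZ)$ (and hence the identity $Y\to X$), as opposed to some non-trivial automorphism. Concretely, one needs to trace a torsor on $\LogPic^0(C/S)$ attached to a lattice element $x\in X$ through the pull-back and compare its monodromy around a cycle $\gamma\in\rmH_1(\Gamma,\ZZ)$ in $C$ with the intersection number $x\cdot\gamma$. This can be done by working with the universal log line bundle on $C\times_S\LogPic^0(C/S)$ and restricting to the section $(\rm{logAJ}_{\sigma_0},\mathrm{id})$, and is essentially a log enhancement of the classical computation on smooth Jacobians.
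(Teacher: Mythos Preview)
Your overall strategy is sound and in fact more complete than the paper's own proof: the paper only verifies that the pull-back morphism is well-defined (i.e.\ that $\rm{logAJ}_{\sigma_0}^*\mathcal F$ really lands in $\LogPic^0(C/S)$), checking \emph{bounded monodromy} and \emph{total degree zero} by passing to the tropicalization and invoking that these conditions are tropical in nature. The paper does not explicitly verify axioms (a)--(d); it leaves them implicit, and in the subsequent remark even notes that what has been constructed is, strictly speaking, a polarization on the dual.

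The main difference is therefore one of emphasis. You take well-definedness essentially for granted (``after normalizing by $\sigma_0$\ldots it has total degree $0$'') and then plan to check the polarization axioms carefully; the paper does the opposite. Your Step~2 and the verification of (a)--(d) are reasonable and, once the morphism is known to exist, largely bookkeeping as you describe. However, you should not skip the well-definedness step: for a log line bundle on $C$ one must check bounded monodromy, and this is the non-formal content. The paper's argument here is short but genuinely needed --- it reduces to the tropical picture via $\overline{a^*\mathcal F}=\rm{tropAJ}_{v_0}^{-1}\bar{\mathcal F}\otimes_{\pi^*\bar M_S^\gp}\bar M_C^\gp$ and observes that a lift of this $\mathcal{PL}$-torsor to $\Hom(\rmH_1(\Gamma,\ZZ),\bar M^\gp)$ is exactly the bounded-monodromy datum $(\bar h,\bar{\mathcal F})$ one started with. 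If you incorporate this check into your Step~1, your proof becomes a strictly more detailed version of the paper's.
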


\begin{rem}
    Technically, this is rather a polarization on the dual. In this specific case, it is expected to be an isomorphism so that we also get a polarization on $\LogPic^0(C/S)$. Otherwise, it can be shown that a polarization on a log-torus induces a polarization on its dual.
\end{rem}

\begin{proof}
    An object $\mathcal F$ in $\LogPic^0(C\slash S)^\vee$ is a $M_S^{\gp}$-torsor over $\LogPic^0(C\slash S)$ obtained as described in \ref{sec-dual-torus-abelian-var}. We can consider the pull-back $\rm{logAJ}_{\sigma_0}^{-1}\mathcal F$ which is a $\pi^*M_S^{\gp}$-torsor to which is naturally associated the $M_C^{\gp}$- torsor 
$a^*\mathcal F:=\rm{logAJ}_{\sigma_0}^{-1}\mathcal F\otimes_{\pi^*M_S^{\gp}}M_C^{\gp}.$
In order to prove that $a^*\mathcal F\in\LogPic^0(C/S)$ we need to argue that it has bounded monodromy and degree $0.$ Both are conditions that can be checked on the associated $\mathcal{PL}$-torsor \[\overline{a^*\mathcal F}=\overline{\rm{logAJ}_{\sigma_0}^{-1}\mathcal F\otimes_{\pi^*M_S^{\gp}}M_C^{\gp}}=\rm{tropAJ}_{v_0}^{-1}\bar{\mathcal F}\otimes_{\pi^*\bar{M}_S^{\gp}}\bar{M}_C^{\gp}.\]

Moreover these are fiber-wise conditions, so we can reduce to working with over $(S,M_S)=(\Spec k, \bar{M})$ for some monoid $ \bar{M}.$

So we reduce to study the following: $\Gamma$ is tropical curve metrized in $\bar{M}$, and
\[a\colon\Gamma\to \Hom(\rmH_1(\Gamma,\mathbb Z),\bar{M}^{\gp})^\dagger\slash \Hom(\rmH_1(\Gamma,\mathbb Z)),\]
the tropicalization of the log Abel-Jacobi, 
$\overline{a^*\mathcal F}\in \rm H^1(\Gamma,\mathcal P\mathcal L)$ has bounded monodromy, namely some lift to a representative in $\Hom(\rm H_1(\Gamma,\mathbb Z),\bar{M}^{\gp}) $ has bounded monodromy (see diagram \ref{eq:tropicalsheaves}), but such lift is tautologically $(\bar{h},\bar{\mathcal F})$ which has bounded monodromy by assumption.

\end{proof}

\begin{rem}
    We remark that since the tropicalization of the logarithmic Abel-Jacobi map is the tropical Abel-Jacobi map, which in the classical setting of tropical curves metrized in $\mathbb R$ is the one considered in \cite{mikhalkin2008tropical}, by the results of \emph{ibid.} the polarization above tropicalizes to the self-duality isomorphism for the tropical Jacobian of tropical curves. 
With some care, it should be possible to conclude that the polarization is in fact an isomorphism of tropical abelian variety over fs log scheme in the sense of \cite[Section~3]{molchowiselog}.
\end{rem}

\subsubsection{Universal property for morphisms to abelian varieties}
\begin{prop}\label{prop:logalb}
Let $(C,M_C)\to(S,M_S)$ be a proper, vertical, log smooth curve with constant degeneration and $\sigma_0$ a strict section. Then $(\LogPic^0(C\slash S),\rm{logAJ}_{\sigma_0})$ is initial for morphism $C\xrightarrow{\varphi} \mathcal A$ to abelian varieties such that $\varphi(\sigma_0)=0_{\mathcal{A}}$
\end{prop}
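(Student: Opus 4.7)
The plan is to use the quotient presentation
\[
\LogPic^0(C/S) = G^{(Y)}_{\log}/Y, \qquad Y = \rmH_1(\Gamma,\ZZ),
\]
available in the constant-degeneration case, with $G = \Pic^{[0]}(C/S)$ sitting in the semi-abelian extension $0 \to T \to G \to B \to 0$ where $T \simeq \rmH^1(\Gamma,\Gm)$ is a torus of rank $b_1(\Gamma)$ and $B = \prod_v \Pic^0(C_v/S)$ is the product of Jacobians of the normalized components. I construct the factoring morphism $f_\ast : \LogPic^0(C/S) \to \mathcal A$ piece by piece on this presentation, then verify the factoring property at smooth points and uniqueness by a density argument.

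On each component $C_v$, the restriction $\varphi|_{C_v} : C_v \to \mathcal A$ is a morphism of smooth projective varieties, so classical Abel-Jacobi yields a canonical base-point-free morphism $(\varphi_v)_\ast : \Pic^0(C_v/S) \to \mathcal A$. These assemble into a morphism $B \to \mathcal A$. Since any morphism from a torus to an abelian variety is trivial, this lifts uniquely to $G \to \mathcal A$ annihilating $T$. Because $\mathcal A$ carries the trivial log structure, the functor $\mathcal A$ on log schemes cannot distinguish $\Gm$ from $\Gmlog$, and $T_{\log} \to \mathcal A$ is forced to be zero; the map therefore extends canonically to $G^{(Y)}_{\log} \to \mathcal A$, factoring as $G^{(Y)}_{\log} \twoheadrightarrow B \to \mathcal A$.

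The main technical step is the verification that the composition $Y \xrightarrow{u} G^{(Y)}_{\log} \to \mathcal A$ vanishes, so that the map descends to the quotient. Unraveling the Snake Lemma construction of $u : Y \to G^{(Y)}_{\log}$ recalled above, a cycle $\gamma = \sum n_e e$ in $\rmH_1(\Gamma,\ZZ)$ projects in $B$ to the class of a divisor supported on preimages of the nodes along $\gamma$: on the normalization its $v$-th component is $\sum_e n_e (q_e^+ - q_e^-)|_{C_v}$, where $q_e^\pm$ are the two branches of the node corresponding to the edge $e$. Applying componentwise Abel-Jacobi produces the telescoping sum $\sum_e n_e (\varphi(q_e^+) - \varphi(q_e^-))$ in $\mathcal A$. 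Since $\varphi$ is a morphism defined globally on $C$, the two branches $q_e^\pm$ map to the same image point (the image of the node), so the sum vanishes. This is the precise step at which the global gluing of $\varphi$ across nodes enters the argument; it is the main obstacle, being the only point where the log-theoretic structure has to interact with the purely algebraic target.

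To verify $f_\ast \circ \rm{logAJ}_{\sigma_0} = \varphi$, by density it suffices to check at smooth points $p$ of $C$, where $\rm{logAJ}_{\sigma_0}(p) = [\mathcal O_C(p - \sigma_0)]$ and $f_\ast[\mathcal O_C(p - \sigma_0)] = \varphi(p) - \varphi(\sigma_0) = \varphi(p)$ by classical Abel-Jacobi on the component containing $p$, combined with the vanishing of $T \to \mathcal A$ to handle components not containing $\sigma_0$. For a quasi-logarithmic section meeting a node, one passes to the subdivision and base change of Section~\ref{sec:logabeljacobi}; the strict transform $\widetilde p$ is a smooth point and $\varphi(\widetilde p) = \varphi(p)$ since $\mathcal A$ has trivial log structure, so the equality is preserved. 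Finally, uniqueness: two factoring morphisms differ by a group morphism $\LogPic^0(C/S) \to \mathcal A$ vanishing on $\rm{logAJ}_{\sigma_0}(C)$; restricted to each component, the image of $\rm{logAJ}_{\sigma_0}$ generates $\Pic^0(C_v)$ by the classical Jacobi theorem, and on the tropical/log directions the vanishing is automatic since $\mathcal A$ has no log structure, so the difference is identically zero.
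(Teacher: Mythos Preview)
Your approach via the quotient presentation $\LogPic^0 = G_{\log}^{(Y)}/Y$ is a legitimate alternative to the paper's, which instead works with explicit line-bundle representatives on models: it invokes Lemma~\ref{lem:degreeonedges} to twist $\mathcal O_{\widetilde C}(p-\sigma_0)$ by a PL function $\alpha$ so that the multidegree is supported only on the new edge-vertices of a subdivision $\widetilde\Gamma$, then defines $\phi$ as $\sum_{v\in\sfV(\Gamma)} \phi_{v,*}$ plus an explicit translation $\tau(\alpha)=\sum_e\bigl(\sum_{v\in e^\circ}d_v\bigr)\varphi(q_e)$ collecting those edge-degrees at node images; well-definedness reduces to checking $\phi(\mathcal O(\lambda))=0$ for PL functions $\lambda$ bending only off $\sfV(\Gamma)$. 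Your route is more structural and sidesteps subdivisions and translation bookkeeping; the paper's is more hands-on, which matches its later computational needs.

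There is, however, a real gap in your factoring verification. When $p$ and $\sigma_0$ lie on different components, $\mathcal O_C(p-\sigma_0)$ has multidegree $\neq[0]$, so it does not lie in $G=\Pic^{[0]}(C)$ and you cannot simply project to $B$ and invoke ``classical Abel--Jacobi on the component containing $p$''. To evaluate your $f_*$ on this class you must first lift it to $G_{\log}^{(Y)}$, i.e.\ twist by a PL function to force multidegree zero; the componentwise restrictions then pick up node-preimage terms along a path from $\sigma_0$'s component to $p$'s, and $\sum_v(\varphi_v)_*$ telescopes to $\varphi(p)-\varphi(\sigma_0)$. This is the path-version of your own cycle argument for killing $Y$, and it is precisely what the paper's Lemma~\ref{lem:degreeonedges} together with $\tau(\alpha)$ accomplishes. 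The phrase ``vanishing of $T\to\mathcal A$'' does not fill the gap: that only tells you $G\to\mathcal A$ factors through $B$, not how a non-multidegree-zero line bundle acquires a lift in $G$ in the first place.
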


\begin{proof}
    We need to show that $(\LogPic^0(C\slash S),\rm{logAJ}_{\sigma_0})$ satisfies the universal property with respect to maps $\varphi\colon   C\to\mathcal A$ mapping $\sigma_0$ to the identity. This means defining $\phi\colon(\LogPic^0(C\slash S)\to\mathcal A$ such that $\phi\circ\rm{logAJ}_{\sigma_0}=\varphi.$
    Let $p\in C$  a logarithmic point (over some fs log scheme $
    (T,M_T)\to(S,M_S)$) then $\rm{logAJ}_{\sigma_0}(p):=[\mathcal O_{\widetilde{C}}(p-\sigma_0)]$  where $\widetilde{C}$ is a model of $C$ possibly over a log modification $(T',M_{T'})\to(T,M_{T})$, such that $p$ pulls-back to a strict logarithmic point of $\widetilde{C}$. We denote by $\widetilde{\Gamma}$ the tropicalization of this model. 

     Furthermore, by Lemma~\ref{lem:degreeonedges}  below, we can assume that there exists $\alpha\in \rmH^0(T',\pi_*\bar{M}_{\widetilde{C}}^\gp)$ such that $\mathcal O_{\widetilde{C}}(p-\sigma_0)(\alpha)\in \Pic^0(\widetilde{C}\slash T')$ has non-zero degree only possibly on $\widetilde{C}_v$ for $v\in \sfV(\widetilde{\Gamma})\setminus\sfV(\Gamma)$. Since these components are rational $\Pic^{d_v}(\widetilde{C}\slash T')\cong T'$. In other words, the degree is $0$ at vertices of $\Gamma$.

    We denote by $\Pic^{[0]_{\sfV(\Gamma)}, [d_e]}(\widetilde{C}\slash T')$ the connected component of $\Pic^0(\widetilde{C}\slash T')$ containing the section $\O_{\widetilde{C}}(p-\sigma_0)(\alpha).$
We wish to define a morphism 
\[\phi\colon \Pic^{[0]_{V(\Gamma)}, [d_e]}(\widetilde{C}\slash T')\to \mathcal A,\]
such that $\phi (  \O_{\widetilde{C}}(p-\sigma_0)(\alpha))=\varphi(p)$
and such that it does not depend from the choice of strict piece-wise linear  function $\alpha$ with the  property above.

Notice that $\Pic^{[0]_{\sfV(\Gamma)}, [d_e]}(\widetilde{C}\slash T')$  is a $\Gm^{b_1(\Gamma)}$- bundle over
\[\prod_{v\in \sfV(\widetilde{\Gamma})} \Pic^{d_v}(C_v\slash T')\cong \prod_{v\in \sfV(\Gamma)}  \Pic^{0}(C_v\slash T').\]
Since morphisms from an algebraic torus to an abelian variety are constant \cite[Chapter~1]{milneAV}, $\phi$ factors through 
\[\prod_{v\in \sfV(\widetilde{\Gamma})} \Pic^{d_v}(C_v),\]

Let us remark that the image of  $C_v\subseteq \widetilde{C}$ via $\widetilde{C}\xrightarrow{\rho} C$ is the node $T'\xrightarrow{q_e} C$ for $e$ the edge of $\Gamma$ containing $v.$ In what follows we abuse notation and write $q_e$ for $q_e(T').$

We define the desired morphism $\phi$ as follows:
\[\phi (-):= \sum_{v\in \sfV(\Gamma)} {\phi_{v,*}(-)}\;\; + \tau(\alpha)\]

where $\phi_{v,*}$ is the natural map obtained dualizing $\varphi\rvert_{C_v}^*\colon \mathcal A^\vee\to  \Pic^{0}(C_v\slash T')$
via the self-duality of the Jacobian scheme for smooth projective curves and $\tau(\alpha)$ is the translation defined by:

\[\tau(\alpha)=\sum_{e\in \sfE(\Gamma)}\left(\sum_{v\in e^{\circ}} d_v\right)\varphi(q_e).\]
    
Here we denoted by $v\in e^{\circ}$ the vertices of $\widetilde{\Gamma}$ supported in the interior of  $e\in E(\Gamma).$
The Abel-Jacobi Theorem for smooth curves ensures that for any $(n_i)_{i\in I}$ such that $\sum n_i=0,$
\[\phi_{v,*}(\O_{C_v}(\sum n_iq_i))=\sum n_i\varphi\rvert_{C_v}(q_i)\]
and on each vertex of $\Gamma,$ $\O_{\widetilde{C}}(p-\sigma_0)(\alpha)$ is of this form, where except for the points $p$ and $p_0$, the $q_i$ appearing are nodes of $\widetilde{C}$ and the coefficients $n_i$ are determined by the slopes of $\alpha.$ It's then immediate to verify that $\phi$ satisfies the required commutativity.

To conclude the proof we need to show that $\phi$ is indeed well-defined, namely that it does not depend on the choice of $\alpha$. Namely, we need to show that for $\lambda\in \rmH^0(T',\pi_*\bar{M}_{\widetilde{C}}^\gp)$  such that $\sfV(\Gamma)$ does not intersect the bending locus of $\lambda$
\[\phi(\mathcal O_{\widetilde{C}}(\lambda))=0\in\mathcal A.\]

Let us compute:
\begin{align*}
      \phi(\O_{\widetilde{C}}(\lambda))= & \sum_{v\in \sfV(\Gamma)} {\phi_{v,*}(\mathcal O_{C_v}(\sum_{e\vdash v}s^v_e(\lambda) q_e)) +\sum_{e\in \sfE(\Gamma)}(\sum_{v\in e^{\circ}} d_v)\varphi(q_e)}\\
      =& \sum_{v\in \sfV(\Gamma)} {\sum_{e\vdash v}s^v_e(\lambda) \varphi(q_e)+\sum_{e\in \sfE(\Gamma)}(\sum_{v\in e^{\circ}} d_v)\varphi(q_e)}
          \end{align*}
Where we denoted by  $s^v_e(\lambda)$ the outgoing slope of $\lambda$ along $e$ at $v.$
Then for each edge $e\in E(\Gamma)$, let $v, w$ its end points, we have that 
\[s^v_e(\lambda) + s^w_e(\lambda)+ \sum_{v\in e^{\circ}} d_v=0\]
which concludes the proof.
\end{proof}

Concretely, we use that nodes have a constant image to glue the maps induced by the Abel-Jacobi maps of the components using translation paparemeters to make them agree over nodes. We expect this universal property to hold for the morphisms to log-abelian varieties as well. However, in this case we do not expect the nodes to have a constant image: nodes correspond to edges of the tropicalization, and their image should be related to the tropicalization of the map.

\begin{lem}\label{lem:degreeonedges}
    Let  $(C,M_C)\to (S,M_S)$ be a proper vertical, log smooth curve with constant degeneration, and let $\mathcal L$ a line bundle of total degree $0.$ Then there exists a model $(\widetilde{C},M_{\widetilde{C}})\to (S',M_{S'})$ and a strict piece-wise linear function $\alpha\in \rmH^0(S',\pi_*\bar{M}_{\widetilde{C}}^\gp)$ such that $\mathcal L(\alpha)$ on $\widetilde{C}$ has degree $0$  on the irreducible components $C_v$ with $v\in \sfV(\Gamma)\subseteq \sfV(\widetilde{\Gamma}).$
\end{lem}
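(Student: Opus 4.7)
Let $d_v := \deg_{C_v}(\mathcal L) \in \ZZ$ for $v \in \sfV(\Gamma)$; the hypothesis $\mathrm{totdeg}(\mathcal L) = 0$ reads $\sum_v d_v = 0$. Recall that a strict piecewise-linear function $\alpha \in \rmH^0(S', \pi_*\bar M_{\widetilde C}^{\gp})$ assigns values in $\bar M_{S'}^{\gp}$ to the vertices of the subdivided tropical curve $\widetilde\Gamma$, varies linearly with integer slopes along edges, and $\O(\alpha)$ has degree at a vertex equal to the sum of outgoing slopes of $\alpha$ there. The plan is therefore to construct such an $\alpha$ on some model $(\widetilde C, M_{\widetilde C}) \to (S', M_{S'})$ with $\div(\alpha)(v) = -d_v$ at every $v \in \sfV(\Gamma) \subseteq \sfV(\widetilde\Gamma)$; no constraint is imposed at new vertices, and total-degree compatibility is automatic. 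Then $\mathcal L(\alpha)$ will be a genuine line bundle of degree $0$ on each original component $C_v$.

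The strategy is to integrate an integer flow along a spanning tree. Connectedness of $\Gamma$ together with $\sum_v d_v = 0$ yields an integer flow $f$ on the oriented edges of $\Gamma$ with boundary equal to $d$ (up to sign conventions). Choose a spanning tree $T \subseteq \Gamma$ rooted at some vertex $v_0$ and reroute $f$ to be supported on $T$ (this unique lift is obtained by iterating over leaves). Recursively define $\alpha_T$ on $\sfV(T)$ by $\alpha_T(v_0) = 0$ and $\alpha_T(v) = \alpha_T(u) + f(e)\ell(e)$ for each tree edge $e$ from parent $u$ to child $v$, extended linearly with integer slope $\pm f(e) \in \ZZ$ along tree edges. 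A direct computation shows that the outgoing tree-slopes at every $v \in \sfV(\Gamma)$ sum to $-d_v$; i.e.\ the contribution of tree edges to $\div(\alpha_T)(v)$ is exactly $-d_v$.

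The extension of $\alpha$ across non-tree edges is the technical heart. For each $e' \in \sfE(\Gamma) \setminus \sfE(T)$ with endpoints $v$ and $w$, the endpoint values $\alpha_T(v), \alpha_T(w) \in \bar M_S^{\gp}$ are prescribed and their difference $\alpha_T(w) - \alpha_T(v) = \sum_{e \in P(v,w)} f(e)\ell(e)$ is an integer combination of tree-edge lengths which in general is \emph{not} an integer multiple of $\ell(e')$. To resolve this, pass to a Kummer extension $\bar M_S \hookrightarrow \bar M_{S'}$ in which all edge lengths of $\Gamma$ acquire a common divisor $N$, and subdivide every edge $e$ of $\Gamma$ into $N$ equal sub-edges of length $\ell(e)/N \in \bar M_{S'}^{\gp}$. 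On the interior sub-edges of each non-tree $e'$ choose integer slopes $s_1, \ldots, s_{N-2}$ satisfying $\sum_i s_i \cdot \ell(e')/N = \alpha_T(w) - \alpha_T(v)$, while the two outer sub-edges touching $v$ and $w$ are given slope $0$ so that no extra contribution appears in $\div(\alpha)$ at $\sfV(\Gamma)$. For $N$ divisible enough to clear the denominators of the rational solution dictated by the tropical period matrix of $\Gamma$, this integer system is solvable simultaneously on all non-tree edges.

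Gluing tree and non-tree data produces a global strict piecewise-linear $\alpha \in \rmH^0(S', \pi_*\bar M_{\widetilde C}^{\gp})$ with $\div(\alpha)(v) = -d_v$ at every $v \in \sfV(\Gamma)$, so $\mathcal L(\alpha)$ has the required component-wise degree $0$ on each $C_v$ with $v \in \sfV(\Gamma)$. The main obstacle is the non-tree step: the cycle-consistency constraints for $\alpha$ force a Kummer extension of $\bar M_S$ in which the tropical period matrix of $\Gamma$ becomes invertible, together with a compatible subdivision of $\widetilde\Gamma$; together these log modifications furnish precisely the model $(\widetilde C, M_{\widetilde C}) \to (S', M_{S'})$ whose existence is asserted in the statement.
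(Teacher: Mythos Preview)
Your spanning-tree integration approach has a genuine obstruction that the paper's method sidesteps. The issue is in the non-tree step: after integrating along $T$, the endpoint values of $\alpha_T$ at a non-tree edge $e'=(v,w)$ differ by an element $\alpha_T(w)-\alpha_T(v)=\sum_{e\in P(v,w)} f(e)\ell(e)\in\bar M_S^{\gp}$, an integer combination of \emph{tree}-edge lengths. But any piecewise-linear extension of $\alpha$ across $e'$, no matter how you subdivide and no matter which Kummer extension you take, changes by a $\QQ$-multiple of $\ell(e')$ alone. When $\bar M_S$ has rank $>1$ these elements are typically $\QQ$-linearly independent in $\bar M_S^{\gp}\otimes\QQ$, so the equation $\sum_i s_i\cdot\ell(e')/N=\alpha_T(w)-\alpha_T(v)$ has no solution. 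For a concrete failure, take $\Gamma$ a two-vertex cycle with edges $e_1,e_2$, $\bar M_S=\NN^2$, $\ell(e_1)=(1,0)$, $\ell(e_2)=(0,1)$, and degrees $(1,-1)$: integrating along $T=\{e_1\}$ forces $\alpha_T(v_2)-\alpha_T(v_1)=\pm(1,0)$, which no PL function along $e_2$ can realise. The appeal to the ``tropical period matrix'' is appropriate for $\RR$-metrized tropical curves, where all lengths are comparable, but does not survive to monoids of higher rank.

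The paper's argument avoids this entirely by refusing to integrate: it sets $\alpha(v)=0$ at \emph{every} original vertex, chooses outgoing slopes $s_e^v$ at each flag so that their sum at $v$ cancels $d_v$, and then subdivides each edge into three equal parts to interpolate between the (generally incompatible) prescribed slopes at its two ends. Because $\alpha$ vanishes at both endpoints of every edge, the interpolation along $e$ involves only $\ell(e)$, and a single $3$-fold subdivision (hence a uniform degree-$3$ Kummer extension of $\bar M_S$) suffices. Your tree construction can be salvaged by the same trick: simply discard the integrated values, set $\alpha\equiv 0$ on $\sfV(\Gamma)$, and use your flow $f$ only to suggest the local outgoing slopes.
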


\begin{proof}
Let $\Gamma\to\sigma$ be the tropicalization of $(C,M_C)\to (S,M_S)$. Here we denote by $\sigma$ the cone dual to $\bar{M}_S,$ which is a constant sheaf on $S$ because of the constant degeneration hypothesis.
 
The multi-degree $\underline{\deg}(\mathcal L)$ defines  a tropical divisor $D_L$ on $\Gamma$ supported at the vertices. The proof is essentially tropical: we need to prove that every tropical divisor is linearly equivalent to a divisor not supported at the vertices. For $D$ a tropical divisor, we have the tropical linear system (shortly TLS):
\[|D_L|=\left\{\alpha\colon\Gamma\to \bar{M}_S^\gp\;\;\text{PL on the edges}\;\;| \;\; D_L+\operatorname{div}(\alpha)\geq 0\right\}\slash \bar{M}_S^\gp. \]
We start defing a piecewise linear function $\alpha$ on a subdivision of $\Gamma$ as follows:
\begin{itemize}[label=$\circ$]
    \item If $v\in\sfV(\Gamma)$, we set $\alpha(v)=0\in\bar{M}_S^\gp$.
    \item For each flag $v\in e$, choose an outgoing slope $s_e^v\in\ZZ$ such that the sum of outgoing slopes at each vertex of $\Gamma$ is precisely the divisor $D_L$.
    \item For an edge $e=(v,v')$, in case the two outgoing slopes do not agree to be $0$, this does not define a strict piecewise linear function. We thus need to show that up to subdividing $e$, it is possible to extend the function to a piecewise linear one.
\end{itemize}
This last step can be performed subdividing (each edge) $e$ in three parts of equal lengths. This yields a subdivision $\widetilde{\Gamma}$ of $\Gamma$. Let $(v,w_1,w_2,v')$ be the four vertices along the subdivided edge. The value of $\alpha$ at $v$ and $v'$ is $0$. For the slope to be $s_e^v$ and $s_e^{v'}$ near $v$ and $v'$, we need to have $\alpha(w_1)=s_e^v\frac{\ell(e)}{3}$ and $\alpha(w_2)=s_e^{v'}\frac{\ell(e)}{3}$. Therefore, the slope over the edge $(w_1,w_2)$ is
$$\frac{s_e^{v'}\ell(e)/3 - s_e^v\ell(e)/3}{\ell(e)/3} = s_e^{v'}-s_e^v\in\ZZ.$$
By construction, the divisor $D_L+\operatorname{div}(\alpha)$ is linearly equivalent to $D_L$ and supported on $\sfV(\widetilde{\Gamma})\setminus\sfV(\Gamma)$.
\end{proof}

\section{A refined stratification of the moduli space of stable maps}
\label{sec-refined-stratification-via-coverings}


In this section we shortly denote by $\M(X)$ the  moduli space of stable maps $\M_{g,n+m}(X,\beta)$ for given discrete data $m,n,g$ and  $\beta\in \rmH_2(X,\ZZ)$.

\medskip

The goal of this section is first to refine the moduli space $\M(X)$ into components depending on how torsion line-bundles on $X$ pull-back to curves. We then study the relation between this refinement and the natural stratification of $\M(X)$, in particular its \textit{recursive structure} (see Section \ref{sec-refined-strat-overview}). Finally we relate the restriction of virtual classes to these components to moduli spaces of stable maps to some coverings of $X$ called $H$-coverings.

\subsection{Coverings of complex manifolds via line bundles}
\label{sec-H-covering}

Let $X$ be a smooth complex projective variety with positive irregularity $q(X)$. We describe the construction of \textit{$H$-coverings}.

\subsubsection{Description via subgroups}

Let $H\subset \Pic^0(X)[\delta]$
be a finite subgroup of the $\delta$-torsion of the Picard group of $X$. Via the Weil pairing, we have its orthogonal $H^\perp\subset \Alb(X)[\delta]$. As $\Alb(X)[\delta]$ is canonically isomorphic to $\rmH_1(X,\ZZ_\delta)$ (where we deleted the torsion part of $\rmH_1(X,\ZZ)$), we can consider its preimage in $\pi_1(X)$ via the (surjective) Hurewicz morphism $\H:\pi_1(X)\to \rmH_1(X,\ZZ_\delta)$. The latter is a normal subgroup of $\pi_1(X)$ with quotient
$$\pi_1(X)/\H^{-1}(H)\simeq \Alb(X)[\delta]/H^\perp\simeq \operatorname{Hom}(H,\mu_\delta)=\widehat{H},$$
where the last equality is the definition of $\widehat{H}$, the Pontrjagin dual of $H$, isomorphic to $H$ though non-canonically.

\begin{defi}
We call $H$-covering of $X$ (denoted by $X_H$) the covering corresponding to the subgroup $\H^{-1}(H^\perp)\subset\pi_1(X)$. It is a Galois covering of $X$ with deck-automorphism group $\widehat{H}=\operatorname{Hom}(H,\mu_n)$.
\end{defi}

\begin{rem}
The covering $X_H$ may also be seen as the fiber product between $X\to\Alb(X)$ and the covering of the complex torus $\Alb(X)$ determined by $H^\perp$, or rather its preimage in $\rmH_1(X,\ZZ)$ modulo torsion.
\end{rem}

\begin{expl}
    If $X$ is an elliptic curve and $n=2$, we have $\rmH_1(X,\ZZ_2)\simeq \ZZ_2^2$, which has five subgroups: $\{0\}$, $\ZZ_2^2$ and three subgroups isomorphic to $\ZZ_2$, yielding five different coverings.
\end{expl}

\subsubsection{Description trivializing line bundles} As we intend to work with line bundles, we provide an alternative description of the $H$-covering of $X$ using a line bundle construction. Given $H\subset \Pic^0(X)[\delta]$, consider the vector bundle $P_H:=\operatorname{Tot}(\L_1\oplus\dots \L_h)\xrightarrow{p} X$ where $\L_1,\dots \L_h$ are the line bundle corresponding to generators of $H.$ Let furthermore denote by $\xi_i$ the tautological section of $p^*\L_i.$ Let $\widetilde{X}_H\subseteq P_H$ the sub-scheme cut out by $\left\{\xi_i^n-1=0\right\}_{i=1,\dots h}$ and let denote by $\mathrm{pr}:\widetilde{X}_H\to X$ the  natural map induced by $p$.
This shows that $\widetilde{X}_H$ is a $\Hom(H,\mu_n)$-principal bundle. All its connected components are therefore isomorphic (differing by the action of $\widehat{H}$) and provide coverings of $X$.

\begin{prop}\label{prop-lifting-K-covering}
The covering $\widetilde{X}_H$ satisfies the following properties:
\begin{enumerate}
    \item The pull-back of every $\L\in H$ to $\widetilde{X}_H$ is trivial.
    \item A map $f:Y\to X$ factors through $\widetilde{X}_H\to X$ if and only if $\forall\L\in H$, $f^*\L\simeq\O$.
    \item Each connected component of $\widetilde{X}_H$ is isomorphic to the $H$-covering of $X$ as defined in Section \ref{sec-H-covering}. In particular, $X_H$ also satisfies (2).
\end{enumerate}
\end{prop}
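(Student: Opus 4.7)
The plan is to prove the three parts in turn: (1) follows directly from the defining equations of $\widetilde{X}_H$, (2) from the description of $\widetilde{X}_H$ as an iterated fibered product of Kummer $\mu_n$-torsors, and (3) from a monodromy calculation using the Weil pairing.

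For Part (1), I would note that the restriction of the tautological section $\xi_i$ to $\widetilde{X}_H$ is a section of $\mathrm{pr}^*\L_i$ satisfying $\xi_i^n = 1$, hence nowhere vanishing, so it trivializes $\mathrm{pr}^*\L_i$. Since the $\L_i$ generate $H$, the pull-back of every $\L \in H$ is trivial.

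Part (2) in one direction is immediate from (1): if $f = \mathrm{pr}\circ\tilde f$, then $f^*\L = \tilde f^*\mathrm{pr}^*\L \simeq \O_Y$ for every $\L\in H$. For the converse, I would rewrite $\widetilde{X}_H$ as the fibered product $T_1\times_X\cdots\times_X T_h$, where $T_i = \{\xi_i^n = 1\}\subset\mathrm{Tot}(\L_i)$ is the Kummer $\mu_n$-torsor associated to $\L_i$. Under the Kummer identification $\rmH^1(Y,\mu_n)\hookrightarrow\Pic(Y)[n]$, triviality of $f^*\L_i$ forces the pulled-back torsor $f^*T_i$ to be trivial, hence to admit a section $s_i\colon Y\to f^*T_i$. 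The collection $(s_1,\ldots,s_h)$ then assembles into the desired lift $Y\to \widetilde{X}_H$.

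For Part (3), let $\widetilde X$ be a connected component of $\widetilde{X}_H$; then $\widetilde X\to X$ is a connected covering corresponding to some subgroup $G\subseteq\pi_1(X)$, and the goal is to identify $G$ with $\H^{-1}(H^\perp)$. By (1), pulling back every $\L\in H$ to $\widetilde X$ is trivial, and Section \ref{sec-weil-pairing-line-bundles} identifies the monodromy of $\L$ along a loop $\gamma$ with $W_n([\gamma],\L)$; triviality then forces $\H(G)\subseteq H^\perp$. For the reverse inclusion I would analyze the monodromy map of the ambient $\mu_n^h$-torsor $\widetilde{X}_H\to X$, namely
\[
\pi_1(X)\longrightarrow \mu_n^h, \quad \gamma\longmapsto\bigl(W_n([\gamma],\L_1),\ldots,W_n([\gamma],\L_h)\bigr),
\]
whose kernel is precisely $\H^{-1}(H^\perp)$ and whose image, by non-degeneracy of the Weil pairing, is $\rmH_1(X,\ZZ_n)/H^\perp\simeq\widehat{H}$. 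So the connected components of $\widetilde{X}_H$ each have fundamental group $\H^{-1}(H^\perp)$ and Galois group $\widehat H$, identifying them with $X_H$.

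The main obstacle will be in Part (3): one must carefully track how the Kummer $\mu_n^h$-torsor of total degree $n^h$ decomposes into components each of degree $|H|$, and this relies on non-degeneracy of the Weil pairing to produce the correct order count $n^h/|H|$ of connected components. The tacit choices of trivializations $\L_i^{\otimes n}\simeq\O_X$ used in setting up the equations $\xi_i^n=1$ are harmless, as different choices merely relabel the components.
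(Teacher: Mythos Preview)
Your proposal is correct and follows essentially the same approach as the paper. Parts (1) and (2) match the paper almost verbatim: the paper also uses the tautological sections $\xi_i$ to trivialize $\mathrm{pr}^*\L_i$, and for the converse in (2) simply says ``choosing a section $\xi_\L$ such that $\xi_\L^{\otimes n}=1$ yields a map to $\widetilde{X}_H$,'' which is exactly your Kummer-torsor argument unpacked.

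In Part (3) there is a minor packaging difference. You compute the monodromy representation $\pi_1(X)\to\mu_n^h$ of the ambient torsor directly, identify its kernel as $\H^{-1}(H^\perp)$ and its image as $\widehat{H}$, and conclude by the standard correspondence between subgroups and connected coverings. The paper instead constructs maps in both directions: it applies (2) to $\pi_H\colon X_H\to X$ (showing via the Weil pairing that every $\pi_H^*\L$ is trivial) to produce $X_H\to\widetilde{X}_H$, and then checks that any loop in $\widetilde{X}_H$ pushes down to one lying in $\H^{-1}(H^\perp)$ to produce a map back. Both arguments rest on the same monodromy-equals-Weil-pairing identification from Section~\ref{sec-weil-pairing-line-bundles}; yours is slightly more streamlined and also makes the component count $n^h/|H|$ explicit.
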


\begin{proof}
\begin{enumerate}
    \item For $\L\in H$, the pull-back of $\mathrm{pr}^*\L$ has a nowhere vanishing section by definition of $\widetilde{X}_H$, and thus the line bundle is trivial.
    \item By construction, for any line bundle $\L\in H$, $\mathrm{pr}^*\L$ possesses a nowhere vanishing section. In particular, the pull-back by any map $\widetilde{f}:Y\to\widetilde{X}_H$ is also trivial. Conversely, if $f:Y\to X$ satisfies $f^*\L\simeq\O$ for any $\L\in H$, choosing a section $\xi_\L$ such that $\xi_\L^{\otimes n}=1$ yields a map to $\widetilde{X}_H$.
    
    \item Since $\widetilde{X}_H$  is a $\widehat{H}$-torsor, all of its connected components are isomorphic. We now prove that the connected cover $X_H$ is isomorphic to any component of $\widetilde{X}_H$. To do so, we use the Hurewicz map $\H:\pi_1(X)\to \rmH_1(X,\ZZ_\delta)\simeq \Alb(X)[\delta]$ along with the Weil pairing.
    
    We have seen in Section \ref{sec-weil-pairing-line-bundles} that the Weil pairing $W([\gamma],\L)\in\mu_\delta$ gives the monodromy of the torsion line bundle $\L$ along the loop $\gamma\in\pi_1(X)$, where $[\gamma]$ is the class of $\gamma$ in $\rmH_1(X,\ZZ_\delta)$. Restricting to the subgroup $\H^{-1}(H^\perp)\subset\pi_1(X)$ thus yields the monodromy of the pull-back to $X_H$. In particular, if $\L\in H$ and $\widetilde{\gamma}$ a loop in $X_H$, we have
    $$W_X(\H(\pi_{H\ast}\widetilde{\gamma}),\L)=0.$$
    Therefore, every $\pi_H^*\L$ is trivial on $X_H$ and we get a morphism of covering $X_H\to\widetilde{X}_H$. As $X_H$ is connected, it only maps to a connected component of $\widetilde{X}_H$.
    
    Conversely, consider a loop $t\mapsto (\gamma(t),(\xi_\L(t)))$ in $\widetilde{X}_H$ and push it to $X$. For any $\L\in H$, $\gamma^*\L$ has a section given by $\xi_\L(t)$. Therefore, $W_X(\H_n(\gamma),\L)=0$ and we deduce that $\gamma\in\H^{-1}(H^\perp)$. Since the latter is the subgroup associated to the covering $X_H$, we get a morphism of covering $\widetilde{X}_H\to X_H$, finishing the proof.
\end{enumerate}
\end{proof}

\begin{rem}
    As an exercise, the reader may check that the covering may also be obtained as follows. Consider the covering $\widetilde{X}_{\Pic^0(X)[\delta]}$ trivializing all $\delta$-torsion line bundles on $X$. The group $H^\perp\subset \Alb(X)[\delta]$ acts on $\widetilde{X}_{\Pic^0(X)[\delta]}$ as follows. If $\chi\in H^\perp$, it induces a morphism $\Pic^0(X)[\delta]\to\mu_n\subset\CC^*$. We then set
    $$\chi\cdot(x,(\xi_\L)) = (x,(\chi(\L)\xi_\L)).$$
    The $H$-covering of $X$ is any connected component of the quotient of the above covering by $H^\perp$.
\end{rem}

\begin{expl}
    In case $X=A$ is an abelian variety, it is isomorphic to its Albanese variety, and we have $\Alb(A)[\delta]\simeq A[n]$. The $K$-covering of $A$ for $H\subset \Pic^0(A)[\delta]=A^\vee[n]$ is also an abelian variety. 
\end{expl}

By construction, the collection of coverings $\pi_H:X_H\to X$ is in bijection with the collection of subgroups of $\Pic^0(X)[\delta]$, which is a partially ordered set. In particular, we have the following.

\begin{lem}
    If $H_1\subset H_2\subset \Pic^0(X)[\delta]$, we have a morphism of coverings $X_{H_2}\to X_{H_1}$.
\end{lem}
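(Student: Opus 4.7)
The plan is to leverage the two equivalent characterizations of the covering $X_H$ given in the previous section: the description via the subgroup $\H^{-1}(H^\perp)\subset\pi_1(X)$, and the universal property from Proposition \ref{prop-lifting-K-covering}(2) which states that a map $f\colon Y\to X$ factors through $X_H$ if and only if $f^*\L\simeq\O$ for every $\L\in H$. Either description gives a short proof; I would present the one via the universal property as the main argument, since it is the one we will use subsequently when comparing stable maps to $X$ and to $X_H$, and use the $\pi_1$-description as a sanity check.

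First, I would apply part (1) of Proposition \ref{prop-lifting-K-covering} to the covering $\pi_{H_2}\colon X_{H_2}\to X$: for every line bundle $\L\in H_2$, the pull-back $\pi_{H_2}^*\L$ is trivial on $X_{H_2}$. Since $H_1\subset H_2$, this trivialization automatically holds for every $\L\in H_1$ as well. By the universal property recorded in part (2) of the proposition (extended to $X_H$ in part (3)), the morphism $\pi_{H_2}\colon X_{H_2}\to X$ must then factor through $\pi_{H_1}\colon X_{H_1}\to X$. This factorization is the desired morphism of coverings $X_{H_2}\to X_{H_1}$, and it is unique up to the $\widehat{H_1}$-action because $X_{H_1}$ is a Galois cover.

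As a cross-check, I would verify compatibility with the $\pi_1$-description. Because the Weil pairing is non-degenerate, taking orthogonals reverses inclusions: $H_1\subset H_2$ implies $H_2^\perp\subset H_1^\perp$ inside $\Alb(X)[\delta]$, and applying $\H^{-1}$ yields $\H^{-1}(H_2^\perp)\subset\H^{-1}(H_1^\perp)\subset\pi_1(X)$. By the Galois correspondence for covering spaces, a smaller subgroup of $\pi_1(X)$ gives rise to a larger cover sitting above the one attached to the larger subgroup, recovering the morphism $X_{H_2}\to X_{H_1}$. There is no real obstacle here: the only mild subtlety is that the universal property in Proposition \ref{prop-lifting-K-covering}(2) is stated for $\widetilde{X}_H$, but part (3) transports it to the connected cover $X_H$, so nothing further is required.
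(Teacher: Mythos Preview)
Your proposal is correct. The paper actually states this lemma without proof, treating it as an immediate consequence of the construction (``By construction, the collection of coverings $\pi_H:X_H\to X$ is in bijection with the collection of subgroups of $\Pic^0(X)[\delta]$, which is a partially ordered set. In particular, we have the following.''). Both of your arguments---the one via the universal property of Proposition~\ref{prop-lifting-K-covering} and the sanity check via the $\pi_1$-description and Galois correspondence---are valid and make explicit what the paper leaves implicit.
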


\subsection{Components of the moduli space of maps}
\label{sec-defi-refined-stratification}

\subsubsection{Definition of the components and splitting of the smooth locus}
The decomposition of the smooth locus into open and closed substacks relies on the following crucial lemma.

\begin{lem}\label{lem-pull-back-open-closed}
    If $\L$ is a torsion line bundle on $X$, the locus of stable maps $f:C\to X$ where $f^*\L=\O$ is both open and closed.
\end{lem}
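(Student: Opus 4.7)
The plan is to realize the locus $\{f : f^*\L \cong \O\}$ as the image of a finite étale map of moduli stacks, from which open- and closed-ness follow formally (étaleness gives openness, properness gives closedness). The construction of Section~\ref{sec-H-covering} is the right tool: since $\L$ is torsion, we can pass to a finite étale cover of $X$ on which $\L$ becomes trivial, and a stable map kills $\L$ precisely when it lifts.

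\textbf{Setup.} Let $\delta'$ be the order of $\L$, set $H = \langle \L \rangle \subset \Pic^0(X)[\delta']$, and let $\pi\colon X_H \to X$ be the connected $H$-covering of Section~\ref{sec-H-covering}; it is Galois with group $\widehat{H}$, finite étale over $X$. By Proposition~\ref{prop-lifting-K-covering}(2)--(3), a stable map $f\colon C\to X$ (with $C$ connected) satisfies $f^*\L\cong\O$ if and only if $f$ lifts to some $\widetilde{f}\colon C\to X_H$. Post-composition with $\pi$ therefore defines
$$\Phi\colon \bigsqcup_{\beta'\,:\, \pi_*\beta'=\beta} \M_{g,n+m}(X_H,\beta') \longrightarrow \M(X),$$
whose set-theoretic image is exactly the locus in question. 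The disjoint union is finite since $\pi$ is finite étale.

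\textbf{Finite étaleness of $\Phi$.} For étaleness, I would use the standard lifting trick for étale covers. Given a family $f\colon \mathcal{C}/S \to X$ and a lift $\widetilde{f}_{s_0}\colon \mathcal{C}_{s_0}\to X_H$ over a point $s_0\in S$, pull back the cover to form the finite étale $\mathcal{C}\times_X X_H \to \mathcal{C}$; the given lift picks out a connected component of the fiber over $s_0$ mapping isomorphically to $\mathcal{C}_{s_0}$. Taking the connected component of $\mathcal{C}\times_X X_H$ containing it yields a finite étale cover of $\mathcal{C}$ of degree $1$ on the special fiber, hence of degree $1$ everywhere (degree being locally constant for finite étale maps), which is the unique extended lift. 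The same argument applied to a DVR base and the generic-fiber lift gives the valuative criterion, so $\Phi$ is proper. Quasi-finiteness is clear: any two lifts of $f$ differ by an element of $\widehat{H}$, so the fibers have size at most $|H|$. Combined, $\Phi$ is finite étale, hence both open and closed, and its image has the required property.

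The main subtlety is to make sure the connected-component argument still works when $\mathcal{C}$ has nodal fibers. The point is that étaleness of $\pi$ is preserved under base change, so $\mathcal{C}\times_X X_H \to \mathcal{C}$ is still étale over the nodal family, and the topological lifting reasoning goes through without any appeal to smoothness of $\mathcal{C}$; this is exactly why working with the covering $X_H$ circumvents the subtleties of $\Pic^{[0]}(\mathcal{C}/S)[\delta]$ failing to be étale over $S$ noted in Section~\ref{sec:Strategy}.
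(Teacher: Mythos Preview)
Your overall strategy—realizing the locus as the image of $\Phi\colon\bigsqcup_{\beta'}\M(X_H,\beta')\to\M(X)$—is legitimate and genuinely different from the paper's. The paper argues in one line: $\Pic^{[0]}(\mathcal{C}/\M(X))[\delta]\to\M(X)$ is \'etale and separated, so the locus where the two sections $f^*\L$ and $\O$ coincide is open (the diagonal of an unramified morphism is an open immersion) and closed (separatedness).

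Your connected-component step, however, has a genuine gap. You assert that the component $W_0\subset\mathcal{C}\times_X X_H$ containing the fiberwise section has degree $1$ on $\mathcal{C}_{s_0}$, but all you know is that $W_0|_{\mathcal{C}_{s_0}}$ \emph{contains} a degree-$1$ piece; it can be strictly larger when the base $S$ contributes monodromy. Concretely: take $X=S=E$ an elliptic curve, $\mathcal{C}=E\times S$ with one marked point, $f(x,s)=2x+s$, and $\L$ a nontrivial $2$-torsion bundle with associated character $\lambda\colon\pi_1(E)\to\ZZ/2$. Every $f_s^*\L\cong\O$, yet the monodromy of $W\to\mathcal{C}$ is $(a,b)\mapsto\lambda(2a+b)=\lambda(b)$, nontrivial in the $S$-direction; hence $W$ is connected of degree $2$ and your recipe returns $2$, not $1$. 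The repair is routine: check \'etaleness via the infinitesimal lifting criterion (immediate since $X_H\to X$ is \'etale), and obtain properness for free because $\Phi$ is a morphism of proper Deligne--Mumford stacks ($X_H$ is projective, being finite over the projective $X$).

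Your closing remark also rests on a misconception: $\Pic^{[0]}(\mathcal{C}/S)[\delta]$ \emph{is} \'etale over $S$ (it is the kernel of multiplication by $\delta$ on a smooth commutative group scheme). What fails over the boundary is finiteness, as the fiber cardinality drops to $\delta^{2g-b_1(\Gamma)}$. This \'etaleness is exactly what the paper's short proof exploits, so the covering route does not sidestep any actual obstacle here.
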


\begin{proof}
Let  $\Pic^{[0]}(C\slash\M(X))[\delta]$ be the finite subgroup of $\delta$-torsion line bundle of the universal Jacobian; it is a finite, étale, (non-proper, see e.g. \cite[Section~1.2]{chiodo2008stable}) group scheme over $\M(X)$ and thus the locus where two sections agree is open. 
Since it is always also closed, we conclude.

Alternativey, a torsion line bundle is fully determined by its monodromy along loops, which is locally constant since we can always deform a loop in a fiber to a nearby fiber.

\end{proof}



We immediately deduce the following.

\begin{coro}
    The morphism from $\M(X)$ to the subgroups of $\Pic^0(X)[\delta]$ mapping $f\colon C\to X$ to $\ker f^*$ is locally constant.
\end{coro}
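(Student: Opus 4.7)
The plan is to deduce the corollary directly from Lemma \ref{lem-pull-back-open-closed} by exploiting the finiteness of $\Pic^0(X)[\delta]$. Since $\Pic^0(X)$ is an abelian variety of dimension $q(X)$, the torsion subgroup $\Pic^0(X)[\delta]$ has order $\delta^{2q(X)}$, and in particular admits only finitely many subgroups. Hence the assignment $f \mapsto \ker f^*$ on $\M(X)$ takes only finitely many values, and to prove local constancy it suffices to show that the preimage of each such value is clopen.

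Concretely, for a fixed subgroup $K \subset \Pic^0(X)[\delta]$, I would express the level set as a finite intersection
$$\M_K(X) \;=\; \bigcap_{\L \in K} \{f : f^*\L \cong \O\} \;\cap\; \bigcap_{\L \in \Pic^0(X)[\delta]\setminus K} \bigl(\M(X) \setminus \{f : f^*\L \cong \O\}\bigr).$$
By Lemma \ref{lem-pull-back-open-closed}, each locus $\{f : f^*\L \cong \O\}$ is clopen in $\M(X)$, so each of its complements is clopen as well. A finite intersection of clopen substacks is clopen, and therefore $\M_K(X)$ is clopen, which is precisely the desired local constancy.

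There is essentially no obstacle once the lemma is in hand; the only geometric input used beyond it is that $\Pic^0(X)[\delta]$ is finite, which follows from $X$ being smooth projective so that $q(X) < \infty$. Without this finiteness one would be intersecting arbitrarily many clopen sets and the argument would fail, but the bound $|\Pic^0(X)[\delta]| = \delta^{2q(X)}$ makes the conclusion immediate.
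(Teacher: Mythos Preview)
Your proposal is correct and is precisely the natural elaboration of what the paper leaves implicit: the paper simply states ``We immediately deduce the following'' after Lemma~\ref{lem-pull-back-open-closed}, and your argument via finite intersections of clopen loci is the standard way to unpack that deduction.
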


In particular, given a subgroup $K\subset \Pic^0(X)[\delta]$, we can consider the following open and closed component of $\M(X)$:
$$\M_K(X)=\{ f:C\to X | \ker(f^*\colon\Pic^0(X)[\delta]\to \Pic^{[0]}(C)[\delta])=K \}.$$
It is in particular an open substack of $\M(X)$ to which we can restrict the usual obstruction theory to define a virtual class $\vir{\M_K(X)}$. 

\medskip

The components $\M_K(X)$ are mutually disjoint. 
However, it may not be obvious why these classes $\vir{\M_K(X)}$ are ``tautological'', nor how to compute with them. This is the content of the next subsection.

\subsubsection{Relation with maps to $H$-coverings}

Let $H\subset \Pic^0(X)[\delta]$ be a subgroup. 
We consider the  moduli space of maps to the $H$-covering $X_H$:
$$\M_{g,n+m}(X_H,\beta)=\bigcup_{\widetilde{\beta}}\M_{g,n,m}(X_H,\widetilde{\beta}),$$
where the union is over homology classes $\widetilde{\beta}\in\rmH_2(X_H,\ZZ)$ such that $\pi_{H\ast}\widetilde{\beta}=\beta\in \rmH_2(X,\ZZ)$. If $\beta$ does not belong to $\mathrm{Im}\pi_{H\ast}$, this space is empty. We will denote this moduli space simply  by $\M(X_H)$, suppressing the additional combinatorial data in the notation.

\medskip

Composing with the covering map, we have a natural morphism of moduli spaces $\M(X_H)\to\M(X)$. 
The image consists of those maps that can be lifted to the $H$-covering and therefore it is not surjective. 

By Proposition~\ref{prop-lifting-K-covering}, for a map $f\colon C\to X$  to admit a lift to $X_H$, the kernel $K=\operatorname{Ker}(f^*)$ should contain $H$, though it may be strictly bigger. 
The image is thus contained in
$$\M^H(X)=\bigsqcup_{K\supset H}\M_K(X),$$
for $\M_K(X)$ the open and closed substacks of $\M(X)$ defined above.

The morphism $\M(X_H)\to\M^H(X)$ is in fact a $\widehat{H}$ -torsor as  deck automorphisms act on a chosen lift.
 We can give a natural modular interpretation to $\M^H(X).$
  Let $F:\C\to X\times S$ be a family of curve in $\M^H(X)$. By definition of these open and closed sub-stacks we have that $F^*\mathcal L$ is trivial along the fibers of $\C\to S$ for  each $L\in H$.  We surely have fiberwise lifts of $F_s$ to $F_H$, but in order to lift the family we need to trivialize $F^*L$ along a section. To do so,
one can take the fiber product with a chosen section $S\xrightarrow{\sigma}\C\xrightarrow{F}X$:
\bcd
S_H\arrow[r]\arrow[d] & X_H\arrow[d] \\
S \arrow[r] & X.
\ecd

Another way to describe the substack $\M^H(X)$ is to define the moduli space of $H$-\textit{rubber} maps to $X_H$. The covering $X_H\to X$ being a $\widehat{H}$-principal bundle, it induces a map from $X$ to the classifying space of $\widehat{H}$. The moduli space $\M^{\sim H}(X)$ of $H$-rubber maps consists in families making the first diagram commute. Concretely, it means that there exists an $\widehat{H}$-torsor $S_H\to S$ such that the pull-back family $\C_H$ endowed with the corresponding $\widehat{H}$-action has an equivariant map to $X_H$, as depicted in the second diagram:

\begin{center}
\begin{tikzcd}
\C\arrow[r]\arrow[d] & X\arrow[d] \\
S \arrow[r] & B\widehat{H}
\end{tikzcd},
\adjustbox{scale=0.75}{
\begin{tikzcd}
 & \C_H\arrow[rr]\arrow[dd]\arrow[dl] & & X_H \arrow[dl] \\
 \C\arrow[rr,crossing over]\arrow[dd] & & X & \\
 & S_H \arrow[dl] & & \\
S &  &  &  \\ 
\end{tikzcd}.
}
\end{center}

Furthermore it follows from the given definition of rubber maps that two maps are isomorphic if they differ by the deck-transformation action (see \cite{carocci2024rubber} for a complete explanation of this fact.) 

\begin{lem}
    The moduli space of $H$-rubber stable maps coincides with the substack $\M^H(X)$. We thus forget about the tilde in the notation.
\end{lem}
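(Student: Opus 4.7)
The plan is to construct mutually inverse morphisms between $\M^{\sim H}(X)$ and $\M^H(X)$. Both stacks admit a clean description in terms of the classifying morphism $X \to B\widehat{H}$ of the $\widehat{H}$-torsor $\pi_H\colon X_H \to X$: for a family $F\colon \C \to X$ over $S$ with structure morphism $\pi\colon \C \to S$, the composition $\C \to X \to B\widehat{H}$ classifies the pullback torsor $P := F^*X_H \to \C$, and an $H$-rubber structure on $F$ is precisely a factorization of this composition through $S$, equivalently a descent of $P$ to an $\widehat{H}$-torsor $S_H$ on $S$ together with the induced equivariant lift $\widetilde{F}\colon \C \times_S S_H \xrightarrow{\sim} P \to X_H$.

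First I would define the forgetful morphism $\Phi\colon \M^{\sim H}(X) \to \M(X)$ and show that its image lies in $\M^H(X)$. On a geometric point, the $H$-rubber datum yields (after base change by the finite torsor $S_H \to S$) a trivialization of $P$ and hence a lift $C \to X_H$ of the underlying stable map $f\colon C \to X$; by Proposition \ref{prop-lifting-K-covering}(2), this forces $f^*\L \simeq \O$ for every $\L \in H$, so $\ker(f^*) \supset H$, and by Lemma \ref{lem-pull-back-open-closed} we conclude $f \in \M_K(X)$ for some $K \supset H$. Thus $\Phi$ factors through $\M^H(X)$.

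Next I would construct an inverse $\Psi\colon \M^H(X) \to \M^{\sim H}(X)$. Given $F\colon \C \to X$ in $\M^H(X)$, the torsor $P := F^*X_H$ is fiberwise trivial by the converse implication in Proposition \ref{prop-lifting-K-covering}(2). Since $\pi\colon \C \to S$ is proper with geometrically connected fibers, one has $\pi_*\underline{\widehat{H}}_{\C} = \underline{\widehat{H}}_{S}$, and the low-degree terms of the Leray spectral sequence identify fiberwise-trivial $\widehat{H}$-torsors on $\C$ with pullbacks of $\widehat{H}$-torsors from $S$. This produces a canonical torsor $S_H \to S$ together with an isomorphism $\C \times_S S_H \xrightarrow{\sim} P$, and composing the latter with the projection $P \to X_H$ yields the equivariant lift $\widetilde{F}$, hence the desired $H$-rubber structure.

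Finally, $\Phi \circ \Psi = \mathrm{id}$ is tautological, while $\Psi \circ \Phi = \mathrm{id}$ amounts to matching ambiguities: two lifts $\widetilde{F}, \widetilde{F}'$ of the same $F$ differ by a section of $\widehat{H}$ on $\C \times_S S_H$, which by connectedness of fibers descends to a locally constant section on $S_H$, precisely the deck-transformation equivalence built into $\M^{\sim H}(X)$. The main technical step is the descent of $P$ from $\C$ to $S$ in the construction of $\Psi$; once that is in place, everything else reduces to the standard dictionary between torsors and classifying maps.
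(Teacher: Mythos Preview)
Your proof is correct and follows the same overall two-direction strategy as the paper, but the construction of the inverse $\Psi$ differs. The paper exploits that the curves are marked: choosing a section $\sigma\colon S\to\C$, it defines $S_H$ directly as the fiber product of $F\circ\sigma\colon S\to X$ with $X_H\to X$, and then argues that the pullbacks $F^*\L$ (for $\L\in H$), being fiberwise trivial and trivialized along $\sigma$, are globally trivial, which gives the lift. You instead descend the torsor $P=F^*X_H$ from $\C$ to $S$ via $\pi_*\underline{\widehat{H}}_{\C}=\underline{\widehat{H}}_S$ and the Leray sequence, avoiding any choice of section.

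Both routes are fine. Yours is more canonical and would apply even without a section, at the cost of invoking the descent/Leray machinery; the paper's is more elementary and concrete in the setting at hand, where a marked point is always available. One minor comment: your appeal to Lemma~\ref{lem-pull-back-open-closed} in the forward direction is not really needed, since the conclusion $\ker(f^*)\supset H$ is immediate once you know $f^*\L\simeq\O$ for all $\L\in H$; the lemma is about open--closedness of the locus, which is not at stake here.
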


\begin{proof}
    Given a map $S\to\M^{\sim H}(X)$, i.e. a commutative diagram as depicted before the proof, we can just forget about the maps to $B\widehat{H}$ to get a family of stable maps to $X$. Furthermore, if $\L\in H$, its pull-back to $X_H$ is trivial, and thus so is its pull-back to $\C_H$, which is fiberwise trivial. Therefore, so is its pull-back to $\C$.

Conversely, assume given a family of stable maps to $F:\C\to X\times S$ and assume that the pull-back of line bundles of $H$ are fiberwise  trivial. Using a section of $\C\to S$, we can consider the pull-back $S_H$  of $S\to X$ via the covering map $X_H\to X$ and trivialize line bundles from $H$ along this section as well, so that they are globally trivial on the family. This pull-back family provides a commutative diagram as displayed before the lemma.
\end{proof}

\begin{rem}
    Given a family of stable maps $F:\C\to X\times S$ fiberwise liftable to the $H$-covering, the fiber product $S_H$ actually encodes all the possible lifts to $X_H$, of which there are $|H|$, differing by the action of deck-automorphisms, leading to this equivariant map $\C_H\to S_H$.
\end{rem}


The $\M^H(X)$ form a \emph{lattice} (in the sense of \emph{posets}) in bijection with the lattice of subgroups of $\Pic^0(X)[\delta]$: for each inclusion $H_1\subset H_2$ we have a map $\M^{H_2}(X)\hookrightarrow\M^{H_1}(X)$.

\subsubsection{Relation with virtual class}
Let $\vir{\M^H(X)}$ be the virtual class of $\M^H(X)$. In particular we have that $\vir{\M^H(X)}=\sum_{K\supset H}\vir{\M_K(X)}$. As the expression only involves groups $K\supset H$, the system is triangular, and the data of $\M_K(X)$ is equivalent to the data of $\M^H(X)$. Concretely,  using the M\"obius function of the lattice one has:
$$\vir{\M^H(X)}=\sum_{K\supset H}\vir{\M_K(X)} \Longleftrightarrow \vir{\M_K(X)}=\sum_{H\supset K}\mu(K,H)\vir{\M^H(X)}.$$
Finally, the virtual class $\vir{\M^H(X)}$ can be computed using the following relation.

\begin{lem}
    The virtual dimension of $\M(X_H)$ and $\M(X)$ are the same. Furthermore, if we denote by $q:\M(X_H)\to\M^H(X)$ the quotient map, we have
    $$q_*\vir{\M(X_H)} = |H|\cdot\vir{\M^H(X)}.$$
\end{lem}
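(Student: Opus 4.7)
The plan is to reduce both claims to two structural facts already established in the excerpt: $\pi_H\colon X_H\to X$ is a finite étale cover, and $q\colon\M(X_H)\to\M^H(X)$ is a $\widehat{H}$-torsor, hence also finite étale of degree $|\widehat{H}|=|H|$ (using that $\widehat{H}$ and $H$ are abstractly isomorphic finite abelian groups).

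For the equality of virtual dimensions, I would use that étale-ness of $\pi_H$ gives $\pi_H^*T_X\cong T_{X_H}$ and $\dim X_H=\dim X$. For any $\widetilde\beta\in \rmH_2(X_H,\ZZ)$ with $\pi_{H*}\widetilde\beta=\beta$, the projection formula yields $\int_{\widetilde\beta}c_1(T_{X_H})=\int_\beta c_1(T_X)$. Since the virtual dimension formula for moduli of stable maps depends only on this integral, on $\dim X_H$, and on the discrete data $(g,n+m)$, one concludes $\mathrm{vdim}\,\M_{g,n+m}(X_H,\widetilde\beta)=\mathrm{vdim}\,\M_{g,n+m}(X,\beta)$ uniformly in $\widetilde\beta$.

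For the comparison of virtual classes, let $\pi\colon\mathcal C\to\M^H(X)$ denote the universal curve carrying the universal map $f\colon\mathcal C\to X$. By the modular description of $\M^H(X)$ as $H$-rubber maps, together with the base-change construction giving the cover $S_H\to S$ that trivializes pull-backs along a chosen section, the pull-back $q^*\mathcal C\to\M(X_H)$ is canonically the universal curve for $X_H$ and $q^*f$ lifts tautologically to the universal map $f_H\colon q^*\mathcal C\to X_H$. Since $\pi_H$ is étale, $f_H^*T_{X_H}=q^*f^*T_X$, and by étale base change along $q$ the two perfect obstruction theories --- both given by the derived pushforward of the pull-back of the tangent sheaf along the universal curve --- are identified. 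This yields $q^*\vir{\M^H(X)}=\vir{\M(X_H)}$.

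The conclusion then follows from the standard pushforward--pullback formula for a finite étale morphism: for any Chow class $\alpha$ on $\M^H(X)$, one has $q_*q^*\alpha=\deg(q)\cdot\alpha=|H|\cdot\alpha$. Applied to $\alpha=\vir{\M^H(X)}$, this gives $q_*\vir{\M(X_H)}=|H|\vir{\M^H(X)}$. The main step that requires care is the identification of the two obstruction theories under $q$; it ultimately reduces to étale base change applied to $R\pi_*$ combined with $\pi_H^*T_X\cong T_{X_H}$, but it is the step where the geometric content of the lemma enters and therefore deserves explicit verification.
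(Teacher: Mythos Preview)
Your proof is correct and follows essentially the same approach as the paper: the virtual dimension equality via $\pi_H^*T_X\cong T_{X_H}$ and the projection formula, and the virtual class identity via étaleness of $q$. Your treatment is in fact more detailed than the paper's, which dispatches the second claim in a single sentence (``the equality then comes from the fact that the quotient map is étale''); you spell out the identification of obstruction theories under $q^*$ and the $q_*q^*=\deg(q)$ step that this sentence encapsulates.
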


\begin{proof}
    For the dimension statement, the only part that may change is the term $c_1(X)\cdot\beta$. However, if $\beta=\pi_{H\ast}\widetilde{\beta}$, we have
    $$c_1(X)\cdot\beta = c_1(X)\cdot\pi_{H\ast}\widetilde{\beta} = \pi_H^*c_1(X)\cdot\widetilde{\beta} = c_1(X_H)\cdot\widetilde{\beta},$$
    where we used that $\pi_H^* c_1(X)=c_1(X_H)$, since $X_H$ is a covering of $X$ and the tangent bundle pulls back to the tangent bundle. The equality then comes from the fact that the quotient map is \'etale.
\end{proof}

The virtual classes $\vir{\M(X_H)}$ being the virtual classes of some moduli space of stable maps, they are \textit{tautological}. Up to multiplication by $|H|$ and some linear combination, we just proved that the $\vir{\M_K(X)}$ are as well.

\subsubsection{Relation with evaluation maps} \label{sec-evaluation-map-correlated} The moduli space $\M(X_H)$ comes with an evaluation map to $X_H^{n+m}$. It sits in the following commutative diagram:
\bcd
\M(X_H) \arrow[r]\arrow[d] & \M(X) \arrow[d]\\
X_H^{n+m} \arrow[r] & X^{n+m}.
\ecd
Factoring through the action of $\widehat{H}$, acting diagonally on $X_H^{n+m}$, we get
\bcd
\M(X_H) \arrow[r,"|H|:1"',"q"]\arrow[d,"\ev_H"] & \M^H(X) \arrow[r]\arrow[d] & \M(X) \arrow[d,"\ev"]\\
X_H^{n+m} \arrow[r,"|H|:1"] & X_H^{n+m}/\widehat{H} \arrow[r] & X^{n+m}.
\ecd

In other words, quotienting by the $\widehat{H}$-action a family of fiberwise liftable stable maps has an evaluation map to $X_H^{n+m}/\widehat{H}$, since the class does not depend on the chosen lift anymore. We have the following lemma reducing computations on $\M^H(X)$ to computations on $\M(X_H)$. We denote by $\pi$ the forgetful map to $\overline{\M}_{g,n+m}$.

\begin{lem}
    For $\alpha\in \rmH^\bullet(\overline{\M}_{g,n+m},\QQ)$ and $\gamma\in \rmH^\bullet(X^{n+m},\QQ)$, we have
    $$\int_{\vir{\M^H(X)}} \pi^*\alpha\cup\ev^*\gamma = \frac{1}{|H|}\int_{\vir{\M(X_H)}} \pi^*\alpha\cup \ev_H^*(\pi_H^*\gamma) .$$
\end{lem}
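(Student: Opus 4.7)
The plan is to reduce everything to a projection formula computation for the finite étale quotient map $q\colon\M(X_H)\to\M^H(X)$, using the commutative diagram and the pushforward identity already established just before the statement:
$$q_*\vir{\M(X_H)} = |H|\cdot\vir{\M^H(X)}.$$
The key input needed is that $q$ is proper of degree $|H|$ and that both the forgetful map to $\overline{\M}_{g,n+m}$ and the evaluation map to $X^{n+m}$ are compatible under $q$.

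First, I would verify that under pullback by $q$ the two cohomology classes appearing on the right-hand side are precisely $q$-pullbacks of the two classes on the left. For the $\psi$-less tautological part this is immediate since the forgetful map $\pi\colon\M(X_H)\to\overline{\M}_{g,n+m}$ factors as $\pi\circ q$ through $\M^H(X)$, so $\pi^*\alpha = q^*(\pi^*\alpha)$. For the evaluation part, the lower half of the commutative diagram in Section~\ref{sec-evaluation-map-correlated} shows that the composition $\M(X_H)\xrightarrow{q}\M^H(X)\xrightarrow{\ev}X^{n+m}$ agrees with $\M(X_H)\xrightarrow{\ev_H}X_H^{n+m}\xrightarrow{\pi_H^{n+m}}X^{n+m}$. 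Hence $q^*(\ev^*\gamma) = \ev_H^*\pi_H^*\gamma$.

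Second, I apply the projection formula to $q$: for any classes $u$ on $\M^H(X)$,
$$q_*(q^*u \cap \vir{\M(X_H)}) = u \cap q_*\vir{\M(X_H)} = |H|\cdot u\cap\vir{\M^H(X)}.$$
Taking $u = \pi^*\alpha\cup\ev^*\gamma$ and integrating (i.e.\ taking the proper pushforward to a point) gives
$$\int_{\vir{\M(X_H)}} \pi^*\alpha\cup \ev_H^*\pi_H^*\gamma = |H|\int_{\vir{\M^H(X)}} \pi^*\alpha\cup\ev^*\gamma.$$
Dividing by $|H|$ yields the stated identity.

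The only slightly non-trivial point is the compatibility $q_*\vir{\M(X_H)} = |H|\vir{\M^H(X)}$, which has already been proven; once that is in hand, the present lemma is a direct consequence of the projection formula. There is no significant obstacle beyond bookkeeping of the commutative diagram.
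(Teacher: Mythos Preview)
Your proposal is correct and follows the same approach as the paper: the paper's proof simply says to use the commutative diagram, pull back $\gamma$, and apply the push-pull (projection) formula together with the fact that $q$ is \'etale of degree $|H|$. You have spelled out exactly these steps in more detail.
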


\begin{proof}
    We use the commutative diagram, pulling-back the cohomology class $\gamma$ and the fact that the quotient maps are \'etale of degree $|H|$, and the push-pull formula.
\end{proof}

\begin{expl}
    Assume that $X=E$ is an elliptic curve, and take $n=2$. In particular, $\Pic^0(E)[2]\simeq\ZZ_2^2$ and it has five subgroups: $0$, $\Pic^0(E)[2]$, and the three subgroups generated by a unique non-trivial element, $H_1,H_2,H_3$.
    We thus have five components $\M_H$, and five coverings of $E$ of respective degree $1,2,2,2,4$. If $\beta=a[E]$, there is always a unique choice of $\widetilde{\beta}$ equal to $a[E_H]$ divided by the degree of the $H$-covering. The corresponding component is empty if $a$ is not divisible by the degree. We have the following splittings:
    \begin{itemize}
        \item $\M^{\{0\}}(E) = \M_{\{0\}}(E)\sqcup\M_{H_1}(E)\sqcup\M_{H_2}(E)\sqcup\M_{H_3}(E)\sqcup\M_{\ZZ_2^2}(E)=\M(E)$;
        \item $\M^{H_j}(E) = \M_{H_j}(E)\sqcup\M_{\ZZ_2^2}(E)$, for $j=1,2,3$;
        \item $\M^{\ZZ_2^2}(E) = \M_{\ZZ_2^2}(E)$.
    \end{itemize}
    The first and last identities are actually true for any groups, since $0$ is contained in any subgroup, and the only subgroup containing the whole group is the group itself.
\end{expl}

    \subsection{The stratification and an overview of its refinement}
    \label{sec-refined-strat-overview}

    \subsubsection{Graphs and Stratification of $\M(X)$}
The moduli space $\M(X)$ has a natural stratification; strata are indexed by so-called stable $X$-graphs. We will need to work with graphs of pre-stable curves with various additional decorations; we thus recall the definition to fix notation.

\begin{defi}\label{defi-graph}
(graph)

A graph $\Gamma$ of genus $g$ with $n+m$ ends consists in the following data
\[\Gamma = (\sfV , \sfH , \sfL , g : \sfV\to\mathbb Z_{\geqslant 0} , \rm{vert}:\sfH\to\sfV , \iota:\sfH\to\sfH)\]
where:
\begin{enumerate}[label=(\roman*)]
\item $\sfV$ is a vertex set with a genus function $g\colon\sfV \to\ZZ_{\geqslant 0}$,
\item $\sfH$ is the set of half-edges; it comes equipped with a vertex assignment $\rm{vert}:\sfH\to\sfV$ and an involution $\iota\colon\sfH\to\sfH$,
\item $\sfE$, the edge set, is defined by the 2-cycles of $\iota$ in $\sfH$ (self-edges at vertices are permitted),
\item $\sfL$, the set of legs, is defined by the fixed points of $\iota$ and is placed in bijective correspondence with $\{1,\dots,n+m\}$,
\item the pair $(\sfV,\sfE)$ defines a connected graph satisfying the genus condition
$$\sum_v g(v)+b_1(\Gamma)=g.$$
\end{enumerate}
An isomorphism between two graphs $\Gamma,\Gamma'$ consists of bijections $\sfV(\Gamma)\to\sfV(\Gamma')$ and $\sfH(\Gamma)\to\sfH(\Gamma')$ respecting the structures $\sfL, g,\rm{vert}$ and $\iota$. Let $\operatorname{Aut}(\Gamma)$ denote the automorphism group of $\Gamma$. The set of these graphs is denoted by $\GGG_{g,n+m}$.
\end{defi}

The set $\GGG_{g,n+m}$ is endowed with a natural operation: \emph{edge contraction}. If we contract and edge $e$ joining two vertices $v_1,v_2$ the resulting graph is obtaining deleting $e$ and replacing $v_1$, $v_2$ with a unique vertex $v$ of genus $g(v_1)+g(v_2)$. In case the edge $e$ is  a loop, we delete the edge and increase the vertex genus by $1$. 

\subsubsection*{ The cone stack associated to $\GGG_{g,n+m}$}
 To each graph $\Gamma$ one can associate a rational polyhedral cone  $\sigma_{\Gamma}:=\RR_{\geqslant 0}^{\sfE(\Gamma)};$ if $\Aut(\Gamma)\neq \left\{e\right\}$ then it acts on $\sigma_{\Gamma}$ permuting coordinates. Edge contractions correspond to face morphism of cones, i.e. if $\Gamma'$ is obtained from $\Gamma$ contracting some edges, then 
 $\sigma_{\Gamma'}\prec\sigma_{\Gamma}.$
 Taking the colimits of the diagram of cones $\left\{\sigma_{\Gamma}\right\}_{\Gamma\in\GGG_{g,n+m}}$ with maps induced by face morphism and automorphism we obtain a stacky cone complex $\Sigma_{g,n+m}$. This is a moduli stack of \emph{tropical pre-stable curves}  in the sense of \cite{cavalieri2020moduli}.

\begin{expl}
    Picking $g=2$ and $n=m=0$, the fan $\Sigma_{2,0}$ has two maximal cones of dimension $3$ corresponding to the theta graph and the dumbbell graph.
\end{expl}

\begin{defi}\label{defi-X-valued-graph}
(stable $X$-graph)

\begin{itemize}
    \item A stable $X$-graph is the data of a graph $\Gamma$,together with a class decoration function $\beta\colon \sfV(\Gamma)\to\rmH_2(X,\ZZ)$, satisfying the following stability condition: for each vertex $v$ such that $\beta_v=0$ we have $2g(v)-2+n(v)>0$, where $n(v)$ is the valency of $\Gamma$ at $v$ including both edges and legs. Automorphisms shall preserve the class function.
\item The total class of a stable $X$-graph is $\beta=\sum_{v\in\sfV} \beta_v$.
\item The set of stable $X$-graphs of total class $\beta$ is denoted by $\GGG_{g,m+n}(X,\beta)$, or shortly $\GGG(X)$ when the setting is clear.
\end{itemize}
\end{defi}

\subsubsection*{ The cone stack associated to $\GGG_{g,n+m}(X,\beta)$}

The set $\GGG_{g,n+m}(X,\beta)$ is also endowed with the edge contraction operation. When merging distinct vertices, we add their vertex classes.

Precisely as we did for graph without class decoration we can built as colimit over cones indexed by  $\GGG_{g,n+m}(X,\beta)$  with morphism face maps and automorphism maps a  stacky cone  complex denoted by $\Sigma_{g,n+m}(X,\beta)$, or shortly by $\Sigma(X)$ when the setting is clear.

\medskip

The strata of $\M(X)$ are indexed by stable $X$-graphs. The boundary component associated to a stable $X$-graph $\Gamma\in\GGG(X)$ is obtained as follows. Let $\M_v(X,\beta_v):=\M_{g(v),n(v)}(X,\beta_v)$ be the moduli space of stable maps associated to a vertex, and let $\ev_v:\M_v(X,\beta_v)\to X^{\mathrm{vert}^{-1}(v)}$ be the associated evaluation map for adjacent half-edges which are not legs. The normalization of the strata $\M_\Gamma(X)$ is the fiber product between $\prod_v\M_v(X,\beta_v)$ and the diagonal. As the stratum described in terms a product of  moduli spaces of maps with smaller numerical data, we speak about \textit{recursive} structure.

The recursive structure is furthermore compatible with the virtual class (see for example \cite{behrend1997gromov}), i.e.:
\begin{equation}\label{eq-splitting}
    \vir{\M_\Gamma(X)}=\sum_{(\beta_v)}\Delta^!\left(\prod\vir{\M_v(X,\beta_v)}\right)=\vir{\M(X)}\cap p^*\mathrm{PD}([j_*\mathfrak{M}_{\Gamma}]).
\end{equation}
where $\Delta$ is the diagonal  and $\mathfrak{M}_{\Gamma}$ is the closed substack in the (smooth) moduli stack of pre-stable curves. We refer the reader for example to the preliminary sections of \cite{bae2023pixton} and references therein for details about cohomology and Poincar\'e duality for smooth Artin stacks.

\subsubsection{Toward a refinement} In the previous section, we saw that $\M(X)$ splits into (possibly still disconnected) components $\M_K(X)$; we refer to this splitting as a \emph{refinement} of $\M(X).$  Our next goal is to explain how the boundary stratification described in the previous section interacts with the refinement.

\medskip

Using Lemma \ref{lem-pull-back-open-closed} and the above description of $\M_\Gamma(X)$, we can define several locally constant maps from $\M_\Gamma(X)$ to the poset of subgroups of $\Pic^0(X)[\delta]$:
\begin{itemize}
    \item the kernel $K$: $f\mapsto\operatorname{Ker}(f^*)$;
    \item for each component of the normalization, indexed by a given vertex $v$, the kernel $K_v$ of restriction to $C_v$: $f\mapsto\operatorname{Ker}(f\rvert_{C_v}^*)$;
    \item the subgroup $\widetilde{K}=\bigcap K_v\supset K$ of line bundles which pull-back to the trivial one component-wise.
\end{itemize}
The last two sheaves of subgroups are inherited from $\prod_v\M_v(X)$. By construction, the pull-back along $f\colon C\to X$ of line bundles in $\widetilde{K}$ lie in the subgroup $\rmH^1(\Gamma,\CC^*)\hookrightarrow\Pic^{[0]}(C)$.

\medskip

Unsurprisingly, $\M_\Gamma(X)$ is not connected. The components of the normalization $\M_v(X)$ are themselves disconnected, as they are disjoint  union of $\M_{v,K_v}(X)$ for various choices of $K_v$.

If we want to distinguish connected components of the boundary we should then definitely decorate the $X$-stable graphs with a choice of $(K_v)$ at the vertices. This leads us to introduce the notion of \textit{group decorated} stable $X$-graph (see Definition \ref{defi-group-decorated-graph}).  However, the decoration at each vertex is not sufficient to recover $K$; the fiber product of $\prod \M_{v,K_v}(X)$ over the diagonal is itself disconnected, and each connected component determines a subgroup $K\subset \bigcap K_v.$

\begin{rem}
This phenomena of disconnected fiber product was already encountered by the authors in \cite[Lemma~4.5,Remark~4.6]{blomme2024correlated} in the study of the correlated degeneration formula.
\end{rem}
\medskip

To recover the missing piece of information we introduce in the next subsection \textit{twisted diagonals}.

\begin{rem}
The reader may be scared that to compute with these classes, one needs to translate from classes $\vir{\M_{v,K_v}(X)}$ to classes $\vir{\M_v^{H_v}(X)}$ and deal with some crazy linear combination between them. Fortunately, and similarly to \cite{blomme2024correlated}, such a refinement is not necessary in the sense that we overdecomposed the moduli space, and it is only necessary to consider $\prod\M_v^H(X)$ for a common $H$ for all vertices.
\end{rem}

\subsection{Twisted diagonals}

This section is devoted to the introduction of twisted diagonals, used in the construction of refined boundary strata. In a nutshell, \textit{twisted diagonals} are preimages of the diagonals in $X$ by the covering maps obtained using the $H$-covering.

\subsubsection{Construction} Let $\Gamma\in\GGG_{g,n+m}(X,\beta)$ be a stable $X$-valued graph. We consider the following commutative diagram, where every square is a fiber product:

\begin{center}
\adjustbox{scale=0.75}{
\begin{tikzcd}
\overline{\operatorname{Diag}}_H \arrow[rr]\arrow[rd]\arrow[rdd]&  & \operatorname{Diag}_H\arrow[rd]\arrow[ddl] & \\
 & X_H^{\mathsf{H}(\Gamma)}\arrow[rr,crossing over,"q_H^\Gamma"]\arrow[rdd,crossing over,"\widehat{\pi}_H^\Gamma"] &  & X_H^{\mathsf{H}(\Gamma)}/\widehat{H}^{\sfV(\Gamma)} \arrow[ldd,"\pi_H^\Gamma"] \\
  & \operatorname{Diag}_0 \arrow[rd,hook] & & \\
 & & X^{\mathsf{H}(\Gamma)} & \\
\end{tikzcd}.}
\end{center}

\begin{itemize}
    \item The maps on the front face are coverings of the product $X^{\mathsf{H}(\Gamma)}$ induced by $H$-covering $X_H\to X$; an element $\hat{h}_v\in\widehat{H}^{\sfV(\Gamma)}$ corresponding to a vertex $v\in \sfV(\Gamma)$ (we mean that on all the other factors we have the identity) acts by deck-automorphism on all copies $X^e_H$ with $e$ an half-edge adjacent to $v$. These coverings have respective degrees
    $$\deg\widehat{\pi}_H^\Gamma = |H|^{|\mathsf{H}(\Gamma)|},\ \deg\pi_H^\Gamma = |H|^{|\mathsf{H}(\Gamma)|-|\sfV(\Gamma)|},\text{ and }\deg q_H^\Gamma=|H|^{|\sfV(\Gamma)|}.$$
    \item $\operatorname{Diag}_0=X^{\mathsf{E}(\Gamma)}$ is the diagonal in $X^{\mathsf{H}(\Gamma)}$, where an edge maps to corresponding half-edges.
    \item $\overline{\operatorname{Diag}}_H$ is the pull-back of the diagonal by the first covering,
    \item $\operatorname{Diag}_H$ is the pull-back of the diagonal by the second covering, or alternatively the quotient of $\overline{\operatorname{Diag}}_H$ by the action of $\widehat{H}^{\sfV(\Gamma)}$.
\end{itemize}

Using the automorphism group of the $H$-covering, we can see that pull-back diagonals are disconnected.

\begin{lem}
    We have the following
    \begin{enumerate}
        \item $\overline{\operatorname{Diag}}_H$ has $|H|^{|\mathsf{E}(\Gamma)|}$ connected components indexed by $C^1(\Gamma,\widehat{H})$, denoted by $\overline{\operatorname{Diag}}_{H,\psi}$. For any $\psi\in C^1(\Gamma,\widehat{H})$, the covering $\overline{\operatorname{Diag}}_{H,\psi}\to\operatorname{Diag}_0$ has degree $|H|^{|\mathsf{E}(\Gamma)|}$.
        \item Let $\partial:C^0(\Gamma,\widehat{H})\to C^1(\Gamma,\widehat{H})$ be the coboundary map. If $\chi\in C^0(\Gamma,\widehat{H})=\widehat{H}^{\sfV(\Gamma)}$, we have $\chi\cdot\overline{\operatorname{Diag}}_{H,\psi}=\overline{\operatorname{Diag}}_{H,\psi+\partial\chi}$.
        \item $\operatorname{Diag}_H$ has $|H|^{b_1(\Gamma)}$ connected components indexed by $\rmH^1(\Gamma,\widehat{H})$. For any $\varphi$, the covering $\operatorname{Diag}_{H,\varphi}\to\operatorname{Diag}_0$ has degree $|H|^{|\mathsf{E}(\Gamma)|-1}$ and each map $\overline{\operatorname{Diag}}_{H,\psi}\to\operatorname{Diag}_{H,[\psi]}$ has degree $|H|$.
    \end{enumerate}
\end{lem}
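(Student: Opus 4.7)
The plan is to reduce everything to a single edge, then assemble globally as a product over $\sfE(\Gamma)$. Since $X_H\to X$ is an \'etale Galois cover with deck group $\widehat H$, the fiber product $X_H\times_X X_H$ associated to a single edge $e=(h,h')$ splits as $\bigsqcup_{\psi_e\in\widehat H} X_H$, where the component indexed by $\psi_e$ is the graph $\{(x,\psi_e\cdot x):x\in X_H\}$ of the corresponding deck automorphism. This component is \'etale of degree $|H|$ over the image diagonal $X\subset X\times X$.

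For part (1), since $\operatorname{Diag}_0\hookrightarrow X^{\sfH(\Gamma)}$ is the edge-wise diagonal, the pullback $\overline{\operatorname{Diag}}_H$ is the product over $e\in\sfE(\Gamma)$ of the fiber products above. This yields the decomposition $\overline{\operatorname{Diag}}_H=\bigsqcup_{\psi\in C^1(\Gamma,\widehat H)}\overline{\operatorname{Diag}}_{H,\psi}$, with each piece isomorphic to $X_H^{\sfE(\Gamma)}$ and hence of degree $|H|^{|\sfE(\Gamma)|}$ over $\operatorname{Diag}_0=X^{\sfE(\Gamma)}$. For part (2), pick a point $(x_h,\psi_e\cdot x_h)_{e=(h,h')}$ in $\overline{\operatorname{Diag}}_{H,\psi}$ and apply $\chi=(\chi_v)_{v\in\sfV(\Gamma)}\in\widehat H^{\sfV(\Gamma)}$ component-wise. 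Writing $v=\mathrm{vert}(h)$, $v'=\mathrm{vert}(h')$, the $e$-component becomes $(\chi_v x_h,(\chi_{v'}\psi_e\chi_v^{-1})\chi_v x_h)$. Since $\widehat H$ is abelian, the new twist is $\psi_e+(\partial\chi)_e$, giving the claimed formula $\chi\cdot\overline{\operatorname{Diag}}_{H,\psi}=\overline{\operatorname{Diag}}_{H,\psi+\partial\chi}$.

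For part (3), $\operatorname{Diag}_H$ is the quotient of $\overline{\operatorname{Diag}}_H$ by $C^0(\Gamma,\widehat H)$, so by part (2) its connected components biject with the orbits of $C^0$ on $C^1$ under the coboundary action, i.e.\ with $C^1/B^1=\rmH^1(\Gamma,\widehat H)$ (there are no $2$-cells). Since $\Gamma$ is connected, $\ker\partial$ is the subgroup of constant cochains $\cong\widehat H$, so $|B^1|=|H|^{|\sfV(\Gamma)|-1}$ and $|\rmH^1(\Gamma,\widehat H)|=|H|^{|\sfE(\Gamma)|-|\sfV(\Gamma)|+1}=|H|^{b_1(\Gamma)}$. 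The two degree statements then follow by bookkeeping: each $\psi$ has stabilizer equal to the constants, so the orbit-quotient map $\overline{\operatorname{Diag}}_{H,\psi}\to\operatorname{Diag}_{H,[\psi]}$ has degree $|H|$; dividing the total degree $\deg\pi_H^\Gamma=|H|^{2|\sfE(\Gamma)|-|\sfV(\Gamma)|}$ by the number of components $|H|^{b_1(\Gamma)}$ gives degree $|H|^{|\sfE(\Gamma)|-1}$ for each $\operatorname{Diag}_{H,\varphi}\to\operatorname{Diag}_0$.

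The main delicate point is the combinatorial bookkeeping in part (3): one must identify the stabilizer of each $\psi$ under the coboundary action as the full constants subgroup $\cong\widehat H$ (a consequence of connectedness of $\Gamma$), rather than trivial, so that the two degrees $|H|$ and $|H|^{|\sfE(\Gamma)|-1}$ multiply correctly to recover the component degree $|H|^{|\sfE(\Gamma)|}$ from part (1). Everything else is formal manipulation of products and Galois covers.
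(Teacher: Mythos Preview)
Your proof is correct and follows essentially the same approach as the paper: both identify the components of $\overline{\operatorname{Diag}}_H$ via graphs of deck automorphisms (you phrase this as an edge-wise fiber product decomposition, the paper argues directly on points of $X_H^{\sfH(\Gamma)}$), both compute the $C^0$-action by the same direct calculation, and both obtain the degrees in part~(3) by identifying the component stabilizer with the constants $\rmH^0(\Gamma,\widehat H)\cong\widehat H$. Your write-up is slightly more explicit about the product-over-edges structure and the cohomological bookkeeping, but there is no substantive difference in strategy.
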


\begin{proof}
    \begin{enumerate}
        \item The covering $X_{H}\to X$ is Galois. Therefore,  an element $(y_h)\in X_H^{\mathsf{H}(\Gamma)}$ is mapped by $\widehat{\pi}_H^\Gamma$  into the image of diagonal of $X$ if and only if for each oriented edge $e=(h,h')$ there exists a deck-automorphism $\varphi_{h'}^{h}$ such that $y_{h'}=\varphi_{h'}^{h}(y_h)$. If one reverses the orientation, we naturally have $(\varphi_{h'}^{h})^{-1}=\varphi_{h}^{h'}$. Notice that, since the action of the deck automorphism is free, $\varphi_{h'}^{h}$ is unique.
 The pull-back of the diagonal is thus the product of graphs of deck automorphisms, which are disjoint since deck automorphisms are fixed-point free. Therefore, the choice of a diagonal component is precisely indexed by this choice of $(\varphi_e)$, which may be seen as a cochain $C^1(\Gamma,\widehat{H})$.
        \item A straightforward computation shows that if $\chi\in C^0(\Gamma,\widehat{H})=\widehat{H}^{\sfV(\Gamma)}$ and $y\in\overline{\operatorname{Diag}}_{H,\psi}$ for some $\psi$, then $\chi\cdot y\in\overline{\operatorname{Diag}}_{H,\psi+\delta\chi}$. Therefore, the stabilizer of the component is $\rmH^0(\Gamma,\widehat{H})$ which is isomorphic to $\widehat{H}$. Furthermore, there are $|\widehat{H}^{|\sfV(\Gamma)|-1}$ components identified by the action.
        \item It follows from the previous point that connected components are indexed by $\rmH^1(\Gamma,\widehat{H})$. The degree has also been computed as the cardinality of the component stabilizer. Alternatively, the total degree $|\widehat{H}|^{|\sfV(\Gamma)|}$ is preserved and there are $|\widehat{H}|^{|\sfV(\Gamma)|-1}$ components, so that the degree for each of them is $|H|$.
    \end{enumerate}
\end{proof}

Connected components of $\overline{\operatorname{Diag}}_H$ and $\operatorname{Diag}_H$ are called \textit{twisted diagonals}. Having introduced twisted diagonals, we can refine the above commutative diagram. We thus have the following diagrams for $\varphi\in \rmH^1(\Gamma,\widehat{H})$ and $\psi\in C^1(\Gamma,\widehat{H})$:
\begin{center}
\adjustbox{scale=0.7}{
\begin{tikzcd}
\overline{\operatorname{Diag}}_{H,\psi} \arrow[rr]\arrow[rd]\arrow[rdd]&  & \operatorname{Diag}_{H,[\psi]}\arrow[rd]\arrow[ddl] & \\
 & X_H^{\mathsf{H}(\Gamma)}\arrow[rr,crossing over]\arrow[rdd,crossing over] &  & X_H^{\mathsf{H}(\Gamma)}/\widehat{H}^{\sfV(\Gamma)} \arrow[ldd] \\
  & \operatorname{Diag}_0 \arrow[rd,hook] & & \\
 & & X^{\mathsf{H}(\Gamma)} & \\
\end{tikzcd},
\begin{tikzcd}
\bigsqcup_{[\psi]=\varphi}\overline{\operatorname{Diag}}_{H,\psi} \arrow[rr]\arrow[rd]\arrow[rdd]&  & \operatorname{Diag}_{H,\varphi}\arrow[rd]\arrow[ddl] & \\
 & X_H^{\mathsf{H}(\Gamma)}\arrow[rr,crossing over]\arrow[rdd,crossing over] &  & X_H^{\mathsf{H}(\Gamma)}/\widehat{H}^{\sfV(\Gamma)} \arrow[ldd] \\
  & \operatorname{Diag}_0 \arrow[rd,hook] & & \\
 & & X^{\mathsf{H}(\Gamma)} & \\
\end{tikzcd}.
}
\end{center}

Let us stress that the squares are commutative but clearly no longer fiber product, except the top square in the right diagram.

\begin{rem}
Elements of $\rmH^1(\Gamma,\widehat{H})$ may  be interpreted as pairings $H\otimes\rmH_1(\Gamma,\ZZ)\to\mu_\delta\subset\CC^*$. This will be the  point of view we'll take in the next section. Such pairings are called \emph{monodromy pairings} for reasons explained below. 
\end{rem}

\subsubsection{Compatibility by change of subgroup} We finish this section proving a compatibility statement about refinements indexed by two different groups $H_1\subset H_2$ yielding a covering $X_{H_2}\to X_{H_1}$ of degree $|H_2/H_1|$. We denote by $\iota$ the inclusion and by $\iota^*:\widehat{H}_2\to\widehat{H}_1$ the restriction between Pontrjagin duals, as well as the morphisms it induces between $C^1(\Gamma,\widehat{H}_j)$ and $\rmH^1(\Gamma,\widehat{H}_j)$. Furthermore, we have $\ker\iota^*=\widehat{H_2/H_1}$, which is the automorphism group of the covering $X_{H_2}\to X_{H_1}$.

\begin{lem}
    Assume $\iota:H_1\hookrightarrow H_2$ are subgroups.
    \begin{enumerate}
        \item We have maps $\overline{\operatorname{Diag}}_{H_2,\psi_2}\to\overline{\operatorname{Diag}}_{H_1,\iota^*\psi_2}$ of degree $|H_2/H_1|^{|\mathsf{E}(\Gamma)|}$. For given $\psi_1$, there are $|H_2/H_1|^{|\mathsf{E}(\Gamma)|}$ elements $\psi_2$ with $\iota^*\psi_2=\psi_1$.
        \item We have maps $\operatorname{Diag}_{H_2,\varphi_2}\to\operatorname{Diag}_{H_1,\iota^*\varphi_2}$ of degree $|H_2/H_1|^{|\mathsf{E}(\Gamma)|-1}$. For given $\varphi_1$, there are $|H_2/H_1|^{b_1(\Gamma)}$ elements $\varphi_2$ with $\iota^*\varphi_2=\varphi_1$.
    \end{enumerate}
\end{lem}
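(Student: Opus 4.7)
The plan is to build both maps directly from the Galois cover $\pi\colon X_{H_2}\to X_{H_1}$, which has degree $|H_2/H_1|$ and deck group $\widehat{H_2/H_1}=\ker(\iota^*\colon\widehat{H_2}\to\widehat{H_1})$. First I would observe that $\pi$ is equivariant in the sense that $\pi\circ\sigma=\iota^*(\sigma)\circ\pi$ for every $\sigma\in\widehat{H_2}$. Taking products over $\mathsf{H}(\Gamma)$ yields a natural map $X_{H_2}^{\mathsf{H}(\Gamma)}\to X_{H_1}^{\mathsf{H}(\Gamma)}$ commuting with the projections down to $X^{\mathsf{H}(\Gamma)}$, hence restricting to $\overline{\operatorname{Diag}}_{H_2}\to\overline{\operatorname{Diag}}_{H_1}$. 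A point of $\overline{\operatorname{Diag}}_{H_2,\psi_2}$ is a tuple $(y_h)$ satisfying $y_{h'}=\psi_2(e)\cdot y_h$ for each oriented edge $e=(h,h')$; by equivariance its image satisfies the analogous relation with $\iota^*(\psi_2(e))$, and therefore lies in $\overline{\operatorname{Diag}}_{H_1,\iota^*\psi_2}$. The degree of the restricted map is read off the common base $\operatorname{Diag}_0$: both source and target cover it with respective degrees $|H_2|^{|\mathsf{E}(\Gamma)|}$ and $|H_1|^{|\mathsf{E}(\Gamma)|}$ by the preceding lemma, giving the intermediate degree $|H_2/H_1|^{|\mathsf{E}(\Gamma)|}$. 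The fiber count of $\iota^*$ on $C^1$ is immediate: edge by edge, $\iota^*\colon\widehat{H_2}\to\widehat{H_1}$ has kernel of cardinality $|H_2/H_1|$, so each fiber of $C^1(\Gamma,\widehat{H_2})\to C^1(\Gamma,\widehat{H_1})$ has size $|H_2/H_1|^{|\mathsf{E}(\Gamma)|}$.

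Part (2) would follow by passing to the $\widehat{H_i}^{\sfV(\Gamma)}$-quotients. These actions are compatible under $\iota^*$, and the corresponding quotients are $\operatorname{Diag}_{H_i}$. The horizontal map thus descends to $\operatorname{Diag}_{H_2,\varphi_2}\to\operatorname{Diag}_{H_1,\iota^*\varphi_2}$, where $\varphi_i=[\psi_i]$. A diagram chase using the degree-$|H_i|$ covers $\overline{\operatorname{Diag}}_{H_i,\psi_i}\to\operatorname{Diag}_{H_i,\varphi_i}$ from the previous lemma identifies the degree as
$$\frac{|H_2/H_1|^{|\mathsf{E}(\Gamma)|}\cdot |H_1|}{|H_2|}=|H_2/H_1|^{|\mathsf{E}(\Gamma)|-1}.$$
For the fiber count of $\iota^*\colon\rmH^1(\Gamma,\widehat{H_2})\to\rmH^1(\Gamma,\widehat{H_1})$, I would use the long exact sequence associated to $0\to\widehat{H_2/H_1}\to\widehat{H_2}\to\widehat{H_1}\to 0$. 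Since $\Gamma$ is one-dimensional, $\rmH^2(\Gamma,-)$ vanishes, so the map on $\rmH^1$ is surjective. Its kernel equals $\rmH^1(\Gamma,\widehat{H_2/H_1})$ modulo the image from $\rmH^0(\Gamma,\widehat{H_1})$, but the preceding connecting map is zero because $\rmH^0(\Gamma,\widehat{H_2})\to\rmH^0(\Gamma,\widehat{H_1})$ is already surjective on a connected graph. Hence the kernel has size $|\rmH^1(\Gamma,\widehat{H_2/H_1})|=|H_2/H_1|^{b_1(\Gamma)}$, as claimed.

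The main obstacle, though mild, is checking that the horizontal map really respects the $\widehat{H_i}^{\sfV(\Gamma)}$-quotient structure and tracks components correctly; this amounts to combining the equivariance $\pi\circ\sigma=\iota^*(\sigma)\circ\pi$ with the component identification $\chi\cdot\overline{\operatorname{Diag}}_{H,\psi}=\overline{\operatorname{Diag}}_{H,\psi+\partial\chi}$ from the previous lemma, so the computation reduces to straightforward cochain bookkeeping together with the long exact sequence argument above.
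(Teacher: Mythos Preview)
Your proof is correct and follows essentially the same approach as the paper: the paper's proof simply says the maps are straightforward and the degrees follow from the degrees of the maps to $\operatorname{Diag}_0$, which is exactly the computation you carry out in detail. Your long exact sequence argument for the $\rmH^1$ fiber count is a welcome elaboration of a step the paper leaves implicit.
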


\begin{proof}
    Finding the maps is straightforward, and computing the degrees follows from the degrees of maps to $\operatorname{Diag}_0$.
\end{proof}

\subsection{Boundary components and  coverings}
\label{sec-monodromy-graphs-stratif-MK}

We can now describe the interaction between the stratification and the refinement, and doing so we describe the induced stratification on the $\M_K(X)$. 
To index the boundary strata of $\M_K(X)$, we need new kind of decorated graphs which we now introduce.

\subsubsection{Group decorated graphs}

\begin{defi}\label{defi-group-decorated-graph}
(group decorated graph)
\begin{itemize}
    \item A \textit{group decorated} stable $X$-graph (or shortly a \textit{group decorated graph}) is the data of a stable $X$-graph $\Gamma$ together with the data of a subgroup $K_v\subset\Pic^0(X)[\delta]$ at each vertex.
    \item Given a group decorated graph $\Gamma$, its \textit{kernel} is the subgroup $\widetilde{K}=\bigcap K_v$.
    \item A \textit{global monodromy} on a group decorated graph is the data of a pairing
    $$\varphi\colon \widetilde{K}\otimes\rmH_1(\Gamma,\ZZ)\longrightarrow \mu_\delta\subset\CC^*.$$
    We call \textit{core}  the subgroup $K=\{\L\in\widetilde{K} \text{ s.t. }\varphi(\L,-)=1\}$.
    \item A group decorated graph together with a choice of kernel, core and monodromy is called a \textit{monodromy graph}. The set of monodromy graphs is denoted by $\GGG^\gp_{g,n+m}(X,\beta)$.
\end{itemize}
\end{defi}

As advertised, this graph decoration is meant to encompass the information about the pull-back to curves with dual graph $\Gamma$ of $\delta$-torsion line bundles: the $K_v$ is the subgroup of $\delta$-torsion line bundles that pull-back trivially on the component $C_v\subset C$ of such a curve $C$. In particular, the kernel $\widetilde{K}$ corresponds to line bundles pulling back to the trivial one on the normalization of $C$, i.e. line bundles whose pull-back  lie in the subgroup $\rmH^1(\Gamma,\CC^*)\subseteq\Pic^0(C)$. The  data of the monodromy pairing $\varphi$ is equivalent  to the data of the pull-back  $f^*\L$ for $\L$ restricting trivially component-wise. The core $K\subset\widetilde{K}$ corresponds to line bundles that, on the different components of the boundary $\M_{\Gamma}(X)$, pull-back to the trivial one not only componentwise but globally.

\begin{rem}
\label{rem-refined-monodromy-graphs}
    We also have a notion of \textit{local monodromy}. Let $\Gamma$ be a group decorated graph. If $A\subset\sfV(\Gamma)$ is a subset and $\Gamma_A$ the subgraph of $\Gamma$ with vertices of $A$ and edges between them, line bundles from $K_A=\bigcap_{v\in A}K_v$ restrict trivially on the components of $v\in A$. The local monodromy is the data a pairing
    $$\varphi_A\colon \left(\bigcap_{v\in A} K_v\right)\otimes \rmH_1(\Gamma_A,\ZZ)\longrightarrow\mu_\delta\subset\CC^*.$$
    These should satisfy thus the following compatibility condition: if $\gamma$ is a loop belonging to two subgraphs $\Gamma_A$ and $\Gamma_B$, then for $\L\in K_A\cap K_B=\bigcap_{A\cup B}K_v$ we have
    $$\varphi_A(\L,\gamma) = \varphi_B(\L,\gamma).$$
    Fixing such a family of $(\varphi_A)_A$ is called a monodromy on the group decorated graph. The global monodromy is merely the pairing for $A=\sfV(\Gamma)$.
    
    As we only care about the core, we do not need this refined notion of monodromy graph fixing a group decoration and a family of pairings $(\varphi_A)$ except when defining edge contraction for defining a cone stack, which is our next step.
\end{rem}

\subsubsection*{ The cone stack associated to $\GGG_{g,n+m}(X,\beta)^{\gp,K}$}

The set of monodromy graphs $\GGG_{g,n+m}^\gp(X,\beta)$, shortly denoted by $\GGG^\gp(X)$ is also endowed with an \emph{edge contraction operation,} defined as follows: let $\Gamma$ be a monodromy graph with kernel $\widetilde{K}$, global monodromy $\varphi$ and let $e$ be an edge. The kernels and pairing are modified as follows.
\begin{itemize}
    \item If $e$ has distinct ends, we intersect the adjacent $K_v$, so that the kernel $\widetilde{K}$ does not change nor the pairing $\varphi$ since we have an isomorphism $\rmH_1(\Gamma,\ZZ)\to\rmH_1(\Gamma/e,\ZZ)$.
    \item If $e$ is a loop, $K_v$ is replaced by the orthogonal of $\gamma$ in $K_v$ in the setting of Remark \ref{rem-refined-monodromy-graphs}, so that the kernel is replaced by
    $$\widetilde{K}_e = \{\L\in\widetilde{K} \text{ s.t. }\varphi(\L,e)=1\}.$$
    Restricting the first argument to $\widetilde{K}_e$, the pairing factors through $\rmH_1(\Gamma,\ZZ)\to\rmH_1(\Gamma/e,\ZZ)$.
\end{itemize}

\emph{Crucially,} the core is preserved by the edge contraction operation. 
Therefore, if makes sense to consider edge contraction restricted to the subset $\GGG^{\gp,K}_{g,n+m}(X,\beta)$ of monodromy graphs with a given core $K$.
Then, as already done for graph and class $X$-decorated strata graph, we get a cone complex $\Sigma^{\gp,K}_{g,n+m}(X,\beta)$ (or shortly $\Sigma_K(X)$) taking the colimit (along maps induced by edge contraction and authomorphisms) of the collection of cones indexed by monodromy graphs with core $K$. In particular, it means the global monodromy induces an injective morphism $\varphi\colon\widetilde{K}/K\hookrightarrow\rmH^1(\Gamma,\CC^*)$.


    \subsubsection{Refinement of the stratification}
Let $\Gamma$ be a monodromy graph with group decoration $(K_v)$ and global monodromy $\varphi\colon\widetilde{K}\otimes\rmH_1(\Gamma,\ZZ)\to\CC^*$. We now explain how to construct an associated stratum.

 As $\widetilde{K}\subset K_v$, the maps of $\M_{v,K_v}(X)$ can be lifted to the $\widetilde{K}$-covering up to rubber (see Section~\ref{sec-H-covering}), and choosing $H=\widetilde{K}$ we get, as explained in Section~\ref{sec-evaluation-map-correlated}, a refined evaluation map
$$\ev:\prod_v\M_{v,K_v}(X)\to X_{\widetilde{K}}^{\mathsf{H}(\Gamma)}/\widehat{\widetilde{K}}^{\sfV(\Gamma)}.$$
We then make the fiber product with the twisted diagonal determined by the global monodromy $\varphi$:
\bcd
\M_{\Gamma,(K_v),\varphi}(X) \arrow[r]\arrow[d] & \prod\M_{v,K_v}(X) \arrow[d] \\
\operatorname{Diag}_{\widetilde{K},\varphi} \arrow[r,hook,"\Delta_{\widetilde{K},\varphi}"] & X_{\widetilde{K}}^{\mathsf{H}(\Gamma)}/\widehat{\widetilde{K}}^{\sfV(\Gamma)}.
\ecd
Using Gysin pull-back, each one is endowed with a virtual class $\Delta_{\widetilde{K},\varphi}^!(\prod\vir{\M_{v,K_v}(X)})$.

\begin{lem}\label{lem-kernel-restriction-bdry-strata}
    We have that $\operatorname{Ker}(f^*)$ coincides with $\left\{\L\;| \; 1=\varphi(\L,-)\colon \rm{H}_1(\Gamma,\mathbb Z)\to\mu_{\delta}\right\}$, i.e. the core of the monodromy graph. In particular, $\M_{\Gamma,(K_v),\widetilde{K},\varphi}$ lies in the boundary of $\M_K$.
\end{lem}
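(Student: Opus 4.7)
The plan is to prove the two inclusions $\ker(f^\ast)\subseteq K$ and $K\subseteq \ker(f^\ast)$ by combining the componentwise pull-back information (encoded by the $K_v$) with the gluing information (encoded by $\varphi$ via the twisted diagonal). The proof proceeds in three steps, and the conclusion about the boundary of $\M_K(X)$ is then immediate from the definition of $\M_K(X)$.

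\emph{Step 1: $\ker(f^\ast)\subseteq \widetilde{K}$.} If $\L\in\Pic^0(X)[\delta]$ satisfies $f^\ast\L=\O_C$, then for each vertex $v$ one has $(f|_{C_v})^\ast\L=(f^\ast\L)|_{C_v}=\O_{C_v}$, so $\L\in K_v$ for every $v$ and hence $\L\in\bigcap_v K_v=\widetilde{K}$.

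\emph{Step 2: Reduction to monodromy on $\Gamma$.} For any $\L\in\widetilde{K}$, the pull-back $f^\ast\L$ is trivial on every irreducible component of $C$, so it belongs to the subgroup $\rmH^1(\Gamma,\CC^\ast)\subseteq\Pic^{[0]}(C)$ of line bundles obtained by gluing the trivial bundle across nodes. Identifying $\rmH^1(\Gamma,\CC^\ast)=\Hom(\rmH_1(\Gamma,\ZZ),\CC^\ast)$ and using that $\L$ is $\delta$-torsion, $f^\ast\L$ corresponds to a character $\mu_\L\colon\rmH_1(\Gamma,\ZZ)\to\mu_\delta$, the \emph{monodromy character} of $f^\ast\L$. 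Thus $f^\ast\L=\O_C$ if and only if $\mu_\L\equiv 1$, and the problem reduces to identifying $\mu_\L$ with $\varphi(\L,-)$.

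\emph{Step 3: Identification $\mu_\L=\varphi(\L,-)$.} Here we unravel the construction of $\M_{\Gamma,(K_v),\varphi}(X)$. By Proposition \ref{prop-lifting-K-covering} and the rubber lift description of $\M_{v,K_v}(X)$, each restriction $f|_{C_v}$ admits a lift to $X_{\widetilde{K}}$ well-defined up to the action of $\widehat{\widetilde{K}}$. At each node, corresponding to an oriented edge $e=(h,h')$, the two lifts from the adjacent components differ by a unique deck automorphism $\psi(e)\in\widehat{\widetilde{K}}$; this is precisely the content of the fiber product with $\operatorname{Diag}_{\widetilde{K},\psi}\subset X_{\widetilde{K}}^{\mathsf{H}(\Gamma)}$. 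Passing to $\operatorname{Diag}_{\widetilde{K},\varphi}\subset X_{\widetilde{K}}^{\mathsf{H}(\Gamma)}/\widehat{\widetilde{K}}^{\sfV(\Gamma)}$ quotients the cochain $\psi\in C^1(\Gamma,\widehat{\widetilde{K}})$ by the coboundaries $\partial\chi$ (coming from changing the lift at a vertex), so that only its class $\varphi=[\psi]\in\rmH^1(\Gamma,\widehat{\widetilde{K}})=\Hom(\widetilde{K}\otimes\rmH_1(\Gamma,\ZZ),\mu_\delta)$ is well-defined. Since every $\L\in\widetilde{K}$ is trivialized on $X_{\widetilde{K}}$, the monodromy of $f^\ast\L$ around a loop $\gamma\in\rmH_1(\Gamma,\ZZ)$ is obtained by accumulating the deck discrepancies along $\gamma$ and evaluating them on $\L$, which gives $\mu_\L(\gamma)=\varphi(\L,\gamma)$.

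Combining the three steps, $\L\in\ker(f^\ast)$ iff $\L\in\widetilde{K}$ and $\varphi(\L,-)=1$, which is the core $K$. The inclusion $\M_{\Gamma,(K_v),\widetilde{K},\varphi}\subseteq\overline{\M_K(X)}$ then follows from the definition of $\M_K(X)$ as the locally closed locus where $\ker(f^\ast)=K$. The main technical step is Step 3, where one must carefully pass between the cochain presentation $\psi\in C^1(\Gamma,\widehat{\widetilde{K}})$ (naturally provided by the fiber product with $\overline{\operatorname{Diag}}_{\widetilde{K},\psi}$) and its cohomology class $\varphi$ (which is what actually indexes connected components of the twisted diagonal), verifying that the coboundary ambiguity matches exactly the choice of lift of each $f|_{C_v}$ to $X_{\widetilde{K}}$.
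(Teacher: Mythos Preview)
Your proof is correct and follows essentially the same approach as the paper's own argument: reduce to $\widetilde{K}$ by componentwise triviality, then identify $\varphi(\L,-)$ with the monodromy of $f^\ast\L$ by lifting each component to $X_{\widetilde{K}}$ and recording the deck-automorphism discrepancies at nodes. Your version is more detailed and carefully structured, but the content is the same.
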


\begin{proof}
    If a line bundle restricts trivially, in particular it restricts trivially componentwise, and thus belongs to $\widetilde{K}$. Unraveling the definition of $\varphi$, if $\L\in\widetilde{K}$, $\varphi(\L)\in \rmH^1(\Gamma,\mu_n)$ actually provides the monodromy of the pull-back of $\L$ along loops of $\Gamma$. Indeed, considering a partial normalisation so that the dual graph is a tree, maps can be lifted to $X_{\widetilde{K}}$, and fibers associated to points resulting from normalising nodes differ by the action of a deck automorphism, yielding the result.
\end{proof}

We are  actually not interested in keeping track in the boundary of the vertex refinement $K_v$;
we thus define the stratum $\M_{\Gamma,\widetilde{K},\varphi}(X)$ associated to a monodromy graph without a choice of group decoration:
$$\M_{\Gamma,\widetilde{K},\varphi}(X)=\bigsqcup_{(K_v):\bigcap K_v=\widetilde{K}} \M_{\Gamma,(K_v),\varphi}.$$

In the end, we get the stratification of $\M_K(X)$ with strata of indexed by \emph{monodromy graphs,} i.e. a stable $X$-graph with a choice of kernel $\widetilde{K}$ and global monodromy $\varphi\colon\widetilde{K}\otimes\rmH_1(\Gamma,\ZZ)\to\CC^*$ with core $K$. 

The dual intersection complex with respect  to such a stratification, keeping track of the dual graph automorphisms, is the stacky cone  complex  $\Sigma_K(X)$ described above.

\subsubsection{Relation to moduli spaces of stable maps to coverings}

By definition, the virtual class of the stratum $\M_{\Gamma,\widetilde{K},\varphi}(X)$ is a sum over the possible group decorations $(K_v)$. Therefore, it may seem this requires excruciating computations. In this section, we prove that they can be drastically reduced to computations for moduli spaces of stable maps to a $H$-covering of $X$, where $H$ is common to all vertices of $\Gamma$.

\medskip

Consider the product between the product of substacks $\M_v^H(X)$ with a common choice of $H$ for all vertices, and the twisted diagonal indexed by $\varphi:H\to \rmH^1(\Gamma,\mu_n)$:
\bcd
\M_{\Gamma,\varphi}^H \arrow[r]\arrow[d] & \prod\M_v^H(X) \arrow[d] \\
\operatorname{Diag}_{H,\varphi} \arrow[r,hook,"\Delta_{H,\varphi}"] & X_H^{\mathsf{H}(\Gamma)}/\widehat{H}^{\sfV(\Gamma)}.
\ecd
It is endowed with a virtual fundamental class coming from Gysin pull-back.

We now relate classes $\Delta_{H,\varphi}^!(\prod\vir{\M_v^H(X)})$ to classes $\sum_{(K_v):\bigcap K_v=\widetilde{K}}\Delta_{\widetilde{K},\psi}^!(\prod\vir{\M_{v,K_v}(X)})$. All computation take place in $\rmH_\bullet(\M_\Gamma(X),\QQ)$.

\begin{prop}
    We have the following relation expressing classes of stable maps liftable to the $H$-covering in terms of classes of refined strata:
    $$\Delta_{H,\varphi}^!(\prod\vir{\M_v^H(X)}) =
    \sum_{\widetilde{K}\supset H}\sum_{\psi:\psi|_H=\varphi}\Delta_{\widetilde{K},\psi}^!\left(\sum_{(K_v):\bigcap K_v=\widetilde{K}}\prod\vir{\M_{v,K_v}(X)}\right).$$
    Furthermore, it is possible to conversely express $\vir{\M_{\Gamma,\widetilde{K},\varphi}}$ as linear combinations of the classes $\Delta_{H,\varphi}^!(\prod\vir{\M_v^H(X)})$.
\end{prop}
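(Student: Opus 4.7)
The plan is to exploit a two-step structure: first decompose the source $\prod_v \M_v^H(X)$ according to the restriction-kernel partition, then refine the target via base change along a finer étale covering. The open--closed decomposition $\M_v^H(X)=\bigsqcup_{K_v\supset H}\M_{v,K_v}(X)$ from Section~\ref{sec-defi-refined-stratification} gives
$$
\prod_v\vir{\M_v^H(X)} \;=\; \sum_{(K_v)\,:\,K_v\supset H\;\forall v}\;\prod_v\vir{\M_{v,K_v}(X)},
$$
and $\Delta_{H,\varphi}^!$ distributes over this finite sum. Thus it suffices to analyze a single tuple $(K_v)$ with intersection $\widetilde{K}=\bigcap_v K_v\supset H$ and recombine at the end.

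On the piece indexed by $(K_v)$, every component-wise evaluation lifts canonically to the $\widetilde{K}$-covering, so the joint evaluation factors through $Y':=X_{\widetilde{K}}^{\sfH(\Gamma)}/\widehat{\widetilde{K}}^{\sfV(\Gamma)}$ before mapping to $Y:=X_{H}^{\sfH(\Gamma)}/\widehat{H}^{\sfV(\Gamma)}$. The natural quotient map $q\colon Y'\to Y$ is finite étale, and the compatibility lemma for change of subgroup (applied to the inclusion $H\subset\widetilde{K}$) identifies the scheme-theoretic preimage
$$
q^{-1}(\operatorname{Diag}_{H,\varphi}) \;=\; \bigsqcup_{\psi\,:\,\psi|_H=\varphi}\operatorname{Diag}_{\widetilde{K},\psi},
$$
each component being an étale cover of $\operatorname{Diag}_{H,\varphi}$ of the same codimension. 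Since the class $\alpha:=\prod_v\vir{\M_{v,K_v}(X)}$ lives on a stack factoring through $Y'$, compatibility of refined Gysin pull-back with étale base change yields
$$
\Delta_{H,\varphi}^!(\alpha) \;=\; \sum_{\psi\,:\,\psi|_H=\varphi}\Delta_{\widetilde{K},\psi}^!(\alpha).
$$
Substituting this into the decomposition and regrouping tuples $(K_v)$ by their common intersection $\widetilde{K}$ produces the asserted identity.

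For the converse, observe that the identity is \emph{triangular} in the lattice of subgroups of $\Pic^0(X)[\delta]$: the class $\Delta_{H,\varphi}^!(\prod_v\vir{\M_v^H(X)})$ expands only into strata classes $\Delta_{\widetilde{K},\psi}^!(\sum\prod\vir{\M_{v,K_v}})$ with $\widetilde{K}\supset H$. Since the poset of subgroups of $\Pic^0(X)[\delta]$ is finite, Möbius inversion inverts the system and writes each $\vir{\M_{\Gamma,\widetilde{K},\varphi}(X)}$ as an explicit $\QQ$-linear combination of the classes $\Delta_{H,\varphi|_H}^!(\prod_v\vir{\M_v^H(X)})$ for $H\supset\widetilde{K}$ (with monodromy restricting to $\varphi$). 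The main technical point in the forward direction is the étale base change for refined Gysin pull-back along $q$ together with the identification of $q^{-1}(\operatorname{Diag}_{H,\varphi})$; the rest is bookkeeping with the poset of restriction-kernels.
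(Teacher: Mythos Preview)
Your proof is correct and follows essentially the same two-step strategy as the paper: split $\vir{\M_v^H(X)}$ vertex-wise according to restriction kernels, then use that the preimage of $\operatorname{Diag}_{H,\varphi}$ under the étale map $q$ is the disjoint union of the $\operatorname{Diag}_{\widetilde{K},\psi}$ for $\psi|_H=\varphi$ to trade $\Delta_{H,\varphi}^!$ for $\sum_\psi \Delta_{\widetilde{K},\psi}^!$; the paper states this preimage identification in one sentence whereas you spell out the étale base-change for refined Gysin pull-back explicitly. One notational slip: in your inversion you write $\Delta_{H,\varphi|_H}^!$ for $H\supset\widetilde{K}$, but since $\varphi$ is defined on $\widetilde{K}\subset H$ you cannot restrict it to $H$; the correct index is an \emph{extension} $\varphi'$ of $\varphi$ to $H$ (your parenthetical ``with monodromy restricting to $\varphi$'' shows you mean this).
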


\begin{proof}
    We first split $\vir{\M_v^H(X)}=\sum_{K_v\supset H}\vir{\M_{v,K_v}(X)}$ at each vertex. We then sort out according to the value of $\bigcap K_v=\widetilde{K}$:
    $$\Delta_{H,\varphi}^!(\prod\vir{\M_v^H(X)}) =
    \sum_{\widetilde{K}\supset H}\Delta_{H,\varphi}^!\left(\sum_{(K_v):\bigcap K_v=\widetilde{K}}\prod\vir{\M_{v,K_v}(X)}\right).$$
    To finish, we need to switch the $\Delta_{H,\varphi}^!$ on the right hand-side into a $\Delta_{\widetilde{K},\psi}$. As the preimage of the twisted diagonal indexed by $\varphi:H\to \rmH^1(\Gamma,\mu_n)$ is given by all extensions $\psi:\widetilde{K}\to \rmH^1(\Gamma,\mu_n)$ with $\psi|_H=\varphi$, we get the expected result. Since the sum on the right hand-side only involves $\widetilde{K}\supset H$, the system of equations is triangular, and therefore invertible since the coefficient is $1$ when $\widetilde{K}=H$.
\end{proof}
\begin{rem}
A very similar computation appeared in \cite[Section~4]{blomme2024correlated} where the authors proved a refinement of the degeneration formula keeping track of the correlators.
\end{rem}

We have the following commutative diagram where every square is a fiber product:

\begin{center}
\adjustbox{scale=0.75}{
\begin{tikzcd}
\M_\Gamma(X_H) \arrow[rr]\arrow[rd]\arrow[dd]&  & \M_\Gamma^H(X) \arrow[rd]\arrow[dd] & \\
 & \prod\M_v(X_H) \arrow[rr,crossing over] &  & \prod\M_v^H(X) \arrow[dd] \\
\overline{\operatorname{Diag}}_H \arrow[rr]\arrow[rd]\arrow[rdd]&  & \operatorname{Diag}_H\arrow[rd]\arrow[ddl] & \\
 & X_H^{\mathsf{H}(\Gamma)}\arrow[from=uu,crossing over]\arrow[rr,crossing over]\arrow[rdd,crossing over] &  & X_H^{\mathsf{H}(\Gamma)}/\widehat{H}^{\sfV(\Gamma)} \arrow[ldd] \\
  & \operatorname{Diag}_0 \arrow[rd,hook] & & \\
 & & X^{\mathsf{H}(\Gamma)} & \\
\end{tikzcd}.}
\end{center}
\begin{itemize}
    \item On the left faces we have the moduli space of (twisted) stable maps to the $H$-covering, constructed as fiber product between the evaluation map and the twisted diagonals.
    \item On the right faces we get the corresponding components for the moduli space of stable maps to $X$, i.e. after the quotient by the deck-automorphisms.
\end{itemize}

\begin{rem}
    The right faces describe what actually happens in the moduli space of stable maps to $X$. The left faces relate it to stable maps to the $H$-covering. All maps going from left side to right side are coverings of degree $|H|^{|\sfV(\Gamma)|}$. Those on the back have degree $|H|$ when restricting to a specific component.
\end{rem}

\section{Moduli of logarithmic roots and their stratifications }
\label{sec-various-moduli-of-curves-including-spin}

The main content of this section is a recollection of \cite{holmes2023root}. In this paper the authors first construct moduli spaces  $\mathcal S(\frac{1}{\delta})$ of $\delta$-roots of a given (log) line  bundle  $L$ on a family $\mathcal C\to\mathcal S$ of log smooth curves. Then, they describe piecewise polynomials on a tropical moduli space of roots $\Sigma(\frac{1}{\delta}).$
Finally, they use the language of piece-wise polynomials  to give a formula for a  certain spin refinement of the double ramification cycle.

We refer the reader to \cite{holmes2025logDR,molcho2023hodge} for definitions and results about piece-wise polynomials on  tropicalizations of snc pairs and their relation with decorated strata classes.
We will moreover freely refer to the \emph{tropicalization morphism} $\tfk\colon \mathcal S(\frac{1}{\delta})\to\Sigma(\frac{1}{\delta})$  from an algebraic stack with log structure  to its Artin fan \cite{abramovich2016skeletons}  and, appealing to \cite{cavalieri2020moduli}, we will not distinguish  between the Artin fan and cone stacks.
However, for the benefit of the reader which is already familiar with the concept above, we review the key steps and we recall a description of the locally closed strata of $\mathcal S(\frac{1}{\delta})$ and of the corresponding to cones in $\Sigma(\frac{1}{\delta})$ in  relation to the strata (respectively cones) in $S$  (respectively $\Sigma(S)$).

\subsection{Stack of torsion log-line bundles} 
Let $\mathcal C\xrightarrow{\pi}\mathcal S$ be a family of  vertical log smooth curves endowed with a (non necessarily strict) morphism of log algebraic stacks:
\[\mathcal S\to (\mathfrak M_{g,n},\partial \mathfrak M_{g,n}).\]

\begin{Notation}
In what follows we just write $\mathfrak M$  for the moduli stack of pre-stable curves thought as a logarithmic algebraic stack with the log structure coming from the boundary divisor parametrizing nodal curves.
\end{Notation}

We denote by $\mathcal S({\frac{1}{\delta}})$ the stack parametrizing the log-$\delta$-roots of $\O$. This is constructed as explained in \cite{holmes2023root} as the fiber product:
\bcd
\mathcal S({\frac{1}{\delta}}) \ar[r]\ar[d] &\mathfrak{M}({\frac{1}{\delta}})\ar[r]\ar[d] & \LogPic(\C/\S) \ar[d,"\delta\cdot"] \\
\S  \ar[r] &\mathfrak{M}\ar[r,"\varphi_\O"] & \LogPic(\C/\S),
\ecd
where $\delta\cdot$ denotes the multiplication map. 

\medskip

The stack $\S(\frac{1}{\delta})$ comes equipped with a universal log-line bundle $\mathcal T$ such that
\[\mathcal T^{\otimes\delta}\cong \O \in \LogPic(\mathcal C\slash\mathcal S)\]

Up to a logarithmic modification of $\mathfrak{M}(\frac{1}{\delta})$  and of $\mathcal C\times_{\mathfrak{M}}\mathfrak{M}(\frac{1}{\delta})$ that we now explain, it is possible to choose universal line bundle representative of the log-root $\T$.

\subsubsection{Root construction} 

Using  the notation of \cite{chiodo2008stable}: we denote by $D_{i,(A|B)}$ the boundary divisor in $\mathfrak{M}$ of curves with one separating node where:  the genus splits as $i, g-i;$ the subsets $A, B \subseteq \left\{1,\dots,n\right\}$ prescribe the distribution of the markings on the two components. We  also denote  by $D_{\rm{irr}}$ the boundary divisor of curves with a non separating node. Consider the root-stack:
\[\widetilde{\mathfrak{M}}=\mathfrak{M}\left[\sum_{i,A,B} D_{i,(A|B)}/\delta +D_{\rm{irr}}\slash\delta\right].\]
The latter  admits a modular interpretation: it is the moduli stack $\mathfrak{M}(\vec{\delta})$ of \emph{twisted} prestable curves in the sense of \cite{chiodo2008stable} with stabilizer of order $\delta$ at the nodes \cite[Theorem~4.5]{chiodo2008stable}. 

\begin{rem}\label{rem:rootstackmonoids}
The root construction can  also be described in terms of
 logarithmic structures, as done in \cite[Section~2]{holmes2023root}.  At the level of monoids,  the upshot of the root  construction is the following: given closed point $b$, the monoid $\overline{M}_{\widetilde{\mathfrak{M}},b}$ is the saturation of the image of $\overline{M}_{\mathfrak{M},b}$ inside $(\mathbb Z^{|E(\Gamma_s)|}\oplus \mathbb Z^{|E(\Gamma_s)|})/(\ell_e-\delta \ell'_{e})$.
\end{rem}

\begin{rem}
    Holmes and Orecchia \cite{holmes2023root} consider a considerably more  efficient root stack which only extracts roots for those boundary components of $\mathfrak{M}$ corresponding to non separating nodes, namely for the components $\mathfrak{M}_{\Gamma}$ for which $b_1(\Gamma)\neq 0;$ even for those the roots are not extracted separately for each non separating edge. It is however easier to describe the effect of the root stack if we consider the version of Chiodo \cite{chiodo2008stable}.
\end{rem}

There are three universal curves living on $\widetilde{\mathfrak{M}}:$ the universal twisted curve $\mathfrak{C}^{\rm{tw}}$ with stacky nodes \cite{chiodo2008stable};  the corresponding coarse curve $\mathfrak{C}$, which is simply the pull-back from $\mathfrak{M}$ and it has singular total space due to  the base change; the resolution $\widetilde{\mathfrak{C}}$ of the latter, whose fibers are obtained inserting a $\delta-1$ rational curves has been inserted at each node.

\medskip

We denote by $\mathtt{C},\widetilde{\mathtt{C}}$ respectively the tropicalizations of the singular coarse universal curve and its resolutions;  these both  come equipped with a  (combinatorially flat) morphism of cone stacks to $\Sigma(\widetilde{\mathfrak{M}}),$ the tropicalization of the root stack $\widetilde{\mathfrak{M}}.$ If follows from  the description of the root stack in terms of logarithmic structures recalled briefly in  Remark~\ref{rem:rootstackmonoids} that the cone stack  $\Sigma(\widetilde{\mathfrak{M}})$ and $\Sigma_{g,n+m}$ only differ  because  of their integral structure:the lattice of  the first in each cone has been refined by a factor $\frac{1}{\delta}$.

\subsubsection{The line bundle representing $\mathcal T$}
After taking the base change along the root stack $\widetilde{\mathfrak{M}}\to \mathfrak M$, i.e. the pull-back 
$\widetilde{\S}({\frac{1}{\delta}})\to\mathcal S(\frac{1}{\delta})$, the universal log root $\mathcal T$ can be represented by an honest line bundle, which we still denote by $\mathcal T$. We refer the reader to \cite[Lemma~2.2.5]{chiodo2008towards} or \cite[Section~2.2]{chiodo2024double}. The line bundle $\T$ is only a $\delta$-root  in the  logarithmic sense, namely $\T^{\otimes\delta}$ is  only \textit{logarithmically} trivial: looking back at Section~\ref{sec-log-picard-group}, it is (fiberwise) the line bundle associated with a piece-wise linear function on the curve.

In particular, this line bundle is not uniquely defined as it depends on the  choice, which we explain below, of a strict piece-wise linear function $\alpha$ on the universal (desingularized) curve $\widetilde{\mathcal  C}\to \widetilde{\mathcal S}({\frac{1}{\delta}})$. After such choice, the equality of log line bundles $\mathcal T^{\otimes\delta}=\O$ can be lifted to the following isomorphism of honest line bundles in $\mathfrak{Pic}(\widetilde{\mathcal C}\slash \widetilde{\S}({\frac{1}{\delta}}))$:
\begin{equation}\label{eq:isolinebundles}
    \mathcal T^{\otimes\delta}=\O(\alpha),
\end{equation}
where 
 $\alpha$ is some strict piecewise linear function on the universal tropical curve $\widetilde{\mathtt{C}}$, which means one of the three following equivalent things:
 \begin{itemize}
     \item a section of $\bar M_{\widetilde{\mathcal C}}^\gp$;
     \item a usual PL function on each tropical curve $\widetilde{\mathtt{C}}_b$ compatible under edge contraction;
     \item a morphism \[\alpha_b\colon \sfV(\widetilde{\mathtt{C}}_b)\to\overline {M}^{\gp}_{\widetilde{\S}(\frac{1}{\delta}),b}\] with integer slopes along the edges
 compatible with edge contraction
 \end{itemize}

On  each fiber  $\widetilde{\mathcal  C}_b$ with tropicalization $\widetilde{\mathtt{C}}_b$  we define the $\delta$-torsion tropical divisor  $D$ as the multi-degree of $\T\rvert_{\widetilde{\mathcal  C}_b}$. The divisor  $D$ is 
 really supported on the vertices of the subdivided curve $\widetilde{\mathtt{C}}$. The linear equivalence class $\mathtt{D}$ of $D$ can also be seen as the image,  via  the natural  map explained  in Section~\ref{sec-log-picard-group} in $\TroPic(\mathtt{C}_b)$ of the log-root $\T$.

\medskip

For any $\beta$ section of   $\bar M_{\widetilde{\mathcal C}}^\gp,$ the line bundle $\mathcal T(\beta)$ is another lift of the universal log root and we are not claiming that there exists a unique choice, but simply that one can be made. 
We will however argue below that the spin DR cycle we are interested in can be computed applying the universal DR formula to any of these representatives.

\subsection{Strata and cones for moduli of spin curves}

We now explicit the stratification and tropicalization of $\mathfrak{M}$ and $\mathfrak{M}(\frac{1}{\delta})$ (or equivalently $\widetilde{\mathfrak{M}}$ and $\widetilde{\mathfrak{M}}(\frac{1}{\delta})$).

\subsubsection{Tropicalization of $\mathfrak{M}_{g,n+m}$} The (stacky) cones of the cone stack $\Sigma(\mathfrak{M})$ correspond to the \emph{canonical} tropicalization of $\mathfrak{M}$ with respect to its natural boundary structure\footnote{As explained in \cite[Example~8]{holmes2025logDR} this differs, in what are the automorphisms of the cones, from the cone stack representing the moduli space of tropical curves constructed in \cite{cavalieri2020moduli}.}.
These are indexed by the set $\GGG_{g,n+m}$ of (unstable) dual graphs $\Gamma$. and cones $\sigma_{\Gamma}$. In other words, $\Sigma(\mathfrak{M})$ is the previously mentionned $\Sigma_{g,n+m}$. The cone associated to $\Gamma$ is denoted by $\sigma_\Gamma$. 

For $\Gamma\in\GGG_{g,n+m}$, the locally  closed stratum $S_\Gamma$ and the normalization of its closure $\overline{S}_\Gamma$ are respectively
\[S_{\Gamma}=\prod_v\mathfrak{M}^{\rm{sm}}_{g(v),n(v)}\slash\Aut(\Gamma) \text{ and }\mathfrak{M}_{\Gamma}=\prod_v\mathfrak{M}_{g(v),n(v)}\slash\Aut(\Gamma).\]
Over $\mathfrak{M}_\Gamma$ we have the so called \emph{universal monodromy torsor} (see for example\cite[Section~6.2.5]{holmes2025logDR})

\[P_{\Gamma}= \prod_v\mathfrak{M}_{g(v),n(v)}\slash\Aut^{\text{loop}}(\Gamma)\to\mathfrak{M}_{\Gamma}\to \bar{S}_{\Gamma}\]
where $\Aut^{\text{loop}}(\Gamma)$ is the subgroup of automorphisms acting as the identity of the edges, i.e. only reversing the orientation of some loops. The universal monodromy torsor $P_{\Gamma}$  comes endowed with a universal curve $\prod_v\mathfrak{C}_v$ which over the open stratum coincide with the normalization.

\medskip

Given $\Gamma\in\GGG_{g,n+m}$, the stacky cone $\sigma_{\Gamma}$ is the quotient of $\tau_{\Gamma}=\RR_{\geqslant 0}^{\sfE(\Gamma)}$ by the finite group $G_\Gamma=\Aut(\Gamma)\slash  \Aut^{\text{loop}}(\Gamma)$. On the cone $\sigma_{\Gamma}$ we have a family of tropical curves $\mathfrak{u}\colon\mathtt{C}_{\sigma_{\Gamma}}\rightarrow\sigma_{\Gamma}.$ 
The data of the morphism of cone complexes $\mathtt{C}_{\sigma_{\Gamma}}\to\sigma_{\Gamma}$ can be encoded in the dual graph underlying $\mathtt{C}_s=\mathfrak{u}^{-1}(s)$ for $s$ a point in the relative interior of $\sigma_{\Gamma}$ together with a \emph{length function} $\ell\colon \sfE(\mathtt{C}_s)\to \bar{M}_{\mathfrak{M}_{\eta}}$ for $\eta$ the generic point of the stratum.

\subsubsection{Tropicalization of $\widetilde{\mathfrak{M}}_{g,n+m}$}
The combinatorial structure of $\widetilde{\mathfrak{M}}$ is the same. The (stacky) cones of the canonical tropicalization of $\widetilde{\mathfrak{M}}$ with respect to the root log structure are the same as $\mathfrak{M}$, but the lattices giving the integral structure have been modified. They are still indexed by dual graphs $\Gamma\in\GGG_{g,n+m}$, but to emphasize the change of lattice, we denote them by $\widetilde{\sigma}_{\Gamma}$. The stratum $S_{\Gamma}$ indexed by $\Gamma$ has closure $\overline{S}_{\Gamma}$ with normalization 
\[\widetilde{\mathfrak{M}}_{\Gamma}=\prod_v\widetilde{\mathfrak{M}}_{g(v),m(v)+h(v)}\slash \Aut(\Gamma)\]
where $\widetilde{\mathfrak{M}}_{g(v),m(v)+h(v)}$ is the algebraic stack of $\delta$-twisted stable curves with $\delta$-orbifold structure also along the
$h(v)$  marking corresponding to half edges in $\Gamma$ (i.e. nodes to be).

\medskip

On each $\widetilde{\sigma}_{\Gamma}$ we now have two families of tropical curves related as follows:
\bcd
\widetilde{\mathtt{C}}_{\widetilde{\sigma}_{\Gamma}} \ar[r,"\rho"] \ar[rd] & \mathtt{C}_{\widetilde{\sigma}_{\Gamma}} \ar[d], \\
 & \widetilde{\sigma}_{\Gamma}
\ecd
where $\widetilde{\mathtt{C}}_{\widetilde{\sigma}_{\Gamma}}$ is obtained from  $\mathtt{C}_{\widetilde{\sigma}_{\Gamma}}$ subdividing each edge into $\delta$ equal parts. The dual graph is thus $\widetilde{\Gamma}$, where each edge has been subdivided in $\delta$ parts. In particular, for each  edge $e\in \Gamma,$ $\rho^{-1}(e)=\{e_1,\dots, e_\delta\}$ and $\ell_e=\delta\tilde{\ell}_{e_i}$ for any $i$.

\begin{rem}
This relation between the  cone stacks $\Sigma_{g,n+m}$  and $\Sigma(\widetilde{\mathfrak M})$ implies  that the piece-wise polynomial with $\mathbb Q$ coefficients ( as defined and  studied in \cite{molcho2023hodge,holmes2025logDR} ) on these  two  cone  stacks coincide.
\end{rem}

\subsubsection{Tropicalization of $\mathfrak{M}(\frac{1}{\delta})$ and $\widetilde{\mathfrak{M}}(\frac{1}{\delta})$}\label{sec-tropicalfunctionroots}

We consider the stack $\widetilde{\mathfrak{M}}(\frac{1}{\delta})$, with the line bundle $\T$ representing the universal root. By \cite[Lemma~4.2]{holmes2023root}, the morphism  $\widetilde{\mathfrak{M}}(\frac{1}{\delta})\to  \widetilde{\mathfrak{M}}$ is strict. Therefore, if we consider the cone stack corresponding to the canonical  Artin fan for this log structure we simply recover the cones $\widetilde{\sigma}_{\Gamma}$ described above.

 However, it is possible to refine the stratification taking into account the multidegree of $\T$. More precisely, let $\widetilde{\mathfrak{M}}_\Gamma(\frac{1}{\delta})$ the
 the stratum lying over $\widetilde{\mathfrak{M}}_\Gamma$:
\[\widetilde{\mathfrak{M}}_\Gamma(\frac{1}{\delta})=\widetilde{\mathfrak{M}}_{\Gamma}\times_{\widetilde{\mathfrak{M}}} \widetilde{\mathfrak{M}}(\frac{1}{\delta})\to\widetilde{\mathfrak{M}}_{\Gamma}.\]
Over such stratum we can consider the multidegree $D=\underline{\operatorname{deg}}(\mathcal T)$. The multidegree $D$ is a $\delta$-torsion tropical divisor on the tropical curve $\widetilde{\mathtt{C}}_{\widetilde{\sigma}_{\Gamma}}$ (supported,fiberwise, at the vertices). As we already mentioned, this  means that there exists a piecewise linear function $\alpha_D$ such that  $\nabla\alpha_D=\delta D$ and, on the algebraic level,we  have the equality of honest line bundles 
$$\T^{\otimes\delta}=\O(\alpha_D),$$
where $\O(\alpha_D)$ is the honest line bundle obtained from $\alpha_D$. In \cite{holmes2023root}, it is explained that the divisors $D$ can be chosen such that the function $\alpha_D$, uniquely defined up to addition of a constant, has (on each fiber) constant value over the vertices of $\mathtt{C}_{\widetilde{\sigma}_{\Gamma}}$, but not on those  of  $\widetilde{\mathtt{C}}_{\widetilde{\sigma}_{\Gamma}}.$

\medskip

Since the degree of line bundles is locally constant on the families of curves $\widetilde{\mathfrak{C}}_v$, distinct divisors $D$ correspond to different open and closed component of $\widetilde{\mathfrak{M}}_\Gamma(\frac{1}{\delta})$. We thus refine the stratification, indexing new strata $\widetilde{\mathfrak{M}}_{\Gamma,D}(\frac{1}{\delta})$ by the choice of a graph $\Gamma$ together with this $\delta$-torsion tropical divisor $D$, or rather its linear equivalence class $\mathtt{D}$. The various $D$ provide representatives of the linear equivalence classes of $\delta$-torsion tropical divisors on $\widetilde{\mathtt{C}}_{\widetilde{\sigma}_{\Gamma}}.$ These equivalence classes are denoted by $\mathtt{D}$, similar notation but an eventual confusion is not a problem.

\begin{rem}
   Decomposing according to the multidegree can also be seen as taking into account the image of the torsion log-line bundle $\T$ in $\mathrm{TroPic}(C)[\delta]$, which is locally constant over a given stratum. 
\end{rem}


\subsubsection{Cone stack of $\delta$-torsion tropical divisors} We use the refined stratification to construct a finer cone stack $\Sigma(\frac{1}{\delta})$ endowed with a natural  morphism from  $\widetilde{\mathfrak{M}}(\frac{1}{\delta})$. To do so, we use a new notion of decorated graph.

\begin{defi}
\label{defi-delta-torsion-graph}
($\delta$-torsion graph)
\begin{itemize}
    \item A $\delta$-torsion graph $(\Gamma,\mathtt{D})$ is the data of a graph $\Gamma\in\GGG_{g,n+m}$ together with the choice of a linear equivalence class of $\delta$-torsion divisor $\mathtt{D}$.
    \item A representative of $(\Gamma,\mathtt{D})$ is the choice of a divisor $D$ in the class $\mathtt{D}$ supported at the vertices of the subdivided graph $\widetilde{\Gamma}$.
    \item The set of $\delta$-torsion graphs is denoted by $\GGG_{g,n+m}(\frac{1}{\delta})$.
\end{itemize}
\end{defi}

The set of $\delta$-torsion graphs $\GGG_{g,n+m}(\frac{1}{\delta})$ is also provided with the edge contraction operation. 
Under edge contraction the multidegree supported at two vertices which  get identified is simply given by  the  sum.

For each  $(\Gamma,\mathtt{D})$ we  have a  cone $\widetilde{\sigma}_{\Gamma,\mathtt{D}}\cong \mathbb R^{E(\Gamma)}_{\geq 0}$ and $\frac{1}{\delta}$ refined integral structure.
Taking the colimits on $\GGG_{g,n+m}(\frac{1}{\delta})$ with face maps induced by edge contractions and self-maps determined  by automorphism we obtain 
a cone stack $\Sigma(\frac{1}{\delta}).$ This should be thought has a \emph{tropical moduli space} of $\delta$-roots of the trivial line bundle.

\begin{rem}
Tropically, it may not be obvious that it is possible to pick representatives $D$ for each class $\mathtt{D}$ that specialize onto each other via edge contraction. The construction of Chiodo \cite{chiodo2008towards} extending $\T$ as a true line bundle proves it is actually possible by providing natural representatives $D$ for each linear equivalence class $\mathtt{D}$. 
\end{rem}


We can now define a natural map $\tfk\colon \widetilde{\mathfrak{M}}(\frac{1}{\delta})\to \Sigma(\frac{1}{\delta})$ as follows: given a standard log-point of the moduli space mapped to the stratum $S_{\Gamma,\mathtt{D}}$, we get a morphism of monoids $\overline M_{S_{\Gamma, \mathtt{D}}}\to\mathbb N$, namely a point in $\widetilde{\sigma}_{\Gamma,\mathtt{D}}$.

\begin{rem}
Via the morphism $\tfk$ we can also define the locally closed strata  $S_{\Gamma,\mathtt{D}},$ their closures in $\widetilde{\mathfrak{M}}(\frac{1}{\delta}),$ the normalization of the latter and the associated   associated monodromy torsors  as fiber product along suitable morphisms of cone stacks $\mathcal B\to \Sigma(\frac{1}{\delta}) $, as explained in \cite{pandharipande2024logarithmic}. Furthermore, following \cite[Section~6]{holmes2025logDR}, $\tfk$ determines a pull-backmap  from the ring  $\mathrm{sPP}(\Sigma(\frac{1}{\delta}))$ of strict piece-wise polynomials to the Chow cohomology of 
$\widetilde{\mathfrak{M}}(\frac{1}{\delta}).$

\medskip

Unlike the case of curves, the normalization $\widetilde{\mathfrak{M}}_{\Gamma,\mathtt{D}}(\frac{1}{\delta})$ of the closed strata does not split as a product of moduli over vertices. However, the forgetful map $\nu_{\Gamma,\mathtt{D}}\colon\widetilde{\mathfrak{M}}_{\Gamma,\mathtt{D}}(\frac{1}{\delta})\to\widetilde{\mathfrak{M}}_\Gamma$ is \'etale, finite  \cite[Theorem~4.3]{holmes2023root} of degree $\delta^{2g-b_1(\Gamma)}$. There are exactly $\delta^{b_1(\Gamma)}$ linear equivalence classes of $\delta$-torsion tropical divisors, so that summing over  the choices of $\mathtt{D}$, the total degree is $\delta^{2g}$ as expected.

 \end{rem}

\subsection{Strata and cones for moduli of maps}
\label{sec-stratification-moduli-of-roots}

Let $\mathcal S$ be an algebraic logarithmic stack together with a strict morphism $\mathcal S\xrightarrow{p}\mathfrak{M}$ to the moduli space of pre-stable curves with the natural boundary structure. By pull-back and the previous section, we obtain a stratification of $\S$ and $\widetilde{\S}$ (resp. $\S(\frac{1}{\delta})$ and $\widetilde{\mathcal S}(\frac{1}{\delta})$) indexed by dual graphs $\Gamma$ (resp. dual graph with an equivalence class of tropical $\delta$-torsion divisor).

In many cases of  course such stratifications can be  refined. We  are interested in  the two following cases:
\begin{itemize}
    \item the moduli space $\M(X)$ of stable maps, whose strata are indexed by elements of the set $\GGG(X)$,  i.e. stable $X$-graphs from Definition \ref{defi-X-valued-graph};
    \item the open  and closed substack $\M_K(X)$ parametrizing maps to $X$ with $\ker f^*=K$ (See Section~\ref{sec-defi-refined-stratification}), whose strata are indexed by the set of  monodromy graphs with core $K$. See Section~\ref{sec-monodromy-graphs-stratif-MK} and Definition \ref{defi-group-decorated-graph}.
\end{itemize}

We recall that in the case of $\M(X)$, the virtual class of the stratum corresponding $\Gamma$ admits the  following splitting:
\begin{equation}\label{eq-splitting}
    \vir{\M_\Gamma(X,\beta)}=\sum_{(\beta_v)}\Delta^!\left(\prod\vir{\M_v(X,\beta_v)}\right)=\vir{\M(X,\beta)}\cap p^*\mathrm{PD}([j_*\mathfrak{M}_{\Gamma}]).
\end{equation}
Furthermore, the class $\mathrm{PD}([j_*\mathfrak{M}_{\Gamma}])$ can be interpreted as a piecewise polynomial function through the morphism $\mathrm{sPP}(\Sigma(\mathfrak{M}))\to\mathrm{CH}^*(\mathcal S)$ obtained composing $\mathcal{S}\xrightarrow{p}\mathfrak{M}\rightarrow\Sigma(\mathfrak{M})$. In \cite[Section~6]{holmes2025logDR} the  piece-wise  polynomial corresponding to the boundary strata classes are explicitly computed.

\medskip

As strata get further refined, the splitting becomes as a sum over the refinements.
Passing to the stack parametrizing $\delta$-roots of the trivial line bundle in our two cases of interest recalled above we get that:
\begin{itemize}
    \item The strata of $\M(X)(\frac{1}{\delta})$ are indexed by stable $X$-graph together with a linear equivalence class of $\delta$-torsion divisor $\mathtt{D}$.  The usual colimit construction allows  us to define a cone stack $\Sigma(X)(\frac{1}{\delta})$, target  of the obvious tropicalization map $\tfk.$
    \item The strata of $\M_K(X)(\frac{1}{\delta})$ are indexed by the  monodromy graphs together with a linear equivalence class of $\delta$-torsion divisor $\mathtt{D}$ (in addition of the class decoration, the kernel $\widetilde{K}$ and global monodromy $\varphi$). 
    The associated cone stack is denoted  by $\Sigma_K(X)(\frac{1}{\delta})$.
\end{itemize}

For the stack of roots, Equation (\ref{eq-splitting}) becomes
\begin{equation}\label{eq:virtual compatibility}
     \vir{\widetilde{\M}_{\Gamma,\mathtt{D}}(X,\beta)} = \sum_{(\beta_v)} \vir{\widetilde{\M}_{\Gamma,\mathtt{D}}(X,(\beta_v))}=\vir{\widetilde{\M}(X)(\frac{1}{\delta})}\cap \mathrm{ft}^*\mathrm{PD}(j_*[\widetilde{\mathfrak{M}}_{\Gamma,\mathtt{D}}(\frac{1}{\delta})]),
\end{equation}
and the last class can still be interpreted as a piecewise polynomial function via the map to $\widetilde{\M}(X)(\frac{1}{\delta})\to \widetilde{\mathfrak{M}}(\frac{1}{\delta})\rightarrow\Sigma(\frac{1}{\delta})$.


We saw that the normalized strata of the moduli spaces of roots fail to express as products. However, this is true after forgetting about the line bundle $\T$. In terms of virtual classes, if $r\colon \widetilde{\M}_{\Gamma,\mathtt{D}}(X,(\beta_v))(\frac{1}{\delta})\to \widetilde{\M}_{\Gamma}(X,(\beta_v))$ is simply defined by forgetting the torsion line-bundle, we get the following splitting
\[r_*\vir{\widetilde{\M}_{\Gamma,D}(X,(\beta_v))}= \delta^{2g-b_1(\Gamma)} \vir{\widetilde{\M}_{\Gamma}(X,(\beta_v))}=\delta^{2g-b_1(\Gamma)} \Delta^!\prod_{v}\vir{\widetilde{\M}_{v}(X,\beta_v}.\]

\subsection{Correlators for torsion log-line bundles}
\label{sec-correlator-torsion line bundle}
In \cite{blomme2024correlated} we introduced a \emph{correlating map}  $\M(Y|D^\pm)\xrightarrow{\kappa^\delta} \Alb(X)$. Through the Albanese property of $\Pic^0(C)$, it coincides with  $f_*\O(\sum\frac{a_i}{\delta}p_i)$ when the source curve $C$ is smooth. Here, $f_*\colon\Pic^0(C)\to\Alb(X)$ denotes the homomorphism dual, in the category of abelian varieties, to the pull-back.

The way the correlating map $\kappa^\delta$ was extended to nodal curves in \cite{blomme2024correlated} uses the evaluation map at the marked points as a way to go around the fact that the push-forward map $f_*$ and the meaning of $\O(\sum\frac{a_i}{\delta}p_i)$ was not quite clear for nodal curves. The present paper fixes that issue by working with spin stable maps and logarithmic Picard group.

We now define a \emph{seemingly different} \emph{correlating map} on the moduli space of spin stable map with value in $\Alb(X)$
 Proving that these two definition of correlator agree upon restriction to log moduli of (rubber or not) maps to $Y=\mathbb P_X(\mathcal O\oplus\L)|D)$ is the content of Section \ref{sec-link-correlators-line-bundle-stable-maps}.

\begin{defi}

    Given a point  in $\M(X)(\frac{1}{\delta})$, i.e. a  stable map $f:C\to X$ together with $\T$ a torsion log-line bundle on $C$, we  define the  \textit{correlator} $\theta_{f,\T}$  to be the map  $\Pic^0(X)[\delta]\to\mu_{\delta}$ induced  the log-Weil-pairing. Its value on  $\L\in\Pic^0(X)[\delta]$, is given by
    $$\theta_{f,\T}(\L)=W_X(f_*\T,\L)=W_C(\T,f^*\L).$$
    
    As the pairing is non degenerate the data is equivalent to the data of $f_*(\mathcal T)\in\Alb(X)[\delta])$ where  $f_*$  is the homorphism induced by the universal property of $\LogPic^0(C)$  (see Section~\ref{sec:logAJ}.) We thus abuse notation and also denote $f_*\T$ with $\theta_{f,\T}.$
\end{defi}

\begin{rem}

Notice that in here and in what follows we consider $\M(X)$,  $\M(X)(\frac{1}{\delta})$ and their universal curves endowed with the natural log structure (vertical log structure for the universal curves) pulled-back from the stacks of prestable and spin prestable space.
We drop the monoid from the notation and simply talk about curves instead  of log curves.
\end{rem}

\subsubsection{Properties of the correlator map}
The correlator function is of course locally constant. 
It follows that the moduli space of spin stable maps is a disjoint union of open and closed sub-stacks indexed by
 $\theta\in\Alb(X)[\delta]$ which we denote by $\M^\theta(X)(1/\delta)$.
 
  We are interested in understanding the relation between the decomposition of $\M(X)(\frac{1}{\delta})$ in  $\M^\theta(X)(1/\delta)$ and the decomposition of $\M(X)$ in  $\M_K(X)$. We will see  that fixing $K$ imposes restrictions on the values $\theta\in\Alb(X)[\delta]$ that can be achieved. This is actually is the main reason for having introduced the components   $\M_K(X)$ in \ref{sec-refined-stratification-via-coverings}. 

  \medskip
   
   We denote by $\M_K^\theta(X)(1/\delta)$ the restriction of $\M^\theta(X)(\frac{1}{\delta})$ to  $\M_K(X)(\frac{1}{\delta})$. Recall that for the $\delta$-torsion log-line bundles on a log-curve $C$, we have the following two short exact sequences:
$$0\to \rmH^1(\Gamma,\mu_\delta)\to\Pic^{[0]}(C)[\delta]\to\bigoplus_v\Pic^0(C_v)[\delta]\to 0,$$
$$0\to \Pic^{[0]}(C)[\delta] \to \LogPic^0(C)[\delta] \to \TroPic^0(C)[\delta] \to 0.$$

\begin{lem}
    Let $f:C\to X$ be a point of  $\M_K(X)$ for $K\subset\Pic^0(X)[\delta]$. We consider the map $f_*:\T\in\LogPic^0(C)[\delta]\mapsto\theta_{f,\T}\in\Alb(X)$. Then, we have the following:
    \begin{enumerate}
        \item The image of $f_*$ is precisely $K^\perp\subset\Alb(X)[\delta]$.
        \item The kernel of $f_*$ is $(\mathrm{Im}f^*)^\perp$, where the orthogonal is for the log-Weil pairing.
        \item The kernel $\ker f_*$ contains $\rmH^1(\Gamma,\mu_\delta)\subset\LogPic(C)[\delta]$.
    \end{enumerate}
\end{lem}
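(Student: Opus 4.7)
My plan is to unravel everything from the adjunction identity $W_X(f_*\T,\L)=W_C(\T,f^*\L)$, which says exactly that $f_*\colon\LogPic^0(C)[\delta]\to\Alb(X)[\delta]$ is the adjoint of $f^*\colon\Pic^0(X)[\delta]\to\LogPic^0(C)[\delta]$ with respect to the two log-Weil pairings. Both these pairings are non-degenerate (the one on $X$ by smoothness of the abelian variety, the one on $C$ by Proposition~\ref{prop:logweil}(4) via the principal polarization of $\LogPic^0(C)$), so the standard linear-algebra facts about adjoints of homomorphisms between finite abelian groups with non-degenerate pairings will give (1) and (2) directly.

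For (2), I will argue by equivalences: $\T\in\ker f_*$ iff $W_X(f_*\T,\L)=1$ for every $\L\in\Pic^0(X)[\delta]$ (by non-degeneracy on $X$), iff $W_C(\T,f^*\L)=1$ for every such $\L$ (by adjunction), iff $\T\in(\mathrm{Im}\,f^*)^\perp$. For (1), one inclusion is immediate: if $\L\in K=\ker f^*$ and $\T$ is arbitrary then $W_X(f_*\T,\L)=W_C(\T,f^*\L)=W_C(\T,0)=1$, so $\mathrm{Im}\,f_*\subset K^\perp$. For the reverse inclusion, by (2) we have a factorization $\LogPic^0(C)[\delta]/(\mathrm{Im}\,f^*)^\perp\hookrightarrow\Alb(X)[\delta]$, and by non-degeneracy the left-hand side has the same cardinality as $\mathrm{Im}\,f^*\cong\Pic^0(X)[\delta]/K$, which is precisely $|K^\perp|$; so the image exhausts $K^\perp$.

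The main work is (3), but it is essentially a direct consequence of Proposition~\ref{prop:logweil} together with the fact that $f^*\L$ is an \emph{honest} (non-logarithmic) line bundle. Indeed, under the filtration
\[0\to\rmH^1(\Gamma,\mu_\delta)\to\Pic^{[0]}(C)[\delta]\to\bigoplus_v\Pic^0(C_v)[\delta]\to 0,\qquad 0\to\Pic^{[0]}(C)[\delta]\to\LogPic^0(C)[\delta]\to\TroPic^0(C)[\delta]\to 0,\]
the subgroup $\rmH^1(\Gamma,\mu_\delta)$ corresponds to the torus part $T[\delta]$ of the semi-abelian $G=\Pic^{[0]}(C)$, while $f^*\L$ lies in the semi-abelian piece $\Pic^{[0]}(C)[\delta]=G[\delta]\subset\LogPic^0(C)[\delta]$. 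Under the principal polarization identifying $\LogPic^0(C)$ with its dual, Proposition~\ref{prop:logweil}(1) says that the log-Weil pairing between $T[\delta]$ and $G[\delta]$ is trivial (both the pairing $T[\delta]\times T[\delta]\to\mu_\delta$ and the pairing $T[\delta]\times B[\delta]\to\mu_\delta$ vanish; what survives is only $T[\delta]$ versus the tropical torsion on the other side). Therefore $W_C(\T,f^*\L)=1$ for every $\T\in\rmH^1(\Gamma,\mu_\delta)$ and every $\L\in\Pic^0(X)[\delta]$, and by (2) this means $\rmH^1(\Gamma,\mu_\delta)\subset\ker f_*$.

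The only subtle point I expect is checking carefully that under the principal polarization of $\LogPic^0(C)$ the subgroup $\rmH^1(\Gamma,\mu_\delta)$ really is identified with the torus-torsion $T[\delta]$ of the dual presentation (and not with some tropical-torsion piece), since the proposition is phrased with respect to the dual filtration. I will handle this by invoking the construction in Section~\ref{sec-log-picard-group}: the polarization coming from the Abel--Jacobi section sends $T=\rmH^1(\Gamma,\Gm)$ into $T^\vee=\Hom(Y,\Gm)$ via the identity on $\rmH_1(\Gamma,\ZZ)$, so the polarization is strict on the toric part and the identification $T[\delta]=\rmH^1(\Gamma,\mu_\delta)$ on both sides is tautological.
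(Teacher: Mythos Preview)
Your proof is correct and follows essentially the same route as the paper: both argue via the adjunction $W_X(f_*\T,\L)=W_C(\T,f^*\L)$ and non-degeneracy to get (2), deduce (1) from (2) (you by a cardinality count, the paper by explicitly extending a character on $\mathrm{Im}\,f^*$ to all of $\LogPic^0(C)[\delta]$ and representing it via non-degeneracy), and obtain (3) from the inclusion $\mathrm{Im}\,f^*\subset\Pic^{[0]}(C)[\delta]$ together with $(\Pic^{[0]}(C)[\delta])^{\perp}\supset\rmH^1(\Gamma,\mu_\delta)$. Your extra care about why the principal polarization matches $T[\delta]$ with $T^\vee[\delta]$ is well placed; the paper simply invokes this orthogonality as a consequence of Proposition~\ref{prop:logweil}.
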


\begin{proof}
    \begin{enumerate}
        \item Notice that if $f:C\to X$ belongs to $\M_K(X)$, $\theta_{f,\T}(\L)=0$ for any $\L\in K$, since $\L$ pulls back to $\O$ on $C$ by definition. In that case, $\theta_{f,\T}\in K^\perp$. Conversely, any element of $K^\perp$ is a character on $\Pic^0(X)[\delta]/K$, and thus on $\mathrm{Im}f^*\subseteq \Pic^{[0]}(C)$. We extend it to $\LogPic^0(C)[\delta]$. Using the non-degeneracy of the Weil pairing, it may be represented by some $\T$.
        \item As $f^*$ and $f_*$ are adjoints of each other for the Weil pairings, for every $\L\in\Pic^0(X)[\delta]$, we have
    $$\theta_{f,\T}(\L)=W_X(f_*\T,\L)=W_C(\T,f^*\L),$$
    yielding that the kernel is actually $(\mathrm{Im}\ f^*)^\perp$.
        \item The pull-back of elements of $\Pic^0(X)[\delta]$ are honest line bundles on $C$. Hence, $\mathrm{Im}f^*=f^*(\Pic^0(X)[\delta])$ is a subgroup of $\Pic^{[0]}(C)[\delta]$. Therefore, the orthogonal of the image of $f^*$ contains $(\Pic^{[0]}(C)[\delta])^\perp=\rmH^1(\Gamma,\mu_\delta)$.
    \end{enumerate}
\end{proof}

\subsubsection{Link between correlator and stratification}

In Section \ref{sec-stratification-moduli-of-roots}, we described a stratification of $\M(X)(\frac{1}{\delta})$ and $\M_K(X)(\frac{1}{\delta})$ and so that the forgetful map forgetting the spin structure restricted to one of these strata has degree $\delta^{2g-b_1(\Gamma)}$.
The correlator function shows that the above moduli spaces are disjoint union of open and closed sub-stacks, and we have an interaction between $\theta$ and $\mathtt{D}$ in the following sense:
\begin{itemize}
    \item for given $\theta$,  whether $f_*\T$ can take value $\theta$ depends on tropical divisor $\mathtt{D}=\underline{\deg}(\T);$ 
    \item for $\T$ a  spin structure on a stratum with fixed tropical torsion $\mathtt{D}$ the value of the correlator $f_*\T=:\theta$ is not constant.
\end{itemize}
We explain the relation more explicitly, restricting to $\delta$-torsion log-line bundles with correlator $0$.

 Recall that for the log-Weil pairing on $\LogPic(C)[\delta]$, $\rmH^1(\Gamma,\mu_\delta)$ and $\Pic^{[0]}(C)[\delta]$ are orthogonal to each other (see Proposition~\ref{prop:logweil}), inducing a non-degenerate pairing $\widetilde{W}$ between $ \rmH^1(\Gamma,\mu_\delta)$ and $\TroPic(C)[\delta]\simeq \rmH_1(\Gamma,\ZZ_\delta)$. We use this to describe the image of the stratum $\M_{\Gamma,\widetilde{K},\varphi}^0(X)(1/\delta)$ under the tropicalization map.

\begin{prop}
\label{prop-link-trop-correlator}
    Fix a boundary strata of $\M_K(X)$ indexed by $(\Gamma,\widetilde{K},\varphi)$ with core $K$.\footnote{ The dual graph is $\Gamma$,  $K$ (resp. $\widetilde{K}$) is the subgroup of torsion line bundles which pull-back to the trivial  one (resp. componentwise trivial) and monodromy is given by $\varphi:\widetilde{K}/K\hookrightarrow \rmH^1(\Gamma,\mu_\delta)$.} In particular, we have $f^*(\widetilde{K})\subset \rmH^1(\Gamma,\mu_\delta)$. Then, the tropicalization of log-line bundles with $0$-correlator is the orthogonal of $f^*(\widetilde{K})$ for the pairing $\rmH^1(\Gamma,\mu_\delta)\otimes\TroPic(C)[\delta]\to\mu_\delta$:
    $$\left(\Trop(\ker f_*)\right)^{\perp_{\widetilde{W}}} = f^*(\widetilde{K}) = \mathrm{Im}f^*\cap \rmH^1(\Gamma,\mu_\delta)\subset \rmH^1(\Gamma,\mu_\delta).$$
\end{prop}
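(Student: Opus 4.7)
The second identity $f^*(\widetilde{K}) = \mathrm{Im}\,f^* \cap \mathrm{H}^1(\Gamma,\mu_\delta)$ is essentially tautological from the definitions: an element $\L \in \Pic^0(X)[\delta]$ pulls back into $\mathrm{H}^1(\Gamma,\mu_\delta)$ precisely when $f^*\L$ restricts to the trivial bundle on every component $C_v$, which is exactly the condition $\L \in \widetilde{K} = \bigcap_v K_v$. So it suffices to prove $\Trop(\ker f_*) = f^*(\widetilde{K})^{\perp_{\widetilde{W}}}$.

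The strategy is to combine the previous lemma (which says $\ker f_* = (\mathrm{Im}\,f^*)^{\perp_W}$ for the log-Weil pairing $W$ on $\LogPic(C)[\delta]$) with the explicit description of $W$ provided by Proposition~\ref{prop:logweil}, applied to the filtration $\mathrm{H}^1(\Gamma,\mu_\delta) \subset \Pic^{[0]}(C)[\delta] \subset \LogPic(C)[\delta]$. Recall that $W$ is trivial between $\mathrm{H}^1(\Gamma,\mu_\delta)$ and $\Pic^{[0]}(C)[\delta]$, it descends to the (nondegenerate) Weil pairing $W_B$ on the abelian quotient $\bigoplus_v \Pic^0(C_v)[\delta]$, and it descends to the natural perfect pairing $\widetilde{W}$ between $\mathrm{H}^1(\Gamma,\mu_\delta)$ and $\TroPic(C)[\delta]$.

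For the inclusion $\Trop(\ker f_*) \subseteq f^*(\widetilde{K})^{\perp_{\widetilde{W}}}$: take $\T \in \ker f_*$ and $\L \in \widetilde{K}$. Then $f^*\L \in \mathrm{H}^1(\Gamma,\mu_\delta)$, so by the trivial pairing with $\Pic^{[0]}(C)[\delta]$ the value $W_C(\T, f^*\L)$ depends only on $\Trop\T \in \TroPic(C)[\delta]$, and equals $\widetilde{W}(f^*\L, \Trop\T)$. Since $\T \in \ker f_* = (\mathrm{Im}\,f^*)^\perp$ this vanishes, giving the inclusion.

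The converse inclusion is the main obstacle. Given $\tau \in f^*(\widetilde{K})^{\perp_{\widetilde{W}}}$, we must construct a lift $\T \in \LogPic(C)[\delta]$ of $\tau$ that is orthogonal to all of $\mathrm{Im}\,f^*$, not only to $f^*(\widetilde{K})$. Pick any lift $\T_0$ of $\tau$ and consider the character $\chi_0 = W_C(\T_0,-)|_{\mathrm{Im}\,f^*}$; by the previous paragraph $\chi_0$ vanishes on $f^*(\widetilde{K}) = \mathrm{Im}\,f^* \cap \mathrm{H}^1(\Gamma,\mu_\delta)$, hence factors through the quotient $\mathrm{Im}\,f^*/f^*(\widetilde{K})$, which injects into the abelian part $\bigoplus_v \Pic^0(C_v)[\delta]$. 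We wish to kill $\chi_0$ by modifying $\T_0$ by some $\eta \in \Pic^{[0]}(C)[\delta]$ (which leaves $\tau$ unchanged). Because $W$ is trivial between $\mathrm{H}^1(\Gamma,\mu_\delta)$ and $\Pic^{[0]}(C)[\delta]$, the character $W_C(\eta,-)|_{\mathrm{Im}\,f^*}$ depends only on the image $\bar\eta$ of $\eta$ in $\bigoplus_v \Pic^0(C_v)[\delta]$ and on the abelian parts of elements of $\mathrm{Im}\,f^*$, via the pairing $W_B$. The surjection $\Pic^{[0]}(C)[\delta] \twoheadrightarrow \bigoplus_v \Pic^0(C_v)[\delta]$ together with the nondegeneracy of $W_B$ thus realizes every character on $\mathrm{Im}\,f^*/f^*(\widetilde{K})$ by some $\bar\eta$, and in particular $-\chi_0$. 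Setting $\T = \T_0 + \eta$ yields the desired lift, and $\Trop(\T) = \tau$, completing the proof.
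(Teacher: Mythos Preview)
Your proof is correct and follows essentially the same approach as the paper's own proof: both directions hinge on the filtration $\rmH^1(\Gamma,\mu_\delta)\subset\Pic^{[0]}(C)[\delta]\subset\LogPic(C)[\delta]$ together with the vanishing/nondegeneracy properties of the log-Weil pairing from Proposition~\ref{prop:logweil}, and the converse inclusion is handled in both by lifting $\tau$ arbitrarily and then correcting by an element of $\Pic^{[0]}(C)[\delta]$ chosen via the nondegenerate pairing $\bigoplus_v W_{C_v}$ on the abelian part. Your write-up is slightly more explicit in invoking Proposition~\ref{prop:logweil} and in naming the character $\chi_0$, but the argument is the same.
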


\begin{proof}
 Let $T=\Trop(\T)$ where $\T$ has correlator $0$. In particular, it means that for any $\L\in \widetilde{K}$, we have
 $$0=\theta_{f,\T}(\L)=W_C(f^*\L,\T)=\widetilde{W}(f^*\L,T),$$
 so that $T$ has to lie in the orthogonal of $f^*(\widetilde{K})$ for $\widetilde{W}$. Conversely, let $T\in f^*(\widetilde{K})^{\perp_{\widetilde{W}}}$ and $\T$ be any lift of $T$. To prove that $T$ is the image of a correlator $0$ log-line bundle, we look for $\U\in \ker\Trop=\Pic^{[0]}(C)$ such that $\T-\U$ has correlator $0$. This means that for any $\L\in\Pic^0(X)[\delta]$ we have
 $$W_C(f^*\L,\T)=W_C(f^*\L,\U).$$
 As $f^*\L$ and $\U$ both belong to $\Pic^{[0]}(C)[\delta]$, we can quotient by $\rmH^1(\Gamma,\mu_\delta)$ on the right hand side, which amounts to restrict to components: we look for $(\U_v)$ such that
 $$W_C(f^*\L,\T)=\sum_v W_{C_v}(f^*_v\L,\U_v).$$
 If such $(\U_v)$ exist, this also means, unsurprisingly, that the choice of $\U$ only matters modulo $\rmH^1(\Gamma,\mu_\delta)$, whose elements are always orthogonal to $\mathrm{Im}f^*\subset \Pic^{[0]}(C)[\delta]=(\rmH^1(\Gamma,\mu_\delta))^\perp$ anyway.
 The assumption ensures that when all $f^*_v\L$ are zero, which is equivalent to saying that $\L\in\widetilde{K}$, so is the left-hand side. Therefore, we may quotient by the kernel of $(f_v^*)_v$ and use the non-degeneracy of $\bigoplus W_{C_v}$ to conclude.
\end{proof}

Putting together the strata description and Proposition~\ref{prop-link-trop-correlator}, we obtain the  following: 

\begin{coro}\label{coro-tropicalization-corr0-strata}
Fix a graph $\Gamma$,  a subgroup $\widetilde{K}$ and a global monodromy $\varphi\colon \widetilde{K}\otimes \rmH_1(\Gamma,\ZZ_\delta)\to\mu_\delta.$ The stratum  $\M_{\Gamma,\widetilde{K},\varphi}^0(X)(1/\delta)$ parametrizes correlator $0$ spin maps with the chosen discrete  data. Then, we have the following:
    \begin{enumerate}[label=(\roman*)]
        \item The stratum sits at the boundary of $\M_K(X)(1/\delta)$ where $K$ is the left kernel of $\varphi$:
        $$K=\{\L \text{ s.t. }\forall\gamma,\ \varphi(\L,\gamma)=0\}\subset \widetilde{K}.$$
        \item The image of the tropicalization map is the right kernel of $\varphi$:
        $$\TTT=\{\gamma \text{ s.t. }\forall\L\in \widetilde{K},\ \varphi(\L,\gamma)=0\} \subset \rmH_1(\Gamma,\ZZ_\delta).$$
    \end{enumerate}
\end{coro}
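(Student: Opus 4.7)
The plan is to recognize this corollary as an essentially immediate consequence of Lemma \ref{lem-kernel-restriction-bdry-strata} and Proposition \ref{prop-link-trop-correlator}, once we make explicit how the global monodromy $\varphi$ is literally the log-Weil pairing on pulled-back torsion line bundles.

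For part (i), I would simply invoke Lemma \ref{lem-kernel-restriction-bdry-strata}: on $\M_{\Gamma,\widetilde{K},\varphi}(X)$, the kernel of $f^*\colon \Pic^0(X)[\delta]\to\Pic^{[0]}(C)[\delta]$ is the left kernel $K$ of $\varphi$. Adjoining a log $\delta$-root $\T$ of $\O$ does not alter the component of $\M(X)$ in which the underlying stable map sits, so the stratum $\M^0_{\Gamma,\widetilde{K},\varphi}(X)(1/\delta)$ lies in the boundary of $\M_K(X)(1/\delta)$ for that same $K$.

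For part (ii), the key observation is that $\varphi$ is nothing but the Weil pairing $\widetilde{W}$ evaluated on pulled-back line bundles. Indeed, for $\L\in\widetilde{K}$ the line bundle $f^*\L$ lies in $\rmH^1(\Gamma,\mu_\delta)\subset\Pic^{[0]}(C)[\delta]$ by definition of $\widetilde{K}$, and by construction of the monodromy graph (see Section~\ref{sec-monodromy-graphs-stratif-MK} and the proof of Lemma \ref{lem-kernel-restriction-bdry-strata}) the induced injection $\widetilde{K}/K\hookrightarrow\rmH^1(\Gamma,\mu_\delta)=\Hom(\rmH_1(\Gamma,\ZZ_\delta),\mu_\delta)$ sends $\L$ to the class $f^*\L$, whose value on a cycle $\gamma$ equals $\widetilde{W}(f^*\L,\gamma)$ by Proposition \ref{prop:logweil}. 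Thus, under the identification $\TroPic(C)[\delta]\simeq \rmH_1(\Gamma,\ZZ_\delta)$, one has the tautological identity
\[
\varphi(\L,\gamma)=\widetilde{W}(f^*\L,\gamma)\qquad\forall\,\L\in\widetilde{K},\ \gamma\in\rmH_1(\Gamma,\ZZ_\delta).
\]
Applying Proposition \ref{prop-link-trop-correlator}, the image of the tropicalization of the correlator-$0$ locus is exactly $f^*(\widetilde{K})^{\perp_{\widetilde{W}}}$, which by the identity above is precisely the right kernel $\TTT$ of $\varphi$.

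The only subtlety worth checking carefully — though it is not really an obstacle — is that the pairing $\widetilde{W}$ between $\rmH^1(\Gamma,\mu_\delta)$ and $\TroPic(C)[\delta]$ coincides, under the canonical isomorphism $\TroPic(C)[\delta]\simeq\rmH_1(\Gamma,\ZZ_\delta)$, with the natural evaluation pairing; this is precisely what Proposition \ref{prop:logweil}(3) provides. With this compatibility in hand, the two parts of the corollary combine the boundary description of $\M_K(X)$ from Section \ref{sec-refined-stratification-via-coverings} with the tropical description of correlator-$0$ spin structures, and no further computation is required.
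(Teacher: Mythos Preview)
Your proposal is correct and follows essentially the same approach as the paper: part (i) via Lemma~\ref{lem-kernel-restriction-bdry-strata} and part (ii) via Proposition~\ref{prop-link-trop-correlator}. You have in fact filled in more detail than the paper does, making explicit the identification $\varphi(\L,\gamma)=\widetilde{W}(f^*\L,\gamma)$ and the role of Proposition~\ref{prop:logweil}(3), whereas the paper's proof is a terse two-line reference to those same results.
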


\begin{proof}
    The first statement has already been proven. By definition, the kernel is the subgroup of line bundles that restrict trivially on components that also happen to have trivial monodromy around any loop. The description of the tropicalization results from Proposition \ref{prop-link-trop-correlator}.
\end{proof}

The previous corollary describes the possible tropicalizations of a correlator $0$ log-line bundle, which index the boundary strata of the moduli of spin stable maps: namely the stratum $\M_{\Gamma,\widetilde{K},\varphi}^0(X)(1/\delta)$ further decomposes into  components indexed by  $\mathtt{D}$ for $\mathtt{D}\in\TTT$. 

\medskip

We  can now compute the degree of the forgetful map forgetting the spin structure to a given stratum with fixed tropicalization.

\begin{lem}\label{lem-degree-proj-corr0}
    The kernel of the tropicalization restricted to $0$-correlator log-line bundles has cardinality $\delta^{2g-b_1(\Gamma)-2q(X)}|\widetilde{K}|$. In particular, each element of $\TTT$ has precisely $\delta^{2g-b_1(\Gamma)-2q(X)}|\widetilde{K}|$ log-line bundle with correlator $0$ in its preimage.
\end{lem}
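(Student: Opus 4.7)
My plan is to combine the two short exact sequences describing $\LogPic^0(C)[\delta]$ with the adjunction between $f^*$ and $f_*$ in the log-Weil pairing, and then use Proposition~\ref{prop-link-trop-correlator} to pin down the image of the correlator-$0$ subgroup in the tropical Picard.

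First, I would identify the subgroup in question. Since tropicalization fits into
\[0\to\Pic^{[0]}(C)[\delta]\to\LogPic^0(C)[\delta]\xrightarrow{\Trop}\TroPic(C)[\delta]\to 0,\]
the kernel of $\Trop$ restricted to $\ker f_*$ is exactly $\ker f_*\cap\Pic^{[0]}(C)[\delta]$. I would then compute $|\ker f_*|$ using the previous lemma: by adjunction $\ker f_*=(f^*\Pic^0(X)[\delta])^\perp$ inside $\LogPic^0(C)[\delta]$, and since the log-Weil pairing is non-degenerate (Proposition~\ref{prop:logweil}) and $|\LogPic^0(C)[\delta]|=\delta^{2g}$, using $|f^*\Pic^0(X)[\delta]|=\delta^{2q(X)}/|K|$ I obtain $|\ker f_*|=\delta^{2g-2q(X)}|K|$.

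Next, I would compute the size of the image of $\ker f_*$ in $\TroPic(C)[\delta]$. By Proposition~\ref{prop-link-trop-correlator}, on the stratum indexed by $(\Gamma,\widetilde{K},\varphi)$ this image equals $\TTT=(f^*\widetilde{K})^{\perp_{\widetilde{W}}}\subset\rmH_1(\Gamma,\ZZ_\delta)$, where $\widetilde{W}$ is the non-degenerate pairing between $\rmH^1(\Gamma,\mu_\delta)$ and $\rmH_1(\Gamma,\ZZ_\delta)$ induced by the log-Weil pairing. Since $|f^*\widetilde{K}|=|\widetilde{K}|/|K|$, non-degeneracy of $\widetilde{W}$ gives
\[|\TTT|=\frac{\delta^{b_1(\Gamma)}|K|}{|\widetilde{K}|}.\]

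With these two cardinalities in hand, the proof concludes via the short exact sequence
\[0\to\ker f_*\cap\Pic^{[0]}(C)[\delta]\to\ker f_*\xrightarrow{\Trop}\TTT\to 0,\]
surjectivity onto $\TTT$ being the content of Proposition~\ref{prop-link-trop-correlator}. Dividing the cardinalities yields
\[|\ker f_*\cap\Pic^{[0]}(C)[\delta]|=\frac{\delta^{2g-2q(X)}|K|}{\delta^{b_1(\Gamma)}|K|/|\widetilde{K}|}=\delta^{2g-b_1(\Gamma)-2q(X)}|\widetilde{K}|.\]
Finally, the ``in particular'' statement is immediate from the fact that $\Trop\colon\ker f_*\to\TTT$ is a group homomorphism, so every fiber is a coset of $\ker f_*\cap\Pic^{[0]}(C)[\delta]$ and has the same cardinality. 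The only mildly delicate point is the surjectivity onto $\TTT$, but this is precisely what the second half of Proposition~\ref{prop-link-trop-correlator} gives; no new argument is required.
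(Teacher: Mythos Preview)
Your proof is correct and follows essentially the same approach as the paper: both compute $|\ker f_*|$ and $|\TTT|$ via non-degeneracy of the (log-)Weil pairings and then divide, with surjectivity onto $\TTT$ supplied by Proposition~\ref{prop-link-trop-correlator} (the paper cites its Corollary~\ref{coro-tropicalization-corr0-strata}, which is just a restatement). The only cosmetic difference is that you compute $|\ker f_*|=\delta^{2g-2q(X)}|K|$ directly from $\ker f_*=(\mathrm{Im}\,f^*)^\perp$, whereas the paper writes $|\ker f_*|=\delta^{2g}/|K^\perp|$ and simplifies $|K^\perp||K|=\delta^{2q(X)}$ at the end.
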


\begin{proof}
Assume $f\colon C\to  X$ is in $\M_K(X)$. The image of the correlator map is $K^\perp$. Therefore, the subgroup of log-line bundles $\T\in\LogPic(C)[\delta]$ with $0$ correlator has cardinality
$$|\ker f_*| = \frac{\delta^{2g}}{|K^\perp|}.$$
By Corollary \ref{coro-tropicalization-corr0-strata}, the tropicalization map surjects $\ker f_*$ onto $\TTT$, so to conclude we only need to compute $|\TTT|$.

By Proposition \ref{prop-link-trop-correlator}, $\TTT$ is the orthogonal of $f^*\widetilde{K}$ for the pairing  $\widetilde{W}$ induced by the log Weil pairing. Thus, we have $|\TTT||f^*\widetilde{K}|=\delta^{b_1(\Gamma)}$.

The kernel of $f^*$ being $K$, we have that $f^*\widetilde{K}$ is isomorphic to $\widetilde{K}/K$. By non-degeneracy of the Weil pairing, the cardinality of $\TTT$ is thus $\delta^{b_1(\Gamma)}/|\widetilde{K}/K|$. Finally, the cardinality of $\ker f_*\cap \Pic^{[0]}(C)[\delta]$ is the quotient of $|\ker f_*|$ by $|\TTT|$:
    $$\frac{\delta^{2g}/|K^\perp|}{\delta^{b_1(\Gamma)}/|\widetilde{K}/K|} = \frac{\delta^{2g}|\widetilde{K}|}{|K^\perp||K|\delta^{b_1(\Gamma)}} = \delta^{2g-b_1(\Gamma)-2q(X)}|\widetilde{K}|.$$
\end{proof}

\subsubsection{Stratification of $\M_K^0(X)(\frac{1}{\delta})$}
\label{sec-stratification-MK0delta}
We now have the necessary information to describe the tropicalization of $\M_K^0(X)(\frac{1}{\delta})$ as  a  sub cone stack of the tropicalization of $\M_K(X)(\frac{1}{\delta})$.

\medskip

Recall that the tropicalization of $\M_K(X)(\frac{1}{\delta})$ is the cone $\Sigma_K(X)(\frac{1}{\delta})$ whose cones are indexed by graphs $\Gamma$ with the following decorations:
\begin{itemize}
    \item a class function $(\beta_v)$ (Definition \ref{defi-X-valued-graph});
    \item a global monodromy $\varphi$ with kernel $\widetilde{K}$ and core $K$ (Definition \ref{defi-group-decorated-graph});
    \item an equivalence class of $\delta$-torsion divisor $\mathtt{D}$ (Definition \ref{defi-delta-torsion-graph}).
\end{itemize}
Notice we have the compatibility relation between $\varphi$ and $\mathtt{D}$ from Proposition \ref{prop-link-trop-correlator}. Furthermore, the projection to $\Sigma_K(X)$ forgetting the tropical torsion divisor class $\mathtt{D}$ maps $\sigma_{\Gamma,\widetilde{K},\varphi,\mathtt{D}}$ to $\sigma_{\Gamma,\widetilde{K},\varphi}$ and using Corollary \ref{coro-tropicalization-corr0-strata}, the map between associated strata has degree $\delta^{2g-b_1(\Gamma)}$.

\medskip

Now, the tropicalization $\Sigma_K^0(X)(\frac{1}{\delta})$ of $\M_K^0(X)(\frac{1}{\delta})$ is a sub-cone stack of the previous one. Cones are indexed by graphs $\Gamma$ with the following decorations:
\begin{itemize}
    \item a class function $(\beta_v)$ (Definition \ref{defi-X-valued-graph});
    \item a global monodromy $\varphi$ with kernel $\widetilde{K}$ and core $K$ (Definition \ref{defi-group-decorated-graph});
    \item an equivalence class of $\delta$-torsion divisor $\mathtt{D}$ (Definition \ref{defi-delta-torsion-graph}) satisfying that $\mathtt{D}\in\TTT$, the right kernel of $\varphi$: $\varphi(-,\mathtt{D})=1$.
\end{itemize}
Furthermore, the projection to $\Sigma_K(X)$ maps $\sigma_{\Gamma,\widetilde{K},\varphi,\mathtt{D}}$ isomorphically to $\sigma_{\Gamma,\widetilde{K},\varphi}$, but the map between associated strata has now degree $\delta^{2g-b_1(\Gamma)-2q(X)}|\widetilde{K}|$.

\section{Double ramification cycle and its refinement}
\label{sec-correlated-DR-where-evth-comes-to-place}

\subsection{Universal double ramification cycle}
\label{sec-uniDR}
Our plan is to apply the universal DR-formula to the moduli space of spin stable maps \emph{keeping track of  the refined stratification taking into account the correlation}.
In \cite[Section~6]{holmes2023root}  the  authors  obtain a formula for  \emph{spin DR cycles} (without any  correlation refinement) written in terms of \emph{piece-wise polynomial}; this can be  translated as  explained in \cite[Section~6]{holmes2025logDR} into a formula in terms of decorated boundary strata as  in the  original formulation of Pixton.

It is moreover showed  in \cite{chiodo2024double} that the push-forward of the  spin DR cycle along the  natural forgetful morphism recovers the usual DR cycle (although the relation is not evident from Pixton's formula).

The  \emph{piece-wise polynomial} incarnation of the  spin DR  formula, together with the  analysis of the refined strata  $\M_{\Gamma,\widetilde{K},\varphi}^0(X)(1/\delta)$ performed in Section~\ref{sec-stratification-MK0delta} allows us to find a Pixton's type  formula  for the  \emph{correlated summand} $\DR_{g,\bfw,m}(X,\beta,L)^{\theta=0}$ as  defined in the introduction.

The remainder of Section \ref{sec-uniDR} is devoted to recall the Pixton formula, in these two different languages: the one from \cite{bae2023pixton} using decorated strata classes, and the one from \cite{holmes2023root,holmes2025logDR} using rather piecewise polynomial functions on the tropicalization.

\subsubsection{DR cycle in terms of decorated strata classes}
To state the universal DR formula, we need several ingredients. We refer to the introduction of \cite{bae2023pixton} for more details. We denote by $\mathfrak{Pic}_{g,n+m}$ the Picard stack, $\pi:\mathfrak{C}_{g,n+m}\to\mathfrak{Pic}_{g,n+m}$ the universal curve, endowed with sections coming from marked points, and $\mathfrak{L}\to\mathfrak{C}_{g,n+m}$ the universal line bundle.
\begin{itemize}[leftmargin=0.4cm]
    \item A \emph{degree decorated} prestable graph is a prestable graph together with a degree decoration function $\deg:\sfV\to\ZZ$. The sum of vertex degrees is the total degree. Let $\GGG_{g,m+n}(d)$ be the set of degree decorated prestable graphs of total degree $d$. If $\Gamma\in \GGG_{g,n+m}(d)$ is a degree decorated prestable graph, we have a morphism
    $$j_\Gamma\colon\mathfrak{Pic}_\Gamma\to\mathfrak{Pic}_{g,n+m},$$
    where $\mathfrak{Pic}_\Gamma$ is the substack of curves with dual graph $\Gamma$ together with a line bundle $L$ whose restriction to the components indexed by $v$ has degree $\deg(v)$. We have the vertex restriction map
    $$\mathfrak{Pic}_\Gamma\to\prod_v \mathfrak{Pic}_{g(v),n(v),\deg(v)},$$
    whose fibers are $\mathbb{G}_m^{b_1(\Gamma)}$-torsors.
    
    \item Let $r$ be a positive integer. If $\Gamma$ is a \emph{degree decorated} prestable graph\footnote{A degree decoration on a pre-stable graph is the same as a graph  together with a divisor $D$ supported at the vertices.} with set of half-edges $\sfH$, a $r$-\textit{weighting} on $\Gamma$, as defined in \cite[Definition~4]{janda2020double}, is a function on the set of half-edges $w:\sfH\to \{0,\dots,r-1\}\simeq\ZZ_r$ such that
        \begin{enumerate}
            \item For every edge $e=(h,h')$ we have $w(h)+w(h')\equiv 0\mod r$.
            \item For every vertex $v$ we have $\sum_{h\vdash v}w(h)\equiv\deg v\mod r$.
            \item For every leg $i$ the weight of the unique adjacent half edge is $a_i$.
        \end{enumerate}
        The set of $r$-weightings on $\Gamma$ is denoted by $W_{\Gamma,r}(\bfw)$. It is a set of cardinality $r^{b_1(\Gamma)}$.
    \item We consider the following cohomology classes on $\mathfrak{Pic}_{g,n+m}$:
        \begin{enumerate}
            \item $\psi_i$ is the first Chern class of the cotangent line bundle at the $i$-th marked point  $q_i$;
            \item $\xi_i=c_1(q_i^*\mathfrak{L})$ 
            \item $\xi=c_1(\mathfrak{L})$ is the first chern class of the universal line bundle, and $\eta=\pi_*(\xi^2)$.
        \end{enumerate}
        In particular, pulling back by the vertex restriction map, such classes also make sense over strata $\mathfrak{Pic}_\Gamma$ for $\Gamma\in G_{g,n+m}(d)$. More precisely, we care about the classes $\psi_h$ for $h\in\sfH$ and $\eta_v$ where $v\in\sfV$.
    \item If $\Gamma\in G_{g,n+m}(d)$ is a degree decorated graph, we call a \textit{cohomological decoration} the choice of the following:
    	\begin{enumerate}
    	\item for each leg $i$ a monomial $\xi_i^a\psi_i^b$;
    	\item for each half-edge $h$ a monomial $\psi_h^a$;
    	\item for each vertex a monomial $\eta^a$.
    	\end{enumerate}
        Given a choice of cohomological decoration $\gamma$, we can consider the class $j_{\Gamma\ast}[\gamma]$ obtained by proper push-forward.
\end{itemize}

For $\Gamma\in G_{g,n+m}(d)$ a degree decorated graph and $w\in W_{\Gamma,r}$ a $r$-weighting. We consider the following class of mixed dimension:
\begin{equation}\label{eq:decoration}
    \gamma_\Gamma(w)=\prod_{i=1}^n e^{\frac{1}{2}a_i^2\psi_i+a_i\xi_i}\prod_{v\in \sfV(\Gamma)}e^{-\frac{1}{2}\eta_v}\prod_{e=(h,h')\in\sfE(\Gamma)} \frac{1-e^{-\frac{w(h)w(h')}{2}(\psi_h+\psi_{h'})}}{(\psi_h+\psi_{h'})}.
\end{equation}
By \cite[Proposition 6]{bae2023pixton},  averaging over all $r$-weightings, one obtains a class whose coefficients are polynomials in $r$ provided $r$ is big enough:
$$\gamma_\Gamma^r=\frac{1}{r^{b_1(\Gamma)}}\sum_{w\in W_{\Gamma,r}(\bfw)}\gamma_\Gamma(w).$$
We take the constant coefficient in $r$ and call the obtained class $\gamma_\Gamma^0$ and consider
$$\sum_{\Gamma\in G_{g,n+m}(d)}\frac{1}{|\operatorname{Aut}(\Gamma)|}j_{\Gamma\ast}[\gamma_\Gamma^0],$$
where $j_{\Gamma\ast}$ is the push-forward of the class Poincar\'e dual to the cohomology class $\gamma_\Gamma^0$. Let $P_{g,\bfw}$ be codimension $g$ term of the above class.

\begin{theo}\cite[Theorem 7]{bae2023pixton}
We have
$$\DR_{g,\bfw}^\mathrm{op} = P_{g,\bfw} \in \operatorname{CH}^g_\mathrm{op}(\mathfrak{Pic}_{g,n+m}).$$    
\end{theo}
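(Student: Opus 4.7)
Since this statement is attributed to \cite{bae2023pixton}, my plan follows the strategy of that paper rather than presenting an independent argument. The goal is to identify two a priori different operational classes on $\mathfrak{Pic}_{g,n+m}$, and the main ingredient will be the Pixton formula for the DR cycle with smooth projective target established in \cite{janda2020double}.

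The first thing I would verify is that $P_{g,\bfw}$ actually defines an operational class, i.e.\ is compatible with arbitrary pullback along morphisms $\Phi\colon S\to\mathfrak{Pic}_{g,n+m}$. The individual ingredients of the expression \eqref{eq:decoration} ($\psi$-classes at markings and half-edges, the classes $\xi_i$ and $\eta_v$, and the boundary pushforward $j_{\Gamma*}$) are each manifestly operational, so the issue is showing that the ``extract the constant term in $r$'' operation makes sense universally; this uses the polynomiality statement \cite[Proposition~6]{bae2023pixton} whose proof is a direct combinatorial computation on the averaging over $r$-weightings. Once this is done, both $\DR^{\mathrm{op}}_{g,\bfw}$ and $P_{g,\bfw}$ live in $\operatorname{CH}^g_{\mathrm{op}}(\mathfrak{Pic}_{g,n+m})$.

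Next I would test equality against representable morphisms $\Phi\colon \M_{g,n+m}(X,\beta)\to\mathfrak{Pic}_{g,n+m}$, $f\mapsto f^*L$, for varying $(X,L,\beta)$. On the one hand, by the very definition of $\DR^{\mathrm{op}}$ recalled in the introduction we have
$$\Phi^*(\DR^{\mathrm{op}}_{g,\bfw})\cap\vir{\M_{g,n+m}(X,\beta)} = \DR_{g,\bfw}(X,\beta,L).$$
On the other hand, because $P_{g,\bfw}$ is built from classes that are compatible with $\Phi^*$, the pullback $\Phi^*(P_{g,\bfw})\cap\vir{\M_{g,n+m}(X,\beta)}$ is the ``Pixton class with target'' as written in \cite{janda2020double}. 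The equality of these two classes is precisely the main result of \cite{janda2020double}, so the two sides agree on every such test.

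The hard step is to deduce equality of the universal operational classes from equality after pullback along the family of morphisms $\Phi$ indexed by smooth projective targets. Concretely, one needs that such $\Phi$'s are jointly ``detecting'' for operational Chow on $\mathfrak{Pic}_{g,n+m}$. In \cite{bae2023pixton} this is achieved by a bootstrapping argument: one chooses very ample $L$'s on projective $X$'s so that every geometric point of $\mathfrak{Pic}_{g,n+m}$ (with appropriate degree decoration) is in the image of some $\Phi$, and more importantly so that the virtual classes $\vir{\M_{g,n+m}(X,\beta)}$ can be combined to cap against a faithful family of cycles on $\mathfrak{Pic}_{g,n+m}$. The subtlety here is that operational classes on Artin stacks are determined by their action on representable morphisms from finite-type DM stacks with fundamental class, and one must therefore exhibit enough such targets built from stable-maps moduli. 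Once this detection property is established, the equality of pullbacks at every test case implies $\DR^{\mathrm{op}}_{g,\bfw}=P_{g,\bfw}$ in the operational Chow of $\mathfrak{Pic}_{g,n+m}$.
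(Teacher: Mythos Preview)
The paper does not prove this statement at all: it is simply quoted from \cite[Theorem~7]{bae2023pixton} and used as input. There is therefore no ``paper's own proof'' to compare against. Your proposal correctly recognises this and sketches the strategy of \cite{bae2023pixton} rather than attempting something original, which is the right thing to do here.

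Your sketch of that strategy is broadly accurate in spirit: polynomiality of the averaged weighting sums, compatibility of the constituent classes under pullback, and the appeal to \cite{janda2020double} on moduli of stable maps as the geometric input. The one place where your description is a bit loose is the ``detection'' step. In \cite{bae2023pixton} the argument is not quite that stable-maps moduli jointly detect operational classes on $\mathfrak{Pic}_{g,n+m}$; rather, the authors work with an Abel--Jacobi map from a suitable compactification and reduce to the target-variety formula via a careful compatibility argument for the virtual classes involved. But since the present paper only cites the result, this level of detail is not required here.
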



\begin{expl}
Consider the moduli space of stable maps $\M_{g,n+m}(X,\beta)$, shortly denoted by $\M(X)$, and let $L\in\Pic(X)$ be a line bundle with $c_1(X)\cdot\beta=\sum a_i$. In particular, we have a map $\GGG_{g,n+m}(X,\beta)\to\GGG_{g,n+m}(c_1(X)\cdot\beta)$, transforming the class decoration into a degree decoration where the degree function is $\deg(v)=c_1(X)\cdot\beta_v$. We have the following map to the Picard stack:
$$\Phi_{f^*L}\colon (f:C\to X)\longmapsto (C,f^*L)\in\mathfrak{Pic}_{g,n+m,c_1(X)\cdot\beta}.$$
Applying the universal DR cycle formula \cite{bae2023pixton} recovers the main result in \cite{janda2020double}, expressing the DR-cycle with target defined with rubber maps in terms of decorated strata classes
\[\epsilon_*\vir{R\M(Y|D^\pm)} = \DR^{\mathrm{op}}_{g,\bfw}(\Phi_{f^*L})\cap\vir{\M(X)} \in \operatorname{CH}_{\rm{vdim-g}}(\M(X)).\]
In this case, we recover a finite sum since the vertex degree needs to be equal to $c_1(X)\cdot\beta_v$. The terms $j_{\Gamma\ast}[\gamma]$ are to be understood as follows:
$$j_{\Gamma\ast}[\gamma] = j_{\Gamma\ast}\left[\Delta^!(\prod\vir{\M_v}\cap\gamma)\right] = j_{\Gamma\ast}\left[\vir{\M_\Gamma}\cap\Delta^*\gamma\right].$$

\medskip

    In the particular case where $L$ has degree $0$, the formula simplifies since we have on one hand that $\xi_i=0$, and on the other hand that $\eta_v=0$, so that there are no middle term in the strata decoration of (\ref{eq:decoration}):
    $$\DR_{g,\bfw}(X,\beta,L) = \left.\sum_{\Gamma}\frac{1}{|\Aut(\Gamma)|}\frac{1}{r^{b_1(\Gamma)}}\sum_{w\in W_{\Gamma,r}(\bfw)}j_{\Gamma\ast}\left[ \prod e^{\frac{1}{2}a_i^2\psi_i}\prod_{e=(h,h')}\frac{1-e^{-\frac{ww'}{2}(\psi+\psi')}}{\psi+\psi'} \right] \right|_{r=0}.$$
\end{expl}

\subsubsection{Rubber description of the Spin DR locus}

It  is proved in \cite{janda2020double} that 

\[\DR_{g,\bfw}(X,\beta,L)=\epsilon_*\vir{R\M(Y|D^\pm)}\]
where $R\M(Y|D^\pm)$ is the moduli space of logarithmic rubber maps to $Y=\mathbb P(L\oplus \mathcal O_X)$ relative to the fiberwise toric boundary with tangencies $\bfw.$

\medskip

We can give the following description for the Spin DR-cyle, which is the analogue  of the Rubber maps description for the usual DR. Define
\[R\M(\mathcal T):=\widetilde{\operatorname{Div}}_{g, \frac{\bfw}{\delta}}\times_{\Pic_{g,n,0}}\widetilde{\M}(X)(\frac{1}{\delta}).\]
Here $\widetilde{\operatorname{Div}}_{g, \frac{\bfw}{\delta}}$ is the algebraic stack with log structure defined  as in \cite{marcuswiselog} but starting with the root stack $\widetilde{\mathfrak{M}}.$ I.e. it parametrizes 
log (marked) curves together with $\alpha\in\pi_*\bar{M}_{\widetilde{C}}^{\gp}\slash\bar{M}_S^{\gp}$ such that the slope of $\alpha$ along the unbounded edge (or leg) corresponding to the $i$-th marking is $\frac{a_i}{\delta}$.  $\Pic_{g,n,0}$ is the universal relative  Picard space\footnote{This is the universal Picard stack quotiented by the relative inertia} for   $\mathfrak{C}\to\mathfrak{M}_{g,m+n}$.
Finally, $\widetilde{\M}(X)(\frac{1}{\delta})$ is the root stack over $\M(X)$ on which the logarithmic $\delta$-root can be represented by a honest line bundle of degree $0$.

By definition of fiber product, a $R\M(\mathcal T)$ 
parametrizes pairs \[\left([f\colon C\slash S\to X,\mathcal T\in \Pic^0(\widetilde{C})], \alpha\in \rmH^0(\widetilde{C},\bar{M}_{\widetilde{C}}^{gp}\slash\bar{M}_S^{\gp})\right)\] such that there is a fiberwise isomorphism 
\[\mathcal T\cong f^*L(\alpha).\]
\begin{rem}
    In the notation of  \cite[Section~4.6]{marcuswiselog}, the fiber product we considered would be rather  denoted by $\widetilde{\operatorname{Div}}_{g, \frac{\bfw}{\delta}}(\widetilde{\M}(X)(\frac{1}{\delta}),\mathcal T).$  Our notation would be actually more appropriate for the fiber product with
    \[\widetilde{\operatorname{Rub}}_{g, \frac{\bfw}{\delta}}\to\widetilde{\operatorname{Div}}_{g, \frac{\bfw}{\delta}}\]
    which is the sub-functor on log schemes defined by the condition that $\alpha$ is locally comparable with zero.
\end{rem}

As explained in \cite[Section~4.6]{molchowiselog}, such a fiber product $R\M(\mathcal T)$ is naturally endowed with a virtual class $\vir {R\M(\mathcal T)}$. Furthermore,
following \cite[Section~3.7]{bae2023pixton} we have that, denoting by $\epsilon\colon R\M(\mathcal T)\to \widetilde{\M}(X)(\frac{1}{\delta}) $ the proper morphism to the base, there is an equality of classes

\[\epsilon_*\vir {R\M(\mathcal T)}=\DR_{g,\bfw/\delta}(X,\beta,\mathcal T).\]

In fact, this is the essentially the definition of the right hand side and the great success of \cite{bae2023pixton} is having obtained an explicit formula in terms of decorated strata classes (or piece-wise polynomial if one likes those better).

\medskip

Notice now that the morphism
\[R\M(\mathcal T)\xrightarrow{\epsilon} \widetilde{\M}(X)(\frac{1}{\delta})\to \M(X) \]
factors trough\footnote{ Again, the moduli space of relative (or log) rubber maps to $Y=\mathbb P(L\oplus\mathcal O_X)$
     relative to the fiberwise toric boundary is 
     rather the fiber product with $\operatorname{Rub}_{g,\bfw}$ in the notation of \cite[Section~4.6]{molchowiselog}. Since we are ultimately interested in the pushforward of the virtual class to $\M(X)$, the difference it is not important.
}
\[R\M=\operatorname{Div}_{g,\bfw}\times_{\rm Pic_{g,n,0}}\M(X).\]

Indeed, given a point $S$ point of $R\M(\mathcal T)$, we have $\alpha\in\rmH^0(\widetilde{C},\bar{M}^{\gp}_{\widetilde{C}}\slash \bar{M}_S^{\gp})$ such that $\mathcal T\cong \widetilde{f}^*L(\alpha)$ and
taking the $\delta$-power we get a isomorphism of  bundles in $\Pic^0(\widetilde{C}\slash S):$
\[\mathcal T^{\otimes\delta}\cong \widetilde{f}^*L(\alpha_D)\cong \mathcal O_{\widetilde{C}}(\delta\alpha)\]
where $\alpha_D$ is the piece-wise linear function on $\widetilde{\Gamma}$ described in Section~\ref{sec-tropicalfunctionroots} and $\widetilde{f}$ is simply obtained pre-composing $f$ with the stabilization $\mathrm{st}\colon \widetilde{C}\to C$
Then $\delta\alpha-\alpha_D\in\rmH^0(\widetilde{C},\bar{M}^{\gp}_{\widetilde{C}}\slash\bar{M}_S^{\gp})$ has fixed slope $a_i$ along the unbounded edges corresponding to markings carrying log structure (this is because $\alpha_D$ is really a strict PL function on the bounded part of the tropical curve) and we have an isomorphism $f^*L\cong\mathcal O_{\widetilde{C}}(\delta\alpha-\alpha_D).$

Finally notice that on all vertices of $\widetilde{C}\setminus C$ the sum of the outgoing slopes of $\delta\alpha-\alpha_D$ is zero and thus
$\delta\alpha-\alpha_D=\rm st^*\beta$ for some $\beta\in \rmH^0(C,\bar{M}^{\gp}_{C}\slash\bar{M}_S^{\gp}).$ Thus, we have
\begin{align*}
   \rm ft \colon R\M(\mathcal T)&\to R\M \\
   [f\colon C\to X,\alpha\in \rm H^0(\widetilde{C},\bar{M}^{\gp}_{\widetilde{C}}\slash \bar{M}_S^{\gp}) ]&\mapsto  [f\colon C\to X,\delta\alpha-\alpha_D\in\rmH^0(C,\bar{M}^{\gp}_{C}\slash \bar{M}_S^{\gp}) ]
\end{align*}

Disregarding the extra $\mu_{\delta}$-automorphism (and the log structure at the boundary of the moduli space) one has for the Spin structure, given $\rm ft$  has the structure of $\mu_{\delta}$ gerbe, we get an isomorphism of the coarse moduli spaces.  

Moreover the two virtual classes are compatible since $\widetilde{\operatorname{Div}}_{g, \frac{\bfw}{\delta}}\to \operatorname{Div}_{g, \frac{\bfw}{\delta}}$ is generically one to one of log smooth stacks; it follows that:
\[\rm ft_*\vir{R\M(\mathcal T)}=\vir {R\M}\]
so push-forward along the forgetful map to $\M(X)$ recovers the usual DR with target variety.

This is  also proved, via a slightly different argument,  for the case $\delta=2$ and without target varieties in \cite[Section~3]{holmes2025dr}. Their proof also applies essentially verbatim in our case.

\begin{expl}

Let  $\widetilde{\M}(X)(1/\delta)$ be the logarithmic modification  of the Spin moduli space of stable maps where the universal $\delta$ (logarithmic  root) of a line bundle  $F^*L$ (which can be thought has $  F^*(\L^{\frac{1}{\delta}})\otimes \T'$  for $\T'$ root of the trivial bundle)  can be represented by a honest line bundle $\mathcal T$  satisfying $\mathcal T^{\otimes\delta}\cong F^*\L(\alpha)$ for $\alpha$ a cone-wise linear function over $\widetilde{\mathcal C}\to \widetilde{\M}(X)(1/\delta).$ Then we have  a morphism

\[ \Phi_\T \colon \widetilde{\M}(X)(1/\delta)\longrightarrow \mathfrak Pic_{g,m+n,0} .\]
Again applying the universal DR  formula   we get a cycle class  \[\DR_{g,\bfw/\delta}(X,\beta,\mathcal T)\in \operatorname{CH}_*(\widetilde{\M}(X)(1/\delta),\mathbb Q)\cong \operatorname{CH}_*(\M(X)(1/\delta),\mathbb Q).\] 
Such class is the degree $g$ component of the constant term of the polynomial in $r$ defined by Pixton's formula upon replacing: 
\begin{itemize}
    \item the set of $X$-valued stable graph with the set  of $\delta$-torsion decorated $X$-valued stable graphs (see Definition~\ref{defi-delta-torsion-graph}). In particular, the set of $r$-\emph{weighting} $W_{\Gamma,r}$ mod $r$ changes depending on the multidegree $D$;
    \item the weights $a_i$ are replaced by $a_i/\delta$, the line bundle $F^*\L$ is replaced by $\mathcal T$ and in the second factor of (\ref{eq:decoration}), the product is over the vertices of $\widetilde{\Gamma}$.
    
    We notice that while $F^*\L$ has multidegree $0$ if we have chosen $L\in\Pic^0(X),$ but the universal root $\T$ on $\widetilde{\M}(X)(1/\delta)$ does not, so the second factor in the decoration is not trivial.
    \item the decorated strata terms become
    \[j_{\Gamma,D\ast}[\gamma]= j_{\Gamma,D\ast}\left[ \vir{\widetilde{\M}_{\Gamma,(\beta_v),D}}\cap \rho^*\Delta^*\gamma \right]\]
\end{itemize}

Assuming that, since the map $\widetilde{\M}(X)(1/\delta)\to \widetilde{\M}(X)$ is \'etale, projecting the class associated to a strata indexed by $(\Gamma,D)$ we obtain the class of the image multiplied by the degree of the projection, which is $\delta^{2g-b_1(\Gamma)}$. This provides a second expression for the DR-cycle with target variety:
$$\DR_{g,\bfw}(X,\beta,L) = \sum_{\Gamma,D}\frac{\delta^{2g-b_1(\Gamma)}}{|\operatorname{Aut}(\Gamma)|}j_{\Gamma\ast}\left[ \prod e^{\frac{1}{2}(\frac{a_i}{\delta})^2\psi_i}\prod e^{-\frac{1}{2}\eta_v}\prod\frac{1-e^{-\frac{ww'}{2}(\psi+\psi')}}{\psi+\psi'} \right].$$
\end{expl}

\subsubsection{DR-cycle in terms piecewise polynomial functions}
As in \cite[Section~6]{holmes2023root,holmes2025logDR}, the Pixton formula also has an expression in terms of strict piecewise polynomial functions.
In particular, both classes $\DR_{g,\bfw}(X,\beta,L)$ and $\DR_{g,\bfw/\delta}(X,\beta,\mathcal T)$ can be written in terms of  strict piecewise polynomial functions.
We now explain how to suitably modify the formula given in \cite[Section~6]{holmes2023root} to get the formula for the case of Spin DR cycle with target varieties.

\medskip

The relation between the formula given in the previous section and this one follows directly from the translation between piecewise polynomials on the tropicalization and decorated strata classes carefully explained in \cite[Section~6]{holmes2025logDR}.

\medskip

Recall  that $\Sigma(1/\delta)$ is the   cone stack interpreted as  tropical moduli space of tropical curves together with an equivalence class of $\delta$-torsion divisor. Its cones are indexed by $(\Gamma,\mathtt{D})$, where $\Gamma$ is a prestable graph and $\mathtt{D}$ goes over equivalence classes of $\delta$-torsion divisors on $\widetilde{\Gamma}$.

We can interpret strict piece-wise polynomial on $\Sigma(1/\delta)$ as operational Chow classes on the moduli space of spin stable maps via the morphism
\[\widetilde{\M}(X)(1/\delta) \to \widetilde{\mathfrak{M}}(1/\delta)\xrightarrow{\widetilde{t}_{\delta}}\Sigma(1/\delta). \]

The first arrow is simply the forgetful morphism, and $\widetilde{t}_{\delta}$ is a morphism of (in fact smooth) algebraic stack identifying the cone stack with its Artin fan \cite{cavalieri2020moduli}. \footnote{For the reader who is not familiar with Artin fans and their  cohomology, onee can think this is a generalization of the  correspondence for toric variety between strict piece-wise polynomials on the fan and toric strata.}

We write down the operational class $\DR^\mathrm{spin}\in\rm{CH}^*(\widetilde{\mathfrak{M}}(1/\delta))$ which we can express as the product of a strict piece-wise polynomial in $\rm{sPP}(\Sigma(1/\delta))$ and a tautological class. 

The spin DR cycle with target variety $X$ in the Chow group of $\widetilde{\M}(X)(1/\delta)$ can then be written as the cap product of $\DR^{\rm{spin}}$ with the virtual class $\vir{\widetilde{\M}(X)(1/\delta)}$.

 \begin{itemize}[leftmargin=0.5cm]
     \item We define a first piecewise polynomial function as follows. On each cone $\widetilde{\sigma}_{\Gamma,\mathtt{D}}$, we consider the function \cite[Section 1.7.3]{holmes2025logDR}
     \[\operatorname{Cont}^r_{\Gamma,D}=\sum_{w\in W_{\Gamma,r}(\bfw/\delta)} \frac{1}{r^{b_1(\Gamma)}}\prod_{e=(h,h')\in \sfE(\Gamma)}\operatorname{exp}\left(\frac{w(h)\cdot w(h')}{2}l_e\right)\in\QQ[[l_e\; :\;e\in\sfE(\Gamma)]]\]
     where the sum runs over the $r$-weightings. As before, this is a polynomial in $r$ for $r$ sufficiently large. Then, following the notation of \cite{holmes2025logDR}, we denote by $\operatorname{Cont}_{\Gamma,D}$ the value at $r=0$ of such polynomial, which is then a formal series in the variables $l_e$. The assumption on the choice of $D$ ensures that all these cone functions glue together to define $\mathfrak{P}\in\rm{sPP}(\Sigma(\frac{1}{\delta}))$, the strict piecewise polynomial with
     \[\mathfrak P\rvert_{\widetilde{\sigma}_{\Gamma,D}}=\operatorname{Cont}_{\Gamma,D}\]
     \item We also consider $\mathfrak{L}\in \rm{sPP}(\Sigma(\frac{1}{\delta}))$  the function that restricted to  $\widetilde{\sigma}_{\Gamma,D}$ is defined by:

     \[\mathfrak{L}\rvert_{\widetilde{\sigma}_{D,\Gamma}}=\frac{1}{\delta^2}\sum_{v\in \sfV(\widetilde{\Gamma})}\alpha_D(v)\underline{\rm{deg}}(\mathcal O(\alpha_D))(v)\]
     where we have chosen $\alpha_D$ is such a way that $\alpha_D(v)=0$ for all $v\in \sfV(\Gamma)\subset\sfV(\widetilde{\Gamma}),$ which is always doable according to \cite[Section 6]{holmes2023root}. As the multidegree is divisible by $\delta$, the product under the sum belongs to the right monoid.

     \item Finally, let us consider the tautological  class
     \[\eta=-\sum_{i=1}^n\frac{a_i^2}{\delta^2}\psi_i.\]
    \end{itemize}

\begin{rem}
    Here, our choice of divisors $D$ representing the linear equivalence classes of tropical torsion divisors are compatible, ensuring that $\mathfrak{P}$ and $\mathfrak{L}$ are well-defined piecewise polynomial functions. Actually, it is shown in \cite[Lemma 6.4]{holmes2023root} that the expression for $e^{-\frac{1}{2}\mathfrak{L}}\mathfrak{P}$ given on a cone $\widetilde{\sigma}_{\Gamma,D}$ does not depend on the choice of $D$ in the linear equivalence class, so that we may pick any choice and the functions glue correctly to define a piecewise polynomial function. However, for some choices, this may not be the case separately for $\mathfrak{P}$ and $\mathfrak{L}$.
\end{rem}

 Then we have an operational class in $\rm{CH}^*(\widetilde{\mathfrak{M}}(\frac{1}{\delta})$ given by:

 \[{\bf{P}}_{g,\bfw/\delta}=\operatorname{exp}(-\frac{1}{2}(\eta+\widetilde{t}_{\delta}^*\mathfrak{L}))\cdot \widetilde{t}_{\delta}^*\mathfrak{P}.\]

 It is proved in \cite[Section~6]{holmes2023root} that the degree $g$ part ${\bf{P}}_{g,\bfw/\delta}^g$ of this operational class is precisely the pull-back of the universal DR class via
 \[\Phi_\T \colon \widetilde{\mathfrak{M}}(1/\delta)\longrightarrow\mathfrak{Pic}_{g,n+m,0}.\]

The usual compatibility of the virtual class for moduli of maps  $\vir{\widetilde{\M}(X)(\frac{1}{\delta})}$ with the boundary stratification of $\widetilde{\mathfrak{M}}(\frac{1}{\delta})$ (see \eqref{eq:virtual compatibility}),
gives the following identity
\[\rm{ft}^*{\bf{P}}_{g,\bfw/\delta}^g\cap\vir{\widetilde{\M}(X)(\frac{1}{\delta})} = \DR_{g,\bfw/\delta}(X,\beta,\mathcal T)\]

\subsection{Pixton formula for correlator $0$}

We now write and prove the correlated Pixton formula expressing the part of the DR-cycle with target variety having $0$-correlator.

We consider again the space of log-$\delta$-roots of $f^*L$, which become true line bundles after base change to a suitable root stack, along with the morphism
\[ \Phi_\T \colon \widetilde{\M}(X)(1/\delta)\longrightarrow \mathfrak Pic_{g,m+n,0} .\]
To obtain the correlated DR formula, we restrict to the stratum of log-line bundles with correlator $\theta=0$, denoted by $\widetilde{\M}^0(X)(\frac{1}{\delta})$. We there apply the universal DR formula and push to $\widetilde{\M}(X)$. To do so, we need to know the stratification of this space. Section \ref{sec-stratification-MK0delta} provides an explicit description, provided we restrict to a given $\M_K(X)$.

\medskip

To express the formula, we provide some piecewise polynomial functions on the following cone stacks:
\begin{itemize}
    \item the tropicalization $\Sigma_K(X)$ of $\M_K(X)$, whose cones are indexed by $(\Gamma,\widetilde{K},\varphi)$;
    \item the tropicalization $\Sigma_K^0(X)(\frac{1}{\delta})$ of $\M_K^0(X)(\frac{1}{\delta})$, whose cones are indexed by $(\Gamma,\widetilde{K},\varphi,\mathtt{D})$ with $\mathtt{D}$ in the right-kernel of $\varphi$.
\end{itemize}
We have a map $\Sigma_K^0(X)(\frac{1}{\delta})\to\Sigma(\frac{1}{\delta})$, so that we can pull-back piecewise polynomial functions on $\Sigma(\frac{1}{\delta})$. We consider the following ones:
\begin{itemize}
    \item We define the piecewise polynomial function $\mathfrak{P}$ on the cone indexed by $(\Gamma,\widetilde{K},\varphi,\mathtt{D})$ function by the formula
    $$\mathfrak{P}|_{\widetilde{\sigma}_{\Gamma,\widetilde{K},\varphi,\mathtt{D}}} = \left.\sum_{w\in W_{\widetilde{\Gamma},r}(\bfw/\delta)}\frac{1}{r^{b_1(\Gamma)}}\prod_{e=(h,h')} e^{\frac{w(h)w(h')}{2}l_e}\right|_{r=0},$$
    where the sum is over weightings with divergence $D$, the preferred divisor in the class $\mathtt{D}$, and slopes at infinity $\bfw/\delta$.. The expression is a polynomial in $r$ for $r$ big enough, the $|_{r=0}$ means we take the constant coefficient.
    
    \item We define the piecewise linear function $\mathfrak{L}$ on the cone indexed by $(\Gamma,\widetilde{K},\varphi,\mathtt{D})$ function by the formula
    $$\mathfrak{L}|_{\widetilde{\sigma}_{\Gamma,\widetilde{K},\varphi,\mathtt{D}}} = \frac{1}{\delta^2}\sum_{v\in\sfV(\widetilde{\Gamma})}\alpha_D(v)\cdot\delta D(v),$$
    where $\delta D(v)$ is actually the degree of $\O(\alpha_D)$ at $v$.
    
    \item We consider the piecewise polynomial function $e^{-\frac{1}{2}\mathfrak{L}}\cdot\mathfrak{P}$. Its expression may seem to depend on the choice of representatives $D$ as both factor do, but \cite[Lemma 6.4]{holmes2023root} proves it is actually not the case, and it only depends on $\mathtt{D}$.
\end{itemize}

We have a map of fans $\Sigma_K^0(X)(\frac{1}{\delta})\to\Sigma_K(X)$ forgetting the tropical torsion information. Thus, we can push this last piecewise polynomial functions using the degree of the projection between strata: we define a piecewise polynomial function on the cone $\sigma_{\Gamma,\widetilde{K},\varphi}$ of $\Sigma_K(X)$ summing over the cones $\sigma_{\Gamma,\widetilde{K},\varphi,\mathtt{D}}$ of $\Sigma_K^0(X)(\frac{1}{\delta})$ for all $\mathtt{D}$ in the right kernel of $\varphi$. We get
$$\mathfrak{DR}_K|_{\sigma_{\Gamma,\widetilde{K},\varphi}} = \delta^{2g-2q(X)}\frac{|\widetilde{K}|}{\delta^{b_1(\Gamma)}}\sum_{\mathtt{D}\in\TTT} e^{-\frac{1}{2}\mathfrak{L}|_{\widetilde{\sigma}(\Gamma,\widetilde{K},\varphi,\mathtt{D})}}\mathfrak{P}|_{\widetilde{\sigma}(\Gamma,\widetilde{K},\varphi,\mathtt{D})}.$$

\begin{lem}
The function $\mathfrak{DR}_K$ on $\Sigma_K(X)$ is well-defined in the sense that the expressions given on each cone correctly glue together.
\end{lem}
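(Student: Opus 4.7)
The plan is to identify $\mathfrak{DR}_K$ with the weighted push-forward, along the forgetful morphism of cone stacks $\pi\colon\Sigma_K^0(X)(\tfrac{1}{\delta})\to\Sigma_K(X)$, of the piecewise polynomial obtained by pulling back $e^{-\frac{1}{2}\mathfrak{L}}\mathfrak{P}$ from $\Sigma(\tfrac{1}{\delta})$, and then to check that the discrete weighting is compatible with face maps. Since $e^{-\frac{1}{2}\mathfrak{L}}\mathfrak{P}\in\mathrm{sPP}(\Sigma(\tfrac{1}{\delta}))$ is piecewise polynomial by \cite[Lemma~6.4]{holmes2023root}, its pull-back is piecewise polynomial. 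Moreover, by Section~\ref{sec-stratification-MK0delta}, on each cone $\sigma_{\Gamma,\widetilde{K},\varphi}$ of $\Sigma_K(X)$ the preimage cones $\widetilde{\sigma}_{\Gamma,\widetilde{K},\varphi,\mathtt{D}}$ map isomorphically to $\sigma_{\Gamma,\widetilde{K},\varphi}$ for each $\mathtt{D}\in\TTT$, so the proposed expression is cone-wise a finite sum of polynomials multiplied by the scalar $\delta^{2g-2q(X)}|\widetilde{K}|/\delta^{b_1(\Gamma)}$.

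It remains to verify that the formula glues under face maps, which are generated by edge contractions $\rho\colon\Gamma\to\Gamma'=\Gamma/e$. Since the restriction of $e^{-\frac{1}{2}\mathfrak{L}}\mathfrak{P}$ is already compatible with edge contractions in $\Sigma(\tfrac{1}{\delta})$, only the weighted sum over $\TTT$ needs to be matched against that over $\TTT'$. If $e$ is not a loop, $(\widetilde{K},\varphi)$, $b_1(\Gamma)$, and the induced map $\mathtt{D}\mapsto\rho_*\mathtt{D}\colon\TTT\xrightarrow{\sim}\TTT'$ are all unchanged or bijective, so the gluing is automatic.

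The main step is the loop case. There $b_1(\Gamma')=b_1(\Gamma)-1$ and $\widetilde{K}$ is replaced by $\widetilde{K}_e=\ker\bigl(\varphi(-,[e])\colon\widetilde{K}\to\mu_\delta\bigr)$, of index $d\mid\delta$ equal to the order of the character $\varphi(-,[e])|_{\widetilde{K}}$. The prefactor thus changes by $|\widetilde{K}_e|/\delta^{b_1(\Gamma')}=\tfrac{d}{\delta}\cdot|\widetilde{K}|/\delta^{b_1(\Gamma)}$. I will show that the natural map $\TTT\to\TTT'$ induced by $\rmH_1(\Gamma,\ZZ_\delta)\twoheadrightarrow\rmH_1(\Gamma',\ZZ_\delta)$ is surjective with every fibre of cardinality $\delta/d$, which exactly compensates the change in the prefactor. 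Splitting $\rmH_1(\Gamma,\ZZ_\delta)\cong\rmH_1(\Gamma',\ZZ_\delta)\oplus\ZZ_\delta\,[e]$, a lift $\mathtt{D}'+k[e]$ of $\mathtt{D}'\in\TTT'$ lies in $\TTT$ iff $\varphi(\L,\mathtt{D}')+k\,\varphi(\L,[e])=0$ for every $\L\in\widetilde{K}$; since $\mathtt{D}'$ is already orthogonal to $\widetilde{K}_e$, both sides factor through $\widetilde{K}/\widetilde{K}_e\cong\ZZ_d$, on which $\varphi(-,[e])$ induces a generator. The condition reduces to a single congruence $k\equiv-m\pmod d$, whose solutions in $\ZZ_\delta$ form a coset of size $\delta/d$.

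This loop-case combinatorial computation is the main obstacle: it is essentially a relative version of the degree calculation of Lemma~\ref{lem-degree-proj-corr0}, comparing projection degrees before and after an edge contraction. Once established, it promotes the cone-wise formula for $\mathfrak{DR}_K$ to a genuine strict piecewise polynomial on $\Sigma_K(X)$.
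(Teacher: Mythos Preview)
Your proposal is correct and follows essentially the same route as the paper: reduce to edge contractions, observe the non-loop case is trivial, and in the loop case verify that the change in the prefactor $|\widetilde{K}|/\delta^{b_1(\Gamma)}$ is exactly compensated by the multiplicity with which elements of $\TTT$ specialize to elements of $\TTT'$.

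There is one arithmetic slip. You write $|\widetilde{K}_e|/\delta^{b_1(\Gamma')}=\tfrac{d}{\delta}\cdot|\widetilde{K}|/\delta^{b_1(\Gamma)}$, but since $|\widetilde{K}_e|=|\widetilde{K}|/d$ and $\delta^{b_1(\Gamma')}=\delta^{b_1(\Gamma)}/\delta$, the ratio is $\delta/d$, not $d/\delta$. With this correction, the fibre size $\delta/d$ that you correctly compute does match, and the gluing holds. The paper's proof avoids this explicit congruence computation by invoking the identity $|\TTT|\,|\widetilde{K}|=|K|\,\delta^{b_1(\Gamma)}$ from Lemma~\ref{lem-degree-proj-corr0}; since the core $K$ is preserved under edge contraction, the equality $\tfrac{|\widetilde{K}|}{\delta^{b_1(\Gamma)}}\cdot\tfrac{|\TTT|}{|\TTT'|}=\tfrac{|\widetilde{K}'|}{\delta^{b_1(\Gamma')}}$ is immediate. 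Your hands-on computation of the fibre via a splitting $\rmH_1(\Gamma,\ZZ_\delta)\cong\rmH_1(\Gamma',\ZZ_\delta)\oplus\ZZ_\delta[e]$ and a single congruence modulo $d$ is a valid alternative that also makes the surjectivity of $\TTT\to\TTT'$ explicit, whereas the paper only asserts it.
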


\begin{proof}
To prove the claim, we just need to prove that the expression is stable under an edge contraction $\Gamma\to\Gamma'=\Gamma/e$. When the contracted edge has distinct ends, nothing happens. We thus assume that the contracted edge $e$ is a loop. In this case, the edge contraction induces a surjection $\pi\colon\rmH_1(\Gamma,\ZZ_\delta)\to\rmH_1(\Gamma',\ZZ_\delta)$. Due to the edge contraction, the number of equivalence classes of $\delta$-torsion tropical divisors drops, and some divisors previously in distinct classes specialize to the same class

We already explained what happens for $\widetilde{K}$ and $\varphi$: $\widetilde{K}'$ is the orthogonal of the loop $e$ in $\widetilde{K}$ for $\varphi$ and $\varphi'$ is the induced pairing between $\widetilde{K}'$ and $\rmH_1(\Gamma,\ZZ_\delta)$. In particular, the surjection $\pi$ înduces a surjection between the right kernels $\TTT\to\TTT'$.

The sum over $\mathtt{D}\in\TTT$ specializes to $\frac{|\TTT|}{|\TTT'|}$ times the sum over $\mathtt{D}'\in\TTT'$. Thus, we just need to check if the front term compensate the factor. This comes from the cardinality computations Lemma \ref{lem-degree-proj-corr0}. The right-kernel $\TTT$ of $\varphi$ is the orthogonal of $\widetilde{K}/K$ for $\varphi$. Hence, we have for every cone the relation $|\TTT||\widetilde{K}|=|K|\delta^{b_1(\Gamma)}$. Applying it for the face resulting from contraction, we get $\frac{|\widetilde{K}||\TTT|}{\delta^{b_1(\Gamma)}} = \frac{|\widetilde{K}'||\TTT'|}{\delta^{b_1(\Gamma')}}$, which can be rewritten
$$\frac{|\widetilde{K}|}{\delta^{b_1(\Gamma)}}\frac{|\TTT|}{|\TTT'|} = \frac{|\widetilde{K}'|}{\delta^{b_1(\Gamma')}}.$$
Therefore, restrictions agree.
\end{proof}

We now use this piecewise polynomial function to express the correlator $0$ double ramification cycle.

\begin{theo}\label{theo-correlated-DR-formula}
As a strict piecewise polynomial function, the correlator $0$ part of the DR-cycle has the following expression:
    $$\DR_{g,\bfw}^0(X,\beta) = \sum_{K} e^{-\frac{1}{2}\sum(\frac{a_i}{\delta})^2\psi_i}\mathfrak{DR}_K,$$
    where each summand yields a class in $\M_K(X)$, and we take the degree $g$ part.
\end{theo}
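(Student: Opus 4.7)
The plan is to begin from the universal DR formula applied on $\widetilde{\M}(X)(\tfrac{1}{\delta})$ through the morphism $\Phi_\T\colon\widetilde{\M}(X)(\tfrac{1}{\delta})\to\mathfrak{Pic}_{g,n+m,0}$, yielding the spin DR class expressed via the piecewise polynomial $\mathrm{ft}^*\mathbf{P}^g_{g,\bfw/\delta}\cap\vir{\widetilde{\M}(X)(\tfrac{1}{\delta})}$. The first step is to verify that this spin DR cycle agrees, after forgetting the spin structure down to $\M(X)$, with $\DR_{g,\bfw}(X,\beta,L)$. This uses the factorization of the rubber forgetful map $R\M(\mathcal T)\to R\M\to\M(X)$ together with the compatibility of the virtual classes recalled in Section~\ref{sec-uniDR} (essentially following \cite[Section~3]{holmes2025dr}).

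Next I would restrict everything to the correlator $0$ locus. By the splitting $\M(X)=\bigsqcup_K\M_K(X)$ (Section~\ref{sec-defi-refined-stratification}) and the further splitting by correlator into the open and closed substacks $\widetilde{\M}_K^\theta(X)(\tfrac{1}{\delta})$, the cycle $\DR_{g,\bfw}^0(X,\beta)$ decomposes as a sum over $K$ of push-forwards from $\widetilde{\M}_K^0(X)(\tfrac{1}{\delta})$. By Corollary~\ref{coro-tropicalization-corr0-strata}, the tropicalization $\Sigma_K^0(X)(\tfrac{1}{\delta})$ only sees cones indexed by $(\Gamma,\widetilde K,\varphi,\mathtt D)$ with $\mathtt D$ in the right kernel of $\varphi$. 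So the spin DR piecewise polynomial, when restricted to this open and closed locus, reads the values of $e^{-\frac{1}{2}\mathfrak L}\mathfrak P$ only on these cones, and one must check that this restriction commutes with capping against the virtual class; this follows from the fact that the $\widetilde{\M}_K^\theta(X)(\tfrac{1}{\delta})$ are open and closed so the usual virtual splitting~\eqref{eq:virtual compatibility} restricts cone by cone.

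The third step is to push forward from $\widetilde{\M}_K^0(X)(\tfrac{1}{\delta})$ down to $\M_K(X)$. On the stratum indexed by $(\Gamma,\widetilde K,\varphi,\mathtt D)$, Lemma~\ref{lem-degree-proj-corr0} tells us the fiber has cardinality $\delta^{2g-b_1(\Gamma)-2q(X)}|\widetilde K|$, and this is precisely the coefficient that appears in $\mathfrak{DR}_K$. Summing over $\mathtt D\in\TTT$ of right kernel of $\varphi$ collects these contributions into the definition of $\mathfrak{DR}_K|_{\sigma_{\Gamma,\widetilde K,\varphi}}$, while the tautological factor $e^{-\frac{1}{2}\sum(a_i/\delta)^2\psi_i}$, coming from $\eta=-\sum (a_i/\delta)^2\psi_i$ in the spin DR formula, passes through push-forward unchanged since it is pulled back from $\M(X)$. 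One then has to cross-check the factor $|\widetilde K|/\delta^{b_1(\Gamma)}$ against the well-definedness computation for $\mathfrak{DR}_K$ under edge contraction, which has already been verified above.

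The hard part I expect is the identification in step one: showing that the pushforward of the spin DR cycle restricted to $\widetilde{\M}^0(X)(\tfrac{1}{\delta})$ coincides with $\DR_{g,\bfw}^0(X,\beta)$ as defined via rubber maps. The two definitions of \emph{correlator}, one via the discrete function $\kappa^\delta$ of \cite{blomme2024correlated} on rubber maps and one via $f_*\T$ using the log Abel–Jacobi map on spin curves, are a priori unrelated. Bridging them requires the logarithmic Abel–Jacobi theory developed in Section~\ref{sec-log-picard-group}: for a rubber map, $\mathcal T$ gets interpreted as the log line bundle $\O(\sum\tfrac{a_i}{\delta}p_i)$, and its push-forward under $f_*\colon\LogPic^0(C)\to\Alb(X)$ (defined in Proposition~\ref{prop:logalb}) is computed via the log Albanese property. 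The agreement $f_*\T=\kappa^\delta(F)$ then extends from the smooth DR locus, where it is essentially tautological, to the full boundary by continuity; this is precisely the content referenced in Section~\ref{sec-link-correlators-line-bundle-stable-maps}, and the technical care needed to define $f_*$ at the boundary through log-abelian machinery is what makes this the delicate step of the whole argument.
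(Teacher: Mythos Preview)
Your proposal is correct and follows essentially the same approach as the paper: split $\M(X)$ into the $\M_K(X)$, restrict the spin DR piecewise polynomial to $\Sigma_K^0(X)(\tfrac{1}{\delta})$ via Corollary~\ref{coro-tropicalization-corr0-strata}, and push forward stratum by stratum using the degree from Lemma~\ref{lem-degree-proj-corr0}. The paper's proof of the theorem itself is in fact quite terse and assumes the correlator comparison (your ``hard part'') as an already-established ingredient, treating it separately in Section~\ref{sec-link-correlators-line-bundle-stable-maps}; your instinct that this identification via the log Abel--Jacobi theory is the genuinely delicate input is exactly right.
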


\begin{proof}
    The moduli space of stable maps splits as the disjoint union of the $\M_K(X)$. Therefore, we may restrict to a $\M_K(X)$ for a fixed choice of $K$ and later sum over the possible choices of $K$.
    
    By construction, the refined boundary strata of $\M_K(X)$ are indexed by monodromy decorated graphs $(\Gamma,\widetilde{K},\varphi)$, where
    \begin{itemize}
    \item $\Gamma$ is a $X$-valued stable graph,
    \item $\widetilde{K}$ is a subgroup of $\Pic^0(X)$ containing $K$,
    \item $\varphi\colon\widetilde{K}/K\hookrightarrow \rmH^1(\Gamma,\mu_\delta)$ is an injective group morphism.
    \end{itemize}

Given a stratum $(\Gamma,\widetilde{K},\varphi)$, the strata of $\M_K(X)(1/\delta)$ over the latter are indexed by the additional choice of a tropical torsion divisor, of which there are $\delta^{b_1(\Gamma)}$. The projection restricted to each of them has degree $\delta^{2g-b_1(\Gamma)}$.

Restricting to the correlator $\theta=0$, by Corollary \ref{coro-tropicalization-corr0-strata}, only strata with $D\in\TTT$ contribute. Therefore, strata are described by $(\Gamma,\widetilde{K},\varphi,D)$, where $D$ is a $\delta$-torsion tropical divisor in the left kernel $\TTT$ of $\varphi$. By Lemma \ref{lem-degree-proj-corr0}, the degree of the projection is now $\delta^{2g-b_1(\Gamma)-2q(X)}|\widetilde{K}|$. Putting all this information together, we recover the formula.
\end{proof}

\begin{rem}
    For the divisor $D$, one may either take the one provided by the construction extending the log-line-bundle $\T$ as a true line bundle, or take any $D$ in the linear equivalence class. Indeed, according to \cite[Lemma 6.4]{holmes2023root}, the picewise polynomial function does not depend on the specific choice of $D$.
\end{rem}
With sufficient care, it is also possible to write a formula for a non-zero given correlator. Due to deformation invariance, in cases where the mapping class group of $X$ acts transitively on torsion elements of $\Alb(X)$ with a given order, which is the case when $X$ is an elliptic curve for instance, the formulas for class of stable maps with $0$-correlator for various values of $\delta$ are actually sufficient to recover the classes for non-zero correlators. This uses what we call \textit{unrefinement} relations, that link correlation at level $\delta$, to the correlation at another level $\delta'$ dividing $\delta$.

\subsection{Comparison of correlators of stable relative maps and log-line bundles}
\label{sec-link-correlators-line-bundle-stable-maps}
So far, we have been using the homomorphism
\[F_*\colon  \LogPic^0(\mathcal{C}\slash \M(X)({\frac{1}{\delta}}))\to\Alb(X)\times \M(X)({\frac{1}{\delta}}),\]
to define a \textit{correlating} map from the moduli space of maps with spin structure to the Albanese variety
\begin{align*}
  \M(X)({\frac{1}{\delta}})&\longrightarrow\Alb(X)\times \M(X)({\frac{1}{\delta}}),\\
   [f,\mathcal T]&\longmapsto f_*\mathcal T
\end{align*} 
where $f_*$ simply denotes the restriction of $F_*$ to a point of the moduli space. Since $\mathcal T$ is $\delta$-torsion, the image actually lies in the discrete $\delta$-torsion subgroup $\Alb(X)[\delta];$ and its value is what we defined to be the correlator $\theta$ of the $[f,\mathcal T]$.
Correlators being locally constant, there is a decomposition of $\M(X)({\frac{1}{\delta}})$ into open and closed substacks
\[ \M(X)({\frac{1}{\delta}})=\bigsqcup_{\theta}  \M^{\theta}(X)({\frac{1}{\delta}}).\]
This induces a refinement of the spin DR-cycle and thus of its push-forward along the forgetful morphism, which is the usual DR-cycle.

\medskip

On the other hand, as we already recalled, by \cite{janda2020double}, we have
\[\DR_{g,\bfw}(X,\beta,L)=\epsilon_*\vir{R\M(Y|D^\pm)},\]
where $R\M(Y|D^\pm)$ is the moduli space of logarithmic rubber maps to $Y=\mathbb P(L\oplus \mathcal O_X)$ relative to the fiberwise toric boundary with tangencies $\bfw.$

In \cite{blomme2024correlated} we defined a refinement of such virtual class using the a priori different definition of the correlating map, which does not require talking about logarithmic Picard group and log Abel-Jacobi map. We considered

\begin{align*}
    \kappa^\delta\colon R\M(Y|D^\pm)&\longrightarrow\Alb(X) ,\\
    [(f\colon (C,\underline{p},\underline{q})\to Y^{\text{rub}}]&\longmapsto\sum\frac{w_i}{\delta}a_X\circ f(q_i)
\end{align*}
where $a_X$ denote the Albanese map. We saw that $\kappa^\delta$ takes value in the discrete subgroup pf $\delta$-torsion elements of the Albanese.
So this also determines a decomposition of $R\M(Y|D^\pm)$ into open and closed substacks
\[R\M(Y|D^\pm)=\bigsqcup_{\theta} R\M^\theta\]
 and thus a refinement 
 \[\DR_{g,\bfw}(X,\beta,L)=\sum \epsilon_*\vir{R\M^\theta}.\]

We now prove that these two refinements do coincide.

\begin{prop}
    The correlator $\kappa^\delta$ from \cite{blomme2024correlated} and from the present paper $f_*\T$ agree on the DR-locus.
\end{prop}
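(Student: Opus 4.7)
The strategy is to identify, on the DR-locus, the canonical log $\delta$-th root of the trivial line bundle arising from the rubber map data, and then apply the universal property of the log Abel--Jacobi map to compute its image under $f_*$.

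First I would exhibit the canonical spin structure associated to a rubber map. Working with $L = \O$, let $F \colon (C,\mathbf{q}) \to Y^{\mathrm{rub}}$ be a rubber log map lying over $f \colon C \to X$, with tangency profile $\bfw$. The rubber structure provides a (logarithmic) trivialization of $f^*\O$ whose associated divisor is $\sum a_i q_i$, which is therefore principal in $\LogPic(C)$. Since $\delta \mid \gcd(a_i)$, the log line bundle
$$\T := \O\!\left(\sum \tfrac{a_i}{\delta} q_i\right) \in \LogPic(C)$$
is well defined and satisfies $\T^{\otimes \delta} = \O$ as log line bundles, giving the canonical spin structure. This is the spin lift of $f$ along which the two correlator definitions are to be compared.

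Next I would apply the logarithmic Abel--Jacobi theorem. By Proposition~\ref{prop:logalb}, for a fixed strict section $q_0$, the log Abel--Jacobi map $\mathrm{logAJ}_{q_0} \colon C \to \LogPic^0(C)$ is initial for morphisms to abelian varieties. Concretely, the induced morphism $f_* \colon \LogPic^0(C) \to \Alb(X)$ is the unique homomorphism with $f_* \circ \mathrm{logAJ}_{q_0} = a_X \circ f - a_X(f(q_0))$, where $a_X$ is the Albanese map of $X$. Rewriting the divisor $\sum \frac{a_i}{\delta} q_i = \sum \frac{a_i}{\delta}(q_i - q_0)$ using $\sum a_i = 0$ and applying $f_*$ linearly gives
$$f_*\T \;=\; \sum_i \tfrac{a_i}{\delta}\, f_*\mathrm{logAJ}_{q_0}(q_i) \;=\; \sum_i \tfrac{a_i}{\delta}\, a_X(f(q_i)) - \Bigl(\sum_i \tfrac{a_i}{\delta}\Bigr) a_X(f(q_0)).$$
The second term vanishes since $\sum a_i = 0$, so $f_*\T = \sum_i \frac{a_i}{\delta}\, a_X(f(q_i)) = \kappa^\delta(F)$.

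The main point requiring care is treating nodal domains: the computation above uses that $f_*$ acts linearly on a sum of Abel--Jacobi images of points, and that the $q_i$ can be viewed as strict sections after possibly passing to a model. When some $q_i$ are logarithmic (quasi-log) sections landing on nodes, the construction of Section~\ref{sec:logabeljacobi} reduces the question to a logarithmic modification $(\widetilde{C}, M_{\widetilde{C}}) \to (C, M_C)$ over an enlargement $(S', M_{S'})$ on which the $\widetilde{q}_i$ are strict, and the formula for $f_*$ from the proof of Proposition~\ref{prop:logalb} then computes $f_*\T$ as the same sum $\sum \frac{a_i}{\delta} a_X(f(q_i))$, since $f(\widetilde{q}_i) = f(q_i) \in X$ (the modification is only on the source). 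Alternatively, since both correlators take values in the discrete set $\Alb(X)[\delta]$ and are locally constant, it suffices to check the equality on the locus of smooth domains, where only the classical Abel--Jacobi theorem is needed; the general case then follows by specialization.
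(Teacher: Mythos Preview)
Your proof is correct and follows essentially the same approach as the paper: identify the spin structure on the DR-locus with $\O(\sum \tfrac{a_i}{\delta} q_i)$ in $\LogPic^0(C)$, then invoke the Albanese universal property of $\LogPic^0$ (Proposition~\ref{prop:logalb}) together with $\sum a_i = 0$ to obtain $f_*\T = \sum \tfrac{a_i}{\delta}\, a_X(f(q_i)) = \kappa^\delta(F)$. The paper phrases the first step slightly differently, noting that on $R\M(\T)$ the line bundles $\T$ and $\O(\sum \tfrac{a_i}{\delta} q_i)$ differ by a vertical piecewise linear function $\alpha^v$, hence coincide in $\LogPic^0$; your direct identification amounts to the same thing.

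One caution on your alternative specialization argument: local constancy reduces to the smooth locus only on connected components of the DR-locus that actually meet it, and a priori some components could lie entirely in the boundary. Your primary argument via the explicit description of $f_*$ in Proposition~\ref{prop:logalb} does not suffer from this issue, so the proof stands as written.
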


\begin{proof}
The key observation is now that when we consider the homomorphism 
\[F_*\colon  \LogPic^0(\mathcal{C}\slash R\M(\mathcal T)\to\Alb(X)\times R\M(\mathcal T),\]
we have that
\[F_*\mathcal T=F_*\mathcal O_C\left(\sum_{i=1}^n \frac{w_i}{\delta} q_i\right).\]
since the isomorphism  of line bundles $\mathcal T\cong\mathcal O_{\widetilde{C}}(\alpha)$ implies that $\mathcal O_C(\sum_{i=1}^n \frac{w_i}{\delta} q_i)$ differ by $\alpha^v\in\rmH^0(\widetilde{C},\bar{M^v}^{\gp}_{\widetilde{C}})$ where $M^v_{\widetilde{C}}$ denotes the vertical part of the log structure (alias, in the tropicalization of $\widetilde{C}$ with respect to the vertical structure  there are only bounded edges).\footnote{We also remark one more time that whenever we talk about the logarithmic Picard group we always considered a log curve only with its vertical log structure}.

On the other hand, $\mathcal O_C(\sum_{i=1}^n \frac{w_i}{\delta} q_i)$ is in the image of the log Abel Jacobi  map defined in Section~\ref{sec:logabeljacobi}. Indeed, independently from the choice of strict section $\sigma_0\in\mathcal C$ we have 
\[ \mathcal O_C(\sum_{i=1}^n \frac{w_i}{\delta} q_i)=\bigotimes \operatorname{logAJ}( q_i-\sigma_0)^{w_i/\delta}.\]
Then remembering that $\LogPic^0(\mathcal C\slash R\M(\mathcal T))$ with $\rm{logAJ}$ satisfied the Albanese universal property (see Proposition~\ref{prop:logalb}), i.e.
 \[a_X\circ F=\operatorname{logAJ}\circ F_*\]
we have that 
\[F_*\mathcal T= \sum\frac{w_i}{\delta}a_X\circ f(q_i)= k^{\delta}( [F\colon (C,\underline{p},\underline{q})\to Y^{\text{rub}} ]) . \]
\end{proof}

In other words, the two maps  $R\M(\mathcal T)\to\Alb(X)[\delta]$ defined by $[f, \mathcal T]\mapsto f_*\mathcal T$ and $\kappa^\delta\circ \rm{ft}$
for $\rm{ft}\colon  R\M(\mathcal T)\to R\M$ coincide and consequently so do the decomposition of the induced decomposition of these moduli spaces and their virtual classes as open and closed substacks.

\section{Explicit computations}

We now use the formula to compute some correlator $0$ invariants when the base $X$ is an elliptic curve $E$. We recover the result from \cite{blomme2024correlated} for point insertions and compute its refinement by inserting a $\lambda$-class.

\subsection{Point insertions}

We consider the correlated Gromov-Witten invariants with the following data: $X=E$ is an elliptic curve and $Y=E\times\PP^1$ the total space of the trivial bundle, $\beta=d[E]$, $g=1$, $m=1$ interior marked point and $n$ log-points with associated weights $a_i$. We then also set $a_0=0$. We also choose a common divisor $\delta$ of the weights $a_i$ to consider the level $\delta$ refinement. Insertions are chosen to be points for every point except for the first log point, whose weight is $a_1$. The non-correlated invariant is denoted as follows:
$$N_d(\bfw) = \gen{\pt_0,1_{a_1},\pt_{a_2},\dots,\pt_{a_n}}^{Y/D^\pm}_{1,d[E]},$$
and the correlator $0$ counterpart for level $\delta$ is denoted by:
$$N^0_d(\bfw) = \gen{\pt_0,1_{a_1},\pt_{a_2},\dots,\pt_{a_n}}^{Y/D^\pm,0}_{1,d[E]}.$$

\medskip

To provide explicit formulas for these invariants, we use the following arithmetic functions:
\begin{itemize}
    \item the second Jordan function $J_2(n)$ giving the number of order $n$ elements in $\ZZ_n^2$, having the explicit expression $J_2(n)=n^2\prod_{p|n}\left(1-\frac{1}{p^2}\right)$;
    \item the sum of divisors function $\sigma(n)=\sum_{k|n}k$;
    \item the deformed sum of divisors functions $\overline{\sigma}^d(n)=\sigma(n/d)$, with value $0$ if $d$ does not divide $n$.
\end{itemize}

Explicit formulas are given in the following statement. The uncorrelated case has already been considered in \cite{blomme2022floor} or \cite{blomme2024bielliptic} and actually stems from an elementary computation using coverings of an elliptic curve. Refining this approach to the correlated case, in \cite{blomme2024correlated} we proved an explicit formula for the correlated case. Below, we recover both formulas using the (correlated) DR-formula.

\begin{prop}
    We have the following identity:
$$N_d(\bfw) = a_1^2d^{n-1}\sigma(d).$$
\end{prop}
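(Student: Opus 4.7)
The strategy is to express the invariant as an integral of the DR-cycle on $\M_{1,n+1}(E, d[E])$ via the main result of \cite{janda2020double}, apply Pixton's formula in its simplified form when $L=\O$, and reduce the resulting sum to a classical covering count on $E$. By \cite{janda2020double},
\[
N_d(\bfw) \;=\; \int_{\vir{\M_{1,n+1}(E,d[E])}} \DR_{1,\bfw}(E,d[E],\O)\cup\ev_0^*[\pt]\cup\prod_{i=2}^n\ev_i^*[\pt].
\]
Because $c_1(L)=0$ and $c_1(E)=0$, the classes $\xi_i$ and $\eta_v$ in Pixton's formula vanish. For $g=1$, the degree-$1$ part of the formula reduces to contributions from the smooth stratum (decorated by $\tfrac{1}{2}\sum_i a_i^2\psi_i$), the self-loop stratum (whose edge factor averages to the Bernoulli-like constant $-\tfrac{1}{12}$), and the separating-edge strata with genus split $(1,0)$ (with edge decoration $\tfrac{ww'}{2}$ at leading order and weightings $w=\pm\sum_{i\in S}a_i$ determined by the partition $S$ of legs).

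On each stratum the virtual class splits recursively over vertices, so the contribution becomes an integral on smaller moduli spaces $\M_{g',N}(E,d'[E])$. The essential classical input is the degree-$d$ isogeny count
\[
\int_{\vir{\M_{1,N}(E,d[E])}}\prod_{i=1}^N \ev_i^*[\pt] \;=\; d^{N-1}\sigma(d),
\]
which follows from the $\sigma(d)$ sublattices of $\pi_1(E)$ of index $d$ combined with the divisor equation $\int\pi^*\alpha\cup\ev^*[\pt]=d\int\alpha$. Together with the vanishing of integrals involving non-constant genus-$0$ components (obstructed by the trivial canonical bundle of $E$), this eliminates all tree strata in which a genus-$0$ bubble carries a positive curve class. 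On constant genus-$0$ bubbles the computation reduces to Gromov-Witten integrals of a point, so the distribution of point insertions and the edge factor $\tfrac{ww'}{2}$ together dictate which combinatorial terms survive.

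Combining contributions, the coefficient of $a_1^2$ (the unique log marking without a point insertion) is extracted from the smooth-stratum $\tfrac{1}{2}a_1^2\psi_1\cdot\ev_0^*[\pt]\prod_{j=2}^n\ev_j^*[\pt]$ term together with its boundary corrections, and yields $a_1^2\,d^{n-1}\sigma(d)$; the coefficients $a_i^2$ for $i\neq 1$ coming from the smooth stratum are cancelled by matching separating-edge strata on which leg $i$ lies on a genus-$0$ bubble, so that the edge factor $-\tfrac{a_i^2}{2}$ absorbs the corresponding $\psi_i$-contribution from the main stratum. The main obstacle will be organising this cancellation precisely: it is the asymmetric role of leg~$1$ (with a trivial insertion) versus the legs with point insertions that produces the factor $a_1^2$ rather than a symmetric expression in the $a_i$, and verifying the matching requires a careful bookkeeping of the degrees of the projections between strata and a repeated use of the divisor equation on $E$.
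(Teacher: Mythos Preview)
Your approach is the same as the paper's, but two points in the plan are wrong and would derail the execution.

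First, the self-loop stratum does not contribute and should be dismissed immediately rather than assigned a $-\tfrac{1}{12}$. A genus-$0$ vertex mapping to $E$ necessarily has degree $0$ (there are no non-constant maps $\PP^1\to E$), so a one-vertex loop graph cannot carry total degree $d>0$; the underlying moduli space is empty.

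Second, your identification of the surviving separating strata and of the cancellation is incorrect. A degree-$0$ genus-$0$ bubble contracts to a single point of $E$, so it can carry at most one of the point-constrained legs $\{0,2,\dots,n\}$; stability then forces the bubble to carry exactly leg $1$ together with a single leg $i\in\{0,2,\dots,n\}$. Hence the edge weight is $a_1+a_i$, not $a_i$, and the decoration is $-\tfrac{(a_1+a_i)^2}{2}$, not $-\tfrac{a_i^2}{2}$. The actual computation (each term multiplied by $\sigma(d)d^{n-1}$) is
\[
\underbrace{n\cdot\tfrac{a_1^2}{2}}_{\psi_1,\ \text{dilaton}}\ +\ \underbrace{\textstyle\sum_{i\geq 2}\tfrac{a_i^2}{2}}_{\psi_i,\ \text{divisor}}\ \underbrace{-\ \tfrac{a_1^2}{2}}_{\Gamma_0}\ -\ \sum_{i\geq 2}\tfrac{(a_1+a_i)^2}{2}\ =\ -a_1\sum_{i\geq 2}a_i\ =\ a_1^2.
\]
Your proposed matching ``$\tfrac{a_i^2}{2}-\tfrac{a_i^2}{2}=0$'' misses both the cross-terms $a_1a_i$ and the additional $\tfrac{a_1^2}{2}$ contributions; these are precisely where the answer comes from. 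Note also that the $\psi_1$ term contributes with multiplicity $n$ via the dilaton equation, not with multiplicity $1$ as your plan suggests.
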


\begin{proof}
Since we have a unique interior marked point $p_0$ we can reduce the computation of the invariants to the an intersection of the DR cycle with the remaining constraints in the homology of maps to $E$ (see for example \cite{maulik2006topological}):
    \[N_d(\bfw) = \DR_{1,\bfw}(E,d)\cap\prod_{i\neq 1}\ev_i^*(\pt),\]
    where $i\neq 1$ means $i=0,2,3,\dots,n$. We use Pixton formula for the DR-cycle. As we are dealing with genus $1$ curves mapping to an elliptic curve with point insertions, we have the following restrictions on the graphs that appear in the formula:
    \begin{itemize}[label=$\bullet$]
        \item Rational curves maps to $E$ with degree $0$. Therefore, since the genus is $1$ and degree $d\neq 0$, the only graphs that may appear are trees, with a unique genus $1$ vertex carrying degree $d$. Other vertices have genus $0$ and degree $0$.
        \item Since we have point insertions at every marked point but one, and because a degree $0$ curve may not pass through two points at the same time, a genus $0$ component may carry at most one of the markings $0,2,\dots,n$. Due to the stability condition, each of its components needs to carry at least three special points, and thus this component can only be a single $\mathbb P^1$ which also carries the $0$ marking.
        \item In the end, we only have two kinds of graphs:
            \begin{itemize}[label=$\triangleright$]
            \item The graph $\Gamma_\mathrm{sm}$ with a unique vertex, corresponding to the smooth strata with a unique genus $1$ vertex. The decoration provided by the DR-formula is given by $\sum \frac{a_i^2}{2}\psi_i$.
            \item The graph $\Gamma_i$ for $i=0,2,\dots,n$ where the graph has two vertices: a genus $0$ one carrying markings $1$ and $i$, and a genus $1$ vertex with the other markings. There is a unique $r$-weighting, so the decoration provided by the Pixton formula is
            $$\left.\sum_w\frac{w(h)w(h')}{2}\right|_{r=0} = -\frac{1}{2}(a_i+a_1)^2.$$
            \end{itemize}
    \end{itemize}
    We need the following three Gromov-Witten invariants, computed by repeated use of the divisor and dilation equation:
    \begin{align*}
    	\int_{\vir{\M_{1,n}(E,d)}}\prod_{i=1}^n\ev_i^*(\pt) = &  \sigma(d)\cdot d^{n-1},\\
        \int_{\vir{\M_{1,n+1}(E,d)}}\psi_i\prod_{i\neq 1}\ev_i^*(\pt) = &  \sigma(d)\cdot d^{n-1},\\
        \int_{\vir{\M_{1,n+1}(E,d)}}\psi_1\prod_{i\neq 1}\ev_i^*(\pt) = &  \sigma(d)\cdot d^{n-1}\cdot n.\\
    \end{align*}
   
    Putting together the results, we get the following sum. Inside the bracket, the four terms correspond respectively to $\Gamma_\mathrm{sm}$ and decoration $\psi_1$, $\Gamma_\mathrm{sm}$ and decorations $\psi_i$ with $i\geqslant 2$, $\Gamma_0$ and $\Gamma_i$ with $i\geqslant 2$.
    \begin{align*}
        N_d(\bfw) = & \sigma(d)\cdot d^{n-1}\cdot\left(n\frac{a_1^2}{2} + \sum_2^n \frac{a_i^2}{2} - \frac{a_1^2}{2}-\sum_2^n \frac{(a_1+a_i)^2}{2} \right) \\
        = & \sigma(d)\cdot d^{n-1}\cdot \left( -\sum_2^n a_1a_i\right) \\
        = & \sigma(d)\cdot d^{n-1}\cdot a_1^2 \text{ since }a_1=-\sum_2^n a_i.\\
    \end{align*}
    \end{proof}

We now get to the correlated version.

\begin{theo}\label{theo-computation-points}
We have the following identity
$$N^0_d(\bfw) = \left(\frac{a_1}{\delta}\right)^2d^{n-1}\sum_{\omega|\delta}J_2(\omega)\overline{\sigma}^\omega(d) = a_1^2\cdot d^{n-1}\cdot \sum_{l|d}\frac{d}{l}\cdot \frac{\gcd(l,\delta)^2}{\delta^2}.$$
\end{theo}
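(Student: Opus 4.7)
The plan is to apply the correlated DR formula (Theorem~\ref{theo-correlated-DR-formula}) and mirror the structural outline of the uncorrelated computation immediately preceding this theorem. First I would reduce $N^0_d(\bfw)$ to the intersection of $\DR^0_{1,\bfw}(E,d)$ with the $n$ point constraints on $\M_{1,n+1}(E,d)$. Exactly the same stability/degree argument as in the uncorrelated case restricts the contributing graphs to the smooth graph $\Gamma_{\mathrm{sm}}$ and the boundary graphs $\Gamma_i$ for $i \in \{0,2,\dots,n\}$ (a genus-zero vertex carrying markings $1$ and $i$ glued to a genus-one vertex of degree $d$). Crucially, each such graph has $b_1(\Gamma)=0$, so in the correlated formula the monodromy pairing $\varphi$ is trivial, $\widetilde{K}=K$, and $\TTT$ reduces to the zero divisor class. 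With $g=q(E)=1$, the formula collapses to $\mathfrak{DR}_K|_{\sigma_\Gamma} = |K|$ times the usual Pixton decoration evaluated at weights $a_i/\delta$.

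Consequently, the polynomial in $(a_i/\delta)$ that one obtains \emph{before} integration is identical to the uncorrelated one, rescaled by $\delta^{-2}$, and will simplify to $(a_1/\delta)^2$ exactly as in the uncorrelated proof (using $\sum a_i=0$). What requires new work is the integral against the refined virtual class on each $\M_K(E,d)$. The key observation I would use is the swap-of-summation identity
\[
\sum_K |K|\, \vir{\M_K(E,d)} \;=\; \sum_{\L \in \Pic^0(E)[\delta]} \vir{\M^{\langle\L\rangle}(E,d)},
\]
which follows because for a fixed $\L$, the components $\M_K$ with $\L\in K$ are exactly those whose disjoint union forms $\M^{\langle\L\rangle}$.

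Next I would translate each integral on $\M^{\langle\L\rangle}(E,d)$ to an integral on $\M(E_{\langle\L\rangle}, d/|\langle\L\rangle|)$ via the projection lemma of Section~\ref{sec-evaluation-map-correlated}: using $\pi_H^*[\pt_E]=|H|[\pt_{E_H}]$ and the standard elliptic GW numbers $\int \psi_j \prod_{i\neq 1}\ev_i^*(\pt) = \sigma(d)\,d^{n-1}$ (respectively $n\sigma(d) d^{n-1}$ when $j=1$), one arrives at
\[
\int_{\vir{\M^{H}(E,d)}} \psi_j \prod_{i\neq 1} \ev_i^*(\pt) = d^{n-1}\,\overline{\sigma}^{|H|}(d),
\]
with the extra factor of $n$ in the $j=1$ case. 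Summing over $\L$ and grouping by the order $\omega=|\langle\L\rangle|$, I would invoke the fact that $\Pic^0(E)[\delta] \simeq \ZZ_\delta^2$ contains exactly $J_2(\omega)$ elements of order $\omega$ for each $\omega \mid \delta$, producing the replacement rule $\sigma(d) \rightsquigarrow S := \sum_{\omega\mid \delta} J_2(\omega)\,\overline{\sigma}^\omega(d)$. The same replacement applies verbatim to the boundary contributions at $\Gamma_i$, because torsion line bundles on $\PP^1$ are trivial, so the restriction-kernel $K$ on the whole curve is determined by the genus-one vertex and the genus-zero factor reduces to a point constraint at the node via the fiber product with $\M_{0,3}(E,0)\cong E$.

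With these pieces in place, the computation becomes \emph{formally identical} to the uncorrelated one but with $\sigma(d)$ replaced by $S$ and each $a_i$ by $a_i/\delta$, yielding
$N^0_d(\bfw) = (a_1/\delta)^2\,d^{n-1}\,S$, which is the first equality of the theorem. The second equality is the number-theoretic identity
\[
\sum_{\omega\mid \gcd(d,\delta)} J_2(\omega)\,\sigma(d/\omega) \;=\; \sum_{l\mid d}\frac{d}{l}\,\gcd(l,\delta)^2,
\]
which I would establish by Dirichlet convolution (reducing both sides, which are multiplicative in $d$, to prime powers) or by direct enumeration. The main obstacle is really bookkeeping: one must verify carefully that the factor $|K|$ coming out of $\mathfrak{DR}_K$ combines with the sum swap to yield exactly $\sum_\L$ over all $\delta$-torsion line bundles, and that the factors $|H|^{n-1}$ from covering-pullback recombine into the clean rule $\sigma(d)\mapsto S$ independently of which marking the $\psi$-class sits on. Once that bookkeeping is done, the algebraic collapse to $a_1^2$ is automatic.
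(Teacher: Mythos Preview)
Your proposal is correct and follows essentially the same approach as the paper: the same reduction to tree graphs with $b_1(\Gamma)=0$ (hence $\widetilde{K}=K$ and the correlated formula collapses to $|K|$ times Pixton at weights $a_i/\delta$), the same swap $\sum_K|K|\,\vir{\M_K}=\sum_\L\vir{\M^{\langle\L\rangle}}$, the same translation to $E_H$-invariants via the covering lemma of Section~\ref{sec-evaluation-map-correlated}, and the same grouping by order to produce the $J_2(\omega)$ factor. The only cosmetic difference is that the paper applies the uncorrelated result wholesale to $E_H$ to obtain $(a_1/\delta)^2 d^{n-1}\sigma(d/|H|)$ in one step, whereas you separate out the $\psi$-integrals and boundary contributions first and then invoke the replacement rule $\sigma(d)\mapsto S$; and the paper proves the final arithmetic identity by a direct substitution $l=k\omega$ together with $\sum_{\omega\mid n}J_2(\omega)=n^2$, rather than by multiplicativity.
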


    \begin{proof}
    We use the correlated DR-formula. To shorten notation, we denote by $\M(E)$ the moduli space $\M_{1,n+1}(E,d)$, with a subscript $K$ when we restrict to a component where line bundle from $K$ pull-back trivially. As in the non correlated case, the graphs associated to the intersected strata all have genus $0$ (i.e.  are trees) for the degree considerations made above. In particular there is no tropical part nor tropical divisors or choice of $\widetilde{K}$ and $\varphi$. In particular, the strata decoration takes the same form as in the standard DR-formula with target. In other words, restricting to the component $\M_{1,n+1,K}(E,d)$ associated to a given subgroup $K$ of $\Pic^0(E)[\delta]$, we have
    $$\DR_{1,\bfw}^0(E,d)|_{\M_K(E)} \cap\prod_{i\neq 1}\ev_i^*(\pt) = |K|\cdot\DR_{1,\bfw/\delta}(E,d)|_{\M_K(E)}\cap\prod_{i\neq 1}\ev_i^*(\pt).$$
    Therefore, the formula takes the following form:
    $$N^0_d(\bfw) = \sum_{K}|K|\cdot\DR_{1,\bfw/\delta}(E,d)_{\M_K(E)}\cap\prod_{i\neq 1}\ev_i^*(\pt),$$
    where the sum is over subgroups of $\Pic^0(E)[\delta]\simeq\ZZ_\delta^2$.

    \medskip
    
    Let $H$ be a subgroup of $\Pic^0(E)[\delta]$ and let $\pi:E_H\to H$ be the associated $H$-covering. In particular we have that $\pi^*(\pt)=|H|\pt$ where $\pt$ is the Poincar\'e dual class to a point in $E$ or $E_H$. Furthermore, the only class pushing forward to $d[E]$ is $\frac{d}{|H|}[E_H]$. From the description of $\M^H(E)$ as moduli space of $H$-rubber maps to the  covering (see Section~\ref{sec-H-covering}), we have the following relation
    \begin{align*}
         \sum_{K\supset H}\DR_{1,\bfw/\delta}(E,d)|_{\M_K(E)}\cap\prod_{i\neq 1}\ev_i^*(\pt) = & \DR_{1,\bfw/\delta}(E,d)|_{\M^H(E)}\cap\prod_{i\neq 1}\ev_i^*(\pt)  \\
        = & \frac{1}{|H|}\cdot \DR_{1,\bfw/\delta}(E_H,d/|H|)\cap \prod_{i\neq 1}\ev_i^*(\pi^*\pt) \\
        = & \frac{|H|^n}{|H|}\left(\frac{d}{|H|}\right)^{n-1}\left(\frac{a_1}{\delta}\right)^2\sigma\left(\frac{d}{|H|}\right) \\
        = & \left(\frac{a_1}{\delta}\right)^2 d^{n-1}\sigma\left(\frac{d}{|H|}\right).
    \end{align*}
    Now, to compute the correlated invariant, we replace $|K|$ by $\sum_{\L\in K}1$ and switch the order of the two sums:
    \begin{align*}
        N^0_d(\bfw) = & \sum_K\sum_{\L\in K} \DR_{1,\bfw/\delta}|_{\M_K(E)}\cap\prod_{i\neq 1}\ev_i^*(\pt) \\
        = & \sum_\L \sum_{K\supset\gen{\L}} \DR_{1,\bfw/\delta}|_{\M_K(E)}\cap\prod_{i\neq 1}\ev_i^*(\pt) \\
        = & \sum_\L \left(\frac{a_1}{\delta}\right)^2d^{n-1}\sigma\left(\frac{d}{|\gen{\L}|}\right),
    \end{align*}
    where we denote by $\gen{\L}$ the subgroup of $\Pic^0(E)$, which we can identify with $E$, generated by $\L$. As $\Pic^0(E)[\delta]\simeq\ZZ_\delta^2$, we have $J_2(\omega)$ elements of order $\omega$. Grouping together the $\L$ according to the value of $|\gen{\L}|$, we get
    $$N^0_d(\bfw) = \left(\frac{a_1}{\delta}\right)^2d^{n-1}\sum_{\omega|\delta}J_2(\omega)\sigma\left(\frac{d}{\omega}\right),$$
    which is the expected result. To get the second expession, we use that $\sum_{\omega|\delta}J_2(\omega) = \delta^2$:
$$\sum_{\omega|\delta}J_2(\omega)\sigma\left(\frac{d}{\omega}\right) = \sum_{\omega|\delta}J_2(\omega)\sum_{k|d/\omega}\frac{a}{k\omega} = \sum_{l|d}\frac{d}{l}\sum_{\omega|l,\delta}J_2(\omega) = \sum_{l|d}\frac{d}{l}\cdot\gcd(l,\delta)^2.$$
\end{proof}

The recovering of all other correlated invariants using decorrelation relations is already handled in \cite{blomme2024correlated}.

\subsection{Point insertions with a $\lambda$-class}

We now increase the genus from $1$ to $g$, and to match the virtual dimensions we insert an additional $\lambda$-class constraint $\lambda_{g-1}$, which is the $(g-1)$th-chern class of the Hodge bundle:
$$N_{g,d}(\bfw) = (-1)^{g-1}\gen{\lambda_{g-1};\pt_0,1_{a_1},\pt_{a_2},\dots,\pt_{a_n}}^{Y/D^\pm}_{1,d[E]},$$
and the correlator $0$ counterpart for level $\delta$ is denoted by:
$$N^0_{g,d}(\bfw) = (-1)^{g-1}\gen{\lambda_{g-1};\pt_0,1_{a_1},\pt_{a_2},\dots,\pt_{a_n}}^{Y/D^\pm,0}_{1,d[E]}.$$
In particular, for $g=1$ we recover the previous invariants. The top chern class $\lambda_g$ is already shown to vanish on $\M_{g,n}(E,d)$ for $d\neq 0$ due to the existence of a non-vanishing holomorphic form on $E$ which can be pulled-back to yield a section of the Hodge bundle, see \cite[Lemma 6.4]{oberdieck2023quantum} or \cite[Lemma 3.9]{blomme2024bielliptic}.

\subsubsection{Invariants in the non-correlated case}
We consider the following generating series of Gromov-Witten invariants:
$$R_{d}(\bfw) = \sum_{g=1}^\infty N_{g,d}(\bfw) u^{2g-2+n} \in\QQ[[u]].$$

These invariants have been computed in \cite[Section 3.4]{blomme2024bielliptic} using tropical geometry techniques from \cite{bousseau2019tropical}, as well as in \cite{oberdieck2023quantum} using the DR-cycle formula. In this situation, similarly to the genus $1$ case with point insertions, it is sufficient to use Haine's formula for compact type curve for the following reason:
\begin{itemize}
    \item The $\lambda$-class $\lambda_{g-1}$ vanishes whenever the strata graph has genus bigger than $2$.
    \item Given a strata associated to a genus $1$ graph, following \cite[Prop 3.1]{bousseau2021floor} the $\lambda$-class splits over the vertices, all inheriting a top $\lambda$-class $\lambda_{g_v}$. At least one vertex $v$ as non-zero degree and positive genus so its $\lambda$-class thus vanishes for the previously mentioned reason.
    \item Consequently, all curves that meet the constraints are compact type. Furthermore, the $\lambda$-class splits over the vertices, all inheriting a top $\lambda$-class $\lambda_{g_v}$ except one inheriting $\lambda_{g_v-1}$. This unique vertex is thus the only one having non-zero degree. In fact, it also must carry all the genus. In the end, the sum is over the same graphs as in the point insertion case, but with different decorations.
\end{itemize}

To express the result, we denote by $[n]$ the $q$-analog of $n$: $[n]=q^{n/2}-q^{-n/2}$, which is a Laurent polynomial in $q^{1/2}$, where $q=e^{iu}$.

\begin{prop}\cite[Prop 3.12]{blomme2024bielliptic}
    Through the change of variable $q=e^{iu}$, we have the following identity
    $$R_d(\bfw)=(-i)^na_1^2\sum_{d=kl}\left(\frac{d}{k}\right)^{n-1}\prod_1^n\frac{[ka_i]}{a_i}.$$
\end{prop}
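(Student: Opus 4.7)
The plan is to apply the universal DR-cycle formula exactly as in the preceding $g=1$ computation, then use vanishing properties of $\lambda$-classes to reduce the strata sum to essentially the same graphs that appeared there, and finally invoke known Hodge integral evaluations on $\bar{\M}_{g,n+1}(E,d)$ coming from the GW/Hurwitz correspondence to extract the $q$-number prefactors $[ka_i]$.

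First, write
$$(-1)^{g-1}N_{g,d}(\bfw) = \int_{\vir{\M_{g,n+1}(E,d)}} \lambda_{g-1}\cdot \pi^*\DR_{g,\bfw}^{\mathrm{op}}\cdot \prod_{i\neq 1}\ev_i^*(\pt),$$
and expand $\DR_{g,\bfw}^{\mathrm{op}}$ via Pixton's formula as a sum over degree-decorated prestable graphs. The next step is to cut down the sum drastically using three inputs: (i) pulling back a nowhere-vanishing holomorphic $1$-form on $E$ yields a nowhere-vanishing section of the Hodge bundle, so $\lambda_g=0$ on any stratum where a positive-genus vertex has non-zero degree; (ii) Mumford's relation $\lambda_g\lambda_{g-1}=0$ at each separating node; (iii) the point insertions force a rational (genus $0$, degree $0$) component to carry at most one point-insertion marking and, by stability, at least three special points. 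Together these reduce the contributing graphs to precisely the ones appearing in the proof of the preceding proposition: the smooth graph $\Gamma_{\mathrm{sm}}$ and the two-vertex trees $\Gamma_i$ (for $i=0,2,\dots,n$) with a rational bubble carrying the markings $1$ and $i$; in every case all the degree and all the genus sit on the unique non-rational vertex, which therefore receives the full $\lambda_{g-1}$.

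Second, each graph's contribution is thus a Hodge integral on $\bar{\M}_{g,n+1}(E,d)$ of the shape
$$I_{g,d}(a;\bfw) = (-1)^{g-1}\int_{\vir{\bar{\M}_{g,n+1}(E,d)}}\lambda_{g-1}\,\psi_1^{a_1'}\cdots\psi_n^{a_n'}\prod_{j\neq 1}\ev_j^*(\pt),$$
with the $\psi$-exponents determined by the Pixton decoration (constant along $\Gamma_{\mathrm{sm}}$, linear in $(a_1+a_i)^2$ for $\Gamma_i$). The stationary Gromov-Witten theory of $E$ with a $\lambda$-class insertion is controlled by the GW/Hurwitz correspondence of Okounkov-Pandharipande: after summing over $g$ and performing the change of variables $q=e^{iu}$, the generating series of $I_{g,d}(a;\bfw)$ becomes a weighted count of $d$-sheeted covers of $E$ with prescribed ramification, evaluated by a Burnside character sum that produces the $q$-numbers $[ka_i]$ and the divisor sum $\sum_{kl=d}(d/k)^{n-1}$.

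Third, once each summand is replaced by its $q$-analog expression, the combination of decorations coming from $\Gamma_{\mathrm{sm}}$ and the $\Gamma_i$'s collapses to the simple prefactor $a_1^2$ by exactly the same algebraic identity used in the $g=1$ case, namely the relation $-\sum_{i\ge 2} a_1 a_i = a_1^2$, which is now valid at the level of $q$-numbers because the $\psi$-insertions coming from Pixton are absorbed by repeated application of the divisor and dilaton equations before specialising. The main obstacle is the bookkeeping of this last collapse in the presence of the non-trivial $\psi$-decoration at the genus-$g$ vertex; the cleanest route is to factor the $q$-number series $\prod_i [ka_i]/a_i$ out of each graph's contribution via the Okounkov-Pandharipande evaluation and then verify that the remaining rational combinatorics reproduces the $g=1$ cancellation. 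Assembling the three steps yields
$$R_d(\bfw)=(-i)^n a_1^2\sum_{d=kl}\left(\tfrac{d}{k}\right)^{n-1}\prod_{i=1}^n\frac{[ka_i]}{a_i},$$
recovering the formula of \cite[Prop.~3.12]{blomme2024bielliptic} via the DR-cycle approach.
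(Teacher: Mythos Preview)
The paper does not give its own proof of this proposition: it is cited from \cite{blomme2024bielliptic}, where the argument is via tropical geometry following \cite{bousseau2019tropical}. The paper also quotes the alternative Eisenstein-series expression of \cite[Theorem~6.10]{oberdieck2023quantum}, obtained there by the DR-cycle approach, and then proves a lemma matching the two formulas.

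Your reduction to compact-type graphs is correct and matches what the paper sketches in the bullet points before the proposition (minor quibble: the splitting of $\lambda_{g-1}$ at a separating node comes from the exact sequence for the Hodge bundle over boundary strata, not from ``Mumford's relation $\lambda_g\lambda_{g-1}=0$'', which is neither Mumford's relation nor the relevant fact). The real gap is your third step. The assertion that ``the $\psi$-insertions coming from Pixton are absorbed by repeated application of the divisor and dilaton equations'' is false once $g\geq 2$: the degree-$g$ part of $\prod_i e^{a_i^2\psi_i/2}$ contains monomials $\psi_1^{k_1}\cdots\psi_n^{k_n}$ with $\sum_i k_i=g$, and neither the divisor nor the dilaton equation lowers a $\psi$-exponent above $1$. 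What you actually need is the full evaluation of
$$\int_{\vir{\bar{\M}_{g,n+1}(E,d)}}\lambda_{g-1}\,\psi_1^{k_1}\cdots\psi_n^{k_n}\prod_{i\neq 1}\ev_i^*(\pt),$$
and this is precisely the substance of \cite{oberdieck2023quantum}; it is carried out there via quasi-modularity and the holomorphic anomaly equation, not via the stationary GW/Hurwitz correspondence of Okounkov--Pandharipande, which does not directly handle a $\lambda$-insertion. Their output is the Eisenstein-series form, and the passage to the $[ka_i]$-presentation is exactly the combinatorial compatibility lemma the paper proves immediately afterwards. So your sketch identifies the right strata, but the substantive analytic input --- the Hodge integral evaluation --- is missing, and the mechanism you name for it does not work as stated.
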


In particular, the generating series of these generating series has the following expression:
$$\R(\bfw) = \sum_{d=1}^\infty R_d(\bfw)y^d = (-i)^na_1^2 \sum_{k,l}l^{n-1}\left(\prod_1^n\frac{[ka_i]}{a_i}\right)y^{kl} \in\QQ[[u,y]].$$
Expanding $q=e^{iu}$, coefficients of this double generating series are the invariants depending on the degree $d$ and the genus $g$. In \cite{oberdieck2023quantum}, G.~Oberdieck and A.~Pixton provide a different expression for these invariants.

\begin{prop}\cite[Theorem 6.10]{oberdieck2023quantum}
    We have the following identity:
    $$\sum_{d=1}^\infty N_{g,d}(\bfw)y^d
    = \frac{a_1^2}{a_1\cdots a_n}\sum_{S\subset\{1,\cdots,n\}}(-1)^{|S|}a_S^{2g-2+n}\frac{(-1)^{n+g-1}}{(n+2g-2)!}D^{n-1}G_{2g}(y),$$
    where $G_{2g}(y)$ is the Eisenstein series, $D$ is the differential operator $y\frac{\mathrm{d}}{\mathrm{d}y}$ and $a_S=\sum_{i\in S} a_i$.
\end{prop}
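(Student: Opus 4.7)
The plan is to derive the Oberdieck--Pixton identity directly from the tropical formula for $R_d(\bfw)$ stated just above. Since both sides are power series in $y$, it is enough to identify the $y^d$-coefficient for each $d\geq 1$; on the OP side this is a polynomial in $u$ (via $q=e^{iu}$), so the proposition reduces to matching the coefficient of $u^{2g-2+n}$ with $N_{g,d}(\bfw)$. The entire argument is a coefficient extraction from Blomme's closed form and requires no further geometric input.

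The first step is to rewrite the $q$-brackets trigonometrically: $[ka_i]=q^{ka_i/2}-q^{-ka_i/2}=2i\sin(ka_iu/2)$. The prefactor $(-i)^n$ combined with the $(2i)^n$ from the product of brackets collapses to $2^n$, giving
\[
R_d(\bfw)=\frac{2^n a_1^2}{a_1\cdots a_n}\sum_{kl=d}l^{n-1}\prod_{i=1}^n\sin\!\left(\frac{ka_iu}{2}\right).
\]
I would then expand each sine as $\sin(x)=(e^{ix}-e^{-ix})/(2i)$ and index the $2^n$ resulting terms by subsets $S\subset\{1,\dots,n\}$ via $\epsilon_i=+1$ iff $i\in S$. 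The crucial input is the zero-sum condition $a_1+\cdots+a_n=0$, which turns $\sum_i\epsilon_ia_i=a_S-a_{S^c}$ into $2a_S$, yielding
\[
\prod_i\sin(ka_iu/2)=(2i)^{-n}\sum_S(-1)^{n-|S|}\,e^{ikua_S}.
\]

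Extracting the coefficient of $u^{2g-2+n}$ from $e^{ikua_S}$ gives $(ika_S)^{2g-2+n}/(2g-2+n)!$, and the $k$-dependence separates so that the divisor sum collapses to $\sum_{kl=d}l^{n-1}k^{2g-2+n}=d^{n-1}\sigma_{2g-1}(d)$. Recognising this as the $y^d$-coefficient of $D^{n-1}G_{2g}(y)=\sum_{d\geq1}d^{n-1}\sigma_{2g-1}(d)y^d$ (valid for $n\geq 2$, since $D$ annihilates the constant term $-B_{2g}/(4g)$ of $G_{2g}$), and gathering the powers of $i$ via $i^{2g-2+n}/i^n=i^{2g-2}=(-1)^{g-1}$ together with $(-1)^{n-|S|}=(-1)^n(-1)^{|S|}$, one obtains exactly the sign $(-1)^{n+g-1}/(n+2g-2)!$ predicted by the proposition.

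The main obstacle I foresee is purely combinatorial bookkeeping: tracking all the powers of $i$, the signs indexed by subsets, and the redistributions between the sine product and the divisor sum without arithmetic slip. A minor check at the boundary case $n=1$ (where $a_1=0$ is forced) confirms that both sides vanish, since $a_S=0$ for every $S$; similarly the constant term of $G_{2g}$ harmlessly drops out as soon as $n\geq 2$, so the formula as written with $D^{n-1}G_{2g}$ is internally consistent with Blomme's tropical series.
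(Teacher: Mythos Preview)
Your proposal is correct and follows essentially the same route as the paper. The paper itself does not prove this proposition (it is cited from Oberdieck--Pixton), but immediately after it the paper states and proves a compatibility lemma showing that the Oberdieck--Pixton expression agrees with Blomme's tropical formula; that proof is exactly your argument --- expand the product $\prod_i[ka_i]=\prod_i(e^{iuka_i/2}-e^{-iuka_i/2})$ over subsets $S$, use $\sum a_i=0$ to collapse $a_S-a_{S^c}$ to $2a_S$, extract the $u^{n+2g-2}$-coefficient, and identify the divisor sum with the $y^d$-coefficient of $D^{n-1}G_{2g}$. The only cosmetic difference is that you pass through sines explicitly while the paper works directly with the exponentials.
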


\begin{rem}
    Actually, the result from \cite{oberdieck2023quantum} deals with the cap product with the DR-cycle so that it's $D^{n-2}$ instead of $D^{n-1}$ appearing. So to get the statement above, one just has to multiply by the degree, which amounts to apply the operator $y\frac{\mathrm{d}}{\mathrm{d}y}$.
\end{rem}

\begin{lem}
    The expressions from \cite{oberdieck2023quantum} and \cite{blomme2024bielliptic} are compatible. The common coefficient is given by
    $$N_{g,d}(\bfw) = \frac{a_1^2}{a_1\cdots a_n}\left(\sum_{S\subset\{1,\cdots,n\}}(-1)^{|S|}a_S^{2g-2+n}\frac{(-1)^{n+g-1}}{(n+2g-2)!}\right)\cdot d^{n-1}\sigma_{2g-1}(d),$$
    where $\sigma_{2g-1}(d) = \sum_{k|d}\left(\frac{d}{k}\right)^{2g-1}$ is the $(2g-1)$-power sum of divisors function.
\end{lem}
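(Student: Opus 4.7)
The plan is straightforward generating function manipulation: extract the coefficient of $y^d$ from the Oberdieck--Pixton formula, extract the coefficient of $u^{2g-2+n}$ from the Blomme--Carocci formula, and reconcile the two.

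First I would handle the Oberdieck--Pixton side. Using the standard expansion of the Eisenstein series $G_{2g}(y) = c_{g} + \sum_{d\geq 1}\sigma_{2g-1}(d) y^d$ for some irrelevant constant $c_g$, the operator $D = y\frac{\mathrm d}{\mathrm d y}$ applied $n-1 \geq 1$ times kills the constant term and yields
$$D^{n-1}G_{2g}(y) = \sum_{d\geq 1} d^{n-1}\sigma_{2g-1}(d)\, y^d.$$
Extracting the coefficient of $y^d$ on the right-hand side of the Oberdieck--Pixton identity therefore gives exactly the claimed closed form.

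Second, for the Blomme--Carocci side, I would substitute $[m] = 2i\sin(mu/2)$ into $R_d(\bfw)$, so that $(-i)^n \prod_i [ka_i]/a_i = 2^n(a_1\cdots a_n)^{-1}\prod_i\sin(ka_i u/2)$. Expanding each sine as a power series in $u$, the coefficient of $u^{2g-2+n}$ in $\prod_i \sin(ka_i u/2)$ equals
$$\frac{(-1)^{g-1} k^{2g-2+n}}{2^{2g-2+n}}\sum_{\substack{j_1+\cdots+j_n=g-1\\ j_i\geq 0}}\prod_i \frac{a_i^{2j_i+1}}{(2j_i+1)!}.$$
The divisor sum $\sum_{kl=d} l^{n-1}k^{2g-2+n}$ rearranges to $d^{n-1}\sigma_{2g-1}(d)$ by the substitution $k\mapsto d/k$, so this already gives the claimed factor $d^{n-1}\sigma_{2g-1}(d)$ together with the expected factor $a_1^2/(a_1\cdots a_n)$.

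The remaining (and only delicate) step is to match the two combinatorial prefactors. I would use the elementary identity
$$\sum_{S\subset\{1,\dots,n\}}(-1)^{n-|S|} e^{a_S t} = \prod_{i=1}^n\bigl(e^{a_i t}-1\bigr).$$
Equating coefficients of $t^{n+2g-2}/(n+2g-2)!$ (and multiplying by $(-1)^n$) transforms the alternating sum in the Oberdieck--Pixton formula into a sum over compositions $j_1+\cdots+j_n=n+2g-2$ with $j_i\geq 1$. The crucial observation is the tangency constraint $a_0+a_1+\cdots+a_n = 0$, i.e.\ $\sum a_i=0$, which allows us to rewrite
$$\prod_{i=1}^n\bigl(e^{a_i t}-1\bigr) = e^{(\sum a_i)t/2}\prod_{i=1}^n\bigl(e^{a_i t/2}-e^{-a_i t/2}\bigr) = 2^n\prod_{i=1}^n\sinh(a_i t/2).$$
Extracting $[t^{n+2g-2}]$ on the right produces exactly the sum over odd compositions $j_i=2k_i+1$ with the factor $1/2^{2g-2}$ that appears in the Blomme--Carocci expansion, completing the identification. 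The main obstacle, if any, is keeping track of the signs and the power of two, but the use of $\sum a_i=0$ makes this routine rather than subtle.
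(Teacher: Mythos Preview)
Your proof is correct and essentially the same as the paper's. Both arguments expand the product of $q$-integers as an alternating sum over subsets, use the constraint $\sum_i a_i=0$ (you via $e^{(\sum a_i)t/2}=1$ to pass to $\sinh$, the paper via $a_S-a_{S^c}=2a_S$), and identify the resulting coefficient with $d^{n-1}\sigma_{2g-1}(d)$ from the Eisenstein expansion; your intermediate detour through the multinomial sum over odd compositions is just a slight repackaging of the paper's direct power-series expansion of $e^{iuka_S}$.
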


\begin{proof}
    We explain how to pass from one presentation to another. To do so, we expand
    \begin{align*}
        \prod_1^n\frac{[ka_i]}{a_i} = & \frac{1}{a_1\dots a_n}\prod_1^n (e^{iuka_i/2}-e^{-iuka_i/2}) \\
        = & \frac{1}{a_1\dots a_n}\sum_{S\subset\{1,\dots,n\}} (-1)^{n-|S|}e^{iuk(a_S-a_{S^c})/2},
    \end{align*}
    where $S^c$ is the complement of $S$. Since $a_S=-a_{S^c}$, we get
    \begin{align*}
        \prod_1^n\frac{[ka_i]}{a_i} = & \frac{1}{a_1\dots a_n}\sum_S (-1)^{n-|S|}e^{iuka_S} \\
        = & \frac{1}{a_1\dots a_n}\sum_S (-1)^{n+|S|}\sum_j \frac{(iuka_S)^j}{j!}.
    \end{align*}
    Since before expansion the function is a product of $\sin(ka_i u/2)$, it vanishes till order $n$ and all the non zero terms have the parity of $n$. 
   
    Thus, we may write the exponent $j$ as $n+2g-2$ for $g\geqslant 1$:
    \begin{align*}
        \prod_1^n\frac{[ka_i]}{a_i} = & \frac{1}{a_1\dots a_n}\sum_S \sum_{g=1}^\infty (-1)^{n+|S|}\frac{(iuka_S)^{n+2g-2}}{(n+2g-2)!}.
    \end{align*}
    Furthermore, we have the expansion $D^{n-1}G_{2g}(y) = \sum_{l,k} (kl)^{n-1}k^{2g-1}y^{kl}$. With this knowledge, we can now factor the generating series in $g$ of the expression from \cite{oberdieck2023quantum} after expanding $D^{n-1}G_{2g}(y)$:
    \begin{align*}
         & \frac{a_1^2}{a_1\cdots a_n}\sum_{g=1}^\infty\sum_S (-1)^{|S|}a_S^{2g-2+n}\frac{(-1)^{n+g-1}}{(n+2g-2)!}D^{n-1}G_{2g}(y)u^{n+2g-2} \\
        = & \frac{a_1^2}{a_1\cdots a_n}\sum_S\sum_{g=1}^\infty\sum_{k,l}(-1)^{|S|}a_S^{n+2g-2}\frac{(-1)^{n+g-1}}{(n+2g-2)!}l^{n-1}k^{n+2g-2}u^{n+2g-2}y^{kl} \\
        = & (-i)^n a_1^2\sum_{k,l} l^{n-1}\frac{1}{a_1\dots a_n}\sum_S\sum_{g=1}^\infty (-1)^{n+|S|} \frac{(ikua_S)^{n+2g-2}}{(n+2g-2)!}y^{kl} \\
        = & (-i)^na_1^2 \sum_{l,k}l^{n-1}\left(\prod_1^n\frac{[ka_i]}{a_i}\right)y^{kl}.
    \end{align*}
    To get the coefficient formula, one just replaces $D^{n-1}G_{2g}(y)$ by its $y^d$-coefficient.
\end{proof}

\subsubsection{Invariants in the correlated case} Given a common divisor $\delta$ of $a_1,\dots,a_n$, we now care about correlator $0$ for level $\delta$-refinement, considering the following generating series:
$$R_d^0(\bfw) = \sum_{g=1}^\infty N^0_{g,d}(\bfw) u^{2g-2+n} \in\QQ[[u]],$$
along with the double generating series
$$\R^0(\bfw) = \sum_{d=1}^\infty R_d^0(\bfw) y^d = \sum_{d,g\geqslant 1} N^0_{g,d}(\bfw) u^{n+2g-2}y^d \in\QQ[[u,y]].$$

\begin{theo}\label{theo-computation-lambda}
    Through the change of variable $q=e^{iu}$, the generating series of correlated invariants with correlator $0$ has the following expression:
    $$\R^0(\bfw) = (-i)^n\left(\frac{a_1}{\delta}\right)^2\sum_{\omega|\delta}J_2(\omega)\sum_{k,l} (\omega l)^{n-1}\left(\prod_1^n\frac{[ka_i]}{a_i}\right)y^{\omega kl} .$$
    The coefficient is given by the following explicit formula
    $$N^0_{g,d}(\bfw) = \frac{a_1^2}{a_1\cdots a_n}\left(\sum_{S\subset\{1,\cdots,n\}}(-1)^{|S|}a_S^{2g-2+n}\frac{(-1)^{n+g-1}}{(n+2g-2)!}\right)\cdot d^{n-1}\sum_{k|d}\left(\frac{d}{k}\right)^{2g-1}\frac{\gcd(k,\delta)^2}{\delta^2}.$$
\end{theo}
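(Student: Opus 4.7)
The proof will follow the same scheme as Theorem \ref{theo-computation-points}, now incorporating the $\lambda_{g-1}$ insertion. My plan is to apply the correlated DR formula, reduce to non-correlated DR computations on components $\M_K(E)$, use the $H$-covering trick to pass to elliptic curves $E_H$, and invoke the Oberdieck-Pixton formula.

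\emph{Step 1: Simplification of the correlated DR formula.} By Theorem \ref{theo-correlated-DR-formula} we need to evaluate $\sum_K e^{-\frac{1}{2}\sum(a_i/\delta)^2\psi_i}\mathfrak{DR}_K$ against $\lambda_{g-1}\prod_{i\neq 1}\ev_i^*(\pt)$. One first argues that only compact-type strata (dual graph a tree) contribute: strata with $b_1(\Gamma)>1$ are annihilated because $\lambda_{g-1}$ restricted to such strata vanishes (too much rank of the Hodge bundle lives in $\rmH^1(\Gamma)$); for $b_1(\Gamma)=1$, the splitting $\lambda_{g-1}|_\Gamma=\prod_v\lambda_{g_v}$ combined with the vanishing $\lambda_{g_v}|_{\M_{g_v,n}(E,d_v)}=0$ whenever $d_v>0$ (as in \cite[Lemma~3.9]{blomme2024bielliptic}) together with the point insertions eliminate these contributions. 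On a tree stratum, $\widetilde{K}=K$, the pairing $\varphi$ is trivial, only $\mathtt{D}=0$ lies in the right kernel $\TTT$, and $\mathfrak{L}=0$. The prefactor in $\mathfrak{DR}_K$ becomes $\delta^{2g-2q(E)}|K|=\delta^{2g-2}|K|$, and $\mathfrak{P}$ together with $e^{-\frac{1}{2}\sum(a_i/\delta)^2\psi_i}$ reconstitutes the standard Pixton decoration with weights $\bfw/\delta$. Hence
$$\DR_{g,\bfw}^0(E,d)|_{\M_K(E)}\cap\lambda_{g-1}\prod_{i\neq 1}\ev_i^*(\pt)=\delta^{2g-2}|K|\cdot\DR_{g,\bfw/\delta}(E,d)|_{\M_K(E)}\cap\lambda_{g-1}\prod_{i\neq 1}\ev_i^*(\pt).$$

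\emph{Step 2: Passage to the $H$-coverings.} As in the proof of Theorem \ref{theo-computation-points}, write $|K|=\sum_{\L\in K}1$ and swap the order of summation; for $H=\gen{\L}$ the inner sum becomes an integral over $\M^H(E)$. Using the étale cover $\M(E_H)\to\M^H(E)$ of degree $|H|$, that $\lambda_{g-1}$ is pulled back from $\overline{\M}_g$, and that $\pi^*\pt=|H|\cdot\pt_{E_H}$, we obtain
$$\sum_{K\supset H}(-1)^{g-1}\int\DR_{g,\bfw/\delta}(E,d)|_{\M_K(E)}\cap\lambda_{g-1}\prod_{i\neq 1}\ev_i^*(\pt)=|H|^{n-1}N_{g,d/|H|}(E_H,\bfw/\delta).$$
Since $E_H$ is again an elliptic curve, we apply the Oberdieck-Pixton formula. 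A direct computation shows $A(g,\bfw/\delta)=\delta^{-2g}A(g,\bfw)$, where $A(g,\bfw)=\frac{a_1^2}{a_1\cdots a_n}\sum_S(-1)^{|S|}a_S^{2g-2+n}\frac{(-1)^{n+g-1}}{(n+2g-2)!}$, so that
$$N_{g,d/|H|}(E_H,\bfw/\delta)=A(g,\bfw)\delta^{-2g}(d/|H|)^{n-1}\sigma_{2g-1}(d/|H|).$$
Combining all prefactors $\delta^{2g-2}\cdot|H|^{n-1}\cdot\delta^{-2g}=\delta^{-2}|H|^{1-n}\cdot|H|^{n-1}\cdot 1$ shows that the contribution of a single $\L$ equals $\delta^{-2}A(g,\bfw)d^{n-1}\sigma_{2g-1}(d/|\gen{\L}|)$.

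\emph{Step 3: Combinatorial summation.} Grouping $\L\in\Pic^0(E)[\delta]\cong\ZZ_\delta^2$ by its order $\omega=|\gen{\L}|$, of which there are $J_2(\omega)$ for each $\omega\mid\delta$, yields
$$N^0_{g,d}(\bfw)=\frac{A(g,\bfw)}{\delta^2}d^{n-1}\sum_{\omega\mid\delta}J_2(\omega)\sigma_{2g-1}(d/\omega).$$
Using the Dirichlet convolution identity $\gcd(k,\delta)^2=\sum_{\omega\mid\gcd(k,\delta)}J_2(\omega)$, swapping the order of summation and substituting $k=\omega k'$ gives
$$\sum_{k\mid d}\left(\frac{d}{k}\right)^{2g-1}\gcd(k,\delta)^2=\sum_{\omega\mid\delta}J_2(\omega)\sigma_{2g-1}(d/\omega),$$
which yields the claimed formula.

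The main obstacle is Step 1: verifying that the contributing strata are precisely those for which the correlated DR formula collapses to $\delta^{2g-2}|K|$ times the level-$\delta$ non-correlated DR. The combinatorial analysis of how $\lambda_{g-1}$ splits over higher-genus boundary, together with the point insertions and the elliptic curve vanishing, needs to be executed carefully; everything else is a direct extension of the genus one argument.
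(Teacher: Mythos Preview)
Your proposal is correct and follows essentially the same approach as the paper: reduce to compact-type strata via the $\lambda_{g-1}$ vanishing, collapse the correlated DR formula on $\M_K(E)$ to $\delta^{2g-2}|K|$ times the ordinary DR with weights $\bfw/\delta$, trade $|K|$ for $\sum_{\L\in K}1$, pass to the $\gen{\L}$-covering, and sum using $J_2$. One minor remark: you only derive the coefficient formula $N^0_{g,d}(\bfw)$ and do not explicitly obtain the generating series expression $\R^0(\bfw)$; the paper handles both, using the product form $\prod_i[ka_i]/a_i$ from \cite{blomme2024bielliptic} for the generating series and the Oberdieck--Pixton coefficient form for $N^0_{g,d}$, but these are equivalent by the compatibility lemma preceding the theorem, so your argument covers the full statement.
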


\begin{proof}
    As in the case of point insertions, but this time also due to the vanishing properties of the $\lambda$-classes recalled above, only  strata associated to compact type curves contribute non trivially to the cap product with DR-cycle: again, no tropical part, no tropical torsion, no $\varphi$ and no choice of $\widetilde{K}$ which has to be equal to $K$. Therefore, as in the point case, we have that the $0$-correlator DR-cycle restricted to the component of the moduli space indexed by $K$ is given by:
    $$\DR^0_{g,\bfw}(E,d)|_{\M_K(E)}\cap\lambda_{g-1}\prod_{i\neq 1}\ev_i^*(\pt) = \delta^{2g-2}|K|\cdot \DR_{g,\bfw/\delta}(E,d)|_{\M_K(E)}\cap\lambda_{g-1}\prod_{i\neq 1}\ev_i^*(\pt).$$
    We thus have the following:
    $$N^0_{g,d}(\bfw) = \sum_K \delta^{2g-2}|K|\DR_{g,\bfw/\delta}(E,d)|_{\M_K(E)}\cap \lambda_{g-1}\prod_{i\neq 1}\ev_i^*(\pt). $$
    To ease notation, we denote the cap product by $f_K(g,d)$, so that
    $$N^0_{g,d}(\bfw)=\sum_K\delta^{2g-2}|K|f_K(g,d).$$
    Let $H$ be a subgroup of $\Pic^0(E)[\delta]$. The computation for the sub-stack $\M_{g,n+1}^H(E,d)$  (which we recalled in the previous proof) gives the following identity:
    \begin{align*}
        \sum_{K\supset H}f_K(g,d) = & \sum_{K\supset H} \DR_{g,\bfw/\delta}(E,d)|_{\M_K(E)}\cap\lambda_{g-1}\prod_{i\neq 1}\ev_i^*(\pt) \\
        = & \frac{1}{|H|}\vir{\M_{g,n+1}(E_H,d/|H|)}\cap \lambda_{g-1}\prod_{i\neq 1}\ev_i^*(\pi^*\pt) \\
        = & \frac{|H|^n}{|H|}N_{g,d/|H|}(\bfw/\delta) = |H|^{n-1}N_{g,d/|H|}(\bfw/\delta).
    \end{align*}
    We can now perform the generating series in $g$ and $d$ and use the same trick as in the case of point insertions to trade the $|K|$ for a sum over $\L\in\Pic^0(E)[\delta]$:
    \begin{align*}
        \sum_{d,g}N^0_{g,d}(\bfw)u^{n+2g-2}y^d = & \sum_{d,g}\left(\sum_K\delta^{2g-2}|K|f_K(g,d)\right)u^{n+2g-2}y^d \\
        = & \sum_{d,g}\delta^{2g-2}\left(\sum_K\sum_{\L\in K}f_K(g,d)\right)u^{n+2g-2}y^d \\
        = & \sum_{d,g}\delta^{2g-2}\left(\sum_\L \sum_{K\supset\gen{\L}}f_K(g,d)\right)u^{n+2g-2}y^d \\
        = & \sum_\L\sum_{d,g}\delta^{2g-2}|\gen{\L}|^{n-1}N_{g,d/|\gen{\L}|}(\bfw/\delta)u^{n+2g-2}y^d.
    \end{align*}
    Grouping together the $\L$ with the same order, we get
    \begin{align*}
        \sum_{d,g}N^0_{g,d}(\bfw)u^{n+2g-2}y^d = & \sum_{\omega|\delta} J_2(\omega)\sum_{d,g}\delta^{2g-2}\omega^{n-1}N_{g,d/\omega}(\bfw/\delta)u^{n+2g-2}y^d. \\
    \end{align*}
    In particular, in the second sum, we may only sum over the values of $d$ divisible by $\omega$, since otherwise the coefficient $N_{g,d/\omega}(\bfw/\delta)$ is $0$. We thus write $d=\omega d'$:
    \begin{align*}
        \sum_{d,g}N^0_{g,d}(\bfw)u^{n+2g-2}y^d = & \sum_{\omega|\delta} J_2(\omega)\frac{1}{\delta^n}\sum_{d',g}\omega^{n-1}N_{g,d'}(\bfw/\delta)(\delta u)^{n+2g-2}(y^\omega)^{d'} \\
        = & (-i)^n\left(\frac{a_1}{\delta}\right)^2\sum_{\omega|\delta}J_2(\omega)\frac{1}{\delta^n}\sum_{k,l}(\omega l)^{n-1}\left(\prod_1^n\frac{[\delta\cdot ka_i/\delta]}{a_i/\delta}\right)(y^\omega)^{kl} \\
        = & (-i)^n\left(\frac{a_1}{\delta}\right)^2\sum_{\omega|\delta}J_2(\omega)\sum_{k,l}(\omega l)^{n-1}\left(\prod_1^n\frac{[ka_i]}{a_i}\right)y^{\omega kl},
    \end{align*}
    where  we used that replacing $u$ by $\delta u$ has the effect or replacing $q$ by $q^\delta$ and thus $[n]$ by $[\delta n]$. For the explicit expression, we just take a fixed coefficient. Alternatively, performing the same trick we have
    \begin{align*}
        N^0_{g,d}(\bfw) = & \sum_K \delta^{2g-2}|K|f_K(g,d) = \delta^{2g-2}\sum_K\sum{\L\in K}f_K(g,d) = \delta^{2g-2}\sum_\L\sum_{K\ni\L}f_K(g,d) \\
        = & \delta^{2g-2}\sum_{\omega|\delta}J_2(\omega) \omega^{n-1} N_{g,d/\omega}(\bfw/\delta) \\
        = & \delta^{2g-2}\sum_{\omega|\delta}J_2(\omega) \omega^{n-1}\frac{(a_1/\delta)^2}{a_1\cdots a_n}\delta^n\left(\sum_S (-1)^{|S|}\left(\frac{a_S}{\delta}\right)^{2g-2+n}\frac{(-1)^{n+g-1}}{(n+2g-2)!}\right) \\
         & \cdot\left(\frac{d}{\omega}\right)^{n-1}\sum_{k|d/\omega}\left(\frac{d}{k\omega}\right)^{2g-1} \\
        = & \frac{a_1^2}{a_1\cdots a_n}\left(\sum_S (-1)^{|S|}a_S^{2g-2+n}\frac{(-1)^{n+g-1}}{(n+2g-2)!}\right)\cdot d^{n-1}\sum_{\omega|\delta}J_2(\omega)\sum_{k|d/\omega}\left(\frac{d}{k\omega}\right)^{2g-1} \\
        = & \frac{a_1^2}{a_1\cdots a_n}\left(\sum_S (-1)^{|S|}a_S^{2g-2+n}\frac{(-1)^{n+g-1}}{(n+2g-2)!}\right)\cdot d^{n-1}\sum_{l|d}\left(\frac{d}{l}\right)^{2g-1}\cdot\frac{\gcd(l,\delta)^2}{\delta^2}.
    \end{align*}
\end{proof}

\appendix

\bibliographystyle{alpha}
\bibliography{biblio}

\end{document}